\numberwithin{equation}{section}
\newcommand{\dxyn}{\frac{dx\, dy}{\abs{x-y}^n}}
\begin{document}

\author[Diening]{Lars Diening}
\address{Fakult\"at f\"ur Mathematik, Universit\"at Bielefeld, Postfach 100131, D-33501 Bielefeld, Germany}
\email{lars.diening@uni-bielefeld.de}
\author[Kim]{Kyeongbae Kim}
\address{Department of Mathematical Sciences, Seoul National University, Seoul 08826, Korea}
\email{kkba6611@snu.ac.kr}
\author[Lee]{Ho-Sik Lee}
\address{Fakult\"at f\"ur Mathematik, Universit\"at Bielefeld, Postfach 100131, D-33501 Bielefeld, Germany}
\email{ho-sik.lee@uni-bielefeld.de}
\author[Nowak]{Simon Nowak}
\address{Fakult\"at f\"ur Mathematik, Universit\"at Bielefeld, Postfach 100131, D-33501 Bielefeld, Germany}
\email{simon.nowak@uni-bielefeld.de}

\makeatletter
\@namedef{subjclassname@2020}{\textup{2020} Mathematics Subject Classification}
\makeatother

\subjclass[2020]{Primary: 35R09, 35B65; Secondary: 47G20, 35J60}

\keywords{nonlocal equations, fractional $p$-Laplacian, regularity}
\thanks{Lars Diening and Simon Nowak gratefully acknowledge funding by the Deutsche Forschungsgemeinschaft (DFG, German Research Foundation) - SFB 1283/2 2021 - 317210226. Kyeongbae Kim thanks for funding of the National Research Foundation of Korea (NRF) through IRTG 2235/NRF-2016K2A9A2A13003815 at Seoul National University. Ho-Sik Lee thanks for funding of the Deutsche Forschungsgemeinschaft through GRK 2235/2 2021 - 282638148.
}

\title{Higher differentiability for the fractional $p$-Laplacian}

\begin{abstract}
	In this work, we study the higher differentiability of solutions to the inhomogeneous fractional $p$-Laplace equation under different regularity assumptions on the data. In the superquadratic case, we extend and sharpen several previous results, while in the subquadratic regime our results constitute completely novel developments even in the homogeneous case. In particular, in the local limit our results are consistent with well-known higher differentiability results for the standard inhomogeneous $p$-Laplace equation.
	All of our main results remain valid in the vectorial context of fractional $p$-Laplace systems.
\end{abstract}

\maketitle

\tableofcontents


\section{Introduction}

\subsection{Aim and scope}
This paper is devoted to the higher differentiability of solutions to fractional $p$-Laplace equations of the type
\begin{equation}
	\label{pt : eq.main}
	(-\Delta_p)^s u=f\quad\text{in }\Omega \subset \mathbb{R}^n.
\end{equation}
Here $n \geq 1$ and the \emph{fractional $p$-Laplacian} $(-\Delta_p)^s$ is formally defined by
\begin{equation} \label{spLap}
	(-\Delta_p)^s u(x)=(1-s)\, \mathrm{P}.\mathrm{V}.\int_{\RRn} \frac{|u(x)-u(y)|^{p-2} (u(x)-u(y))}{|x-y|^{n+sp}}\, dy,
\end{equation}
where $s \in (0,1)$ and $p \in (1,\infty)$.

Equation \eqref{pt : eq.main} arises naturally in the calculus of variations as the Euler-Lagrange equation of the inhomogeneous $W^{s,p}$ energy
\begin{equation} \label{energy}
	u \mapsto (1-s) \, \frac{1}{p} \iint_{(\Omega^c \times \Omega^c)^c} \frac{|u(x)-u(y)|^p}{|x-y|^{n+sp}} dx\,dy - \int_{\Omega} fu \, dx.
\end{equation}

The factor $(1-s)$ appearing in \eqref{spLap} and \eqref{energy} guarantees that the energy \eqref{energy} converges to the standard inhomogeneous $p$-Dirichlet energy
\begin{equation} \label{locEnergy}
	u \mapsto \frac{1}{p} \int_{\Omega} |\nabla u|^p \, dx - \int_{\Omega} fu \, dx
\end{equation}
as $s \to 1$, see for instance \cite{BBM1}. Since the Euler-Lagrange equation of the functional \eqref{locEnergy} is given by the standard inhomogeneous $p$-Laplace equation
\begin{equation} \label{pLap}
	-\Delta_p u:=-\textnormal{div} (|\nabla u|^{p-2} \nabla u)=f \quad\text{in }\Omega \subset \mathbb{R}^n,
\end{equation}
the nonlocal operator \eqref{spLap} can indeed be considered to be a fractional analogue of the classical $p$-Laplacian $-\Delta_p$.


In order to provide some context and motivation, let us review some known higher differentiability results for the local $p$-Laplacian. 
Indeed, if $u \in W^{1,p}(\Omega)$ is a weak solution to the inhomogeneous $p$-Laplace equation \eqref{pLap}, we have the following implications
\begin{equation} \label{locreg}
	f \in L^{p^\prime}_{\loc}(\Omega) \implies \begin{cases}
		u \in W^{\alpha,p}_{\loc}(\Omega) \text{ } \forall \alpha \in \left (0, p^\prime \right ), \quad &\textnormal{if } p \in (2,\infty), \\
		u \in W^{2,p}_{\loc}(\Omega), \quad & \textnormal{if } p \in (1,2],
	\end{cases}
\end{equation}
see \cite{Simon77,Simon81} for the case $p>2$ and \cite[Remark 1.5]{BrascoSant} and \cite{Thelin} for the case $p \leq 2$. Here as usual $p^\prime:=\frac{p}{p-1}$ denotes the H\"older conjugate of $p$. Moreover, for $p>2$ the above conclusion can be improved if $f$ is sufficiently differentiable in the sense of the following implication
\begin{equation} \label{pg2loc}
	f \in W^{t,p^\prime}_{\loc}(\Omega) \textnormal{ for some } t > \frac{p-2}{p} \implies u \in W^{\alpha,p}_{\loc}(\Omega) \text{ } \forall \alpha \in \left (0, \frac{p+2}{p}\right ). 
\end{equation}
For $f=0$, this regularity result goes back to Uhlenbeck (see \cite[Lemma 3.1]{Uhl77}), while for a general right-hand side $f$ the implication \eqref{pg2loc} was proved in \cite[Theorem 1.1]{BrascoSant}. For various further developments concerning gradient differentiability of solutions to \eqref{pLap}, we refer for instance to \cite{MinCZMD,KuuMin12,AKMARMA,BCDKS,CM1,BCDM,BDW,Donglocal}.

A main goal of the present paper is to provide analogues of the implications \eqref{locreg}-\eqref{pg2loc} in the case of the inhomogeneous fractional $p$-Laplace equation \eqref{pt : eq.main}, which are in particular consistent with the results from the local setting as $s \to 1$. Moreover, in the subquadratic regime when $p \in (1,2)$ our results are already new in the homogeneous case when $f=0$. In fact, in this case for the whole range $s \in (0,1)$ we in particular prove that the gradient of any weak solution to $(-\Delta_p)^s u=0$ exists in $L^p$, which in contrast to classical $p$-harmonic functions is a priori not known for $(s,p)$-harmonic functions and is instead a highly nontrivial observation.

\subsection{Setup}

Before stating our main results, we need to fix our setup more rigorously. First of all, in order to control the growth of solutions at infinity, for $\beta >0$ and $q \in (0,\infty)$ we consider the tail spaces
$$L^q_{\beta}(\mathbb{R}^n):= \left \{u \in L^1_{\loc}(\mathbb{R}^n) \mathrel{\Big|} \int_{\mathbb{R}^n} \frac{|u(y)|^q}{(1+|y|)^{n+\beta}}dy < \infty \right \}$$
introduced in \cite{existence}.
We remark that a function $u \in L^1_{\loc}(\mathbb{R}^n)$ belongs to the space $L^{p-1}_{sp}(\mathbb{R}^n)$ if and only if the \emph{nonlocal tails} of $u$ given by
\begin{equation*}
	\mathrm{Tail}(u;B_{r}(x_{0}))=\left((1-s)r^{sp}\int_{\bbR^{n}\setminus B_{r}(x_{0})}\frac{|u(y)|^{p-1}}{|y-x_{0}|^{n+sp}}\,dy\right)^{\frac{1}{p-1}}
\end{equation*}
are finite for all $r>0$, and $x_0 \in \mathbb{R}^n$.

For notational convenience, we also use the following nonlocal excess functional.
Indeed, for any $p \in (1,\infty)$, $r>0$, $x_0 \in \mathbb{R}^n$ and $u\in  L_{sp}^{p-1}(\bbR^{n})$,
\begin{align*}
	E(u;B_r(x_0))&=\left(\dashint_{B_r(x_0)}|u-(u)_{B_{r}(x_0)}|^{p}\,dx\right)^{\frac{1}{p}}\\
	&\quad+\mathrm{Tail}(u-(u)_{B_r(x_0)};B_r(x_0)).
\end{align*}

We now define weak solutions to \eqref{pt : eq.main}.
\begin{definition}[Weak solutions] \label{def:weaksol}
	Let $\Omega \subset \mathbb{R}^n$ be an open set, $s \in (0,1)$ and $p \in (1,\infty)$. Moreover, assume that $f \in L^{p^\prime}_{\loc}(\Omega)$. We say that $u \in W^{s,p}_{\loc}(\Omega) \cap L^{p-1}_{sp}(\mathbb{R}^n)$ is a weak solution of \eqref{pt : eq.main}, if for any $\psi\in W^{s,p}(\Omega)$ with compact support in $\Omega$,
	\begin{align*}
		&(1-s)\int_{\bbR^{n}}\int_{\bbR^{n}}\frac{|u(x)-u(y)|^{p-2}(u(x)-u(y))}{|x-y|^{n+sp}}(\psi(x)-\psi(y))\,dx\,dy=\int_{\Omega}f\psi\,dx.
	\end{align*}
\end{definition}

\subsection{Main results}
Throughout this section, we fix some arbitrary parameter $s \in (0,1)$.
Our first main result yields sharp higher differentiability in the case when the data belongs to $L^{p^\prime}$ and therefore might not be differentiable.

\begin{theorem}[Non-differentiable data]\label{thm.1}
	Let $u\in W^{s,p}_{\mathrm{loc}}(\Omega)\cap L^{p-1}_{sp} (\bbR^{n})$ be a weak solution to \eqref{pt : eq.main} with $f\in L^{p'}_{\mathrm{loc}}(\Omega)$. Fix $s_0\in(0,s]$ and denote 
	\begin{equation}\label{defn.alpha0}
		\begin{aligned}
			\alpha_0=\begin{cases}
				s_0p'&\quad\text{if }p\geq2,\\
				s_0p'&\quad\text{if }p\leq2 \text{ with }s_0p'\leq 1,\\
				2+(s_0-1)p&\quad\text{if }p\leq2\text{ with }s_0p'>1.
			\end{cases}
		\end{aligned}
	\end{equation}
	Then the following holds.
	
	\textbf{In case of $s_0p'\leq 1$: }For any $\alpha\in(0,\alpha_0)$, we have $u\in W^{\alpha,p}_{\mathrm{loc}}(\Omega)$. Moreover,
	\begin{align*}
		r^{\alpha-\frac{n}{p}}[u]_{W^{\alpha,p}(B_r(x_0))}\leq cE(u;B_{2r}(x_0))+cr^{\frac{sp}{p-1}-\frac{n}{p}}\|f\|^{\frac{1}{p-1}}_{L^{p'}(B_{2r}(x_0))}
	\end{align*}
	holds for some constant $c=c(n,s_0,p,\alpha)$, provided that $B_{2r}(x_0)\Subset \Omega$. 
	
	\textbf{In case of $s_0p'>1$: }For any $\alpha\in(1,\alpha_0)$, we have $\nabla u \in W^{\alpha-1,p}_{\mathrm{loc}}(\Omega)$. Moreover,
	\begin{align*}
		r^{\alpha-\frac{n}{p}}[\nabla u]_{W^{\alpha-1,p}(B_r(x_0))}\leq cE(u;B_{2r}(x_0))+cr^{\frac{sp}{p-1}-\frac{n}{p}}\|f\|^{\frac{1}{p-1}}_{L^{p'}(B_{2r}(x_0))}
	\end{align*}
	holds for some constant $c=c(n,s_0,p,\alpha)$, provided that $B_{2r}(x_0)\Subset \Omega$.
\end{theorem}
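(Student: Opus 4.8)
The strategy is a nonlocal analogue of the classical difference-quotient technique, combined with a bootstrap (self-improvement) argument exploiting the equation. The natural starting point is the Caccioppoli-type / energy estimate that weak solutions satisfy, giving control on $[u]_{W^{s,p}(B_r)}$ and the tail in terms of $E(u;B_{2r})$ and $\|f\|_{L^{p'}(B_{2r})}$. From here I would test the weak formulation with translated test functions to derive estimates for second-order (or symmetric) difference quotients $\tau_h \tau_{-h} u$ of $u$. The monotonicity of the vector field $z \mapsto |z|^{p-2}z$ enters here: in the superquadratic case $p \geq 2$ one uses the bound $|a-b|^p \lesssim (|a|^{p-2}a - |b|^{p-2}b)\cdot(a-b)$ to absorb the full $p$-th power of a difference quotient of $u-u(\cdot+h)$ against the analogous quantity involving the nonlocal operator; in the subquadratic case $p \leq 2$ one instead works with the quantity $V(z)=|z|^{(p-2)/2}z$, for which $|V(a)-V(b)|^2 \simeq (|a|^{p-2}a-|b|^{p-2}b)\cdot(a-b)$, and then converts back via Hölder with the companion estimate $|a-b|^p \lesssim |V(a)-V(b)|^p(|a|+|b|)^{p(2-p)/2}$. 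The right-hand side contributes a term $\int f \, \tau_h\tau_{-h}\psi$, which by duality ($f \in L^{p'}$, $\tau$ of a test function in $L^p$) costs essentially one translation and produces the $\|f\|_{L^{p'}}^{1/p-1}$ term with the scaling exponent $\frac{sp}{p-1}-\frac{n}{p}$ as in the stated estimate.

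The core of the argument is a finite iteration. One sets up a family of estimates of the form: if $u \in W^{\beta,p}_{\mathrm{loc}}$ with $\beta < $ some threshold, then in fact $u \in W^{\beta',p}_{\mathrm{loc}}$ for $\beta' = \beta + \delta$ for a fixed gain $\delta$ depending on $s$ and $p$, with the quantitative estimate propagating. Each step proceeds by testing with second differences, using the monotonicity/$V$-function machinery above to bound $\iint |\tau_h(V(D_s u))|^2 \,d\mu$ (or the $p$-power analogue) and then reading off fractional differentiability of $u$ of improved order via a standard characterization of Besov–Nikolskii spaces through difference quotients (one needs the uniform-in-$h$ bound $\sup_h |h|^{-\gamma}[\tau_h u]_{W^{\sigma,p}} < \infty$ to conclude $u \in W^{\sigma+\gamma-\varepsilon,p}$, hence the loss of $\varepsilon$ forcing the open intervals $(0,\alpha_0)$ rather than the closed endpoint). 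The gain per step is limited by the order $sp$ of the operator and by how much smoothing one extracts from the $f$-term; iterating until one reaches the barrier $\alpha_0$ gives the claimed exponent. The three cases in \eqref{defn.alpha0} arise because: for $p \geq 2$ the ceiling is governed purely by the dual exponent scaling $s_0 p'$; for $p \leq 2$ with $s_0 p' \leq 1$ the same scaling applies but the argument stays below the gradient threshold $\alpha = 1$; and for $p \leq 2$ with $s_0 p' > 1$ one crosses $\alpha=1$, differentiates the equation once (so $\nabla u$ solves a related nonlocal equation), and the surviving regularity is measured by $2+(s_0-1)p$, which is exactly what the shifted/differentiated estimate yields and which matches $W^{2,p}$ in the local limit $s_0 \to 1$.

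The main obstacle I anticipate is controlling the nonlocal tail contributions uniformly through the iteration: when testing with translated cutoffs, the "tail" integral over $\mathbb{R}^n \setminus B_r$ produces terms that must be reabsorbed into $E(u;B_{2r})$ without degrading the scaling, and in the subquadratic case the $V$-function bookkeeping makes these tail terms more delicate since one cannot freely apply Hölder's inequality with the natural exponents across the full singular kernel. A second, related difficulty is the transition across $\alpha = 1$ in the subquadratic regime: justifying that $\nabla u$ genuinely exists in $L^p$ (as emphasized in the introduction, this is not a priori clear for $(s,p)$-harmonic functions) and that it solves an equation amenable to the same machinery requires care — one likely obtains $\nabla u$ as the strong $L^p$-limit of difference quotients, using the uniform fractional estimates at order approaching $1$ from below together with a compactness/completeness argument, and only then runs the final differentiated iteration step to reach $2+(s_0-1)p$.
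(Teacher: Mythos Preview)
Your overall strategy---testing with $\delta_{-h}(\psi^2\delta_h u)$, exploiting the $V$-function monotonicity, converting second-difference bounds to fractional Sobolev regularity via Besov-type embeddings, and bootstrapping---matches the paper's approach closely. Two points, however, separate your outline from a complete proof.

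First, the tail issue you flag is indeed the crux, and the paper resolves it not by estimating tails of finite differences directly but by a \emph{localization} device (Section~\ref{sec:loc}): one multiplies $u$ by a cutoff $\xi$ so that $w=u\xi\in W^{s,p}(\mathbb{R}^n)$ globally, and shows that $w$ solves $(-\Delta_p)^s w=f+g$ in a smaller ball with $g\in L^{p'}$ explicitly controlled by $\|u\|_{L^p}$ and the tail of $u$ (Lemma~\ref{el : lem.loc}). All long-range interaction is thereby pushed into a source term, so the difference-quotient argument for $w$ (Lemmas~\ref{lem.combvest}--\ref{lem.vrefs}) never sees a tail. Without this device one would need a direct tail estimate for finite differences, and the known such estimates require local boundedness of $u$, which is unavailable under merely $f\in L^{p'}$.

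Second, your plan for $p\le 2$ with $s_0 p'>1$---to ``differentiate the equation once'' and then iterate on an equation for $\nabla u$---is not the route taken and would be problematic, since no workable equation for $\nabla u$ is readily available here. The paper stays at the level of second differences throughout: Lemma~\ref{loc.mainthmdbs} yields
\[
\sup_{0<|h|<h_0}\left\|\frac{\delta_h^2(u\eta)}{|h|^{2\sigma}}\right\|_{L^p(\mathbb{R}^n)}<\infty
\]
with $2\sigma$ possibly exceeding $1$, and then the embedding Lemma~\ref{lem.secmeb} (the case $\sigma\in(1,2)$, estimate~\eqref{ineq.seq2}) delivers $\nabla u\in W^{\alpha-1,p}$ directly from this second-difference bound---no separate equation for $\nabla u$ is needed. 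The bootstrap for $p\le 2$ climbs through a sequence $\gamma_i\to (s_0p-\delta)/(p-1)$; crossing the threshold $1$ happens only in the exponent on $|h|$, never via a differentiated equation.
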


\begin{remark}[Stability] \normalfont
	Due to the presence of the additional parameter $s_0$, the constants in Theorem \ref{thm.1} and also in all of our further main results below do not blow up in the local limit $s \to 1^-$ for $\alpha$ fixed.
\end{remark}

\begin{remark}[Sharpness, non-differentiable data]\label{rmk.sharp} \normalfont
For any $\epsilon>0$, we construct a weak solution $u\notin W^{sp'+\epsilon,p}_{\mathrm{loc}}$ to \eqref{pt : eq.main} with $f\in L^{p'}_{\mathrm{loc}}$. By Appendix \ref{app.power}, we observe 
	\begin{align*}
		u(x)=|x|^{sp'+\frac{\epsilon}{2}-\frac{n}{p}}\in \left(W^{s,p}_{\mathrm{loc}}(\bbR^n)\cap L^{p-1}_{sp}(\bbR^n)\right)\setminus W^{sp'+\epsilon,p}(B_1).
	\end{align*}
	By \eqref{eq11} in Appendix \ref{app.power}, we see that $u$ is a weak solution to 
	\begin{align*}
 		(-\Delta_p)^su=f \quad\text{in }B_1,
	\end{align*}
	where
	\begin{align*}
		f(x)=c|x|^{\frac{\epsilon(p-1)}{2}-\frac{n(p-1)}{p}}
	\end{align*}
	for some constant $c=c(n,s,p)$, which implies $f\in L^{p'}_{\mathrm{loc}}(\bbR^n)$. In this sense, our estimate in Theorem \ref{thm.1} is sharp for $s_0=s$ in the case when $p\geq2$ and also in the case when $p\leq2 \text{ with }sp'\leq 1$, since in both cases we have $\alpha_0=sp'$.
\end{remark}

By using bootstrap arguments, we are able to improve differentiability of the solution when the right-hand side is differentiable. Before stating the results, for convenience of notation for any $f\in W^{t,p'}(B_R)$ with $t\in(0,1)$ let us write
\begin{align}\label{not.norm}
    \|f\|_{\widetilde{W}^{t,p'}(B_{R})} \coloneqq [f]_{{W}^{t,p'}(B_{R})}+R^{-t}\|f\|_{L^p(B_{R})}.
\end{align}
We start with the superquadratic case when $p\geq 2$. 
\begin{theorem}[Differentiable data, superquadratic case]\label{thm.regd}
	Let $p\geq2$ and $u\in W^{s,p}_{\mathrm{loc}}(\Omega)\cap L^{p-1}_{sp} (\bbR^{n})$ be a weak solution to \eqref{pt : eq.main} with $f\in W^{t,p'}_{\mathrm{loc}}(\Omega)$, where $t\in\left(0,\min\left\{\frac{s_0p}{p-2},1\right\}\right)$ for some constant $s_0\in(0,s]$.
	We denote 
	\begin{equation}\label{defn.alpha1}
		\begin{aligned}
			\alpha_1=\begin{cases}
				\frac{t+s_0p}{p-1}&\quad\text{if }\frac{t+s_0p}{p-1}\leq 1,\\
				s_0+\frac{1+t}{p}&\quad\text{if }\frac{t+s_0p}{p-1}>1.
			\end{cases}
		\end{aligned}
	\end{equation} Then the following holds.

	\textbf{In case of $\frac{t+s_0p}{p-1}\leq 1$: } For any $\alpha\in(0,\alpha_1)$, we have
	$u\in W^{\alpha,p}_{\mathrm{loc}}(\Omega)$ with the estimate 
	\begin{equation}\label{est.sec.thm7}
		r^{\alpha-\frac{n}p}[u]_{W^{\alpha,p}(B_r(x_0))}\leq cE(u;B_{2r}(x_0))+cr^{\frac{sp+t}{p-1}-\frac{n}p}\|f\|^{\frac{1}{p-1}}_{\widetilde{W}^{t,p'}(B_{2r}(x_0))}
	\end{equation}
	for some constant $c=c(n,s_0,p,t,\alpha)$, whenever $B_{2r}(x_0)\Subset \Omega$.

	\textbf{In case of $\frac{t+s_0p}{p-1}> 1$: } For any $\alpha\in(1,\alpha_1)$, we have $\nabla u\in W^{\alpha-1,p}_{\mathrm{loc}}(\Omega)$ with the estimate 
	\begin{equation}\label{est.secg.thm7}
		r^{\alpha-\frac{n}p}[\nabla u]_{W^{\alpha-1,p}(B_r(x_0))}\leq cE(u;B_{2r}(x_0))+cr^{\frac{sp+t}{p-1}-\frac{n}p}\|f\|^{\frac{1}{p-1}}_{\widetilde{W}^{t,p'}(B_{2r}(x_0))}
	\end{equation}
	for some constant $c=c(n,s_0,p,t,\alpha)$, whenever $B_{2r}(x_0)\Subset \Omega$.
\end{theorem}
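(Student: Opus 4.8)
\emph{Plan of proof.} We prove the estimates by a finite bootstrap that starts from Theorem~\ref{thm.1} and repeatedly applies a quantitative one-step gain of differentiability, handled through finite differences. Writing $\tau_hv(x)=v(x+h)-v(x)$, it is convenient to abbreviate
$$\mathcal A(x_0,r):=E(u;B_{2r}(x_0))+r^{\frac{sp+t}{p-1}-\frac np}\,\|f\|^{\frac1{p-1}}_{\widetilde W^{t,p'}(B_{2r}(x_0))}$$
and to say that property $\mathrm P(\beta)$ holds if: in case $\beta\leq1$, one has $u\in W^{\alpha,p}_{\mathrm{loc}}(\Omega)$ for every $\alpha\in(0,\beta)$ with $r^{\alpha-\frac np}[u]_{W^{\alpha,p}(B_r(x_0))}\leq c\,\mathcal A(x_0,r)$ whenever $B_{2r}(x_0)\Subset\Omega$; and in case $\beta>1$, one has $\nabla u\in W^{\alpha-1,p}_{\mathrm{loc}}(\Omega)$ for every $\alpha\in(1,\beta)$ with $r^{\alpha-\frac np}[\nabla u]_{W^{\alpha-1,p}(B_r(x_0))}\leq c\,\mathcal A(x_0,r)$ whenever $B_{2r}(x_0)\Subset\Omega$. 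Thus $\mathrm P(\alpha_1)$ is exactly the conclusion of the theorem. By Theorem~\ref{thm.1} applied with the \emph{same} $s_0$, property $\mathrm P(\beta_0)$ holds for every $\beta_0<s_0p'=\alpha_0$, which starts the induction.

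The heart of the argument is the one-step gain: \emph{if} $\mathrm P(\beta)$ holds \emph{then} $\mathrm P(\beta')$ holds with $\beta'=s_0+\frac{t+\min\{\beta,1\}}{p}$. We would prove this as follows. First, $\mathrm P(\beta)$ yields the difference bound $\|\tau_hu\|_{L^p(B_{2r}(x_0))}\lesssim|h|^{\min\{\beta,1\}}\,r^{\frac np-\min\{\beta,1\}}\,\mathcal A(x_0,r)$ for $|h|\leq r$ — directly when $\beta\leq1$, and through $W^{\beta,p}\hookrightarrow W^{1,p}$ followed by $\|\tau_hu\|_{L^p}\lesssim|h|\,\|\nabla u\|_{L^p}$ when $\beta>1$. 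Next, we test the weak formulation of \eqref{pt : eq.main} with $\psi=\tau_{-h}(\eta^p\tau_hu)$, where $\eta$ is a cutoff between $B_r(x_0)$ and $B_{2r}(x_0)$ with $|\nabla\eta|\lesssim r^{-1}$ and $|h|\leq r/8$. Shifting $\tau_{-h}$ onto the kernel, the left-hand side becomes
$$(1-s)\iint\frac{\big(|U_h|^{p-2}U_h-|U|^{p-2}U\big)(x,y)}{|x-y|^{n+sp}}\Big(\eta(x)^p\tau_hu(x)-\eta(y)^p\tau_hu(y)\Big)\,dx\,dy,$$
where $U(x,y)=u(x)-u(y)$ and $U_h(x,y)=u(x+h)-u(y+h)$. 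Decomposing $\eta(x)^p\tau_hu(x)-\eta(y)^p\tau_hu(y)=\tfrac12\big(\eta(x)^p+\eta(y)^p\big)\big(\tau_hu(x)-\tau_hu(y)\big)+(\text{commutator})$ and invoking the superquadratic monotonicity $\big(|a|^{p-2}a-|b|^{p-2}b\big)(a-b)\gtrsim|a-b|^p$ produces a controlling term $\gtrsim(1-s)\,[\tau_hu]^p_{W^{s,p}(B_r(x_0))}$, which dominates $[\tau_hu]^p_{W^{s_0,p}(B_r(x_0))}$ up to lower-order $L^p$ and tail contributions — passing from the $s$- to the $s_0$-seminorm here is what keeps the constants bounded as $s\to1$. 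The commutator terms and the genuinely nonlocal contributions coming from $y\notin B_{2r}(x_0)$ are absorbed or estimated by Young's inequality, $\big|\,|a|^{p-2}a-|b|^{p-2}b\big|\lesssim(|a|+|b|)^{p-2}|a-b|$, the tail space $L^{p-1}_{sp}$ and $\mathrm P(\beta)$, and contribute $E(u;B_{2r}(x_0))$. After the change of variables $x\mapsto x-h$ the right-hand side equals $\int(\tau_hf)\,\eta^p\tau_hu\,dx\leq\|\tau_hf\|_{L^{p'}(B_{2r}(x_0))}\,\|\tau_hu\|_{L^p(B_{2r}(x_0))}$; absorbing part of $\|\tau_hu\|_{L^p}$ into $[\tau_hu]_{W^{s_0,p}}$ by a fractional Poincar\'e inequality, this leads to a per-$h$ inequality of the form $[\tau_hu]^p_{W^{s_0,p}(B_{r/2}(x_0))}\lesssim\|\tau_hf\|_{L^{p'}(B_{2r}(x_0))}\,\|\tau_hu\|_{L^p(B_{2r}(x_0))}+(\text{controlled by }\mathcal A)$. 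Finally, we integrate this inequality against $|h|^{-n-\gamma p}\,dh$ over $|h|\leq r/8$ for any fixed $\gamma<\frac{t+\min\{\beta,1\}}{p}$: using the difference bound for $\|\tau_hu\|_{L^p}$ together with H\"older's inequality in $h$ (convergent since $t<1$), the term $\int\|\tau_hf\|_{L^{p'}}\|\tau_hu\|_{L^p}|h|^{-n-\gamma p}\,dh$ is controlled by $\mathcal A(x_0,r)^p$ up to harmless powers of $r$, so that the standard iteration-of-differences argument gives $u\in W^{s_0+\gamma,p}_{\mathrm{loc}}(\Omega)$ with $r^{(s_0+\gamma)-\frac np}[u]_{W^{s_0+\gamma,p}(B_{r/4}(x_0))}\lesssim\mathcal A(x_0,r)$ (and the corresponding statement for $\nabla u$ when $s_0+\gamma>1$); since $\gamma$ is arbitrary below $\frac{t+\min\{\beta,1\}}{p}$, this is precisely $\mathrm P(\beta')$. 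The restriction $t<\min\{\frac{s_0p}{p-2},1\}$ is exactly what makes all the Young and interpolation exponents occurring in this step admissible, so that the estimate closes.

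The map $\beta\mapsto s_0+\frac{t+\min\{\beta,1\}}p$ is non-decreasing and non-expansive, with unique fixed value $\alpha_1=\min\big\{\tfrac{t+s_0p}{p-1},\,s_0+\tfrac{1+t}p\big\}$: the first expression is the fixed point of the genuine contraction $\beta\mapsto s_0+\tfrac{t+\beta}p$ of ratio $\tfrac1p$, and the second is the ceiling forced by the fact that a first-order difference $\tau_hu$ never decays faster than $|h|^1$. Hence, starting from $\mathrm P(\beta_0)$ with $\beta_0<s_0p'$ and applying the one-step gain finitely many times, $\mathrm P(\beta)$ holds for every $\beta<\alpha_1$. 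When $\tfrac{t+s_0p}{p-1}\leq1$ all these $\beta$ stay below $1$ and $\alpha_1=\tfrac{t+s_0p}{p-1}$, giving the first conclusion of the theorem; when $\tfrac{t+s_0p}{p-1}>1$ the values eventually exceed $1$ and $\alpha_1=s_0+\tfrac{1+t}p$, giving the second conclusion. The main obstacle is the one-step gain itself: estimating the commutator and the far-field (tail) terms with the correct powers of $|h|$ and $r$, carrying out the $h$-integration that extracts the $W^{t,p'}$-regularity of $f$ with the sharp exponent, and verifying that everything stays admissible precisely under $t<\min\{\frac{s_0p}{p-2},1\}$ — together with correctly locating the saturation value $\alpha_1$ and the transition to the gradient formulation beyond order one.
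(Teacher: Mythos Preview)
Your overall bootstrap architecture matches the paper's: start from Theorem~\ref{thm.1}, then iterate a one-step differentiability gain obtained by testing the equation with $\tau_{-h}(\eta^p\tau_hu)$ until the map saturates at~$\alpha_1$. Two points, however, are not handled correctly.

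\textbf{The commutator contributes an $|h|^{2\beta}$ term, not just $E(u;\cdot)$.} After absorbing, the local commutator leaves a term of size $[u]_{W^{s,p}}^{p-2}\,\|\tau_hu\|_{L^p}^2$ on the right (this is the content of the paper's Lemma~\ref{lem.degind}). With $\|\tau_hu\|_{L^p}\lesssim|h|^{\beta}$ this scales like $|h|^{2\beta}$, so the true one-step gain is $\beta\mapsto s_0+\tfrac{\beta+\min\{\beta,t\}}{p}$, not your $s_0+\tfrac{t+\min\{\beta,1\}}{p}$. When $\beta<t$ (which can happen: the starting value $s_0p'$ need not exceed~$t$) your formula is too optimistic. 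The paper deals with this by a preliminary bootstrap (its ``Step~1'') that first reaches $u\in W^{t,p}_{\mathrm{loc}}$ via the map $\gamma\mapsto s_0+\tfrac{2\gamma}{p}$, and only then switches to $\gamma\mapsto s_0+\tfrac{\gamma+t}{p}$.

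\textbf{The far-field terms are the real obstacle, and your sketch does not close them.} For $y\notin B_{2r}(x_0)$ the quantity $A(U_h)-A(U)$ involves $\tau_hu(y)$, and under the sole assumption $u\in L^{p-1}_{sp}(\mathbb R^n)$ this can only be bounded by $|u(y)|+|u(y+h)|$; the resulting tail term then scales merely like $|h|^{\beta}\,\mathrm{Tail}(u;\cdot)^{p-1}$, which would cap the gain at $s_0+\beta/p$ and break the iteration. This is exactly the difficulty that forced the extra global-differentiability hypothesis in \cite{BraLin17}. The paper circumvents it by the localization argument of Section~\ref{sec:loc}: one replaces $u$ by $w=u\xi\in W^{\gamma,p}(\mathbb R^n)$, which solves $(-\Delta_p)^sw=f+g$ with an explicit $g\in W^{t_0,p'}$ whose seminorm is controlled by $E(u;\cdot)$ (Corollary~\ref{el : lem.loc.deg}). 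After this step there are no tail differences at all---all $\|\tau_hw\|$ and $[w]$ quantities are global---and the one-step lemma (Lemma~\ref{lem.deg.reg.ind}) goes through with the correct power. Your proposal should either incorporate this localization, or supply an independent tail estimate for finite differences (as in \cite[Lemma~3.1]{DuzLiao}, which however needs local boundedness); saying the tails ``contribute $E(u;B_{2r})$'' does not suffice.
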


\begin{remark} \normalfont
	In the homogeneous case when $f=0$, Theorem \ref{thm.regd} yields the same amount of differentiability that was recently obtained in \cite[Theorem 1.1 and Theorem 1.6]{DuzLiao}.
\end{remark}
Next, we provide a similar result in the case when $p\in(1,2]$.
\begin{theorem}[Differentiable data, subquadratic case] \label{thm.regs}
	Let $p\in(1,2]$ and $u\in W^{s,p}_{\mathrm{loc}}(\Omega)\cap L^{p-1}_{sp}(\bbR^n)$ be a weak solution to \eqref{pt : eq.main} with $f\in W^{t,p'}_{\mathrm{loc}}(\Omega)$, where $t\in(0,p-1)$. Let us fix $s_0\in(0,s]$ and denote 
	\begin{equation}\label{defn.alpha2}
		\begin{aligned}
			\alpha_2=\begin{cases}
				\frac{t+s_0p}{p-1}&\quad\text{if }\frac{t+s_0p}{p-1}\leq 1,\\
				1+\frac{1+t}{2}-\frac{(1-s_0)p}{2}&\quad\text{if }\frac{t+s_0p}{p-1}>1.
			\end{cases}
		\end{aligned}
	\end{equation} Then the following holds.

	\textbf{In case of $\frac{t+s_0p}{p-1}\leq 1$: } For any $\alpha\in(0,\alpha_2)$, we have
	$u\in W^{\alpha,p}_{\mathrm{loc}}(\Omega)$ with the estimate 
	\begin{equation}\label{est.sec.thm9}
		r^{\alpha-\frac{n}p}[u]_{W^{\alpha,p}(B_r(x_0))}\leq cE(u;B_{2r}(x_0))+cr^{\frac{sp+t}{p-1}-\frac{n}p}\|f\|^{\frac{1}{p-1}}_{\widetilde{W}^{t,p'}(B_{2r}(x_0))}
	\end{equation}
	for some constant $c=c(n,s_0,p,t,\alpha)$, whenever $B_{2r}(x_0)\Subset \Omega$.

	\textbf{In case of $\frac{t+s_0p}{p-1}> 1$: } For any $\alpha\in(1,\alpha_2)$, we have $\nabla u\in W^{\alpha-1,p}_{\mathrm{loc}}(\Omega)$ with the estimate 
	\begin{equation}\label{est.secg.thm9}
		r^{\alpha-\frac{n}p}[\nabla u]_{W^{\alpha-1,p}(B_r(x_0))}\leq cE(u;B_{2r}(x_0))+cr^{\frac{sp+t}{p-1}-\frac{n}p}\|f\|^{\frac{1}{p-1}}_{\widetilde{W}^{t,p'}(B_{2r}(x_0))}
	\end{equation}
	for some constant $c=c(n,s_0,p,t,\alpha)$, whenever $B_{2r}(x_0)\Subset \Omega$.
\end{theorem}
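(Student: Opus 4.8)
I propose to prove Theorem~\ref{thm.regs} by a finite bootstrap on the differentiability exponent, in which each step is a finite-difference energy estimate adapted to the singular (subquadratic) algebra of $(-\Delta_p)^s$. As a starting point one uses the regularity furnished by Theorem~\ref{thm.1} (applied with a suitable auxiliary parameter $s_0$) together with the a priori membership $u\in W^{s,p}_{\mathrm{loc}}(\Omega)$; the extra $t$ derivatives carried by $f$ are then fed into the iteration, which pushes the exponent up to any value below $\alpha_2$ as in \eqref{defn.alpha2}. Throughout, for an increment $h\in\mathbb{R}^n$ we write $\tau_h w(x)=w(x+h)-w(x)$, and for $a$ (scalar, or a vector in $\mathbb{R}^N$) we set $V_p(a):=|a|^{(p-2)/2}a$; the two algebraic inputs for $p\in(1,2]$ are the monotonicity inequality $\bigl(|a|^{p-2}a-|b|^{p-2}b\bigr)\cdot(a-b)\ \geq\ c\,|V_p(a)-V_p(b)|^2$ and the continuity inequality $\bigl|\,|a|^{p-2}a-|b|^{p-2}b\,\bigr|\ \leq\ c\,|a-b|^{p-1}$.

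The core is a single finite-difference Caccioppoli estimate, which I would prove once and then iterate. Fix $B_{2r}(x_0)\Subset\Omega$ and a cutoff $\eta$ equal to $1$ on $B_r(x_0)$ and supported in $B_{3r/2}(x_0)$, and for $0<|h|\le r/8$ test the weak formulation against $\psi=\tau_{-h}\bigl(\eta^p\,\tau_h u\bigr)$ (and, once the running exponent exceeds $1$, against the analogous test function built from second differences, so as to reach $\nabla u$). Using the translation invariance of the kernel, the nonlocal term turns into
\[ \iint_{\mathbb{R}^n\times\mathbb{R}^n}\frac{\bigl(|U+U_h|^{p-2}(U+U_h)-|U|^{p-2}U\bigr)\bigl(\eta(x)^p\tau_h u(x)-\eta(y)^p\tau_h u(y)\bigr)}{|x-y|^{n+sp}}\,dx\,dy, \]
where $U(x,y)=u(x)-u(y)$ and $U_h(x,y)=\tau_h u(x)-\tau_h u(y)$. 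By the monotonicity inequality its principal part dominates $c\iint_{B_r\times B_r}\eta^p\,|V_p(U+U_h)-V_p(U)|^2\,|x-y|^{-n-sp}\,dx\,dy$; the commutator terms arising from $\eta(x)\neq\eta(y)$ are absorbed into this via the continuity inequality, the currently available $W^{\beta,p}_{\mathrm{loc}}$-regularity of $u$, Young's inequality and the difference-quotient characterization of fractional Sobolev spaces, while the far-field interactions are controlled by $\mathrm{Tail}\bigl(u-(u)_{B_{2r}(x_0)};B_{2r}(x_0)\bigr)\le cE(u;B_{2r}(x_0))$. On the right, $\int_\Omega f\,\tau_{-h}(\eta^p\tau_h u)\,dx$ is treated by transferring part of the difference onto $f$: the bound $\|\tau_{-h}f\|_{L^{p'}(B_{3r/2}(x_0))}\le c\,|h|^{t}\,\|f\|_{\widetilde{W}^{t,p'}(B_{2r}(x_0))}$ produces the gain factor $|h|^{t}$, and the residual $W^{s,p}$-seminorm of $\tau_h u$ is split off and reabsorbed. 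Balancing scales yields, for an explicit exponent $\gamma>0$ determined by the current $\beta$ and by $s_0,p,t$, the bound
\[ |h|^{-2\gamma}\iint_{B_r\times B_r}\eta^p\,\frac{|V_p(U+U_h)-V_p(U)|^2}{|x-y|^{n+sp}}\,dx\,dy\ \leq\ C\Bigl(E(u;B_{2r}(x_0))+r^{\frac{sp+t}{p-1}-\frac np}\|f\|_{\widetilde{W}^{t,p'}(B_{2r}(x_0))}^{1/(p-1)}\Bigr)^2 \]
uniformly in $0<|h|\le r/8$, modulo lower-order terms.

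It remains to convert this control of differences of the degenerate quantity $V_p(U)$ into genuine $L^p$-differentiability of $u$. Using $|V_p(a)-V_p(b)|^2\approx(|a|+|b|)^{p-2}|a-b|^2$ and Hölder's inequality with the conjugate exponents $\tfrac2p$ and $\tfrac2{2-p}$ (this is where $p<2$ enters decisively), one extracts, after bounding $[\tau_h u]_{W^{s,p}}$ by the base regularity, the estimate $[\tau_h u]_{W^{s,p}(B_r(x_0))}\le c\,|h|^{\gamma}\bigl(E(u;B_{2r}(x_0))+r^{\frac{sp+t}{p-1}-\frac np}\|f\|_{\widetilde{W}^{t,p'}(B_{2r}(x_0))}^{1/(p-1)}\bigr)$; by the difference-quotient characterization this upgrades $u$ to $W^{\beta',p}_{\mathrm{loc}}(\Omega)$ with $\beta'=\beta+\gamma$ (up to an arbitrarily small loss), carrying the scaled estimate along. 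Iterating finitely many times drives the exponent up: below the level $1$ the exponents converge to $\tfrac{t+s_0p}{p-1}$ (each step consuming a fixed proportion of the $t/(p-1)$ derivatives the right-hand side still allows), which settles the case $\tfrac{t+s_0p}{p-1}\le1$; when $\tfrac{t+s_0p}{p-1}>1$, once the exponent crosses $1$ one passes to $\nabla u$, whose differentiated equation has the same structure and for which $W^{\alpha,p}_{\mathrm{loc}}$ is read through $\nabla u\in W^{\alpha-1,p}_{\mathrm{loc}}$, and re-running the estimate for $\nabla u$ shows that the excess above $1$ saturates precisely at $\tfrac{p-1}{2}\bigl(\tfrac{t+s_0p}{p-1}-1\bigr)=\tfrac{1+t}{2}-\tfrac{(1-s_0)p}{2}$, i.e.\ at $\alpha_2$. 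Chaining the finitely many scaled estimates — with the parameter $s_0$ used precisely to keep all constants finite as $s\to1^-$ — produces \eqref{est.sec.thm9} and \eqref{est.secg.thm9}; and since every algebraic inequality, embedding and difference-quotient characterization used above is valid for $\mathbb{R}^N$-valued maps, the same proof delivers the result for fractional $p$-Laplace systems.

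The hardest part, and the genuinely new point in the subquadratic regime, is exactly this transfer from $V_p$-control to $u$-control: since $a\mapsto|a|^{p-2}a$ is only Hölder continuous when $p<2$, the energy method naturally controls only the degenerate combination $V_p(u(x)-u(y))$, and recovering honest fractional $L^p$-differentiability of $u$ from it — in particular the bare existence of $\nabla u$ in $L^p$ already when $f=0$ — forces the interpolation argument to be uniform as $h\to0$; in parallel one must make the exponent iteration close at precisely $\alpha_2$, which encodes the subquadratic degeneracy of $(-\Delta_p)^s$ near the diagonal, while keeping the nonlocal tail terms controlled with the correct power of $r$ at every stage.
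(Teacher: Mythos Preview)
Your overall strategy---test the equation with $\tau_{-h}(\eta^p\tau_h u)$, extract a $V_p$-type Caccioppoli inequality, convert it back to an $L^p$ gain on $\tau_h u$ via H\"older with exponents $(\tfrac{2}{p},\tfrac{2}{2-p})$, and bootstrap---is exactly the skeleton of the paper's proof (Lemmas~\ref{lem.diffquo}, \ref{lem.sinind}, \ref{lem.sin.reg.ind}). The conversion step and the arithmetic that makes the iterates converge to $\alpha_2$ are also correct in spirit.

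The genuine gap is your treatment of the far-field terms. After testing with $\tau_{-h}(\eta^p\tau_h u)$ you face, for $x\in\mathrm{supp}\,\eta$ and $y\notin B_{2r}(x_0)$, integrands of the form
\[
\bigl(A(u_h(x)-u_h(y))-A(u(x)-u(y))\bigr)\,\eta(x)^p\,\tau_h u(x)\,|x-y|^{-n-sp}.
\]
For $p\le 2$ the continuity bound gives $|A(a)-A(b)|\le c|a-b|^{p-1}$, so this is controlled by $c\,|\tau_h u(x)-\tau_h u(y)|^{p-1}|\tau_h u(x)|\,|x-y|^{-n-sp}$. The piece containing $|\tau_h u(y)|^{p-1}$ with $y$ outside the ball carries \emph{no} intrinsic power of $|h|$: you only know $u\in L^{p-1}_{sp}(\mathbb{R}^n)$, so $\int_{\mathbb{R}^n\setminus B_{2r}}|\tau_h u(y)|^{p-1}|x-y|^{-n-sp}\,dy$ is merely bounded by a tail of $u$, not by $|h|^{\theta}$ times a tail. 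Consequently that term is $O(|h|^{\gamma})$ (from the single local factor $|\tau_h u(x)|$), which is not enough to close the iteration---you need $O(|h|^{2\sigma})$ with $2\sigma>\gamma$. Your sentence ``the far-field interactions are controlled by $\mathrm{Tail}(u-(u)_{B_{2r}(x_0)};B_{2r}(x_0))$'' hides precisely this shortfall.

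The paper handles this not by estimating such tails directly but by a \emph{localization} step (Lemma~\ref{el : lem.loc} and Corollary~\ref{el : lem.loc.sin}): one replaces $u$ by $w=u\xi\in W^{\gamma,p}(\mathbb{R}^n)$, which solves $(-\Delta_p)^s w=f+g$ on a smaller ball with an explicit $g\in W^{\gamma(p-1),p'}$ controlled by $\|u\|_{W^{\gamma,p}}$ and $\mathrm{Tail}(u)$. All difference-quotient estimates (Lemmas~\ref{lem.diffquo}, \ref{lem.sinind}) are then run for $w$, which has \emph{global} $W^{\gamma,p}$ regularity, so no uncontrolled long-range differences appear; the extra $g$ enters only through $\|\delta_h g\|_{L^{p'}}\le c|h|^{t_0}[g]_{W^{t_0,p'}}$. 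This is the step your proposal is missing. A secondary imprecision: the passage above level $1$ is not done by writing an equation for $\nabla u$; rather, one keeps estimating $\|\delta_h^2(w\psi)\|_{L^p}/|h|^{\sigma_0}$ and then invokes the second-difference embedding (Lemma~\ref{lem.secmeb}, case $\sigma\in(1,2)$) to read off $[\nabla u]_{W^{\alpha-1,p}}$ directly.
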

\begin{remark}[Sharpness, differentiable data] \normalfont
For any $\epsilon>0$, we are able to construct a weak solution $u\notin W^{\frac{sp+t}{p-1}+\epsilon,p}_{\mathrm{loc}}$ to \eqref{pt : eq.main} with $f\in W^{t,p'}_{\mathrm{loc}}$. As in Remark \ref{rmk.sharp}, we consider the function 
	\begin{equation*}
		u(x)=|x|^{\frac{sp+t}{p-1}+\frac{\epsilon}{2}-\frac{n}p}\in \left(W^{s,p}_{\mathrm{loc}}(\bbR^n)\cap L^{p-1}_{sp}(\bbR^n)\right)\setminus  W^{\frac{sp+t}{p-1}+\epsilon,p}(B_1).
	\end{equation*}
By following the same lines as in the proof of \eqref{eq11}, we obtain
	\begin{align*}
		(-\Delta_p)^su(x)=f(x)=c|x|^{t+\frac{\epsilon(p-1)}{2}-\frac{n(p-1)}{p}}\quad\text{in }B_1	\end{align*}
	for some constant $c=c(n,s,p)$, where $f\in  W^{t,p'}_{\mathrm{loc}}$. This implies that our results given in Theorem \ref{thm.regd} and Theorem \ref{thm.regs} are sharp when $\frac{t+sp}{p-1}\leq1$. 
\end{remark}

Finally, we summarize our findings in the homogeneous subquadratic setting, since our results are new already in this case.

\begin{corollary}[Homogeneous subquadratic case]
	\label{cor.regs}
	Let $p\in(1,2]$ and $u\in W^{s,p}_{\mathrm{loc}}(\Omega)\cap L^{p-1}_{sp} (\bbR^{n})$ be a weak solution to \eqref{pt : eq.main} with $f\equiv 0$. Let us fix $s_0\in(0,s]$. Then for any $\alpha\in\left(1,\max\{1+\frac{s_0p}{2},2+(s_0-1)p\}\right)$, we have $\nabla u\in W^{\alpha,p}_{\mathrm{loc}}(\Omega)$ and
	\begin{align}\label{est.sin.grad}
		r^{1+\alpha-\frac{n}{p}} [\nabla u]_{W^{\alpha,p}(B_r(x_0))}\leq cE(u;B_{2r}(x_0))
	\end{align}
	for some constant $c=c(n,s_0,p,\alpha)$, provided that $B_{2r}(x_0)\Subset \Omega$.
\end{corollary}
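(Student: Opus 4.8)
The plan is to obtain Corollary \ref{cor.regs} directly from Theorem \ref{thm.1} and Theorem \ref{thm.regs}, specialized to $f\equiv 0$ and then optimized over the free parameters. Since $0\in W^{t,p'}_{\mathrm{loc}}(\Omega)$ for every $t\in(0,p-1)$ and, when $f\equiv 0$, the data-dependent terms $r^{\frac{sp}{p-1}-\frac{n}{p}}\|f\|^{\frac1{p-1}}_{L^{p'}(B_{2r}(x_0))}$ and $r^{\frac{sp+t}{p-1}-\frac{n}{p}}\|f\|^{\frac1{p-1}}_{\widetilde{W}^{t,p'}(B_{2r}(x_0))}$ both vanish, the estimates in both theorems collapse to the form $c\,E(u;B_{2r}(x_0))$ appearing in \eqref{est.sin.grad}. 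It therefore remains only to track the differentiability exponent that each theorem produces.

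The first contribution comes from Theorem \ref{thm.regs}. For $t$ close to $p-1$ one has $\frac{t+s_0p}{p-1}$ close to $1+\frac{s_0p}{p-1}>1$, so the second branch is active and the theorem yields $\nabla u\in W^{\alpha-1,p}_{\mathrm{loc}}(\Omega)$ for every $\alpha<\alpha_2(t)=1+\frac{1+t}{2}-\frac{(1-s_0)p}{2}$, with the stated estimate. As $t\uparrow p-1$ we have $\alpha_2(t)\uparrow 1+\frac{s_0p}{2}$. Hence, given any target exponent strictly below $1+\frac{s_0p}{2}$, I would first fix it, then pick $t<p-1$ large enough that $\alpha_2(t)$ exceeds it, and apply Theorem \ref{thm.regs}; the resulting constant depends on this $t$, but $t$ is itself determined by $n,s_0,p$ and the target exponent, so the dependence collapses to $c=c(n,s_0,p,\alpha)$. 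This gives the full range up to $1+\frac{s_0p}{2}$. The second contribution comes from Theorem \ref{thm.1} applied with $f\equiv 0\in L^{p'}_{\mathrm{loc}}(\Omega)$: when $s_0p'>1$ the relevant exponent is $\alpha_0=2+(s_0-1)p$, giving $\nabla u\in W^{\alpha-1,p}_{\mathrm{loc}}(\Omega)$ for every $\alpha<2+(s_0-1)p$ with the same reduced estimate. (If $s_0p'\le 1$ this branch no longer produces gradient differentiability, but then $2+(s_0-1)p\le 1+\frac{s_0p}{2}$, so nothing is lost.) Combining, $\nabla u$ has the asserted regularity for every exponent below $\max\{1+\frac{s_0p}{2},\,2+(s_0-1)p\}$, the estimate in each case being inherited from the $f\equiv 0$ specialization of Theorem \ref{thm.regs} or Theorem \ref{thm.1}.

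There is no real obstacle here: the analytic work is entirely contained in the two theorems, and the corollary is essentially a bookkeeping statement. The two points needing a little care are (i) identifying, for each pair $(s_0,p)$, which branch of each theorem is active and checking that the two candidate exponents combine as claimed, and (ii) passing to the supremum over $t$ in Theorem \ref{thm.regs}, which is why the endpoint $1+\frac{s_0p}{2}$ is excluded and why the statement carries a strict upper bound on $\alpha$.
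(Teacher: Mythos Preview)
Your proposal is correct and follows essentially the same approach as the paper's proof: apply Theorem \ref{thm.1} and Theorem \ref{thm.regs} with $f\equiv 0$ and combine the resulting exponents. Your treatment is in fact slightly more explicit than the paper's, which simply invokes Theorem \ref{thm.regs} when $\max\{1+\frac{s_0p}{2},2+(s_0-1)p\}=1+\frac{s_0p}{2}$ without spelling out that one must choose $t<p-1$ depending on the target $\alpha$; you make this step, and the resulting absorption of $t$ into the constant $c(n,s_0,p,\alpha)$, fully transparent.
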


\begin{remark}[Existence of weak gradient] \normalfont
	Corollary \ref{cor.regs} implies that if $u$ is a solution of the fractional $p$-Laplace equation $(-\Delta_p)^su=0$ in $\Omega$ with $p\leq2$, then the weak gradient of the solution always exists and belongs to $L^p_{\loc}(\Omega)$, which in contrast to the standard $p$-Laplace equation is a priori not known in the nonlocal case. 
\end{remark}

\begin{remark}[Higher differentiability for fractional $p$-Laplace systems] \label{systems} \normalfont
	Finally, we remark that all of our arguments apply essentially verbatim in the vectorial context of the inhomogeneous fractional $p$-Laplace system. In this vectorial setting local $L^\infty$-estimates have recently been shown in \cite{BehDieNowSch}.
\end{remark}

\subsection{Previous results}
Studying the regularity of weak solutions to nonlocal equations of fractional $p$-Laplacian-type has been a very active area of research in recent years.
The first regularity results in this direction were concerned with local boundedness and H\"older regularity for small exponents in the spirit of De Giorgi-Nash-Moser theory, see e.g.\ \cite{BP,DKP,IanMosSqu16,CozziJFA,DFPJDE,CKWCalcVar,KPT,LiaoPar}, where the whole range $p \in (1,\infty)$ is covered. Moreover, fine zero-order regularity estimates are obtained in e.g.\ \cite{KuuMinSir15,KMSsurvey,KLL,NOS24}.

Concerning higher differentiability in the superquadratic case when $p\geq 2$, in the fundamental paper \cite{BraLin17} Brasco and Lindgren prove the conclusion of our Theorem \ref{thm.regd} in the special case when $t=s$ assuming an additional amount of differentiability of the solution at infinity. Thus, for $t>s$ in Theorem \ref{thm.regd} we improve the differentiability gain obtained in \cite{BraLin17} and additionally remove the mentioned differentiability assumption at infinity, thereby proving higher differentiability under optimal assumptions on the long-range behavior of the solution. Again in the superquadratic case, while we were preparing the present paper, in the interesting recent work \cite{DuzLiao} the authors prove in particular the conclusion of our Theorem \ref{thm.regd} in the homogeneous case when $f=0$ by a slightly different approach. 

Concerning non-differentiable data, a large amount of regularity results are known in the linear case when $p=2$ in the additional presence of coefficients. Indeed, in \cite{KMS1} a slight differentiability gain was obtained for such linear nonlocal equations with bounded measurable coefficients. Moreover, in \cite{CozziSob} the same regularity as in our Theorem \ref{thm.1} for $p=2$ is proved in the additional presence of suitably regular coefficients. Below the gradient level, the assumptions on the coefficients were subsequently weakened in e.g.\ \cite{MSY,FMSYPDEA,MeV,MeI}. Again in the case $p=2$, in \cite{KuuSimYan22} sharp higher differentiability in the presence of measure data was obtained for linear nonlocal equations with H\"older coefficients. Moreover, a similar higher differentiability result for nonlinear nonlocal measure data problems with linear growth was recently obtained by the authors in \cite{DieKimLeeNow24}. In the case when $p \geq 2$, a slight improvement of differentiability in the spirit of \cite{KuuMinSir15} was observed under non-differentiable data in e.g.\ \cite{SchikorraMA,MengScott,BKK23}. In addition, below the gradient level, the conclusion of our Theorem \ref{thm.1} in the superquadratic case was proved by different methods by two of the authors in \cite[Corollary 1.10]{DNCZ}. On the other hand, for $p \neq 2$ our Theorem \ref{thm.1} to the best of our knowledge yields the first higher differentiability results above the gradient level in the presence of non-differentiable data. Furthermore, in the subquadratic case when $p<2$, all of our higher differentiability results seem to be completely new.

In addition, higher H\"older regularity and Calder\'on-Zygmund estimates below the gradient level for equations of fractional $p$-Laplacian-type were for instance studied in \cite{BLS,DNCZ,Kyeongbae1,DuzLiao} for $p \geq 2$ and in \cite{GarLin23} for $p<2$. In contrast, proving H\"older regularity of the gradient of solutions to \eqref{pt : eq.main}  in the spirit of the classical results \cite{NU68,Uhl77} remains an intriguing open question. Nevertheless, we note that in \cite{DieKimLeeNow24} we recently established gradient H\"older regularity as well as gradient potential estimates for nonlinear nonlocal equations with linear growth.

\subsection{Technical approach and novelties}

In the proofs of our main results, as in the previous papers \cite{BraLin17} and \cite{DuzLiao} that deal with the superquadratic case and regular data, we rely on fractional differentiation of the equation in terms of difference quotients in both the superquadratic as well as in the subquadratic case. The main technical reason why the authors in \cite{DuzLiao} are able to improve the higher differentiability result obtained in \cite{BraLin17} for $f=0$ is a novel tail estimate for finite differences, see \cite[Lemma 3.1]{DuzLiao}. On the other hand, in Theorem \ref{thm.regd} we achieve a similar improvement of the result obtained in \cite{BraLin17} by a related, but somewhat different approach. In fact, instead of estimating the tail terms containing finite differences directly, in Section \ref{sec:loc} we develop and apply certain localization arguments that enable us to essentially treat the nonlocal part of the fractional $p$-Laplacian as a right-hand side. One advantage of our approach is that while in the tail estimate \cite[Lemma 3.1]{DuzLiao} solutions are assumed to be locally bounded, our localization approach does not require any additional regularity assumption. In particular, since in the presence of a right-hand side as in Theorem \ref{thm.regd} solutions might in general not be locally bounded, our approach is suitable for obtaining higher differentiability results in the presence of general data, while in \cite{DuzLiao} only the homogeneous case when $f=0$ is treated.

In addition, a similar philosophy of localizing the solution was already successfully implemented in order to obtain optimal regularity results e.g.\ in the context of nonlocal equations in nondivergence form (see e.g.\ \cite{CSCPAM,FRRO}), for parabolic nonlocal equations (see e.g.\ \cite{KW24}) and recently by the authors also for nonlinear nonlocal equations with linear growth (see \cite{DieKimLeeNow24}). For this reason, we believe that the localization arguments for the fractional $p$-Laplacian developed in the present paper are of independent interest and might also be useful beyond obtaining higher differentiability results.

\subsection{Outline}
In Section \ref{sec:prelim}, we gather some preliminary definitions and technical results mostly related to useful embeddings and properties of difference quotients.
In Section \ref{sec:loc}, we then develop and prove our localization arguments mentioned in the previous section.
Section \ref{sec:nddata} is devoted to proving higher differentiability in the presence of non-differentiable data, that is, to the proof of Theorem \ref{thm.1}.
Finally, in Section \ref{sec:ddata} we conclude the paper by proving our remaining main results that are concerned with differentiable data.

\section{Preliminaries} \label{sec:prelim}
We denote general constants $c\geq1$ which may change each line. Brackets are used to clarify dependencies on constants, so that e.g. $c=c(n,s,p)$ means that $c$ depends only on $n,s$ and $p$. First of all, for any fixed $p \in (1,\infty)$ we define the function $\varphi:\mathbb{R} \to \mathbb{R}$ and its derivative by
\begin{equation}
	\label{pt : assmp.phi}
	\phi(t):= \tfrac{1}{p} |t|^p, \quad \phi'(t)= |t|^{p-2}t.
\end{equation}
We then denote
\begin{equation}\label{defn.avftn}
    A(X)\coloneqq |X|^{p-2} X=\phi'(|X|)\frac{X}{|X|},\quad
    V(X)\coloneqq |X|^\frac{p-2}{2}X=\sqrt{\phi'(|X|)|X|}\frac{X}{|X|}
\end{equation}
for any $X\in \bbR$. Note that while of course, in the present scalar setting we have $A(X)=\phi'(X)$, we still use the notation introduced in \eqref{defn.avftn} since it is required to handle the vectorial case mentioned in Remark \ref{systems}. 

We mention some useful elementary inequalities (see e.g.\ \cite{DieEtt08,DFTW}).
\begin{lemma}
\label{el : lem.eleineq}
For any $X,Y\in\bbR$, the following inequalities hold.
\vspace{1mm}
\begin{enumerate}
    \item $|V(X)-V(Y)|^{2}\eqsim (A(X)-A(Y))\cdot (X-Y)$.
	\vspace{1mm}
    \item $ |V(X)-V(Y)|^{2}\eqsim \phi''(|X|+|Y|)|X-Y|^{2}$.
    \vspace{1mm}
    \item $|{A}(X)-{A}(Y)|\eqsim {\phi''}(|X|+|Y|)|X-Y|$.
    \vspace{1mm}
    \item For $p\geq2$, $({A}(X)-{A}(Y))\cdot(X-Y)\geq\frac{1}{c}|X-Y|^{p}$.
\end{enumerate}
All of the (implicit) constants depend only on $p$.
\end{lemma}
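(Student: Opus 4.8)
The plan is to reduce all four items to two elementary equivalences and then obtain the rest by algebra. First I would record that $\phi''(t)=(p-1)|t|^{p-2}$, so $\phi''(|X|+|Y|)\eqsim(|X|+|Y|)^{p-2}$ with constant depending only on $p$; hence items (1)--(3) follow once we show
\begin{equation}\label{prop:core}
	(A(X)-A(Y))\cdot(X-Y)\ \eqsim\ |V(X)-V(Y)|^{2}\ \eqsim\ (|X|+|Y|)^{p-2}|X-Y|^{2}
\end{equation}
and
\begin{equation}\label{prop:core2}
	|A(X)-A(Y)|\ \eqsim\ (|X|+|Y|)^{p-2}|X-Y|,
\end{equation}
while (4) will then follow from \eqref{prop:core}. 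Note that every quantity in \eqref{prop:core} is $p$-homogeneous in $(X,Y)$ and both sides of \eqref{prop:core2} are $(p-1)$-homogeneous, so we may normalize freely.

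To prove \eqref{prop:core} and \eqref{prop:core2} I would use that in $\bbR$ the numbers $X,Y$ are either of opposite sign (allowing one of them to vanish) or of equal sign, and treat these two cases separately. In the opposite-sign case $|X-Y|=|X|+|Y|$, while $A(X)-A(Y)=\pm(|X|^{p-1}+|Y|^{p-1})$ and $V(X)-V(Y)=\pm(|X|^{p/2}+|Y|^{p/2})$; using $2|X|^{p/2}|Y|^{p/2}\le|X|^{p}+|Y|^{p}$, Young's inequality $|X|^{p-1}|Y|+|X||Y|^{p-1}\le|X|^{p}+|Y|^{p}$, and $(|X|+|Y|)^{q}\eqsim|X|^{q}+|Y|^{q}$ for $q>0$, one checks directly that each of the three terms in \eqref{prop:core} is $\eqsim|X|^{p}+|Y|^{p}=(|X|+|Y|)^{p-2}|X-Y|^{2}$ and that both sides of \eqref{prop:core2} are $\eqsim(|X|+|Y|)^{p-1}$. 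In the equal-sign case we may assume $X,Y\ge0$ and set $a:=\max\{X,Y\}\ge b:=\min\{X,Y\}\ge0$; since the estimates are trivial when $a=b$, dividing each of them by the appropriate power of $a-b$ and using $(a+b)^{p-2}\eqsim a^{p-2}$ reduces everything to the single elementary claim
\begin{equation}\label{prop:quot}
	\frac{a^{q}-b^{q}}{a-b}\ \eqsim\ a^{q-1}\qquad\text{for }0\le b<a,\ q>0,
\end{equation}
with constant depending only on $q$, applied with $q=p-1$ (for the $A$-terms) and with $q=p/2$ (for the $V$-term, after squaring). By homogeneity (normalizing $a=1$), \eqref{prop:quot} is the statement that $g_{q}(b):=\frac{1-b^{q}}{1-b}$ extends continuously to $b=1$ with $g_{q}(1)=q>0$ and stays strictly positive on $[0,1]$, hence takes values in a compact subset of $(0,\infty)$. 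This settles \eqref{prop:core} and \eqref{prop:core2}, and thus items (1)--(3).

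For (4), let $p\ge2$. Then $p-2\ge0$ and $|X-Y|\le|X|+|Y|$ give $(|X|+|Y|)^{p-2}\ge|X-Y|^{p-2}$, so \eqref{prop:core} yields $(A(X)-A(Y))\cdot(X-Y)\ge\tfrac1c(|X|+|Y|)^{p-2}|X-Y|^{2}\ge\tfrac1c|X-Y|^{p}$. (Alternatively, the lower bound in (3) can be read off from \eqref{prop:core} and the Cauchy--Schwarz inequality $(A(X)-A(Y))\cdot(X-Y)\le|A(X)-A(Y)|\,|X-Y|$ after cancelling $|X-Y|$.) I expect the only point requiring any care to be the quotient estimate \eqref{prop:quot}, but it is dispatched cleanly by homogeneity and compactness, so I do not anticipate a real obstacle; these are precisely the classical structural inequalities for the $p$-Laplacian nonlinearity collected in \cite{DieEtt08,DFTW}. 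Finally, I would note that the same scheme carries over to the vectorial setting of Remark \ref{systems} once the sign dichotomy is replaced by the dichotomy ``$X$ and $Y$ enclose an acute versus an obtuse angle'', the acute case being handled by integrating $DA$ (respectively $DV$) along the segment $[Y,X]$ together with the elementary estimate $\int_{0}^{1}|Y+t(X-Y)|^{p-2}\,dt\eqsim(|X|+|Y|)^{p-2}$, which is finite precisely because $p-2>-1$.
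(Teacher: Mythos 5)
Your proof is correct. The paper itself gives no argument for this lemma and simply refers the reader to \cite{DieEtt08,DFTW}, so there is no paper proof to compare against; what you have done is supply the standard self-contained scalar argument behind those references. Your reduction of items (1)--(3) to the two chains
\begin{equation*}
(A(X)-A(Y))(X-Y)\ \eqsim\ |V(X)-V(Y)|^{2}\ \eqsim\ (|X|+|Y|)^{p-2}|X-Y|^{2},
\qquad
|A(X)-A(Y)|\ \eqsim\ (|X|+|Y|)^{p-2}|X-Y|,
\end{equation*}
the sign-dichotomy, the homogeneity normalization, and the quotient lemma $(a^{q}-b^{q})/(a-b)\eqsim a^{q-1}$ (via continuity on $[0,1]$ and the endpoint value $q$) are all sound, and your deduction of (4) from $(|X|+|Y|)^{p-2}\ge|X-Y|^{p-2}$ when $p\ge2$ is exactly right. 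Two small remarks: the degenerate case $X=Y$ (where $(|X|+|Y|)^{p-2}|X-Y|^{2}$ is formally $\infty\cdot 0$ for $p<2$) is handled by your normalization, but it is worth stating explicitly that both sides vanish there; and your closing remark on the vectorial case is only a sketch --- the obtuse-angle case needs the additional observation that $X\cdot Y\le 0$ forces $|X-Y|\eqsim|X|+|Y|$, after which the algebra goes through, while the acute case indeed reduces to $\int_{0}^{1}|Y+t(X-Y)|^{p-2}\,dt\eqsim(|X|+|Y|)^{p-2}$, valid for all $p>1$. Neither point affects correctness of the scalar statement you were asked to prove.
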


As in \cite{DFTW} we define a shifted $N$-function
\begin{align*}
    \phi_{a}'(t)\coloneqq \frac{\phi'(a\vee t)}{a\vee t}t\quad\text{ for any }a,t>0,
\end{align*}
where $a\vee b=\max\{a,b\}$. Then
\begin{align}\label{equi.shif}
    \phi'_{|X|}(|X-Y|)\eqsim |A(X)-A(Y)|
\end{align}
and
\begin{align*}
    \phi'_{a}(t) t\eqsim \phi_{a}(t)
\end{align*}
for any $X,Y\in\bbR$ and $a,t>0$, where the implicit constants depend on $p$.
Hence it follows that
\begin{equation}\label{ineq1.shif}
    |\phi_a(t)|\leq c |a\vee t|^{p-2} |t|^2\leq c|a|^{p-2}|t|^2+c|t|^p\quad\text{if }p\geq2
\end{equation}
and
\begin{equation}\label{ineq2.shif}
    |\phi_a(t)| \leq c |t|^p\quad\text{if }p\leq2
\end{equation}
for some constant $c=c(p)$.

Given a measurable function $g: \bbR^n\to \bbR$ and $h\in\bbR^n$, we denote
\begin{equation*}
    g_h(x) \coloneqq g(x+h),\,\quad \delta_h g(x)\coloneqq g_h(x)-g(x)\quad\text{and}\quad \delta_h^2g\coloneqq \delta_h(\delta_h g).
\end{equation*}
In addition, for a measurable function $G:\bbR^n\times\bbR^n\to\bbR$, we similarly write
\begin{align*}
   G_h(x,y)\coloneqq G(x+h,y+h)\quad\text{and}\quad \delta_h G(x,y)\coloneqq G_h(x,y)-G(x,y).
\end{align*}

\subsection{Embedding results}
Let us first recall the fractional Sobolev spaces. For $\Omega \subset\setR^n$, $s\in(0,1)$ and $p\in[1,\infty)$,  $W^{s,p}(\Omega)$ consists of all functions $g:\Omega\rightarrow\setR^n$ satisfying
\begin{align*}
\|g\|_{W^{s,p}(\Omega)} &\coloneqq \|g\|_{L^p(\Omega)}+[g]_{W^{s,p}(\Omega)}\\
&:=\left(\int_{\Omega}|g|^p\,dx\right)^{\frac{1}{p}}+\left(\int_{\Omega}\int_{\Omega}\dfrac{|g(x)-g(y)|^p}{|x-y|^{n+sp}}\,dxdy\right)^{\frac{1}{p}}<\infty.
\end{align*}
We also define the corresponding local fractional Sobolev spaces as 
\begin{align*}
W^{s,p}_{\loc}(\Omega):=\{g\in L^p_{\loc}(\Omega): g\in W^{s,p}(K)\,\,\text{for any compact set } K\subset \Omega\}.
\end{align*}
Then from \cite[Lemma 4.5]{CozziJFA}, the following embedding result holds between two fractional Sobolev spaces: For $0<s<s'<1$ and $p\in[1,\infty)$,
\begin{align}\label{eq:embed}
[g]_{W^{s,p}(B_r)}\leq r^{(s'-s)p}[g]_{W^{s',p}(B_r)}
\end{align}
with any $g\in W^{s',p}(B_r)$ for some $c=c(n,s,p)$. See \cite{DinPalVal12} for more details on fractional Sobolev spaces.
Next, we mention a robust fractional Poincar\'e inequality.
\begin{lemma}\label{lem.poist}(See \cite[Corollary 2.1]{Pon04}) Let $g\in W^{s,p}(B_R(x_0))$ and fix $s_0\in(0,s]$. Then there is a constant $c=c(n,s_0,p)$ such that 
\begin{equation*}
    \dashint_{B_R(x_0)}|g-(g)_{B_R(x_0)}|^{p}\,dx\leq c(1-s)R^{sp}\dashint_{B_R(x_0)}\int_{B_R(x_0)}\frac{|g(x)-g(y)|^{p}}{|x-y|^{n+sp}}\,dx\,dy.
\end{equation*}
\end{lemma}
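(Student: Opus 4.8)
The plan is to first reduce to a normalized configuration and then isolate the genuinely nontrivial gain of the factor $(1-s)$. By a translation we may take $x_0=0$, and since replacing $g$ by $g-(g)_{B_R}$ alters neither side we may assume $(g)_{B_R}=0$. A direct change of variables $z\mapsto g(Rz)$ then shows that the asserted inequality on $B_R$ is equivalent to the same inequality on the unit ball with $R=1$: the power $R^{sp}$ on the right-hand side is exactly the one needed to absorb the scaling of the Gagliardo seminorm. So it suffices to prove
\begin{equation*}
	\dashint_{B_1}|g|^{p}\,dx\leq c(n,s_0,p)\,(1-s)\dashint_{B_1}\int_{B_1}\frac{|g(x)-g(y)|^{p}}{|x-y|^{n+sp}}\,dx\,dy
\end{equation*}
for $g\in W^{s,p}(B_1)$ with $(g)_{B_1}=0$. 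Here the inequality \emph{without} the factor $(1-s)$ is trivial, since Jensen's inequality gives $\dashint_{B_1}|g|^{p}\,dx\leq\dashint_{B_1}\dashint_{B_1}|g(x)-g(y)|^{p}\,dx\,dy$ and $|x-y|\leq2$ on $B_1\times B_1$; the whole content is the extra factor $(1-s)$, which is what renders the estimate consistent with the classical Poincar\'e inequality as $s\to1$, in line with the Bourgain--Brezis--Mironescu asymptotics of the Gagliardo seminorm.

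To produce the factor $(1-s)$ I would invoke Ponce's Poincar\'e-type inequality for general mollifiers: there is $c=c(n,p)$ such that, for every radially symmetric $\rho\geq0$ with $\int_{\bbR^{n}}\rho\,dz=1$ and every $g\in L^{p}(B_1)$,
\begin{equation*}
	\int_{B_1}|g-(g)_{B_1}|^{p}\,dx\leq c\int_{B_1}\int_{B_1}\frac{|g(x)-g(y)|^{p}}{|x-y|^{p}}\,\rho(x-y)\,dx\,dy,
\end{equation*}
with $c$ independent of $\rho$. One then specializes to $\rho=\rho_{s}$, where $\rho_{s}(z)\coloneqq\lambda_{s}|z|^{(1-s)p-n}\mathbf{1}_{B_{2}}(z)$ and $\lambda_{s}>0$ is the normalization making $\int_{\bbR^{n}}\rho_{s}\,dz=1$; computing in polar coordinates yields $\lambda_{s}=\frac{(1-s)p}{n|B_1|\,2^{(1-s)p}}$, so $\lambda_{s}\eqsim(1-s)$ uniformly for $s\in[s_0,1)$ with constants depending only on $n,p,s_0$. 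Since $|x-y|<2$ for $x,y\in B_1$, we have $\rho_{s}(x-y)/|x-y|^{p}=\lambda_{s}|x-y|^{-n-sp}$, and after dividing by $|B_1|$ the displayed inequality is precisely the one we need.

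The crux --- and the main obstacle --- is thus Ponce's general-mollifier inequality with a constant \emph{uniform in $\rho$} (hence, after the specialization, uniform in $s\in[s_0,1)$). I would establish it by contradiction and compactness: were it false, there would exist $g_{k}$ with $(g_{k})_{B_1}=0$, $\|g_{k}\|_{L^{p}(B_1)}=1$, and radial mollifiers $\rho_{k}$ of unit mass such that $\int_{B_1}\int_{B_1}|g_{k}(x)-g_{k}(y)|^{p}|x-y|^{-p}\rho_{k}(x-y)\,dx\,dy\to0$; the Bourgain--Brezis--Mironescu compactness theorem (a sequence whose nonlocal energies against unit-mass radial mollifiers are uniformly bounded is precompact in $L^{p}$) then yields, along a subsequence, $g_{k}\to g$ in $L^{p}(B_1)$ with $\|g\|_{L^{p}(B_1)}=1$ and $(g)_{B_1}=0$; finally, passing to the limit in the energies forces $g\in W^{1,p}(B_1)$ with $\nabla g=0$, so $g$ is constant and hence $g\equiv0$, contradicting $\|g\|_{L^{p}(B_1)}=1$. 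Of course one may instead simply cite \cite[Corollary~2.1]{Pon04}, as the statement does. The subtle point in the self-contained route is that neither the compactness step nor the identification $\nabla g=0$ may introduce any constant depending on $\rho_{k}$, which is exactly why one first extracts the dimensionless constant $c(n,p)$ and only then inserts the $s$-dependent mollifier $\rho_{s}$.
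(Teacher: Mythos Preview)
The paper does not supply a proof of this lemma; it is stated with a direct citation to \cite[Corollary~2.1]{Pon04} and nothing more. Your proposal is therefore not to be compared against a proof in the paper but is rather a sketch of Ponce's own argument: scale to $B_1$, invoke the mollifier Poincar\'e inequality with constant independent of $\rho$, then specialize to $\rho_{s}(z)=\lambda_{s}|z|^{(1-s)p-n}\mathbf{1}_{B_{2}}(z)$. The scaling and normalization computations are correct. Note incidentally that since $1\le 2^{(1-s)p}\le 2^{p}$ one has $\lambda_{s}\le c(n,p)(1-s)$ with no dependence on $s_{0}$, so in this route the dependence $c=c(n,s_0,p)$ stated in the lemma is merely conservative.

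One caveat in your contradiction outline: the BBM compactness theorem as usually stated assumes the mollifier sequence concentrates at the origin, whereas in your contradiction the $\rho_{k}$ are arbitrary unit-mass radial mollifiers. Ponce's proof deals with this by a dichotomy: either (a subsequence of) the $\rho_{k}$ concentrates and BBM applies, or some mass survives away from the origin, in which case the vanishing of the limiting energy $\int_{B_1}\int_{B_1}|g(x)-g(y)|^{p}|x-y|^{-p}\rho(x-y)\,dx\,dy$ forces $g$ to be constant directly. Your sketch glosses over this split, but the cited reference covers it, so the proposal is sound.
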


The next three lemmas yield several relations between fractional Sobolev spaces and Besov-type spaces which are described by finite differences. In what follows, each cut-off function is always assumed to be nonnegative.
\begin{lemma}\label{lem.firemb}
    Let $p\in[1,\infty)$ and $\sigma\in(0,1)$. If $g\in L^{p}(B_{R+h_0}(x_0))$ satisfies
    \begin{equation*}
        \sup_{0<|h|<h_0}\left\|\frac{\delta_hg}{|h|^{\sigma}}\right\|_{L^{p}(B_{R}(x_0))}< \infty
    \end{equation*}
    for some constant $h_0\in(0,R/4)$, then for every $\alpha\in(0,\sigma)$, $g\in W^{\alpha,p}({B_{R/2}(x_0)})$ holds. Moreover, we have
    \begin{align*}
        [g]^p_{W^{\alpha,p}({B_{R/2}(x_0)})}&\leq \frac{ch_0^{p(\sigma-\alpha)}}{(\sigma-\alpha)^p}\sup_{0<|h|<h_0}\left\|\frac{\delta_h g}{|h|^\sigma}\right\|^p_{L^p(B_R(x_0))}\\
        &\quad+c\left(\frac{h_0^{p(1-\alpha)}}{R^p(\sigma-\alpha)^p}+ \frac{h_0^{-\alpha p}}{\alpha}\right)\|g-k\|^p_{L^{p}(B_R(x_0))}
    \end{align*}
    for any $k\in\bbR$ with some constant $c=c(n,p)$.
\end{lemma}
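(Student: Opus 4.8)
The plan is to express the Gagliardo seminorm $[g]_{W^{\alpha,p}(B_{R/2}(x_0))}^p$ as a double integral over $B_{R/2}(x_0)\times B_{R/2}(x_0)$ and to split this integral according to whether the pair $(x,y)$ is "close" (meaning $|x-y|<h_0$) or "far" ($|x-y|\geq h_0$). For the near-diagonal part I would pass to the difference variable $h = y-x$ and estimate, using that $y\in B_{R/2}(x_0)$ implies $x+h \in B_{R}(x_0)$ for $|h|<h_0 < R/4$; after writing $|g(x)-g(x+h)|^p = |\delta_h g(x)|^p$ the factor $|x-y|^{-n-\alpha p}$ becomes $|h|^{-n-\alpha p}$, and integrating first in $x$ over $B_R(x_0)$ gives a bound by $\sup_{0<|h|<h_0}\|\delta_h g/|h|^\sigma\|_{L^p(B_R(x_0))}^p \cdot |h|^{(\sigma-\alpha)p - n}$. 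Integrating this in $h$ over the ball $\{|h|<h_0\}$ in $\mathbb{R}^n$ converges precisely because $\alpha<\sigma$, and produces the factor $\tfrac{h_0^{p(\sigma-\alpha)}}{\sigma-\alpha}$; raising to the right power and tracking the constant from the spherical integration yields the $\tfrac{c h_0^{p(\sigma-\alpha)}}{(\sigma-\alpha)^p}$ term (the extra power on $\sigma-\alpha$ in the denominator is harmless and can be absorbed, or one keeps it as stated).

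For the far part, where $|x-y|\geq h_0$, I would simply bound $|g(x)-g(y)|^p \leq 2^{p-1}(|g(x)-k|^p + |g(y)-k|^p)$ for the arbitrary constant $k\in\mathbb{R}$ — this is where the freedom to subtract any constant enters, and it is essential since otherwise the tail integral need not converge. One then has
\begin{align*}
\iint_{\substack{x,y\in B_{R/2}(x_0)\\ |x-y|\geq h_0}} \frac{|g(x)-g(y)|^p}{|x-y|^{n+\alpha p}}\,dx\,dy
\leq c \int_{B_{R/2}(x_0)} |g(x)-k|^p \left(\int_{|z|\geq h_0} \frac{dz}{|z|^{n+\alpha p}}\right) dx,
\end{align*}
and the inner integral equals $c h_0^{-\alpha p}/\alpha$ by direct computation in polar coordinates. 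This already gives the $\tfrac{h_0^{-\alpha p}}{\alpha}\|g-k\|_{L^p(B_R(x_0))}^p$ contribution. The remaining term $\tfrac{h_0^{p(1-\alpha)}}{R^p(\sigma-\alpha)^p}$ in the statement is a bit curious; I would expect it to arise from an alternative, slightly more refined handling of an intermediate annulus $h_0\leq |x-y|$ where one interpolates between the near and far estimates, or from the way the near-diagonal integral is extended — in any case it is dominated by $\tfrac{h_0^{-\alpha p}}{\alpha}$ when $h_0 \lesssim R$, so it is not the essential point and can be produced by splitting the $|h|<h_0$ integral differently or by a crude bound.

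The main obstacle — though it is more of a bookkeeping matter than a genuine difficulty — is getting the dependence of the constants on $\sigma-\alpha$ and $\alpha$ exactly right, since these are the quantities that degenerate as $\alpha \uparrow \sigma$ or $\alpha \downarrow 0$ and they must be displayed explicitly for the later bootstrap arguments (where one iterates with $\alpha$ approaching a critical exponent). The only real analytic content is the convergence of $\int_{|h|<h_0} |h|^{(\sigma-\alpha)p-n}\,dh$, which holds iff $(\sigma-\alpha)p > 0$, i.e. $\alpha<\sigma$; this is the reason for the hypothesis and it is automatic. No deep tool is needed beyond the change of variables $y \mapsto y-x$, polar coordinates, and the elementary inequality $(a+b)^p \leq 2^{p-1}(a^p+b^p)$; the statement follows by combining the near and far estimates and noting $B_{R/2}(x_0)\subset B_R(x_0)$ so every integral on the left is controlled by the corresponding one on $B_R(x_0)$.
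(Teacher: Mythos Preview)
Your proof is correct and in fact slightly sharper than the paper's. The two arguments differ in structure: the paper multiplies $g$ by a cutoff $\psi\in C_c^\infty(B_{3R/4}(x_0))$ with $\psi\equiv 1$ on $B_{R/2}(x_0)$ and $|\nabla\psi|\le 8/R$, estimates $\sup_{0<|h|<h_0}\|\delta_h(g\psi)/|h|^\sigma\|_{L^p(\mathbb{R}^n)}$ via the product rule (this is where the factor $h_0^{1-\sigma}/R$ enters), and then invokes \cite[Proposition~2.7]{BraLin17} to pass from the global difference-quotient bound to $[g\psi]_{W^{\alpha,p}(\mathbb{R}^n)}^p$. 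The term $\tfrac{h_0^{p(1-\alpha)}}{R^p(\sigma-\alpha)^p}$ that puzzled you is exactly the cross term $\tfrac{h_0^{p(\sigma-\alpha)}}{(\sigma-\alpha)^p}\cdot(h_0^{1-\sigma}/R)^p$ produced by this cutoff step; it is an artifact of the localization and is not intrinsic to the estimate.

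Your direct near/far splitting of the Gagliardo double integral avoids the cutoff altogether, needs no external reference, and yields the cleaner bound
\[
[g]_{W^{\alpha,p}(B_{R/2}(x_0))}^p \le \frac{c\,h_0^{p(\sigma-\alpha)}}{\sigma-\alpha}\sup_{0<|h|<h_0}\left\|\frac{\delta_h g}{|h|^\sigma}\right\|_{L^p(B_R(x_0))}^p + \frac{c\,h_0^{-\alpha p}}{\alpha}\|g-k\|_{L^p(B_R(x_0))}^p,
\]
which implies the stated inequality (the right-hand side in the lemma only adds a nonnegative term, and $\tfrac{1}{\sigma-\alpha}\le\tfrac{1}{(\sigma-\alpha)^p}$ since $\sigma-\alpha<1$). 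The paper's route has the minor advantage of packaging the passage to $W^{\alpha,p}$ as a black-box citation, but your argument is self-contained and explains the constants more transparently.
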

\begin{proof}
    Take a cut-off function $\psi\in C_{c}^{\infty}(B_{3R/4}(x_0))$ such that $\psi\equiv 1$ on $B_{R/2}(x_0)$ and $|\nabla \psi|\leq 8/R$ to observe 
    \begin{align*}
        \sup_{0<|h|<h_0}\left\|\frac{\delta_h (g\psi)}{|h|^\sigma}\right\|_{L^p(\bbR^n)}\leq c\sup_{0<|h|<h_0}\left\|\frac{\delta_h g}{|h|^\sigma}\right\|_{L^p(B_R(x_0))}+c\frac{h_0^{1-\sigma}}{R}\left\|g\right\|_{L^p(B_R(x_0))}
    \end{align*}
    for some constant $c=c(n,p)$. Therefore by \cite[Proposition 2.7]{BraLin17}, we get
    \begin{align*}
        [g\psi]^p_{W^{\alpha,p}(\bbR^n)}
        &\leq c\frac{h_0^{p(\sigma-\alpha)}}{(\sigma-\alpha)^p}\sup_{0<|h|<h_0}\left\|\frac{\delta_h(g\psi)}{|h|^{\sigma}}\right\|^p_{L^{p}(\bbR^n)}\\
        &\quad+ c\frac{h_0^{-\alpha p}}{\alpha}\|g\psi\|^p_{L^{p}(\bbR^n)}
    \end{align*}
    for some constant $c=c(n,p)$. Combining the above two inequalities yields
    \begin{align*}
        [g\psi]^p_{W^{\alpha,p}(\bbR^n)}&\leq \frac{ch_0^{p(\sigma-\alpha)}}{(\sigma-\alpha)^p}\sup_{0<|h|<h_0}\left\|\frac{\delta_h g}{|h|^\sigma}\right\|^p_{L^p(B_R(x_0))}+c\frac{h_0^{p(1-\alpha)}}{R^p(\sigma-\alpha)^p}\left\|g\right\|^p_{L^p(B_R(x_0))}\\
        &\quad+ c\frac{h_0^{-\alpha p}}{\alpha}\|g\|^p_{L^{p}(B_R(x_0))}
    \end{align*}
    for some constant $c=c(n,p)$. Since we are able to prove the above inequality with $g$ replaced by $g-k$ for any real number $k$, we obtain the desired result by the fact that $\psi\equiv 1$ on $B_{R/2}(x_0)$.
\end{proof}


\begin{lemma}\label{lem.secmeb}
Let $g\in L^{p}(\bbR^n)$ satisfy
\begin{equation}\label{ass.lem.secemb}
\sup_{0<|h|<h_0}\left\|\frac{\delta_h^2 g}{|h|^{\sigma}}\right\|_{L^{p}(\bbR^n)}\leq M_0
\end{equation}
for some constants $h_0,M_0>0$, where $\sigma\in(0,2)$. Then the following holds.

If $\sigma\in(0,1]$, there exists $c=c(n,p)$ such that for any $\alpha\in(0,\sigma)$, we have
\begin{equation}\label{ineq.seq1}
\begin{aligned}
[g]_{W^{\alpha,p}(\bbR^n)}^p&\leq \frac{ch_0^{p(\sigma-\alpha)}}{(\sigma-\alpha)^p(1-\sigma)^p}M_0^p\\
&\quad+c\left(\frac{h_0^{-\alpha p}+h_0^{p(\sigma-\alpha)}}{(\sigma-\alpha)^p(1-\sigma)^p}+\frac{h_0^{-\alpha p}}{{\alpha}}\right)\|g\|_{L^{p}(\bbR^n)}^p.
\end{aligned}
\end{equation}

If $\sigma\in(1,2)$, there exists $c=c(n,p)$ such that for any $\alpha\in(0,\sigma-1)$, we have
\begin{equation}\label{ineq.seq2}
\begin{aligned}
[\nabla g]_{W^{\alpha,p}(\bbR^n)}^p&\leq \frac{ch_0^{p(\sigma-1-\alpha)}}{(\sigma-1-\alpha)^p(2-\sigma)^p(\sigma-1)^p}\left(M_0^p+h_0^{-\sigma p}\|g\|_{L^{p}(\bbR^n)}^p\right)\\
&\quad+\frac{ch_0^{-\alpha p}}{\alpha}\left(\| g\|_{L^p(\bbR^n)}^p+\frac{1}{(\sigma-1)^p}(M_0^p+h_0^{-\sigma p}\| g\|_{L^p(\bbR^n)}^p)\right).
\end{aligned}
\end{equation} 
\end{lemma}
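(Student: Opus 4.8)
The plan is to reduce both cases to the first-difference estimate \cite[Proposition 2.7]{BraLin17} — the very tool already exploited in the proof of Lemma \ref{lem.firemb} — after converting the control on second differences in \eqref{ass.lem.secemb} into control on first differences, of $g$ itself when $\sigma\le 1$ and of $\nabla g$ when $\sigma\in(1,2)$. The conversion rests on the elementary halving identity $\delta_{2h}g=2\delta_hg+\delta_h^2g$, which upon iteration yields, for every $N\in\mathbb N$,
\[
	\delta_hg=2^{-N}\delta_{2^Nh}g-\sum_{k=0}^{N-1}2^{-(k+1)}\delta_{2^kh}^2g .
\]
Given $0<|h|<h_0$, I would take $N=N(h)$ to be the largest integer with $2^N|h|\le h_0$, so that $2^{-N}\eqsim|h|/h_0$ and every increment $2^kh$ with $0\le k\le N-1$ still lies in the admissible range of \eqref{ass.lem.secemb}; for $h_0/2\le|h|<h_0$ one simply uses $\|\delta_hg\|_{L^p(\bbR^n)}\le2\|g\|_{L^p(\bbR^n)}$.

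\emph{Case $\sigma\in(0,1)$.} Plugging $\|\delta_{2^kh}^2g\|_{L^p(\bbR^n)}\le M_0(2^k|h|)^\sigma$ and $\|\delta_{2^Nh}g\|_{L^p(\bbR^n)}\le2\|g\|_{L^p(\bbR^n)}$ into the identity and summing the geometric series, whose ratio $2^{\sigma-1}<1$ is responsible for the factor $(1-\sigma)^{-1}$, gives
\[
	\sup_{0<|h|<h_0}\frac{\|\delta_hg\|_{L^p(\bbR^n)}}{|h|^\sigma}\le\frac{c}{1-\sigma}M_0+ch_0^{-\sigma}\|g\|_{L^p(\bbR^n)} .
\]
Inserting this into \cite[Proposition 2.7]{BraLin17} (legitimate since $\alpha<\sigma$) and raising to the power $p$ produces \eqref{ineq.seq1}; for $\sigma=1$ the asserted inequality is vacuous, its right-hand side being infinite.

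\emph{Case $\sigma\in(1,2)$.} Here the halving identity enters three times. \emph{First}, the geometric ratio $2^{\sigma-1}>1$ now combines with $2^N|h|\le h_0$ to bound $|h|^\sigma\sum_{k<N}2^{k(\sigma-1)}$ by $ch_0^{\sigma-1}|h|/(\sigma-1)$, so that $\|\delta_hg\|_{L^p(\bbR^n)}\le|h|\,\Lambda$ for all $h$ (extending from $|h|<h_0$ to arbitrary $h$ by chopping $\delta_h$ into small increments), with $\Lambda\le c\big(h_0^{-1}\|g\|_{L^p(\bbR^n)}+\tfrac{h_0^{\sigma-1}}{\sigma-1}M_0\big)$; hence $g\in W^{1,p}(\bbR^n)$ and $\|\nabla g\|_{L^p(\bbR^n)}\le\Lambda$. \emph{Second} — and this is the crux — the quantitative $W^{1,p}$-estimate just obtained, which holds for any $L^p(\bbR^n)$ function fulfilling \eqref{ass.lem.secemb}, is applied to $w:=\delta_{h'}^2g$ with localization radius $|h'|$ in place of $h_0$ (valid since $|h'|<h_0$); using $\|w\|_{L^p(\bbR^n)}\le M_0|h'|^\sigma$ and the crude bound $\|\delta_{h''}^2\delta_{h'}^2g\|_{L^p(\bbR^n)}\le4\|\delta_{h''}^2g\|_{L^p(\bbR^n)}$, and commuting $\nabla$ with the finite differences (legitimate as $g\in W^{1,p}$), this gives
\[
	\|\delta_{h'}^2\nabla g\|_{L^p(\bbR^n)}\le\frac{c}{\sigma-1}M_0|h'|^{\sigma-1},\qquad 0<|h'|<h_0 .
\]
\emph{Third}, feeding this into the halving identity written for $\nabla g$ and summing the geometric series of ratio $2^{\sigma-2}<1$ (the source of the factor $(2-\sigma)^{-1}$) yields
\[
	\sup_{0<|h|<h_0}\frac{\|\delta_h\nabla g\|_{L^p(\bbR^n)}}{|h|^{\sigma-1}}\le\frac{c}{(2-\sigma)(\sigma-1)}\big(M_0+h_0^{-\sigma}\|g\|_{L^p(\bbR^n)}\big).
\]
Finally, \cite[Proposition 2.7]{BraLin17} applied to $\nabla g$ with smoothness exponent $\sigma-1$ and any $\alpha\in(0,\sigma-1)$, combined with the last two displays and $\|\nabla g\|_{L^p(\bbR^n)}\le\Lambda$, gives \eqref{ineq.seq2} once the various powers of $h_0$ and of $M_0,\|g\|_{L^p(\bbR^n)}$ are rearranged and lower-order terms absorbed.

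The routine case is $\sigma<1$. The genuine work, and the main obstacle, is the gradient regime $\sigma\in(1,2)$: one must both establish existence of the weak gradient and relay the $\sigma$-order decay of the second differences of $g$ to an $(\sigma-1)$-order decay of the first differences of $\nabla g$, all while keeping the constants uniform (depending only on $n,p$) and exhibiting the correct blow-up as $\sigma\to1^+$, as $\sigma\to2^-$, and as $\alpha\to(\sigma-1)^-$. The repeated, carefully book-kept use of the halving identity — once to produce $\nabla g$, once to relay the decay to $\delta_{h'}^2\nabla g$, once to pass from second to first differences of $\nabla g$ — is what the argument is really about; an alternative would be to quote a ready-made second-difference embedding from \cite{BraLin17} and merely re-track the explicit constants.
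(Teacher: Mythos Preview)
Your proposal is correct and follows essentially the same approach as the paper: both reduce to \cite[Proposition 2.7]{BraLin17} after converting the second-difference control into first-difference control (of $g$ when $\sigma\le 1$, of $\nabla g$ when $\sigma\in(1,2)$). The only difference is packaging: where the paper simply cites \cite[Lemma 2.3]{BraLin17} for the case $\sigma\le 1$ and \cite[Proposition 2.4, Lemma 2.2]{BraLin17} for the case $\sigma\in(1,2)$, you rederive the same bounds directly from the iterated halving identity --- as you yourself note in your closing remark.
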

\begin{proof}
    Let us first prove \eqref{ineq.seq1}. Suppose $\sigma\in(0,1]$ and fix $\alpha\in(0,\sigma)$. Then by \cite[Lemma 2.3]{BraLin17}, we get
    \begin{align*}
        \sup_{0<|h|<h_0}\left\|\frac{\delta_h g}{|h|^{\sigma}}\right\|_{L^{p}(\bbR^n)}^p\leq \frac{c}{\left(1-\sigma\right)^p}\left[M_0^p+\left(h_0^{-{\sigma p}}+1\right)\|g\|_{L^p(\bbR^n)}^p\right]
    \end{align*}
    for some constant $c=c(n,p)$. We next use \cite[Proposition 2.7]{BraLin17} to see that
    \begin{align*}
        [g]_{W^{\alpha,p}(\bbR^n)}^p&\leq c\left(\frac{h_0^{{p(\sigma-\alpha)}}}{(\sigma-\alpha)^p}\sup_{0<|h|<h_0}\left\|\frac{\delta_h g}{|h|^{\sigma}}\right\|_{L^{p}(\bbR^n)}^p+\frac{h_0^{-\alpha p}}{\alpha}\|g\|_{L^{p}(\bbR^n)}^p\right)
    \end{align*}
    for some constant $c=c(n,p)$. Combining the above two inequalities yields \eqref{ineq.seq1}. We now prove \eqref{ineq.seq2} for $\sigma\in(1,2)$. First employ \cite[Proposition 2.4]{BraLin17} along with  \cite[Lemma 2.2]{BraLin17} to see that 
    \begin{align*}
        \|\nabla g\|_{L^p(\bbR^n)}^p\leq c\| g\|_{L^p(\bbR^n)}^p+\frac{c}{(\sigma-1)^p}\left( \sup_{0<|h|<h_0}\left\|\frac{\delta_h^2g}{|h|^{\sigma}}\right\|^p_{L^{p}(\bbR^{n})}+h_0^{-\sigma p}\|g\|_{L^{p}(\bbR^n)}^p\right)
    \end{align*}
    for some constant $c=c(n,p)$. This along with \eqref{ass.lem.secemb} implies
    \begin{align*}
        \|\nabla g\|_{L^p(\bbR^n)}^p\leq c\| g\|_{L^p(\bbR^n)}^p+\frac{c}{(\sigma-1)^p}(M_0^p+h_0^{-\sigma p}\| g\|_{L^p(\bbR^n)}^p),
    \end{align*}
    where $c=c(n,p)$. We also employ \cite[Proposition 2.4]{BraLin17} along with  \cite[Lemma 2.2]{BraLin17} to find that 
    \begin{align*}
        \sup_{|h|>0}\left\|\frac{\delta_h \nabla g}{|h|^{\sigma-1}}\right\|_{L^{p}(\bbR^n)}^p&\leq \frac{c}{(2-\sigma)^{p}(\sigma-1)^p}\left( \sup_{0<|h|<h_0}\left\|\frac{\delta_h^2g}{|h|^{\sigma}}\right\|^p_{L^{p}(\bbR^{n})}+h_0^{-\sigma p}\|g\|_{L^{p}(\bbR^n)}^p\right)
    \end{align*}
    for some constant $c=c(n,p)$. By \cite[Proposition 2.7]{BraLin17}, we similarly get
    \begin{align*}
        [\nabla g]_{W^{\alpha,p}(\bbR^n)}^p&\leq c\left(\frac{h_0^{p(\sigma-1-\alpha)}}{(\sigma-1-\alpha)^p}\sup_{0<|h|<h_0}\left\|\frac{\delta_h\nabla g}{|h|^{\sigma-1}}\right\|_{L^{p}(\bbR^n)}^p+\frac{h_0^{-\alpha p}}{\alpha}\|\nabla g\|_{L^{p}(\bbR^n)}^p\right),
    \end{align*}
    where $c=c(n,p)$. Combining all the estimates along with \eqref{ass.lem.secemb} yields \eqref{ineq.seq2}.
\end{proof}

\begin{lemma}[\cite{BraLin17}, Proposition 2.6]\label{lem.embdiff} Let $p\in[1,\infty)$ and $\sigma\in(0,1)$. 
\begin{enumerate}
    \item For every $g\in W^{\sigma,p}(\bbR^n)$, 
\begin{equation}\label{lem.embdiff.a1}
    \sup_{|h|>0}\left\|\frac{\delta_h g}{|h|^{\sigma}}\right\|^{p}_{L^{p}(\bbR^n)}\leq c(1-\sigma)[g]_{W^{\sigma,p}(\bbR^n)}^p
\end{equation}
holds, where $c=c(n,p)$. In addition, for any $h_0>0$, we have 
\begin{equation}\label{lem.embdiff.a2}
    \sup_{0<|h|<h_0}\left\|\frac{\delta_h g}{|h|^{\sigma}}\right\|^{p}_{L^{p}(\bbR^n)}\leq c(1-\sigma)[g]_{W^{\sigma,p}(\bbR^n)}^p+ch_0^{-p}\|g\|_{L^p(\bbR^n)}^p
\end{equation}
for some constant $c=c(n,p)$.
\item Let $g\in W^{\sigma,p}(B_{R+h_0})$ for some $R>0$ and $h_0>0$. Then 
\begin{align*}
    \sup_{0<|h|<h_0}\left\|\frac{\delta_h g}{|h|^{\sigma}}\right\|^{p}_{L^{p}(B_R)}&\leq c(1-\sigma)[g]_{W^{\sigma,p}(B_{R+h_0})}^p\\
    &\quad+c\left[(1+Rh_0^{-1})^p(R+h_0)^{-\sigma p}+\sigma^{-1}h_0^{-\sigma p}\right]\|g\|_{L^{p}(B_{R+h_0})}^{p}
\end{align*}
holds, where $c=c(n,p)$.
\end{enumerate}

\end{lemma}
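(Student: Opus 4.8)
The statement is essentially \cite[Proposition 2.6]{BraLin17}, so one legitimate option is to simply invoke it; below I sketch a self-contained argument. The plan is to prove \eqref{lem.embdiff.a1} first, by localizing $\|\delta_h g\|_{L^p}^p/|h|^{\sigma p}$ to boundedly many dyadic annuli of the Gagliardo seminorm and then exploiting a reverse-doubling property of these annular pieces; \eqref{lem.embdiff.a2} is then immediate; and part (2) follows by a cut-off and zero-extension argument reducing it to part (1). The only genuinely delicate point is the sharp factor $(1-\sigma)$ in part (1) — a naive bound of the difference quotient by the full seminorm loses it.

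For \eqref{lem.embdiff.a1}, fix $h\neq 0$. Averaging the elementary inequality $|g(x+h)-g(x)|^p\leq 2^{p-1}\big(|g(x+h)-g(y)|^p+|g(y)-g(x)|^p\big)$ over $y$ in the ball $B_{|h|/4}(x+\tfrac12h)$, on which both $|x-y|$ and $|x+h-y|$ lie in $[|h|/4,3|h|/4]$ and whose volume is $c_n|h|^n$, and then integrating in $x$ and merging the two resulting terms by translation invariance, I would obtain
\begin{equation*}
\|\delta_h g\|_{L^p(\mathbb{R}^n)}^p\leq \frac{c}{|h|^n}\iint_{|h|/4\leq|x-y|\leq |h|}|g(x)-g(y)|^p\,dx\,dy\leq c\,|h|^{\sigma p}\sup_{j\in\mathbb{Z}} b_j,\qquad b_j:=\iint_{2^j\leq|x-y|\leq 2^{j+1}}\frac{|g(x)-g(y)|^p}{|x-y|^{n+\sigma p}}\,dx\,dy,
\end{equation*}
since the annulus $\{|h|/4\leq|x-y|\leq|h|\}$ meets only three dyadic shells. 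As $\sum_j b_j=[g]_{W^{\sigma,p}(\mathbb{R}^n)}^p$, it remains to control $\sup_j b_j$. The crucial observation is that $\delta_{2h}g=\delta_h g+(\delta_h g)_h$, so $\|\delta_{2h}g\|_{L^p}\leq 2\|\delta_h g\|_{L^p}$, and hence, substituting $h\mapsto 2h$ in the integral defining $b_{j+1}$, one gets the reverse-doubling bound $b_{j+1}\leq 2^{p(1-\sigma)}b_j$. Summing the geometric series $\sum_{m\geq 0}b_{j_0-m}\geq b_{j_0}\sum_{m\geq 0}2^{-mp(1-\sigma)}$ downward from an almost-maximizing index $j_0$ gives $\sup_j b_j\leq \big(2^{p(1-\sigma)}-1\big)[g]_{W^{\sigma,p}(\mathbb{R}^n)}^p\leq c(1-\sigma)[g]_{W^{\sigma,p}(\mathbb{R}^n)}^p$ with $c=c(n,p)$, which is \eqref{lem.embdiff.a1}; restricting the supremum to $0<|h|<h_0$ yields \eqref{lem.embdiff.a2} at once (the extra term is not even needed).

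For part (2), I would take $\psi\in C_c^\infty(B_{R+3h_0/4})$ with $0\leq\psi\leq1$, $\psi\equiv1$ on $B_{R+h_0/2}$ and $|\nabla\psi|\leq c/h_0$, and set $\tilde g:=g\psi$, extended by zero. For $x\in B_R$ and $0<|h|<h_0/2$ both $x$ and $x+h$ lie in $B_{R+h_0/2}$, so $\delta_h\tilde g(x)=\delta_h g(x)$ and $\|\delta_h g\|_{L^p(B_R)}\leq\|\delta_h\tilde g\|_{L^p(\mathbb{R}^n)}$, to which part (1) applies. Writing $\tilde g(x)-\tilde g(y)=\psi(x)(g(x)-g(y))+g(y)(\psi(x)-\psi(y))$ on $B_{R+h_0}\times B_{R+h_0}$, bounding $|\psi(x)-\psi(y)|\leq c|x-y|/h_0$ with $|x-y|\leq 2(R+h_0)$ there (using $p(1-\sigma)>0$), and estimating the exterior part via $\operatorname{dist}(\operatorname{supp}\psi,\partial B_{R+h_0})\geq h_0/4$, one arrives at
\begin{equation*}
[\tilde g]_{W^{\sigma,p}(\mathbb{R}^n)}^p\leq c\,[g]_{W^{\sigma,p}(B_{R+h_0})}^p+\frac{c(R+h_0)^{p(1-\sigma)}}{h_0^{p}(1-\sigma)}\|g\|_{L^p(B_{R+h_0})}^p+\frac{c\,h_0^{-\sigma p}}{\sigma}\|g\|_{L^p(B_{R+h_0})}^p .
\end{equation*}
Multiplying by $(1-\sigma)$ through part (1), the factor $1/(1-\sigma)$ in the second term cancels, and using $(R+h_0)^{p-\sigma p}h_0^{-p}=(1+Rh_0^{-1})^p(R+h_0)^{-\sigma p}$ and $(1-\sigma)/\sigma\leq 1/\sigma$ gives exactly the asserted estimate for $0<|h|<h_0/2$; the remaining range $h_0/2\leq|h|<h_0$ is disposed of trivially by $\|\delta_h g\|_{L^p(B_R)}^p\leq 2^p\|g\|_{L^p(B_{R+h_0})}^p$ together with $|h|^{-\sigma p}\leq c\,h_0^{-\sigma p}$, and taking the supremum over $0<|h|<h_0$ finishes part (2).

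The main obstacle is thus not any single hard estimate but getting the sharp robustness as $\sigma\to1$: it is recovered precisely by combining the annular averaging (which confines the difference quotient to one dyadic scale of the seminorm) with the reverse-doubling $b_{j+1}\leq 2^{p(1-\sigma)}b_j$; everything else is routine, and since the bump $\psi$ and the annuli depend only on $n$, all constants are easily seen to depend only on $n$ and $p$.
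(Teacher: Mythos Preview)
The paper does not prove this lemma; it is stated with the citation \cite[Proposition 2.6]{BraLin17} and used as a black box, so there is no in-paper proof to compare against. Your option of simply invoking the reference is exactly what the paper does.

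Your self-contained sketch is correct. The key idea for \eqref{lem.embdiff.a1}---confining $\|\delta_h g\|_{L^p}^p$ to a single dyadic shell $b_j$ of the Gagliardo seminorm by averaging, then using the subadditivity $\|\delta_{2h}g\|_{L^p}\le 2\|\delta_h g\|_{L^p}$ to get the reverse-doubling bound $b_{j+1}\le 2^{p(1-\sigma)}b_j$, and finally summing the geometric series downward---cleanly produces the sharp factor $(1-\sigma)$ with constants depending only on $n,p$. Your observation that \eqref{lem.embdiff.a2} is an immediate weakening of \eqref{lem.embdiff.a1} is correct; the extra $L^p$ term is indeed redundant once \eqref{lem.embdiff.a1} is known with the sharp constant. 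The cut-off reduction for part (2) is standard and your bookkeeping of the three pieces of $[\tilde g]_{W^{\sigma,p}(\mathbb{R}^n)}^p$ (interior Gagliardo term, Lipschitz bound on $\psi$ giving the $(1+Rh_0^{-1})^p(R+h_0)^{-\sigma p}$ contribution after the $(1-\sigma)$ cancellation, and the exterior tail giving $\sigma^{-1}h_0^{-\sigma p}$) matches the stated estimate; handling $|h|\in[h_0/2,h_0)$ by the trivial $L^p$ bound is fine.
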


For any $g\in L^p(B_R(x_0))\cap L^{p-1}_{sp}(\bbR^n)$, let us denote 
\begin{align*}
    \widetilde{E}(g;B_R(x_0))=\left(\dashint_{B_R(x_0)}|g|^{p}\,dx\right)^{\frac{1}{p}}+\mathrm{Tail}(g;B_R(x_0)).
\end{align*}
We end this section with the following lemma which allows us to obtain estimates with respect to the excess functional $E(\cdot)$.
\begin{lemma}\label{lem.upest}
    Let $u\in W^{s,p}(B_{2R}(x_0))\cap L^{p-1}_{sp}(\bbR^{n})$ be a weak solution to 
    \begin{equation}\label{eq.upest}
        (-\Delta_p)^su=f\quad\text{in }B_{2R}(x_0),
    \end{equation}
    where $f\in L^{p'}(B_2)$. Then we have 
    \begin{align}\label{ineq2.upest}
        (1-s)^{\frac{1}{p}}R^{-\frac{n}{p}+s}[u]_{W^{s,p}(B_R(x_0))}\leq c{E}(u;B_{2R}(x_0))+cR^{\frac{sp}{p-1}}\left(\dashint_{B_{2R}(x_0)}|f|^{p'}\,dx\right)^{\frac{1}{p}}
    \end{align}
    for some constant $c=(n,s,p)$. In addition, for fixed constant $s_0\in(0,1)$, the constant $c$ depends only on $n,s_0$ and $p$ whenever $s\in[s_0,1)$.
\end{lemma}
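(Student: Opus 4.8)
Estimate \eqref{ineq2.upest} is a Caccioppoli-type inequality, so I would prove it by the classical energy method, testing the equation with a cut-off multiple of $u$ minus its mean. By translation I may assume $x_0=0$; since adding a constant to $u$ changes neither \eqref{eq.upest} nor $[u]_{W^{s,p}(B_R)}$, and $E(u;B_{2R})$ is defined through $w:=u-(u)_{B_{2R}}$, I would work throughout with $w$. Fix $\eta\in C_c^\infty(B_{3R/2})$ with $0\le\eta\le1$, $\eta\equiv1$ on $B_R$ and $|\nabla\eta|\le 8/R$, and test Definition \ref{def:weaksol} with $\psi=w\eta^p$ (which has compact support in $B_{2R}$ and lies in $W^{s,p}$). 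Using $u(x)-u(y)=w(x)-w(y)$, this yields
\begin{equation*}
	(1-s)\iint_{\bbR^n\times\bbR^n}\frac{|w(x)-w(y)|^{p-2}(w(x)-w(y))\big(w(x)\eta(x)^p-w(y)\eta(y)^p\big)}{|x-y|^{n+sp}}\,dx\,dy=\int_{B_{2R}}fw\eta^p\,dx .
\end{equation*}
I would then split the double integral according to the four regions cut out by $B_{2R}$ and $\bbR^n\setminus B_{2R}$: the exterior--exterior region contributes nothing since $\operatorname{supp}\eta\subset B_{3R/2}\subset B_{2R}$, leaving the interior--interior region $D_1:=B_{2R}\times B_{2R}$ and two symmetric mixed regions.

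On $D_1$ I would invoke the standard pointwise algebraic inequality (in the spirit of \cite{DKP}, and for $p\ge2$ consistent with Lemma~\ref{el : lem.eleineq}): for $a,b\in\bbR$ and $\eta_1,\eta_2\ge0$,
\begin{equation*}
	|a-b|^{p-2}(a-b)(a\eta_1^p-b\eta_2^p)\ \ge\ \tfrac1c|a-b|^p\max\{\eta_1,\eta_2\}^p-c\big(|a|^p+|b|^p\big)|\eta_1-\eta_2|^p ,\qquad c=c(p).
\end{equation*}
Multiplying by $(1-s)|x-y|^{-n-sp}$ and integrating over $D_1$ bounds $(1-s)\iint_{D_1}(\cdots)$ below by $\tfrac{1-s}{c}[w]_{W^{s,p}(B_R)}^p$ minus a cut-off error which, using $|\eta(x)-\eta(y)|\le c|x-y|/R$ together with the fact that $\int_{B_{2R}}|x-y|^{p(1-s)-n}\,dy\le\frac{c}{p(1-s)}R^{p(1-s)}$ for $x\in B_{2R}$, is at most $\tfrac{cR^{-sp}}{p}\|w\|_{L^p(B_{2R})}^p$ (the $(1-s)$ canceling the $(1-s)^{-1}$). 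On each mixed region $\eta$ vanishes at the exterior point; there I would bound the integrand in absolute value via $|w(x)-w(y)|^{p-1}\le c(|w(x)|^{p-1}+|w(y)|^{p-1})$ and the elementary fact $|x-y|\ge\tfrac14|y|$ whenever $x\in B_{3R/2}$, $y\notin B_{2R}$; recalling the definition of $\mathrm{Tail}(\cdot;B_{2R})$, this gives, after again multiplying by $(1-s)$, a bound $\tfrac{cR^{-sp}}{sp}\|w\|_{L^p(B_{2R})}^p+cR^{-sp}\,\mathrm{Tail}(w;B_{2R})^{p-1}\|w\|_{L^1(B_{3R/2})}$.

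Collecting the contributions gives $(1-s)[w]_{W^{s,p}(B_R)}^p\le c\int_{B_{2R}}|f||w|+cR^{-sp}\|w\|_{L^p(B_{2R})}^p+cR^{-sp}\mathrm{Tail}(w;B_{2R})^{p-1}\|w\|_{L^1(B_{3R/2})}$, where the only surviving $s$-dependence of $c$ is through $s^{-1}$ — hence $c=c(n,s,p)$ in general and $c=c(n,s_0,p)$ once $s\ge s_0$. For the data term I would use Hölder and then Young's inequality with the weight $R^{-s}$, producing a harmless multiple of $R^{-sp}\|w\|_{L^p(B_{2R})}^p$ plus $cR^{sp'}\|f\|_{L^{p'}(B_{2R})}^{p'}$ with $sp'=\tfrac{sp}{p-1}$; for the tail term I would use $\|w\|_{L^1(B_{3R/2})}\le cR^{n/p'}\|w\|_{L^p(B_{2R})}$. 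Passing to averages via $\|w\|_{L^p(B_{2R})}^p=cR^n\dashint_{B_{2R}}|u-(u)_{B_{2R}}|^p$ and bounding $\mathrm{Tail}(w;B_{2R})\le E(u;B_{2R})$ and $\big(\dashint_{B_{2R}}|u-(u)_{B_{2R}}|^p\big)^{1/p}\le E(u;B_{2R})$ absorbs all geometric terms into $E(u;B_{2R})^p$; finally, rescaling by $R^{sp-n}$ and taking $p$-th roots (using $(a+b)^{1/p}\le a^{1/p}+b^{1/p}$), the exponents combine into precisely $R^{s-n/p}$ on the left and $R^{sp/(p-1)}\big(\dashint_{B_{2R}}|f|^{p'}\big)^{1/p}$ on the right, which is \eqref{ineq2.upest}. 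The only genuinely delicate point is the bookkeeping of the $(1-s)$- and $s$-factors underlying the robust constant; a minor subtlety is that the long-range interaction naturally involves $|w(y)|^{p-1}$ rather than $|w(y)|^p$, which is why the tail enters paired with $\|w\|_{L^1(B_{3R/2})}$ (hence, after Hölder, with a single power of $\|w\|_{L^p}$).
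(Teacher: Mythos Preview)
Your proof is correct and follows the standard nonlocal Caccioppoli route: test with $w\eta^p$, split into diagonal and off-diagonal regions, apply the pointwise algebraic inequality of \cite{DKP} type on $B_{2R}\times B_{2R}$, and control the mixed regions by the tail; the tracking of the $(1-s)$- and $s^{-1}$-factors for the robust constant is accurate. The paper's own proof is not different in spirit but merely shorter: it cites \cite[Proposition~8.5]{CozziJFA} for the estimate with $\widetilde{E}$ (whose proof is precisely the argument you wrote out) and then replaces $u$ by $u-(u)_{B_{2R}}$ to pass from $\widetilde{E}$ to $E$, exactly as you do at the outset.
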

\begin{proof}
As in the proof of \cite[Proposition 8.5]{CozziJFA}, observe that
\begin{equation*}
    \begin{aligned}
        (1-s)^{\frac{1}{p}}R^{-\frac{n}{p}+s}[u]_{W^{s,p}(B_R(x_0))}\leq c\widetilde{E}(u;B_{2R}(x_0))+cR^{\frac{sp}{p-1}}\left(\dashint_{B_{2R}(x_0)}|f|^{p'}\,dx\right)^{\frac{1}{p}}
    \end{aligned}
    \end{equation*}
holds for some constant $c=c(n,s_0,p)$.
Moreover, since $u-(u)_{B_{2}}$ is a also weak solution to \eqref{eq.upest}, we get \eqref{ineq2.upest}.
\end{proof}


\section{Localization arguments} \label{sec:loc}

In this section, we localize our equation \eqref{pt : eq.main}. This allows us to assume that the solution $u$ enjoys global Sobolev regularity.
For the remainder of this section, we choose any parameter $s_0\in(0,1)$ and fix another parameter $s$ satisfying
\begin{equation}\label{s0assum}
    s\in[s_0,1).
\end{equation}
\begin{lemma}
\label{el : lem.loc}
Let $B_{5R}(x_{0})\Subset\Omega$ and $u\in W^{s,p}_{\mathrm{loc}}(\Omega)\cap L^{p-1}_{sp}(\bbR^{n})$ be a weak solution to \eqref{pt : eq.main}. Let us fix a cut-off function $\xi \in C_{c}^{\infty}\left(B_{4R}(x_{0})\right)$ with $\xi\equiv 1$ on $B_{3R}(x_{0})$. Then we have that $w\coloneqq u\xi\in W^{s,p}(\bbR^{n})$ is a weak solution to
\begin{equation}\label{eq:f+g}
    (-\Delta_p)^sw=f+g\quad\text{in }B_{2R}(x_{0}),
\end{equation}
where $g\in L^{p'}(B_{5R/2}(x_0))$ satisfies
\begin{equation}\label{ineq.loc}
\begin{aligned}
    \dashint_{B_{5R/2}(x_0)}|g|^{p'}\,dx&\leq c(1-s)^{p'}R^{-spp'}\dashint_{B_{5R/2}(x_0)}|u|^p\,dx\\
    &\quad+cR^{-spp'}\mathrm{Tail}(u;B_{3R}(x_0))^p
\end{aligned}
\end{equation}
for some constant $c=c(n,s_0,p)$, where the constant $s_0$ is determined in \eqref{s0assum}. In particular, for any $x_1,x_2\in B_{5R/2}(x_0)$,
\begin{align}\label{ineq1.loc}
    |g(x_1)-g(x_2)|
    &\leq c(1-s)\int_{\bbR^{n}\setminus B_{3R}(x_0)}\phi'_{\frac{|u(x_1)-(u\xi)(y)|}{|x_1-y|^{s}}}\left(\frac{|u(x_1)-u(x_2)|}{|x_1-y|^{s}}\right)\frac{\,dy}{|x_1-y|^{n+s}}\nonumber\\
    & +c(1-s)\int_{\bbR^{n}\setminus B_{3R}(x_0)}\phi'_{\frac{|u(x_1)-u(y)|}{|x_1-y|^{s}}}\left(\frac{|u(x_1)-u(x_2)|}{|x_1-y|^{s}}\right)\frac{\,dy}{|x_1-y|^{n+s}}\nonumber\\
    &+c(1-s)\phi'\left(\frac{|u(x_2)|}{R^{s}}\right)\frac{1}{R^{s}}\frac{|x_1-x_2|}{R}\\
    &+cR^{-sp}\mathrm{Tail}(u;B_{3R}(x_0))^{p-1}\frac{|x_1-x_2|}{R}\nonumber
\end{align}
for some constant $c=c(n,p)$.
\end{lemma}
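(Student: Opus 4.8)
The plan is to split the fractional $p$-Laplacian of $w=u\xi$ into a "good" local piece and a remainder that will play the role of $g$. First I would write, for $x\in B_{2R}(x_0)$ where $\xi\equiv 1$,
\[
(-\Delta_p)^s w(x)-(-\Delta_p)^s u(x)=(1-s)\,\mathrm{P.V.}\!\int_{\mathbb{R}^n}\frac{A(u(x)-(u\xi)(y))-A(u(x)-u(y))}{|x-y|^{n+sp}}\,dy=:g(x),
\]
using that $u(x)-(u\xi)(y)=u(x)-u(y)$ whenever $y\in B_{3R}(x_0)$, so the integrand vanishes there and the integral is actually over $\mathbb{R}^n\setminus B_{3R}(x_0)$; this removes the singularity and kills the principal value. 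Since $u$ is a weak solution of \eqref{pt : eq.main} and $w$ coincides with $u$ on $B_{2R}(x_0)$ in the sense of testing, one checks that $w$ is a weak solution of \eqref{eq:f+g} with this $g$; the fact that $w\in W^{s,p}(\mathbb{R}^n)$ follows from $u\in W^{s,p}_{\mathrm{loc}}$, $\xi\in C_c^\infty$ and the Leibniz-type product estimate for fractional Sobolev seminorms.

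Next I would bound $g$ pointwise on $B_{5R/2}(x_0)$. For $x\in B_{5R/2}(x_0)$ and $y\notin B_{3R}(x_0)$ we have $|x-y|\gtrsim R$, and $A(u(x)-(u\xi)(y))-A(u(x)-u(y))$ is controlled via Lemma \ref{el : lem.eleineq}(3) (or the shifted $N$-function equivalence \eqref{equi.shif}) by $\phi''(\cdots)\,|(u\xi)(y)-u(y)|=\phi''(\cdots)|u(y)|(1-\xi(y))$, which since $\xi\equiv 1$ near $B_{3R}$ is supported in $\mathbb{R}^n\setminus B_{3R}(x_0)$. Splitting into the region where $|u(x)|+|u(y)|$ is controlled by the $L^p$-average over $B_{5R/2}$ and the region where the tail dominates, and using $|x-y|\gtrsim R$ together with $\int_{\mathbb{R}^n\setminus B_{3R}(x_0)}|y-x_0|^{-n-sp}\,dy\lesssim (1-s)^{-1}R^{-sp}$, gives $|g(x)|^{p'}\lesssim (1-s)^{p'}R^{-spp'}\dashint_{B_{5R/2}}|u|^p+R^{-spp'}\mathrm{Tail}(u;B_{3R})^p$, which is \eqref{ineq.loc}; here $s_0$ enters only to keep $(1-s)$ and the $\mathrm{Tail}$ normalization balanced so the constant stays bounded as $s\to 1$.

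Finally, for the oscillation estimate \eqref{ineq1.loc} I would take $x_1,x_2\in B_{5R/2}(x_0)$ and write $g(x_1)-g(x_2)$ as an integral over $\mathbb{R}^n\setminus B_{3R}(x_0)$ of the difference of two integrands. I would split this as: (i) the difference in the $A(u(x_i)-(u\xi)(y))$ terms, which by the shifted-$N$-function monotonicity is bounded by $\phi'_{|u(x_1)-(u\xi)(y)|/|x_1-y|^s}\big(|u(x_1)-u(x_2)|/|x_1-y|^s\big)\,|x_1-y|^{-n-s}$ (first line of \eqref{ineq1.loc}); (ii) the analogous difference in the $A(u(x_i)-u(y))$ terms (second line); and (iii) the error coming from replacing $|x_1-y|$ by $|x_2-y|$ in the denominators — a mean-value estimate on $t\mapsto|x_2-y+t(x_1-x_2)|^{-n-sp}$ on $\{|y-x_0|\ge 3R\}$ produces a factor $|x_1-x_2|/R$ times $|u(x_2)|^{p-1}|x_2-y|^{-n-sp}$ (or the analogous expression with $u(x_2)-(u\xi)(y)$), which after integration splits into the $\phi'(|u(x_2)|/R^s)R^{-s}\,|x_1-x_2|/R$ term and the $R^{-sp}\mathrm{Tail}(u;B_{3R})^{p-1}|x_1-x_2|/R$ term (third and fourth lines). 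The main obstacle is the bookkeeping in step (iii): carefully expanding the two integrands so that the "shifted" terms on lines one and two come out exactly in the stated $\phi'_a$ form while the leftover is genuinely Lipschitz in $x$ with the claimed $|x_1-x_2|/R$ decay, and making sure every constant depends only on $n,p$ (not on $s$) — which is why the $(1-s)$ factors are tracked explicitly and $|x-y|\gtrsim R$ is used to absorb the integrability.
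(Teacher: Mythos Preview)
Your proposal is correct and follows essentially the same route as the paper: define $g$ as the difference of the nonlocal parts of $(-\Delta_p)^s w$ and $(-\Delta_p)^s u$ (supported on $\mathbb{R}^n\setminus B_{3R}(x_0)$ since $w=u$ on $B_{3R}$), then get \eqref{ineq.loc} from a pointwise bound and \eqref{ineq1.loc} by splitting $g(x_1)-g(x_2)$ into an ``argument change'' piece (your (i),(ii), the paper's $J_1$) and a ``kernel change'' piece (your (iii), the paper's $J_2+J_3$) handled by a mean-value estimate on $|x-y|^{-m}$.

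Two small corrections worth noting. First, for \eqref{ineq.loc} the paper does not use the difference estimate $|A(X)-A(Y)|\lesssim \phi''(|X|+|Y|)|X-Y|$; it simply bounds each of the two integrals in $g$ separately via $|A(X)|=|X|^{p-1}$ and $|w|\le |u|$, giving the cleaner pointwise bound $|g(x)|\lesssim (1-s)R^{-sp}|u(x)|^{p-1}+R^{-sp}\mathrm{Tail}(u;B_{3R})^{p-1}$ without any region-splitting. Second, the relevant decay integral is $\int_{\mathbb{R}^n\setminus B_{3R}(x_0)}|y-x_0|^{-n-sp}\,dy\lesssim (sp)^{-1}R^{-sp}$, not $(1-s)^{-1}R^{-sp}$; this is where the $s_0$-dependence actually enters (to bound $1/s$ from above), not to balance $(1-s)$ factors.
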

\begin{proof}
Let us take a test function $\psi\in W^{s,p}(B_{2R}(x_0))$ which has a compact support in $B_{2R}(x_0)$. Recall $A(X)=\phi'(X)\text{ for any }X\in\bbR.$
Then we have 
\begin{equation}
\label{el : lem.loc.eq1}
\begin{aligned}
    &(1-s)\int_{\bbR^{n}}\int_{\bbR^{n}}{\phi'}\left(\frac{w(x)-w(y)}{|x-y|^{s}}\right)\frac{\psi(x)-\psi(y)}{|x-y|^{s}}\frac{\,dx\,dy}{|x-y|^{n}}\\
    &\quad-\int_{\bbR^{n}}f\psi\,dx\\
    &=(1-s)\int_{\bbR^{n}}\int_{\bbR^{n}}{A}\left(\frac{w(x)-w(y)}{|x-y|^{s}}\right)\frac{\psi(x)-\psi(y)}{|x-y|^{s}}\frac{\,dx\,dy}{|x-y|^{n}}\\
    &\quad-(1-s)\int_{\bbR^{n}}\int_{\bbR^{n}}{A}\left(\frac{u(x)-u(y)}{|x-y|^{s}}\right)\frac{\psi(x)-\psi(y)}{|x-y|^{s}}\frac{\,dx\,dy}{|x-y|^{n}}\eqqcolon I,
\end{aligned}
\end{equation}
since $u$ is a weak solution to \eqref{pt : eq.main}. For $I$, since $w(x)=u(x)$ in $B_{3R}(x_0)$ and $\psi(x)\equiv 0$ on $\mathbb{R}^{n}\setminus B_{2R}(x_0)$, we obtain
\begin{align*}
    I&=2(1-s)\int_{B_{2R}(x_0)}\int_{\mathbb{R}^{n}\setminus B_{3R}(x_0)}{A}\left(\frac{w(x)-w(y)}{|x-y|^{s}}\right)\frac{\psi(x)}{|x-y|^{s}}\frac{\,dy\,dx}{|x-y|^{n}}\\
    &\quad-2(1-s)\int_{B_{2R}(x_0)}\int_{\mathbb{R}^{n}\setminus B_{3R}(x_0)}{A}\left(\frac{u(x)-u(y)}{|x-y|^{s}}\right)\frac{\psi(x)}{|x-y|^{s}}\frac{\,dy\,dx}{|x-y|^{n}},
\end{align*}
where the fact that $A$ is an odd function is used. As a result, the equality \eqref{el : lem.loc.eq1} can be rewritten as
\begin{align*}
    &(1-s)\int_{\bbR^{n}}\int_{\bbR^{n}}A\left(\frac{w(x)-w(y)}{|x-y|^{s}}\right)\frac{\psi(x)-\psi(y)}{|x-y|^{s}}\frac{\,dx\,dy}{|x-y|^{n}}\\
    &=\int_{B_{2R}(x_0)}({f}+g)\psi\,dx,
\end{align*}
where 
\begin{align*}
    g(x)&=2(1-s)\int_{\mathbb{R}^{n}\setminus B_{3R}(x_0)}{A}\left(\frac{w(x)-w(y)}{|x-y|^{s}}\right)\frac{\,dy}{|x-y|^{n+s}}\\
    &\quad-2(1-s)\int_{\mathbb{R}^{n}\setminus B_{3R}(x_0)}{A}\left(\frac{u(x)-u(y)}{|x-y|^{s}}\right)\frac{\,dy}{|x-y|^{n+s}}.
\end{align*}

This implies that $w\in W^{s,p}(\bbR^n)$ is a weak solution to 
\begin{equation*}
    (-\Delta_p)^sw=f+g\quad\text{in }B_{2R}(x_0)
\end{equation*}
satisfying 
\begin{equation}\label{norm.loc}
\begin{aligned}
[w]_{W^{s,p}(\bbR^n)}^{p}&\leq \int_{\setR^n}\int_{\setR^n}\left|\dfrac{u(x)-u(y)}{\abs{x-y}^s}\right|^p\xi(x)^p\dfrac{\,dx\,dy}{\abs{x-y}^n}\\
&\quad+\int_{\setR^n}\int_{\setR^n}\abs{u(y)}^p\left|\dfrac{\xi(x)-\xi(y)}{\abs{x-y}^s}\right|^p\dfrac{\,dx\,dy}{\abs{x-y}^n}\\
&\leq [u]_{W^{s,p}(B_{5R}(x_0))}^{p}+cs^{-1}R^{-sp}\|u\|_{L^{p}(B_{5R}(x_0))}^{p}
\end{aligned}
\end{equation}
for some constant $c=c(n,p)$.
We are going to prove \eqref{ineq.loc} and $g\in L^{p'}(B_{5R/2}(x_0))$.
To do this, first note that
\begin{align}
\label{el : lem.loc.ineq1}
|x-y|\geq {|y-x_0|}/{6}
\end{align}
for any $x\in B_{{5R}/{2}}(x_0)$ and $y\in \bbR^{n}\setminus B_{3R}(x_0)$. Using \eqref{defn.avftn} and \eqref{el : lem.loc.ineq1} along with the fact that $|w|\leq |u|$, 
\begin{align*}
    |g(x)|&\leq c(1-s)\int_{\bbR^{n}\setminus B_{3R}(x_0)}\phi'\left(\frac{|u(x)|}{|y-x_0|^{s}}\right)\frac{\,dy}{{|y-x_0|^{n+s}}}\\
    &\quad+c(1-s)\int_{\bbR^{n}\setminus B_{3R}(x_0)}\phi'\left(\frac{|u(y)|}{|y-x_0|^{s}}\right)\frac{\,dy}{{|y-x_0|^{n+s}}}
\end{align*}
holds for any $x\in B_{5R/2}(x_0)$, where $c=c(n,p)$.  Using the inequality 
\begin{equation}\label{ineq1.loclem}
    \int_{\bbR^{n}\setminus B_{3R}(x_0)}\frac{1}{|y-x_0|^{n+sm}}\,dy\leq \frac{c(n)}{sm}\frac{1}{R^{sm}}
\end{equation}
with the choice of $m=p$, for any $x\in B_{5R/2}(x_0)$ we obtain
\begin{align*}
\abs{g(x)}\leq c(1-s)R^{-sp}\abs{u(x)}^{p-1}+cR^{-sp}\mathrm{Tail}(u;B_{3R}(x_0))^{p-1}.
\end{align*}
After a few simple calculations, \eqref{ineq.loc} is attained. Moreover, $g\in L^{p'}(B_{5R/2}(x_0))$ since $u\in L^{p-1}_{sp}(\setR^n)$.

We are now in the position to prove \eqref{ineq1.loc}.
For any $x_{1},x_{2}\in B_{{5R}/{2}}(x_0)$, observe that
\begin{align*}
    g(x_{1})-g(x_{2})&=2(1-s)\int_{\mathbb{R}^{n}\setminus B_{3R}(x_0)}{A}\left(\frac{w(x_{1})-w(y)}{|x_{1}-y|^{s}}\right)\frac{\,dy}{|x_{1}-y|^{n+s}}\\
    &\quad-2(1-s)\int_{\mathbb{R}^{n}\setminus B_{3R}(x_0)}{A}\left(\frac{w(x_{2})-w(y)}{|x_{2}-y|^{s}}\right)\frac{\,dy}{|x_{2}-y|^{n+s}}\\
    &\quad-2(1-s)\int_{\mathbb{R}^{n}\setminus B_{3R}(x_0)}{A}\left(\frac{u(x_{1})-u(y)}{|x_{1}-y|^{s}}\right)\frac{\,dy}{|x_{1}-y|^{n+s}}\\
    &\quad+2(1-s)\int_{\mathbb{R}^{n}\setminus B_{3R}(x_0)}{A}\left(\frac{u(x_{2})-u(y)}{|x_{2}-y|^{s}}\right)\frac{\,dy}{|x_{2}-y|^{n+s}}.
\end{align*}
We first estimate the term $J$ given by
\begin{equation}
\label{el : defn.j1}
\begin{aligned}
    J&\coloneqq\int_{\mathbb{R}^{n}\setminus B_{3R}(x_0)}{A}\left(\frac{w(x_{1})-w(y)}{|x_{1}-y|^{s}}\right)\frac{\,dy}{|x_{1}-y|^{n+s}}\\
    &\quad-\int_{\mathbb{R}^{n}\setminus B_{3R}(x_0)}{A}\left(\frac{w(x_{2})-w(y)}{|x_{2}-y|^{s}}\right)\frac{\,dy}{|x_{2}-y|^{n+s}}.
\end{aligned}
\end{equation}
Let us rewrite $J$ as
\begin{align*}
    J&=\int_{\mathbb{R}^{n}\setminus B_{3R}(x_0)}\left[{A}\left(\frac{w(x_{1})-w(y)}{|x_{1}-y|^{s}}\right)-{A}\left(\frac{w(x_{2})-w(y)}{|x_{1}-y|^{s}}\right)\right]\frac{\,dy}{|x_{1}-y|^{n+s}}\\
    &\quad+\int_{\mathbb{R}^{n}\setminus B_{3R}(x_0)}\left[{A}\left(\frac{w(x_{2})-w(y)}{|x_{1}-y|^{s}}\right)-{A}\left(\frac{w(x_{2})-w(y)}{|x_{2}-y|^{s}}\right)\right]\frac{\,dy}{|x_{1}-y|^{n+s}}\\
    &\quad+\int_{\mathbb{R}^{n}\setminus B_{3R}(x_0)}{A}\left(\frac{w(x_{2})-w(y)}{|x_{2}-y|^{s}}\right)\left[\frac{\,dy}{|x_{1}-y|^{n+s}}-\frac{\,dy}{|x_{2}-y|^{n+s}}\right]\eqqcolon \sum_{i=1}^{3}J_{i}.
\end{align*}
Then the estimates to $J_{i}$ for each $i=1,2$ and 3 are as follows.

\textbf{Estimate of $J_{1}$.}
By \eqref{equi.shif} and using the fact that $\xi(x_1)=\xi(x_2)=1$ for any $x_1,x_2\in B_{5R/2}(x_0)$, we have 
\begin{align}\label{ineq.j1}
    |J_1|\leq c\int_{\bbR^{n}\setminus B_{3R}(x_0)}\phi'_{\frac{|u(x_1)-w(y)|}{|x_1-y|^{s}}}\left(\frac{|u(x_1)-u(x_2)|}{|x_1-y|^{s}}\right)\frac{\,dy}{|x_1-y|^{n+s}}
\end{align}
for some constant $c=c(p)$.

\textbf{Estimate of $J_2$.}
We first note that for any $m\geq0$, $x_1,x_2\in B_{5R/2}(x_0)$ and $y\in \bbR^n\setminus B_{3R}(x_0)$,
\begin{align}
\label{el : lem.loc.ineq2}
    \left|\frac{1}{|x_{1}-y|^{m+s}}-\frac{1}{|x_{2}-y|^{m+s}}\right|=\left|\int_{|x_{1}-y|}^{|x_{2}-y|}\frac{s+m}{t^{s+m+1}}\,dt\right|\leq c\frac{|x_{2}-x_{1}|}{|y-x_0|^{s+m+1}}
\end{align}
holds for some constant $c=c(m)$, where \eqref{el : lem.loc.ineq1} is used.
By Lemma \ref{el : lem.eleineq}, \eqref{el : lem.loc.ineq1}, \eqref{el : lem.loc.ineq2} with $m=0$ and \eqref{ineq1.loclem} with $m=1+s^{-1}$, we get

\begin{align}\label{ineq.j2}
    |J_2|&\leq c\int_{\bbR^{n}\setminus B_{3R}(x_0)}\phi''\left(\frac{|w(x_2)-w(y)|}{|x_1-y|^{s}}+\frac{|w(x_2)-w(y)|}{|x_2-y|^{s}}\right)\nonumber\\
    &\qquad\quad\times\left|\frac{|w(x_2)-w(y)|}{|x_1-y|^{s}}-\frac{|w(x_2)-w(y)|}{|x_2-y|^{s}}\right|\frac{\,dy}{|x_1-y|^{n+s}}\nonumber\\
    &\leq c\int_{\bbR^{n}\setminus B_{3R}(x_0)}\phi''\left(\frac{|w(x_2)-w(y)|}{|y-x_0|^{s}}\right)\left|\frac{|w(x_2)-w(y)|}{|x_1-y|^{s}}-\frac{|w(x_2)-w(y)|}{|x_2-y|^{s}}\right|\nonumber\\
    &\qquad\qquad\times \frac{\,dy}{|y-x_0|^{n+s}}\nonumber\\
    &\leq c\int_{\bbR^{n}\setminus B_{3R}(x_0)}\phi'\left(\frac{|w(x_2)-w(y)|}{|y-x_0|^{s}}\right)|x_1-x_2|\frac{\,dy}{|y-x_0|^{n+s+1}}\\
    &\leq c\left[\phi'\left(\frac{|w(x_2)|}{R^{s}}\right)\frac{1}{R^{s}}+\int_{\bbR^{n}\setminus B_{3R}(x_0)}\phi'\left(\frac{|w(y)|}{|y-x_0|^{s}}\right)\frac{\,dy}{|y-x_0|^{n+s}}\right]\frac{|x_1-x_2|}{R}\nonumber
\end{align}
for some constant $c=c(n,p)$.

\textbf{Estimate of $J_3$.}
Using \eqref{el : lem.loc.ineq1} and \eqref{el : lem.loc.ineq2} with $m=n$, we get 
\begin{align}\label{ineq.j3}
    |J_3|\leq c\left[\phi'\left(\frac{|w(x_2)|}{R^{s}}\right)\frac{1}{R^{s}}+\int_{\bbR^{n}\setminus B_{3R}(x_0)}\phi'\left(\frac{|w(y)|}{|y-x_0|^{s}}\right)\frac{\,dy}{|y-x_0|^{n+s}}\right]\frac{|x_1-x_2|}{R}
\end{align}
for some constant $c=c(n,p)$. As in the estimates of $J_1$, $J_2$ and $J_3$ with $w$ replaced by $u$, the following term 
\begin{equation*}
\begin{aligned}
    \int_{\mathbb{R}^{n}\setminus B_{3}}{\phi'}\left(\frac{u(x_{1})-u(y)}{|x_{1}-y|^{s}}\right)\frac{\,dy}{|x_{1}-y|^{n+s}}-\int_{\mathbb{R}^{n}\setminus B_{3}}{\phi'}\left(\frac{u(x_{2})-u(y)}{|x_{2}-y|^{s}}\right)\frac{\,dy}{|x_{2}-y|^{n+s}}
\end{aligned}
\end{equation*}
is also estimated to obtain \eqref{ineq.j1}, \eqref{ineq.j2} and \eqref{ineq.j3} with $w$ replaced by $u$.
Therefore, combining all the estimates along with the fact that $|w|\leq|u|$ yields \eqref{ineq1.loc}.
\end{proof}

Using the previous lemma, we now prove the following Sobolev regularity of the right-hand side $g$ in \eqref{eq:f+g} in the case when $p\geq2$.
\begin{corollary}
\label{el : lem.loc.deg}
Let $B_{5R}(x_{0})\Subset\Omega$ and $u\in W^{\sigma,p}_{\mathrm{loc}}(\Omega)\cap L^{p-1}_{sp}(\bbR^{n})$ be a weak solution to \eqref{pt : eq.main} for some $p\geq2$ and $\sigma\in[s,1)$. Fix a cut-off function $\xi \in C_{c}^{\infty}\left(B_{4R}(x_{0})\right)$ with $\xi\equiv 1$ on $B_{3R}(x_{0})$. Then $w\coloneqq u\xi\in W^{\sigma,p}(\bbR^{n})$ is a weak solution to
\begin{equation*}
    (-\Delta_p)^sw=f+g\quad\text{in }B_{2R}(x_{0}),
\end{equation*}
where $g\in W^{t,p'}(B_{5R/2}(x_0))$ for any $t\in(0,\sigma)$. In addition, the estimate 
\begin{align*}
    (\sigma-t)[g]_{W^{t,p'}(B_{\frac{5R}{2}}(x_0))}^{p'}&\leq c(1-s)^{p'}R^{-p'(t+sp)}\|u\|^{p}_{L^{p}(B_{3R}(x_0))}\\
    &\quad+cR^{n-p'(t+sp)}\mathrm{Tail}(u;B_{3R}(x_0))^{p}\\
    &\quad+c(1-s)^{p'}(\sigma-t)R^{-p'(t+sp)+\sigma p}[u]_{W^{\sigma,p}(B_{3R}(x_0))}^{{p}}
\end{align*}
holds for some $c=c(n,s_0,p)$, where the constant $s_0$ is determined in \eqref{s0assum}.
Moreover, if $\sigma=1$, then we obtain $g\in W^{1,p'}(B_{5R/2}(x_0))$ with the estimate
\begin{align*}
        \|\nabla g\|^{p'}_{L^{p'}(B_{\frac{5R}{2}}(x_0))}&\leq c(1-s)^{p'}R^{-p'(1+sp)}\|u\|^p_{L^{p}(B_{3R}(x_0))}\\
         &\quad+cR^{n-p'(1+sp)}\mathrm{Tail}(u;B_{3R}(x_0))^{p}\\
        &\quad+c(1-s)^{p'}R^{-p'(1+sp)+p}\|\nabla u\|^{p}_{L^{p}(B_{3R}(x_0))}
    \end{align*}
    for some constant $c=c(n,s_0,p)$.
\end{corollary}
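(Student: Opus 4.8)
The plan is to extract everything from Lemma~\ref{el : lem.loc}. With $\xi$ as fixed, $w=u\xi\in W^{\sigma,p}(\bbR^{n})$ (by a Leibniz-type bound exactly as in \eqref{norm.loc}, now with $s$ there replaced by $\sigma$, using $u\in W^{\sigma,p}_{\mathrm{loc}}(\Omega)$ and $B_{5R}(x_0)\Subset\Omega$) and $w$ solves $(-\Delta_p)^{s}w=f+g$ in $B_{2R}(x_0)$, where $g$ satisfies the pointwise difference estimate \eqref{ineq1.loc}. To prove the first assertion I would raise \eqref{ineq1.loc} to the power $p'$, divide by $\abs{x_1-x_2}^{n+tp'}$ and integrate over $(x_1,x_2)\in B_{5R/2}(x_0)^2$, so that the left-hand side becomes $[g]^{p'}_{W^{t,p'}(B_{5R/2}(x_0))}$; for the case $\sigma=1$ I would instead divide \eqref{ineq1.loc} by $\abs{x_1-x_2}$ and let $x_2\to x_1$ to get a pointwise bound for $\abs{\nabla g(x_1)}$, then take its $L^{p'}(B_{5R/2}(x_0))$-norm. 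In both cases the four terms on the right of \eqref{ineq1.loc} are treated separately, and all exponents of $R$ and all factors $(1-s)$ are dictated by the scale invariance of the equation, which is how one checks they coincide with those in the statement.

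The two terms of \eqref{ineq1.loc} carrying the explicit factor $\abs{x_1-x_2}/R$ are elementary: after the $p'$-power and the integration they leave $\int_{B_{5R/2}(x_0)}\abs{x_1-x_2}^{p'-n-tp'}\,dx_1\leq cR^{p'(1-t)}$, which converges precisely because $p'(1-t)>0$; combined with $\phi'(z)=\abs{z}^{p-2}z$, the tail bound \eqref{ineq1.loclem}, and the definition of $\mathrm{Tail}$, these contribute exactly the first two summands on the right-hand sides. For $\sigma=1$ no $x_2$-integration is needed and a Hölder inequality in $B_{5R/2}(x_0)$ passes from $\int\abs{u}^{p-2}$-type quantities to $\|u\|_{L^p}$ and $\mathrm{Tail}$.

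The core is the two tail integrals built from $\phi'_a$. Since $p\geq2$, \eqref{ineq1.shif} yields the splitting $\phi'_a(\tau)\leq c\,a^{p-2}\tau+c\,\tau^{p-1}$, which I would apply with $\tau=\abs{u(x_1)-u(x_2)}/\abs{x_1-y}^{s}$ and $a=\abs{u(x_1)-(u\xi)(y)}/\abs{x_1-y}^{s}$ (respectively $a=\abs{u(x_1)-u(y)}/\abs{x_1-y}^{s}$ for the companion integral). After the $y$-integration via \eqref{ineq1.loclem}, the $\tau^{p-1}$-piece contributes a multiple of $(1-s)\abs{u(x_1)-u(x_2)}^{p-1}R^{-sp}$; raising to the power $p'$ and integrating leaves $\iint\abs{u(x_1)-u(x_2)}^{p}\abs{x_1-x_2}^{-n-tp'}\,dx_1\,dx_2=[u]^{p}_{W^{t/(p-1),p}(B_{5R/2}(x_0))}$, and since $t/(p-1)\leq t<\sigma<1$ the embedding \eqref{eq:embed} absorbs this into $[u]^{p}_{W^{\sigma,p}(B_{3R}(x_0))}$ with a power of $R$ fixed by scaling, giving the third summand (the $(\sigma-t)$-factor being taken up by the embedding constant). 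The $a^{p-2}\tau$-piece yields, after the $y$-integration, $\abs{u(x_1)-u(x_2)}$ times the weighted tail $\mathcal{T}(x_1):=\int_{\bbR^{n}\setminus B_{3R}(x_0)}\abs{u(x_1)-(u\xi)(y)}^{p-2}\abs{x_1-y}^{-n-sp}\,dy$ (and the analogous quantity with $u(y)$ instead of $(u\xi)(y)$ for the companion term). The case $\sigma=1$ runs in parallel: as $x_2\to x_1$ the shifted terms produce $c(1-s)\abs{\nabla u(x_1)}\bigl(\mathcal{T}(x_1)+\int_{\bbR^{n}\setminus B_{3R}(x_0)}\abs{u(x_1)-u(y)}^{p-2}\abs{x_1-y}^{-n-sp}\,dy\bigr)$ plus the Lipschitz terms, and one then takes the $L^{p'}(B_{5R/2}(x_0))$-norm.

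Thus the remaining task — and what I expect to be the main obstacle — is to estimate the weighted tail $\mathcal{T}(x_1)$ and then absorb $\abs{u(x_1)-u(x_2)}^{p'}\mathcal{T}(x_1)^{p'}$ (integrated against $\abs{x_1-x_2}^{-n-tp'}$), respectively $\abs{\nabla u(x_1)}^{p'}\mathcal{T}(x_1)^{p'}$, into $\|u\|^{p}_{L^p}$, $\mathrm{Tail}(u;B_{3R}(x_0))^{p}$ and $[u]^{p}_{W^{\sigma,p}}$. Using $\abs{x_1-y}\geq\abs{y-x_0}/6$ (i.e.\ \eqref{el : lem.loc.ineq1}), the splitting $\abs{u(x_1)-(u\xi)(y)}^{p-2}\leq c\abs{u(x_1)}^{p-2}+c\abs{(u\xi)(y)}^{p-2}$, the bound \eqref{ineq1.loclem}, a Hölder inequality on the annular region where $u\xi\ne 0$, and (for the companion integral) a Hölder inequality with exponents $\tfrac{p-1}{p-2}$ and $p-1$ producing $\mathrm{Tail}(u;B_{3R}(x_0))^{p-2}$ together with a factor $(1-s)^{-(p-2)/(p-1)}$ that is killed by the prefactor $(1-s)$ in \eqref{ineq1.loc}, one obtains $\mathcal{T}(x_1)\leq cR^{-sp}\bigl(\abs{u(x_1)}^{p-2}+R^{-n(p-2)/p}\|u\|^{p-2}_{L^p(B_{3R}(x_0))}+\mathrm{Tail}(u;B_{3R}(x_0))^{p-2}\bigr)$. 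Then, via $\abs{u(x_1)}^{p-2}\leq c\abs{u(x_1)-u(x_2)}^{p-2}+c\abs{u(x_2)}^{p-2}$, Young's inequality in the scale-correct form $\abs{u(x_1)-u(x_2)}^{p'}\abs{u(x_2)}^{(p-2)p'}\leq cR^{(p-2)p'}\abs{u(x_1)-u(x_2)}^{p}+cR^{-p'}\abs{u(x_2)}^{p}$, and another application of \eqref{eq:embed}, every resulting term lands in one of the three target quantities with exactly the prefactors of the statement; for $\sigma=1$ the same weighted-tail estimate, Young's inequality and a Hölder inequality in $B_{5R/2}(x_0)$ give the claimed bound for $\|\nabla g\|_{L^{p'}}$. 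The delicate point throughout is to keep every power of $R$ and every factor $(1-s)$ in balance, so that the final constant depends only on $n,s_0,p$ and, in line with the Stability remark, does not degenerate as $s\to1^-$.
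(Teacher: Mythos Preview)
Your overall strategy is right and matches the paper: extract the pointwise bound \eqref{ineq1.loc} from Lemma~\ref{el : lem.loc}, raise to the $p'$-th power, and integrate against $|x_1-x_2|^{-n-tp'}$. The handling of the two Lipschitz terms and of the $\tau^{p-1}$-piece of $\phi'_a(\tau)$ is fine. The genuine gap is in the $a^{p-2}\tau$-piece when $p>2$.

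After your pointwise bound on $\mathcal{T}(x_1)$ and the split $|u(x_1)|^{p-2}\leq c|u(x_1)-u(x_2)|^{p-2}+c|u(x_2)|^{p-2}$, you apply Young's inequality to $|u(x_1)-u(x_2)|^{p'}|u(x_2)|^{(p-2)p'}$ and produce, among other things, a multiple of $R^{-p'}|u(x_2)|^{p}$. But this must still be integrated against the kernel $|x_1-x_2|^{-n-tp'}$, and since $tp'>0$ the inner integral $\int_{B_{5R/2}(x_0)}|x_1-x_2|^{-n-tp'}\,dx_1$ diverges. No pointwise Young can rescue this: any piece that is constant in $x_1$ will be non-integrable against the singular kernel. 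The paper's remedy is to apply H\"older's inequality \emph{to the double integral itself}, with exponents $\tfrac{p-1}{p-2}$ and $p-1$: writing $U=(1-s)^{1/(p-1)}(|u(x_1)|+|u(x_2)|)+\mathrm{Tail}(u;B_{3R}(x_0))$, one bounds
\[
\iint \frac{U^{(p-2)p'}|u(x_1)-u(x_2)|^{p'}}{|x_1-x_2|^{n+tp'}}
\le
\Bigl(\iint \frac{U^{p}\,dx_1\,dx_2}{|x_1-x_2|^{n+\frac{(t-\sigma)p}{p-2}}}\Bigr)^{\frac{p-2}{p-1}}
\Bigl(\iint \frac{|u(x_1)-u(x_2)|^{p}\,dx_1\,dx_2}{|x_1-x_2|^{n+\sigma p}}\Bigr)^{\frac{1}{p-1}}.
\]
The first factor is finite precisely because $t<\sigma$ makes the kernel exponent $n+\tfrac{(t-\sigma)p}{p-2}<n$; its evaluation produces the factor $(\sigma-t)^{-(p-2)/(p-1)}$, and a subsequent Young on the resulting product of norms yields the $(\sigma-t)^{-1}$ that appears (after rearrangement) on the left of the stated estimate. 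So the $(\sigma-t)$ is \emph{not} ``taken up by the embedding constant'' as you write --- it originates from this H\"older step, and your argument does not account for it.

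For $\sigma=1$ your limiting argument $x_2\to x_1$ presupposes differentiability of $g$, which is part of what is to be proved. The paper instead computes $\delta_h g$ directly from the explicit formula for $g$ via the fundamental theorem of calculus, bounds $\|\delta_h g/|h|\|_{L^{p'}(B_{5R/2}(x_0))}$ uniformly in $h$, and invokes the standard difference-quotient characterization of $W^{1,p'}$. Your route can be made rigorous by the same detour through difference quotients (set $x_2=x_1+h$ in \eqref{ineq1.loc} and keep $h$ fixed), but as written the limit step is not justified.
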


\begin{proof}
Let us consider the case $t\in(0,\sigma)$. By Lemma \ref{el : lem.loc}, $w\in W^{\sigma,p}(\bbR^{n})$ is a weak solution to 
\begin{equation*}
    (-\Delta_p)^sw=f+g\quad\text{in }B_{2R}(x_{0})
\end{equation*}
with 
\begin{align}\label{esti.g}
    |g(x_1)-g(x_2)|
    &\leq c(1-s)\int_{\bbR^{n}\setminus B_{3R}(x_0)}\phi'_{\frac{|u(x_1)-w(y)|}{|x_1-y|^{s}}}\left(\frac{|u(x_1)-u(x_2)|}{|x_1-y|^{s}}\right)\frac{\,dy}{|x_1-y|^{n+s}}\nonumber\\
    &\, +c(1-s)\int_{\bbR^{n}\setminus B_{3R}(x_0)}\phi'_{\frac{|u(x_1)-u(y)|}{|x_1-y|^{s}}}\left(\frac{|u(x_1)-u(x_2)|}{|x_1-y|^{s}}\right)\frac{\,dy}{|x_1-y|^{n+s}}\nonumber\\
    &\,+c(1-s)\phi'\left(\frac{|u(x_2)|}{R^{s}}\right)\frac{1}{R^{s}}\frac{|x_1-x_2|}{R}\nonumber\\
    &\,+c(1-s)\int_{\bbR^{n}\setminus B_{3R}(x_0)}\phi'\left(\frac{|u(y)|}{|y-x_0|^{s}}\right)\frac{\,dy}{|y-x_0|^{n+s}}\frac{|x_1-x_2|}{R}\nonumber\\
    &\eqqcolon\sum_{i=1}^{4}J_i
\end{align}
for any $x_1,x_2\in B_{5R/2}(x_0)$, where $c=c(n,p)$. By $p\geq 2$, \eqref{equi.shif}, Lemma \ref{el : lem.eleineq}, \eqref{el : lem.loc.ineq1}, \eqref{ineq1.loclem} with $m$ replaced by $p$,
we get
\begin{equation*}
\begin{aligned}
    |J_1|&\leq c(1-s)\int_{\bbR^{n}\setminus B_{3R}(x_0)}\frac{(|u(x_1)|+|u(x_2)|)^{p-2}}{|y-x_0|^{s(p-2)}}\frac{|u(x_1)-u(x_2)|}{|y-x_0|^{s}}\frac{\,dy}{|y-x_0|^{n+s}}\\
    &\quad+ c(1-s)\int_{\bbR^{n}\setminus B_{3R}(x_0)}\frac{|w(y)|^{p-2}}{|y-x_0|^{s(p-2)}}\frac{|u(x_1)-u(x_2)|}{|y-x_0|^{s}}\frac{\,dy}{|y-x_0|^{n+s}}\\
    &\leq \frac{c(1-s)^{\frac{1}{p-1}}}{R^{sp}}\left[(1-s)^{\frac{1}{p-1}}(|u(x_1)|+|u(x_2)|)+\mathrm{Tail}(u;B_{3})\right]^{p-2}|u(x_1)-u(x_2)|
\end{aligned}
\end{equation*}
for some constant $c=c(n,s_0,p)$ with the help of H\"older's inequality and the fact that $|w|\leq|u|$ for the last inequality. Similarly, we estimate $J_2$ as 
\begin{align*}
    |J_2| &\leq \frac{c(1-s)^{\frac{1}{p-1}}}{R^{sp}}\left[(1-s)^{\frac{1}{p-1}}(|u(x_1)|+|u(x_2)|)+\mathrm{Tail}(u;B_{3})\right]^{p-2}|u(x_1)-u(x_2)|
\end{align*}
with some constant $c=c(n,s_0,p)$. After a few simple calculations along with the fact that $|\phi'(a)|\leq c|a|^{p-1}$ for some constant $c=c(p)$, we estimate $J_3+J_4$ as 
\begin{align}\label{ineq.estj3j4}
    |J_3+J_4|\leq cR^{-sp}\left[(1-s)|u(x_2)|^{p-1}+\mathrm{Tail}(u;B_{3R}(x_0))^{p-1}\right]\frac{|x_1-x_2|}{R}.
\end{align}
Therefore, denoting
\begin{equation*}
U=(1-s)^{\frac{1}{p-1}}(|u(x_1)|+|u(x_2)|)+\mathrm{Tail}(u;B_{3R}(x_0)),
\end{equation*}
and using H\"older's inequality together with the fact that $t<\sigma$, we have 
\begin{equation}\label{j1j2sum}
\begin{aligned}
    &\sum_{i=1}^{2}\int_{B_{\frac{5R}{2}}(x_0)}\int_{B_{\frac{5R}{2}}(x_0)}|J_i|^{p'}\frac{\,dx_1\,dx_2}{|x_1-x_2|^{n+tp'}}\\
    &\leq c\frac{(1-s)^{\frac{p'}{p-1}}}{R^{spp'}}\int_{B_{\frac{5R}{2}}(x_0)}\int_{B_{\frac{5R}{2}}(x_0)}\frac{U^{p'(p-2)}|u(x_1)-u(x_2)|^{p'}}{|x_1-x_2|^{n+tp'}}\,dx_1\,dx_2\\
    &\leq c\frac{(1-s)^{\frac{p'}{p-1}}}{R^{spp'}}\left(\int_{B_{\frac{5R}{2}}(x_0)}\int_{B_{\frac{5R}{2}}(x_0)}\frac{U^{p}}{|x_1-x_2|^{n+\frac{(t-\sigma)p}{p-2}}}\,dx_1\,dx_2\right)^{\frac{p-2}{p-1}}\\
    &\qquad\times\left(\int_{B_{\frac{5R}{2}}(x_0)}\int_{B_{\frac{5R}{2}}(x_0)}\frac{|u(x_1)-u(x_2)|^{p}}{|x_1-x_2|^{n+\sigma p}}\,dx_1\,dx_2\right)^{\frac{1}{p-1}}\\
    &\leq \frac{c(1-s)^{\frac{p'}{p-1}}}{(\sigma-t)^{\frac{p-2}{p-1}}}R^{p'(\sigma-t-sp)}\|(1-s)^{\frac{1}{p-1}}u\|^{p'(p-2)}_{L^{p}\big(B_{\frac{5R}{2}}(x_0)\big)}[u]^{p'}_{W^{\sigma,p}\big(B_{\frac{5R}{2}}(x_0)\big)}\\
    &\quad+\frac{c(1-s)^{\frac{p'}{p-1}}}{(\sigma-t)^{\frac{p-2}{p-1}}}R^{n+p'(\sigma-t-sp)}\mathrm{Tail}(u;B_{3R}(x_0))^{p'(p-2)}[u]^{p'}_{W^{\sigma,p}\big(B_{\frac{5R}{2}}(x_0)\big)}
\end{aligned}
\end{equation}
for some constant $c=c(n,p)$.

On the other hand,
\begin{equation}\label{j3j4sum}
\begin{aligned}
    &\sum_{i=3}^{4}\int_{B_{\frac{5R}{2}}(x_0)}\int_{B_{\frac{5R}{2}}(x_0)}|J_i|^{p'}\frac{\,dx_1\,dx_2}{|x_1-x_2|^{n+tp'}}\\
    &\leq \int_{B_{\frac{5R}{2}}(x_0)}\int_{B_{\frac{5R}{2}}(x_0)}\frac{c
    (1-s)^{p'}|u(x_2)|^{p}+\mathrm{Tail}(u;B_{3R}(x_0))^{p}}{R^{(sp+1)p'}|x_1-x_2|^{n+(t-1)p'}}{\,dx_1\,dx_2}{}\\
    &\leq \frac{cR^{p'(-t-sp)}}{1-t}\left((1-s)^{p'}\|u\|^{p}_{L^{p}\big(B_{\frac{5R}{2}}(x_0)\big)}+ R^n\mathrm{Tail}(u;B_{3R}(x_0))^{p}\right)
\end{aligned}
\end{equation}
holds for some constant $c=c(n,p)$.
Combining all the estimates \eqref{j1j2sum} and \eqref{j3j4sum} along with Young's inequality yields the desired estimate.
We now consider the case when $\sigma=1$. Recall that
    \begin{align*}
        g(x)& =
        2(1-s)\int_{\mathbb{R}^{n}\setminus B_{3R}(x_0)}{\phi'}\left(\frac{w(x)-w(y)}{|x-y|^{s}}\right)\frac{\,dy}{|x-y|^{n+s}}\\
    &\quad-2(1-s)\int_{\mathbb{R}^{n}\setminus B_{3R}(x_0)}{\phi'}\left(\frac{u(x)-u(y)}{|x-y|^{s}}\right)\frac{\,dy}{|x-y|^{n+s}}\coloneqq g_1(x)-g_2(x).
    \end{align*}
    For any $x\in B_{5R/2}(x_0)$ and $|h|<R/1000$, the fundamental theorem of calculus yields
    \begin{align*}
        \delta_h g_1(x)&= 2(1-s)\int_{\bbR^{n}\setminus B_{3R}(x_0)}\int_{0}^1\phi''\left(\frac{(tw(x+h)+(1-t)w(x))-w(y)}{|x+h-y|^{s}}\right)\,dt\\
        &\quad\qquad\qquad\times\frac{w(x+h)-w(x)}{|x+h-y|^{n+2s}}\,dy\\
        &-2(1-s)\int_{\bbR^{n}\setminus B_{3R}(x_0)}\int_{0}^1\phi''\left(\frac{w(x)-w(y)}{|x+th-y|^{s}}\right)\frac{s(x+th-y)\cdot h}{|x+th-y|^{s+2}}\,dt\\
        &\qquad\quad\qquad\qquad\qquad\qquad\qquad\times  \frac{(w(x)-w(y))}{|x+h-y|^{n+s}}\,dy\\
        &-2(1-s)\int_{\bbR^{n}\setminus B_{3R}(x_0)}\phi'\left(\frac{w(x)-w(y)}{|x-y|^{s}}\right)\int_{0}^{1}\frac{(n+s)(x+th-y)\cdot h}{|x+th-y|^{n+s+2}}\,dt\,dy\\
        &\eqqcolon \sum_{i=1}^{3}I_i.
    \end{align*}
    We note that for any $t\in[0,1]$, $x\in B_{5R/2}(x_0)$, $y\in\bbR^n\setminus B_{3R}(x_0)$ and $|h|<R/1000$,
    \begin{equation*}
        |x+th-y|\geq c|y-x_0|
    \end{equation*} holds for some constant $c$.
    Therefore, using $p\geq 2$, $\phi''(t)=(\phi')'(t)\leq c t^{p-2}$ for $t\geq 0$ and Young's inequality, we have 
    \begin{align*}
        |I_1|&\leq c(1-s)\int_{\bbR^{n}\setminus B_{3R}(x_0)}\left[\frac{(|w(x)|+|w_h(x)|)^{p-2}|\delta_h w(x)|}{|y-x_0|^{n+sp}}+\frac{|w(y)|^{p-2}|\delta_h w(x)|}{|y-x_0|^{n+sp}}\right]dy\\
        &\leq c(1-s)R^{-sp}{(|w(x)|+|w_h(x)|)^{p-2}|\delta_hw(x)|}\\
        &\quad+cR^{-sp}\mathrm{Tail}(w;B_{3R}(x_0))^{p-2}|\delta_h w(x)|\\
        &\leq c(1-s)R^{-(1+sp)}\left[{(|w(x)|+|w_h(x)|)^{p-1}+(R|\delta_hw(x)|)^{p-1}}\right]\\
        &\quad+ cR^{-(1+sp)}\left[\mathrm{Tail}(w;B_{3R}(x_0))^{p-1}+(R|\delta_hw(x)|)^{p-1}\right]
    \end{align*}
    for some constant $c=c(n,s_0,p)$.
    On the other hand, with $\phi''(t)t\leq ct^{p-1}$ for $t\geq 0$, $I_2$ and $I_3$ are estimated as
    \begin{align*}
        |I_2|+|I_3|&\leq c|h|(1-s)R^{-sp-1}(|w(x)|+|w_h(x)|)^{p-1}\\
        &\quad+c|h|R^{-sp-1}\mathrm{Tail}(w;B_{3R}(x_0))^{p-1}
    \end{align*}
    for some constant $c=c(n,s_0,p)$. Since $w\in W^{1,p}(\bbR^n)$, we observe 
    \begin{align*}
        \int_{B_{5R/2}(x_0)}|\delta_h w|^p\,dx\leq \abs{h}^p\int_{B_{3R}(x_0)}|\nabla w|^p\,dx.
    \end{align*}
    Using this along with the estimate $I_{i}$ for each $i=1,2$ and $3$, there holds
    \begin{align*}
        \left\|\frac{\delta_h g_1}{|h|}\right\|_{L^{p'}(B_{5R/2}(x_0))}^{p'}&\leq c(1-s)^{p'}R^{-p'(1+sp)}\|w\|^p_{L^{p}(B_{3R}(x_0))}\\
        &\quad+cR^{n-p'(1+sp)}\mathrm{Tail}(w;B_{3R}(x_0))^{p}\\
        &\quad+c(1-s)^{p'}R^{-p'(1+sp)+p}\|\nabla w\|^{p}_{L^{p}(B_{3R}(x_0))}
    \end{align*}
    for some constant $c=c(n,s_0,p)$. Thus, by the standard difference quotient approximations (See \cite[Page 292]{Eva98}) together with the fact that $w=u\xi$,
    \begin{align*}
        \|\nabla g_1\|^{p'}_{L^{p'}(B_{{5R}/{2}}(x_0))}&\leq c(1-s)^{p'}R^{-p'(1+sp)}\|u\|^p_{L^{p}(B_{3R}(x_0))}\\
        &\quad+cR^{n-p'(1+sp)}\mathrm{Tail}(u;B_{3R}(x_0))^{p}\\
        &\quad+c(1-s)^{p'}R^{-p'(1+sp)+p}\|\nabla u\|^{p}_{L^{p}(B_{3R}(x_0))}
    \end{align*}
    for some constant $c=c(n,s_0,p)$. Similarly, the same upper bound for the term $\|\nabla g_2\|^{p'}_{L^{p'}(B_{{5R}/{2}}(x_0))}$ holds. Therefore, we obtain the desired estimate. 
\end{proof}

We next provide an analogous version of Corollary \ref{el : lem.loc.deg} when $p\leq2$.
\begin{corollary}
\label{el : lem.loc.sin}
Let $B_{5R}(x_{0})\Subset\Omega$ and $u\in W^{\sigma,p}_{\mathrm{loc}}(\Omega)\cap L^{p-1}_{sp}(\bbR^{n})$ be a weak solution to \eqref{pt : eq.main} for some $p\leq2$ and $\sigma\in[s,1)$. Fix a cut-off function $\xi \in C_{c}^{\infty}\left(B_{4R}(x_{0})\right)$ with $\xi\equiv 1$ on $B_{3R}(x_{0})$. Then $w\coloneqq u\xi\in W^{\sigma,p}(\bbR^{n})$ is a weak solution to
\begin{equation*}
    (-\Delta_p)^sw=f\quad\text{in }B_{2R}(x_{0}),
\end{equation*}
where $f\in W^{\sigma(p-1),p'}(B_{5R/2}(x_0))$. In addition, we have the estimate 
\begin{align*}
    [f]_{W^{\sigma(p-1),p'}(B_{\frac{5R}{2}}(x_0))}^{p'}&\leq c(1-s)^{p'}R^{-spp'-\sigma p}(1-\sigma(p-1))^{-1}\|u\|^{p}_{L^{p}(B_{3R}(x_0))}\\
    &\quad+cR^{n-spp'-\sigma p}(1-\sigma(p-1))^{-1}\mathrm{Tail}(u;B_{3R}(x_0))^{p}\\
    &\quad+c(1-s)^{p'}R^{-spp'}[u]_{W^{\sigma,p}(B_{3R}(x_0))}^{{p}},
\end{align*}
where $c=c(n,s_0,p)$.
\end{corollary}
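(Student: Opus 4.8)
The plan is to invoke Lemma \ref{el : lem.loc} to reduce everything to an estimate for the localization term $g$, and then to exploit the subquadratic structure of the operator through the pointwise bound \eqref{ineq1.loc}. First I would observe that, by Lemma \ref{el : lem.loc}, the function $w=u\xi$ belongs to $W^{s,p}(\bbR^n)$ — and in fact to $W^{\sigma,p}(\bbR^n)$ by the very same computation behind \eqref{norm.loc} with $s$ replaced by $\sigma$, using $u\in W^{\sigma,p}_{\mathrm{loc}}(\Omega)$ and $B_{5R}(x_0)\Subset\Omega$ — and that $w$ is a weak solution of $(-\Delta_p)^sw=f+g$ in $B_{2R}(x_0)$, where $g$ obeys the pointwise inequality \eqref{ineq1.loc} for all $x_1,x_2\in B_{5R/2}(x_0)$. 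Hence it suffices to establish the asserted seminorm bound for the localization term $g$.

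The decisive point in the subquadratic range is that for $p\le2$ the shifted $N$-function satisfies $\phi'_a(t)\le c\,t^{p-1}$ for all $a,t>0$, with $c=c(p)$; indeed $\phi'_a(t)=(a\vee t)^{p-2}t\le t^{p-2}t=t^{p-1}$ since $p-2\le0$ (equivalently, this follows from \eqref{ineq2.shif} together with $\phi_a(t)\eqsim\phi'_a(t)t$). Feeding this into the first two integrals in \eqref{ineq1.loc} with $t=|u(x_1)-u(x_2)|/|x_1-y|^{s}$, using $|x_1-y|\ge|y-x_0|/6$ and \eqref{ineq1.loclem} with $m=p$ (note $s(p-1)+s=sp$), and using $\phi'(\tau)=\tau^{p-1}$ for $\tau>0$ on the remaining two terms, I would obtain the clean pointwise estimate
\begin{equation*}
	|g(x_1)-g(x_2)|\le c(1-s)R^{-sp}|u(x_1)-u(x_2)|^{p-1}+cR^{-sp}\bigl[(1-s)|u(x_2)|^{p-1}+\mathrm{Tail}(u;B_{3R}(x_0))^{p-1}\bigr]\tfrac{|x_1-x_2|}{R}
\end{equation*}
for all $x_1,x_2\in B_{5R/2}(x_0)$, with $c=c(n,s_0,p)$; here the factor $1/(sp)\le1/(s_0p)$ produced by \eqref{ineq1.loclem} is absorbed into $c$, which is what keeps the constant uniform as $s\to1^-$.

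The final step is to raise this estimate to the power $p'$ and integrate it against $|x_1-x_2|^{-n-\sigma(p-1)p'}\,dx_1\,dx_2$ over $B_{5R/2}(x_0)\times B_{5R/2}(x_0)$, using the two algebraic identities $(p-1)p'=p$ and $\sigma(p-1)p'=\sigma p$. The first term then contributes exactly $c(1-s)^{p'}R^{-spp'}[u]^{p}_{W^{\sigma,p}(B_{5R/2}(x_0))}$, which is dominated by the last term of the claim (and by the corresponding quantity on $B_{3R}(x_0)$). For the second term, after pulling out $|x_1-x_2|^{p'}$ one is left with the inner integral $\int_{B_{5R/2}(x_0)}|x_1-x_2|^{-n-\sigma p+p'}\,dx_1$; since $\sigma<1$ and $p-1\le1$ we have $\sigma(p-1)<1$, hence $\sigma p<p'$, so this integral converges and is bounded by $c(1-\sigma(p-1))^{-1}R^{p'-\sigma p}$ — this is precisely where the factor $(1-\sigma(p-1))^{-1}$ in the statement is born. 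Integrating the remaining quantity $(1-s)^{p'}|u(x_2)|^{p}+\mathrm{Tail}(u;B_{3R}(x_0))^{p}$ over $B_{5R/2}(x_0)$ (which produces $(1-s)^{p'}\|u\|_{L^p(B_{3R}(x_0))}^{p}$ and, via $|B_{5R/2}|\eqsim R^n$, the term $R^n\,\mathrm{Tail}(u;B_{3R}(x_0))^p$) and collecting the powers of $R$, namely $R^{-spp'}\cdot R^{-p'}\cdot R^{p'-\sigma p}=R^{-spp'-\sigma p}$, reproduces the first two terms of the claimed bound.

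The only genuinely delicate point is the convergence of the singular integral $\int_{B_{5R/2}(x_0)}|x_1-x_2|^{-n-\sigma p+p'}\,dx_1$: this is the reason the correct differentiability exponent for $g$ is exactly $\sigma(p-1)$ — any larger exponent would make this integral diverge — and why the factor $(1-\sigma(p-1))^{-1}$, which degenerates as $\sigma(p-1)\to1$ (i.e.\ as $p\to2$ and $\sigma\to1$ simultaneously), must be tracked carefully throughout. Everything else amounts to elementary bookkeeping of constants, with the $(1-s)$ factors kept explicit on the right-hand side so that no constant blows up in the local limit $s\to1^-$.
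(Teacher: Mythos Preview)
Your proof is correct and follows essentially the same approach as the paper: apply Lemma~\ref{el : lem.loc} to obtain the pointwise bound \eqref{ineq1.loc} for $g$, use the subquadratic inequality $\phi'_a(t)\le c\,t^{p-1}$ (equivalently \eqref{ineq2.shif}) on the first two terms and \eqref{ineq.estj3j4} on the last two, then integrate against $|x_1-x_2|^{-n-\sigma p}$ and track the factor $(1-\sigma(p-1))^{-1}$ from the convergent singular integral. The paper's proof is organized identically, splitting into $J_1+J_2$ and $J_3+J_4$ and performing the same computations; your observation that the statement's ``$f$'' is really $f+g$ (with the estimate pertaining to $g$) is also how the corollary is used downstream in the paper.
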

\begin{proof}
Lemma \ref{el : lem.loc} yields that $w\in W^{\sigma,p}(\bbR^{n})$ is a weak solution to 
\begin{equation*}
    (-\Delta_p)^sw=f+g\quad\text{in }B_{2R}(x_{0})
\end{equation*}
with \eqref{esti.g}. By \eqref{ineq2.shif}, we observe 
\begin{align*}
    J_1+J_2&\leq c(1-s)\int_{\bbR^n\setminus B_{3R}(x_0)}\frac{|u(x_1)-u(x_2)|^{p-1}}{|x_1-y|^{n+sp}}\,dy\\
    &\leq cR^{-sp}(1-s)|u(x_1)-u(x_2)|^{p-1}
\end{align*}
for some constant $c=c(n,s_0,p)$, where $J_1$ and $J_2$ are determined in \eqref{esti.g}. Thus
\begin{align*}
    &\sum_{i=1}^{2}\int_{B_{\frac{5R}{2}}(x_0)}\int_{B_{\frac{5R}{2}}(x_0)}|J_i|^{p'}\frac{\,dx_1\,dx_2}{|x_1-x_2|^{n+\sigma p}}\\
    &\leq c(1-s)^{p'}R^{-spp'}\int_{B_{\frac{5R}{2}}(x_0)}\int_{B_{\frac{5R}{2}}(x_0)}{|u(x_1)-u(x_2)|^p}\frac{\,dx_1\,dx_2}{|x_1-x_2|^{n+\sigma p}},
\end{align*}
where $c=c(n,s_0,p)$. Using \eqref{ineq.estj3j4}, we arrive at
\begin{align*}
    &\sum_{i=3}^{4}\int_{B_{\frac{5R}{2}}(x_0)}\int_{B_{\frac{5R}{2}}(x_0)}|J_i|^{p'}\frac{\,dx_1\,dx_2}{|x_1-x_2|^{n+\sigma p}}\\
    &\leq \frac{c}{1-(p-1)\sigma}R^{-spp'-\sigma p}\left((1-s)^{p'}\|u\|^{p}_{L^{p}\big(B_{\frac{5R}{2}}(x_0)\big)}+R^n\mathrm{Tail}(u;B_{3R}(x_0))^{p}\right)
\end{align*}
for some constant $c=c(n,s_0,p)$, where $J_3$ and $J_4$ are determined in \eqref{esti.g}.
Combining the above two estimates, the desired estimate is obtained.
\end{proof}
\section{Non-differentiable data} \label{sec:nddata}
In this section, we establish higher differentiability of weak solutions to \eqref{pt : eq.main} in the case when the right-hand side $f$ belongs to $L^{p'}$.

For the remaining sections, for any $t\in[0,1]$ we shall use the notation
\begin{align}\label{nota}
    \sigma_{x,y} u =  \frac{u(x)+u(y)}{2} \quad\text{and}\quad\delta_{x,y}^{t}u=\frac{u(x)-u(y)}{|x-y|^{t}}.
\end{align}

Denoting $B_r=B_r(0)$, we first mention the scaling properties of our equation.
\begin{lemma}\label{lem.scale}
Let $f\in L^{p'}(B_{R}(x_{0}))$ be given and $w\in W^{s,p}(B_{R}(x_{0}))\cap L^{p-1}_{sp}(\bbR^{n})$ be a weak solution to 
\begin{equation*}
    (-\Delta_p)^sw=f\quad\text{in }B_{R}(x_{0}).
\end{equation*}
Then we see that $w_{R}(x)=\frac{w(R x+x_{0})}{R^{s}}\in W^{s,p}(B_{1})\cap L^{p-1}_{sp}(\bbR^{n})$ is a weak solution to 
\begin{equation*}
    (-\Delta_p)^sw_{R}=f_{R}\quad\text{in }B_{1},
\end{equation*}
where $f_{R}(x)=R^{s}f(R x+x_{0})\in L^{p'}(B_{1})$.
\end{lemma}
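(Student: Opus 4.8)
The statement to prove is Lemma~\ref{lem.scale}, a routine scaling lemma. I would approach it as a direct verification via change of variables in the weak formulation.

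\textbf{Setup and strategy.} The plan is to start from the weak formulation of $(-\Delta_p)^s w = f$ in $B_R(x_0)$ and substitute $x = Rx' + x_0$, $y = Ry' + x_0$, tracking how each factor in the integrand scales. The key observation is the choice of exponent $s$ in the normalization $w_R(x) = w(Rx+x_0)/R^s$: this is precisely the exponent for which the $s$-H\"older difference quotient $\delta^s_{x,y} w$ is scale-invariant, i.e. $\frac{w(Rx'+x_0)-w(Ry'+x_0)}{|Rx'-Ry'|^s} = \frac{w_R(x')-w_R(y')}{|x'-y'|^s}$ after multiplying through by the right power of $R$. So the nonlinear nonlocal operator transforms covariantly, and the right-hand side picks up the compensating factor $R^s$, giving $f_R(x) = R^s f(Rx+x_0)$.

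\textbf{Key steps.} First I would fix a test function $\psi' \in W^{s,p}(B_1)$ with compact support in $B_1$ and set $\psi(x) = \psi'((x-x_0)/R)$, which has compact support in $B_R(x_0)$ and lies in $W^{s,p}(B_R(x_0))$, hence is admissible in Definition~\ref{def:weaksol} for the equation on $B_R(x_0)$. Next, in the double integral
\[
(1-s)\int_{\bbR^n}\int_{\bbR^n} \frac{|w(x)-w(y)|^{p-2}(w(x)-w(y))}{|x-y|^{n+sp}}(\psi(x)-\psi(y))\,dx\,dy = \int_{B_R(x_0)} f\psi\,dx,
\]
I substitute $x = Rx'+x_0$, $y=Ry'+x_0$. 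The volume element contributes $R^{2n}$; the denominator $|x-y|^{n+sp}$ contributes $R^{-(n+sp)}$; writing $w(x)-w(y) = R^s(w_R(x')-w_R(y'))$ gives $R^{s(p-1)}$ from the numerator $|w(x)-w(y)|^{p-2}(w(x)-w(y))$; and $\psi(x)-\psi(y) = \psi'(x')-\psi'(y')$. The net power of $R$ on the left is $2n - (n+sp) + s(p-1) = n - s$. On the right, $\int_{B_R(x_0)} f(x)\psi(x)\,dx = R^n\int_{B_1} f(Rx'+x_0)\psi'(x')\,dx'$, a power $R^n$. Dividing both sides by $R^{n-s}$ yields exactly the weak formulation of $(-\Delta_p)^s w_R = f_R$ in $B_1$ with $f_R(x) = R^s f(Rx+x_0)$.

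\textbf{Remaining checks and the main obstacle.} I would then verify the membership statements: $w_R \in W^{s,p}(B_1)$ follows from $[w_R]_{W^{s,p}(B_1)}^p = R^{-n}[w]_{W^{s,p}(B_R(x_0))}^p$ by the same change of variables; $w_R \in L^{p-1}_{sp}(\bbR^n)$ follows since $\int_{\bbR^n}\frac{|w_R(y)|^{p-1}}{(1+|y|)^{n+sp}}dy$ transforms to a finite multiple of the corresponding tail integral for $w$ (using that the integrand outside a large ball is comparable); and $f_R \in L^{p'}(B_1)$ with $\|f_R\|_{L^{p'}(B_1)}^{p'} = R^{sp'-n}\|f\|_{L^{p'}(B_R(x_0))}^{p'}$. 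Honestly, there is no real obstacle here — the only point requiring a little care is confirming that the bijection $\psi' \mapsto \psi$ between admissible test functions on $B_1$ and on $B_R(x_0)$ is onto, so that the transformed identity holds for \emph{all} admissible $\psi'$ and not just a dense subclass; this is immediate since the change of variables is a diffeomorphism preserving compact supports and the $W^{s,p}$ class. I would keep the writeup short, essentially just displaying the substitution and reading off the result.
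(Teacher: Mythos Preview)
Your proposal is correct; the change-of-variables bookkeeping is accurate (the net power $R^{n-s}$ on the left versus $R^n$ on the right indeed yields the factor $R^s$ in $f_R$), and the membership checks for $w_R$ and $f_R$ are straightforward as you outline. The paper itself does not give a proof of this lemma at all --- it is stated as a routine scaling property and immediately used --- so your writeup would in fact supply more detail than the original.
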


Relying on different quotients techniques, we provide the following estimate in terms of the $V$-function defined in \eqref{defn.avftn}.
\begin{lemma}\label{lem.combvest}
    Let $u\in W^{\gamma,p}(\bbR^{n})$ be a weak solution to 
    \begin{equation}\label{maineq.comb}
        (-\Delta_p)^su=f\quad\text{in }B_{2}
    \end{equation}
    with $f\in L^{p'}(B_{2})$ and $\gamma\in[s,1)$. Let us take a cut-off function $\psi\in C_{c}^{\infty}(B_{3/4})$ with $\psi\equiv 1$ in $B_{1/2}$ and
\begin{equation}\label{test.sing}
    |\nabla \psi|+ |\nabla^{2}\psi| \leq c 
\end{equation}
for some constant $c=c(n)$. Then for any $|h|<1/1000$, we have
    \begin{equation}\label{ineq.vp}
    \begin{aligned}
        &(1-s)\int_{\bbR^{n}}\int_{\bbR^{n}}\left|\delta_h V\left(\delta_{x,y}^su\right)\right|^{2}\psi(x)\psi(y)\frac{\,dx\,dy}{|x-y|^{n}}\\
        &\leq c\left(\left((1-s)^{\frac1p}[u]_{W^{s,p}(\bbR^{n})}\right)^{p-1}+\|f\|_{L^{p'}(B_1)}\right)\\
        &\quad\times \left(|h|^{\gamma}(1-\gamma)^{\frac{1}{p}}[\delta_h (u\psi)]_{W^{\gamma,p}(\bbR^{n})}+|h|^2\|u\|_{L^{p}(\bbR^{n})}+|h|\left\|{\delta_h u}\right\|_{L^{p}(\bbR^{n})}\right)
    \end{aligned}
    \end{equation}
    for some constant $c=c(n,s_0,p)$, where the constant $s_0$ is determined in \eqref{s0assum}.
\end{lemma}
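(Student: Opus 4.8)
The plan is to differentiate the equation along $h$ by testing the weak formulation with the discrete second difference $\varphi=\delta_{-h}\bigl(\psi^{2}\delta_{h}u\bigr)$ and then to peel off a coercive $V$-term. Since $\gamma\ge s$ and $u\in W^{\gamma,p}(\bbR^{n})$, the function $\psi^{2}\delta_{h}u$ belongs to $W^{\gamma,p}(\bbR^{n})\subset W^{s,p}_{\mathrm{loc}}(B_{2})$ with compact support in $B_{3/4}$, so for $|h|<1/1000$ the function $\varphi$ has compact support in $B_{2}$ and is admissible in Definition~\ref{def:weaksol}. Abbreviating $U\coloneqq\delta^{s}_{x,y}u$ and $U_{h}\coloneqq\delta^{s}_{x,y}u_{h}$, and exploiting that the kernel $|x-y|^{-n}$ is translation invariant in $(x,y)$, a discrete integration by parts in both variables together with $\delta^{s}_{x,y}(\delta_{h}u)=U_{h}-U$ and $A(U_{h})-A(U)=\delta_{h}\bigl(A(U)\bigr)$ rewrites the equation as
\[
(1-s)\iint_{\bbR^{n}\times\bbR^{n}}\bigl(A(U_{h})-A(U)\bigr)\,\delta^{s}_{x,y}\bigl(\psi^{2}\delta_{h}u\bigr)\,\frac{dx\,dy}{|x-y|^{n}}=\int_{B_{2}}f\,\delta_{-h}\bigl(\psi^{2}\delta_{h}u\bigr)\,dx .
\]
Next I would split the inner finite difference by the Leibniz-type identity
\[
\delta^{s}_{x,y}\bigl(\psi^{2}\delta_{h}u\bigr)=\frac{\psi(x)^{2}+\psi(y)^{2}}{2}\,(U_{h}-U)+\bigl(\psi(x)+\psi(y)\bigr)\,\bigl(\sigma_{x,y}\delta_{h}u\bigr)\,\delta^{s}_{x,y}\psi .
\]
For the principal term, Lemma~\ref{el : lem.eleineq}(1) gives $\bigl(A(U_{h})-A(U)\bigr)(U_{h}-U)\eqsim|V(U_{h})-V(U)|^{2}\ge 0$, and since $\psi(x)\psi(y)\le\tfrac12\bigl(\psi(x)^{2}+\psi(y)^{2}\bigr)$, the left-hand side of \eqref{ineq.vp} is bounded, up to a $p$-dependent constant, by $(1-s)$ times this principal term. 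It therefore suffices to estimate the $f$-term and the commutator term on the right by the product in \eqref{ineq.vp}.

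For the $f$-term I would use H\"older's inequality, $\bigl|\int_{B_{2}}f\,\delta_{-h}(\psi^{2}\delta_{h}u)\bigr|\le\|f\|_{L^{p'}(B_{1})}\|\delta_{-h}(\psi^{2}\delta_{h}u)\|_{L^{p}(\bbR^{n})}$ (the support of $\delta_{-h}(\psi^{2}\delta_{h}u)$ lies in $B_{1}$), and then expand $\delta_{-h}(\psi^{2}\delta_{h}u)$ by the discrete product rule as $-\psi^{2}\,(\delta^{2}_{h}u)(\cdot-h)$ plus lower-order terms in which a difference operator falls on $\psi$. Replacing $\psi^{2}\delta_{h}^{2}u$ by $\delta_{h}^{2}(u\psi)$ up to further Leibniz errors and invoking Lemma~\ref{lem.embdiff}(1) — applied once to $\delta_{h}u$ and once to $\delta_{h}(u\psi)$ — together with the bounds $\|\delta_{\pm h}\psi\|_{L^{\infty}(\bbR^{n})}+\|\nabla\delta_{\pm h}\psi\|_{L^{\infty}(\bbR^{n})}\le c|h|$ and $\|\delta_{h}^{2}\psi\|_{L^{\infty}(\bbR^{n})}\le c|h|^{2}$ coming from \eqref{test.sing}, one arrives at
\[
\|\delta_{-h}(\psi^{2}\delta_{h}u)\|_{L^{p}(\bbR^{n})}\le c\Bigl(|h|^{\gamma}(1-\gamma)^{1/p}[\delta_{h}(u\psi)]_{W^{\gamma,p}(\bbR^{n})}+|h|^{2}\|u\|_{L^{p}(\bbR^{n})}+|h|\|\delta_{h}u\|_{L^{p}(\bbR^{n})}\Bigr),
\]
which is exactly the second factor in \eqref{ineq.vp}; in particular the $f$-term has the asserted form.

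The commutator term is the step I expect to be most delicate. I would first move the translation off $A$: the change of variables $(x,y)\mapsto(x-h,y-h)$ in the $A(U_{h})$-part turns the commutator into $(1-s)\iint A(U)\,\delta_{-h}\Phi\,\frac{dx\,dy}{|x-y|^{n}}$ with $\Phi(x,y)=(\psi(x)+\psi(y))(\sigma_{x,y}\delta_{h}u)\,\delta^{s}_{x,y}\psi$, and then expand $\delta_{-h}\Phi$ by the discrete product rule. In each summand $\delta_{-h}$ either hits a $\psi$-factor — and then $\delta_{-h}\psi$, $\delta^{s}_{x,y}(\delta_{-h}\psi)$ are pointwise $\le c|h|$, respectively $\le c|h|\min\{|x-y|^{-s},|x-y|^{1-s}\}$, by \eqref{test.sing} — or it hits $\sigma_{x,y}\delta_{h}u$, producing the second difference $\sigma_{x-h,y-h}\delta_{h}^{2}u$. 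In the first case I would use $|A(U)|\le|U|^{p-1}$ and H\"older in the measure $|x-y|^{-n}dx\,dy$ with exponents $p'$ and $p$, together with $\iint_{\{x\in\mathrm{supp}\,\psi\}}|U|^{p}\,\frac{dx\,dy}{|x-y|^{n}}\le[u]^{p}_{W^{s,p}(\bbR^{n})}$ for the $L^{p'}$-factor (which, with the prefactor $(1-s)$, produces $\bigl((1-s)^{1/p}[u]_{W^{s,p}(\bbR^{n})}\bigr)^{p-1}$) and the kernel bound $\int_{\bbR^{n}}|\delta^{s}_{x,y}\psi|^{p}\,dy\le c\bigl((1-s)s\bigr)^{-1}$ for the $L^{p}$-factor, the extra power $|h|$ coming from $\delta_{-h}\psi$; this yields the term $|h|\|\delta_{h}u\|_{L^{p}}$. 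In the second case, after replacing $\delta_{h}^{2}u$ on $\mathrm{supp}\,\psi$ by $\delta_{h}^{2}(u\psi)$ up to Leibniz errors absorbed into $|h|^{2}\|u\|_{L^{p}}$ and $|h|\|\delta_{h}u\|_{L^{p}}$, Lemma~\ref{lem.embdiff}(1) bounds the second difference by $c(1-\gamma)^{1/p}|h|^{\gamma}[\delta_{h}(u\psi)]_{W^{\gamma,p}(\bbR^{n})}$, which is the remaining term of the second factor. Combining the principal, the $f$ and the commutator estimates and absorbing the $s$-dependence via $s\ge s_{0}$ into a constant $c=c(n,s_{0},p)$ gives \eqref{ineq.vp}. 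The main obstacle is precisely the bookkeeping in the commutator: one must extract one genuine power of $|h|$ beyond the $|h|^{\gamma}$ already supplied by $\delta_{h}u$, keep the $W^{\gamma,p}$-seminorm attached to the localized function $u\psi$ rather than to $u$, and make sure every kernel integral generates at worst the harmless factor $((1-s)s)^{-1}$ that is compensated by the $(1-s)$ in front and by $s\ge s_{0}$.
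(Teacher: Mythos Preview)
Your overall strategy---testing with $\delta_{-h}(\psi^{2}\delta_{h}u)$, extracting the coercive $V$-term, and handling the $f$-term via the discrete product rule and Lemma~\ref{lem.embdiff}---is correct and matches the paper. The gap is in the commutator, specifically in your ``second case''.

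When $\delta_{-h}$ hits $b=\sigma_{x,y}\delta_{h}u$ it produces $-\sigma_{x-h,y-h}\delta_{h}^{2}u=-\tfrac{1}{2}\bigl(\delta_{h}^{2}u(x-h)+\delta_{h}^{2}u(y-h)\bigr)$. The surviving factors $a_{-h}=\psi_{-h}(x)+\psi_{-h}(y)$ and $c=\delta^{s}_{x,y}\psi$ force only \emph{one} of $x,y$ into a neighborhood of $\mathrm{supp}\,\psi$, not both. After H\"older the resulting $L^{p}$-factor is, up to constants, $((1-s)s)^{-1/p}\|\delta_{h}^{2}u\|_{L^{p}(\bbR^{n})}$, a global norm; your proposed ``replacement of $\delta_{h}^{2}u$ on $\mathrm{supp}\,\psi$ by $\delta_{h}^{2}(u\psi)$'' is not available, because $\delta_{h}^{2}u$ is being evaluated far from $\mathrm{supp}\,\psi$. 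Lemma~\ref{lem.embdiff} then yields $|h|^{\gamma}[\delta_{h}u]_{W^{\gamma,p}(\bbR^{n})}$ rather than $|h|^{\gamma}[\delta_{h}(u\psi)]_{W^{\gamma,p}(\bbR^{n})}$. This is not \eqref{ineq.vp}, and more importantly it breaks the absorption in Lemmas~\ref{lem.vrefd}--\ref{lem.vrefs}, where one closes the loop by bounding $[\delta_{h}(u\psi)]_{W^{\gamma,p}}$ back by the \emph{localized} $V$-integral carrying the weight $\psi(x)\psi(y)$.

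The paper resolves this by an additional symmetrization \emph{before} shifting: it rewrites the sum of the principal and commutator pieces as $J_{1}+J_{2}$, where the new commutator
\[
J_{2}=2\iint\delta_{h}A(\delta^{s}_{x,y}u)\,\psi(x)\,(\delta^{s}_{x,y}\psi)\,\delta_{h}u(x)\,\frac{dx\,dy}{|x-y|^{n}}
\]
has the cutoff $\psi(x)$ and the difference $\delta_{h}u(x)$ at the \emph{same} point. One then writes $\psi(x)\delta_{h}u(x)=\delta_{h}(u\psi)(x)-u_{h}(x)\delta_{h}\psi(x)$ and only afterwards moves the translation off $A$. The shift now acts on the compactly supported function $\delta_{h}(u\psi)$, producing $\delta_{h}^{2}(u\psi)$ and hence exactly the factor $[\delta_{h}(u\psi)]_{W^{\gamma,p}(\bbR^{n})}$ via Lemma~\ref{lem.embdiff}. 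Inserting this pairing step repairs your argument.
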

\begin{proof}
 Let us fix $h\in B_{1/1000}\setminus \{0\}$. 
By recalling definitions \eqref{defn.avftn} and testing $\delta_{-h}\left(\psi^{2}\delta_h u\right)$ to \eqref{maineq.comb}, we get 
\begin{align*}
    0&=(1-s)\int_{\bbR^{n}}\int_{\bbR^{n}}\delta_h A(\delta_{x,y}^{s}u){\delta_{x,y}^{s}(\psi^{2}\delta_{h}u)}\frac{\,dx\,dy}{|x-y|^{n}}-\int_{B_2}f\delta_{-h}\left(\psi^{2}\delta_h u\right)\,dx\\
    &\eqqcolon (1-s)I_1-I_2.
\end{align*}

\noindent
\textbf{Estimate of $I_1$.}
We first rewrite $I_1$ as 
\begin{align*}
    {I}_1 &= \int_{\RRn} \int_{\RRn} \delta_h A(\delta^s_{x,y} u) (\sigma_{x,y} \psi^2) \delta^s_{x,y} \delta_h u \dxyn\\
    &\quad+ \int_{\RRn} \int_{\RRn} \delta_h A(\delta^s_{x,y}u) (\delta^s_{x,y} \psi^2) \sigma_{x,y} \delta_h u \dxyn
    \\
    &\eqqcolon I_{1,1}+I_{1,2}.
\end{align*}
Using the fact that $\delta^{s}_{x,y}\psi^2=\psi(x)\delta^s_{x,y}\psi+\psi(y)\delta^s_{x,y}\psi$, we can further estimate $I_{1,2}$ as
\begin{align*}
    I_{1,2}&=\int_{\bbR^n}\int_{\bbR^n}\delta_h A(\delta_{x,y}^{s}u)\psi(x)(\delta^{s}_{x,y}\psi )\left(\frac{\delta_h u(y)-\delta_h u(x)}{2}\right)\dxyn\\
    &\quad+\int_{\bbR^n}\int_{\bbR^n}\delta_h A(\delta_{x,y}^{s}u)\psi(x)(\delta^{s}_{x,y}\psi)(\delta_h u(x))\dxyn\\
    &\quad+\int_{\bbR^n}\int_{\bbR^n}\delta_h A(\delta_{x,y}^{s}u)\psi(y)(\delta^{s}_{x,y}\psi) \left(\frac{\delta_h u(x)-\delta_h u(y)}{2}\right)\dxyn\\
    &\quad+\int_{\bbR^n}\int_{\bbR^n}\delta_h A(\delta_{x,y}^{s}u)\psi(y)(\delta^{s}_{x,y}\psi)  (\delta_h u(y))\dxyn.
\end{align*}
    By interchanging $x$ and $y$ along with the fact that $A(-X)=-A(X)$, after some manipulations one can see that 
    \begin{align*}
        I_{1,1}+I_{1,2}&=\int_{\RRn} \int_{\RRn} \delta_h A(\delta^s_{x,y} u) \psi(x)\psi(y) (\delta^s_{x,y} \delta_h u )\dxyn\\
        &\quad+2\int_{\bbR^n}\int_{\bbR^n}\delta_h A(\delta_{x,y}^{s}u)\psi(x)(\delta^{s}_{x,y}\psi)  (\delta_h u(x))\dxyn\eqqcolon J_1+J_2.
    \end{align*}

    Using Lemma \ref{el : lem.eleineq}, we estimate $J_1$ as follows
    \begin{align*}
        J_1\geq \int_{\bbR^{n}}\int_{\bbR^{n}}\frac{1}{c}|\delta_h(V(\delta_{x,y}^{s}u))|^{2}\psi(x)\psi(y)\frac{\,dx\,dy}{|x-y|^{n}},
    \end{align*}
    where $c=c(p)$. On the other hand, we rewrite $J_2$ as 
    \begin{align*}
        J_2&= 2\int_{\bbR^{n}}\int_{\bbR^{n}}\delta_h A(\delta_{x,y}^{s}u)\,(\delta_{h}(u\psi)(x))\,({\delta_{x,y}^{s}\psi})\frac{\,dx\,dy}{|x-y|^{n}}\\
        &\quad-2\int_{\bbR^{n}}\int_{\bbR^{n}}\delta_h A(\delta_{x,y}^{s}u)\,((u_h \delta_h \psi)(x))\,({\delta_{x,y}^{s}\psi})\frac{\,dx\,dy}{|x-y|^{n}}\eqqcolon J_{2,1}+J_{2,2}.
    \end{align*}
    By the changing variables, we deduce that
    \begin{align*}
        J_{2,1}&=2\int_{\bbR^{n}}\int_{\bbR^{n}} A(\delta_{x,y}^{s}u_h)\,(\delta_{h}(u\psi)(x))\,({\delta_{x,y}^{s}\psi})\frac{\,dx\,dy}{|x-y|^{n}}\\
        &\quad-2\int_{\bbR^{n}}\int_{\bbR^{n}}A(\delta_{x,y}^{s}u_h)\,(\delta_{h}(u_h\psi_h)(x))\,({\delta_{x,y}^{s}\psi_h})\frac{\,dx\,dy}{|x-y|^{n}}\\
        &=-2\int_{\bbR^{n}}\int_{\bbR^{n}}A(\delta_{x,y}^{s}u_h)\,(\delta^{2}_{h}(u\psi)(x))\,({\delta_{x,y}^{s}\psi})\frac{\,dx\,dy}{|x-y|^{n}}\\
        &\quad-2\int_{\bbR^{n}}\int_{\bbR^{n}}A(\delta_{x,y}^{s}u_h)\,(\delta_{h}(u_h\psi_h)(x))\,({\delta_{x,y}^{s}\delta_h\psi})\frac{\,dx\,dy}{|x-y|^{n}}\\
        &\eqqcolon J_{2,1,1}+J_{2,1,2}.
    \end{align*}
    Using H\"older's inequality, we estimate $J_{2,1,1}$ as 
    \begin{align}\label{ineq.j211}
        |J_{2,1,1}|&\leq c[u]_{W^{s,p}(\bbR^{n})}^{p-1}\left(\int_{\bbR^{n}}\int_{\bbR^n}|(\delta^2_h(u\psi)(x))\,(\delta^s_{x,y}\psi)|^{p}\dxyn\right)^{\frac{1}{p}}.
    \end{align}
    Next, observe that 
    \begin{align}\label{obser}
        \int_{\bbR^{n}}\int_{\bbR^n}|(\delta^2_h(u\psi)(x))\,(\delta^s_{x,y}\psi)|^{p}\dxyn&\leq \int_{B_1}\int_{B_1}|(\delta^2_h(u\psi)(x))\,(\delta^s_{x,y}\psi)|^{p}\dxyn\nonumber\\
        &\quad+\int_{B_1}\int_{\bbR^n\setminus B_1}|\delta^2_h(u\psi)(x)|^p\frac{|\psi(y)|^p\,dx\,dy}{|x-y|^{n+sp}}\nonumber\\
        &\quad+\int_{\bbR^n\setminus B_1}\int_{B_1}|\delta^2_h(u\psi)(x)|^p\frac{|\psi(x)|^p\,dx\,dy}{|x-y|^{n+sp}}.
    \end{align}
    In view of \eqref{test.sing} and the fact that $\psi\equiv 0$ on $\bbR^{n}\setminus B_{3/4}$ and $\delta^2_h(u\psi)\equiv 0$ on $\bbR^n\setminus B_{7/8}$, we further estimate the right-hand side of the above as 
    \begin{align*}
        \int_{\bbR^{n}}\int_{\bbR^n}|(\delta^2_h(u\psi)(x))\,(\delta^s_{x,y}\psi)|^{p}\dxyn\leq c(1-s)^{-1}\|\delta^2_h(u\psi)\|_{L^p(\bbR^n)}^{p}
    \end{align*}
    for some constant $c=c(n,s_0,p)$.
    Thus, plugging this into \eqref{ineq.j211} along with \eqref{lem.embdiff.a1} in Lemma \ref{lem.embdiff}, we have 
    \begin{align*}
        |J_{2,1,1}|&\leq c(1-s)^{-\frac1p}[u]_{W^{s,p}(\bbR^{n})}^{p-1}\left(\int_{\bbR^{n}}|\delta^2_h(u\psi)|^{p}\,dx\right)^{\frac{1}{p}}\\
        &\leq c(1-s)^{-\frac1p}(1-\gamma)^{\frac{1}{p}}|h|^{\gamma}[u]_{W^{s,p}(\bbR^{n})}^{p-1}[\delta_h(u\psi)]_{W^{\gamma,p}(\bbR^n)}
    \end{align*}
    for some $c=c(n,s_0,p)$.

    With aid of H\"older's inequality, we estimate $J_{2,1,2}$ as 
    \begin{align*}
        |J_{2,1,2}|&\leq c[u]_{W^{s,p}(\bbR^{n})}^{p-1}\left(\int_{\bbR^{n}}\int_{\bbR^n}|(\delta_h(u_h\psi_h)(x))\,(\delta^s_{x,y}\delta_h\psi)|^{p}\dxyn\right)^{\frac{1}{p}}.
    \end{align*}
    Note that for some $c=c(n,p)$,
    \begin{align}\label{ineq.test.grhe}
        |\delta_{x,y}^{1}\delta_h\psi|=\frac{|\delta_h \psi(x)-\delta_h \psi(y)|}{|x-y|}\leq c|h|\|\nabla^{2}\psi\|_{L^{\infty}}\quad\text{and}\quad|\delta_{h}\psi(x)|\leq |h|\|\nabla \psi\|_{L^{\infty}}.
    \end{align}
    Following the same lines as in \eqref{obser} along with \eqref{ineq.test.grhe} and \eqref{test.sing} yields
    \begin{align*}
        |J_{2,1,2}|&\leq c(1-s)^{-\frac1p}|h|[u]_{W^{s,p}(\bbR^n)}^{p-1}\|\delta_{h}(u\psi)\|_{L^{p}(\bbR^{n})}\\
        &\leq c(1-s)^{-\frac1p}|h|[u]_{W^{s,p}(\bbR^n)}^{p-1}\left(\|\delta_{h}u\|_{L^{p}(\bbR^{n})}+\abs{h}\|u\|_{L^{p}(\bbR^{n})}\right),
    \end{align*}
    where $c=c(n,s_0,p)$.
    Combining all the estimates $J_{2,1,i}$ for $i \in \{1,2\}$, we obtain
    \begin{align*}
        |J_{2,1}|&\leq c(1-s)^{-\frac1p}(1-\gamma)^{\frac{1}{p}}|h|^{\gamma}[u]_{W^{s,p}(\bbR^{n})}^{p-1}[\delta_h(u\psi)]_{W^{\gamma,p}(\bbR^n)}\\
        &\quad+c(1-s)^{-\frac1p}|h|[u]_{W^{s,p}(\bbR^n)}^{p-1}\|\delta_{h}(u\psi)\|_{L^{p}(\bbR^{n})},
    \end{align*}
    where $c=c(n,s_0,p)$.

    Changing variables, we next estimate $J_{2,2}$ as 
    \begin{align*}
        J_{2,2}&=-2\int_{\bbR^{n}}\int_{\bbR^{n}}A\left(\delta_{x,y}^{s}u_h\right){(\delta_{x,y}^{s}\psi)\,\delta_h \psi(x)\,u_h(x)}\frac{\,dx\,dy}{|x-y|^{n}}\\
        &\quad+2\int_{\bbR^{n}}\int_{\bbR^{n}}A\left(\delta_{x,y}^{s}u_h\right){(\delta^s_{x,y}\psi_h)\delta_h\psi_h(x)\,u_{2h}(x)}\frac{\,dx\,dy}{|x-y|^{n}}\\
        &=2\int_{\bbR^{n}}\int_{\bbR^{n}}A\left(\delta_{x,y}^{s}u_h\right){(\delta_{x,y}^{s}\delta_h\psi)\,\delta_h \psi(x)\,u_h(x)}\frac{\,dx\,dy}{|x-y|^{n}}\\
        &\quad+2\int_{\bbR^{n}}\int_{\bbR^{n}}A\left(\delta_{x,y}^{s}u_h\right){(\delta_{x,y}^{s}\psi_h)\,\delta^{2}_h \psi(x)\,u_h(x)}\frac{\,dx\,dy}{|x-y|^{n}}\\
        &\quad+2\int_{\bbR^{n}}\int_{\bbR^{n}}A\left(\delta_{x,y}^{s}u_h\right){(\delta_{x,y}^{s}\psi_h)\,\delta_h \psi_h(x)\,\delta_h u_h(x)}\frac{\,dx\,dy}{|x-y|^{n}}\\
        &\eqqcolon \sum_{i=1}^{3}J_{2,2,i}.
    \end{align*}
    Using H\"older's inequality, \eqref{ineq.test.grhe} and the fact that 
    \begin{equation*}
        |x-y|\geq |y|/8\quad\text{for any }x\in B_{7/8} \text{ and }y\in \bbR^n\setminus B_1,
    \end{equation*} we estimate $J_{2,2,1}$ as 
    \begin{align*}
        |J_{2,2,1}|&\leq c|h|[u]_{W^{s,p}(\bbR^n)}^{p-1}\left(\int_{\bbR^n}\int_{B_{7/8}}(|\delta^s_{x,y}\delta_h\psi||u_h(x)|)^{p}\dxyn\right)^{\frac{1}{p}}\\
        &\leq c|h|^{2}[u]_{W^{s,p}(\bbR^n)}^{p-1}\left(\int_{B_1}\int_{B_{7/8}}|u_h(x)|^{p}\frac{\,dx\,dy}{|x-y|^{n-p+sp}}\right)^{\frac{1}{p}}\\
        &\quad+c|h|^{2}[u]_{W^{s,p}(\bbR^n)}^{p-1}\left(\int_{\bbR^{n}\setminus B_1}\int_{B_{7/8}}|u_h(x)|^{p}\frac{\,dx\,dy}{|y|^{n+sp}}\right)^{\frac{1}{p}}\\
        &\leq c(1-s)^{-\frac1p}|h|^{2}[u]_{W^{s,p}(\bbR^n)}^{p-1}\|u\|_{L^{p}(\bbR^n)}
    \end{align*}
    for some constant $c=c(n,s_0,p)$. Similarly, $J_{2,2,2}$ and $J_{2,2,3}$ are estimated as follows
    \begin{align*}
        |J_{2,2,2}|\leq c(1-s)^{-\frac1p}|h|^{2}[u]_{W^{s,p}(\bbR^n)}^{p-1}\|u\|_{L^{p}(\bbR^{n})}
    \end{align*}
   and
    \begin{align*}
        |J_{2,2,3}|\leq c(1-s)^{-\frac1p}|h|[u]_{W^{s,p}(\bbR^n)}^{p-1}\left\|{\delta_h u}\right\|_{L^{p}(\bbR^{n})},
    \end{align*}
    respectively, where $c=c(n,s_0,p)$.
    Combining all the estimates $J_{2,2,i}$ for each $i \in \{1,2,3\}$, we have 
    \begin{align*}
    |J_{2,2}|\leq c(1-s)^{-\frac1p}[u]_{W^{s,p}(\bbR^n)}^{p-1}\left(|h|^{2}\|u\|_{L^{p}(\bbR^{n})}+|h|\left\|{\delta_h u}\right\|_{L^{p}(\bbR^{n})}\right).
    \end{align*}
Therefore, it follows that
\begin{align*}
    I_1&\geq\frac{1}{c} \int_{\bbR^{n}}\int_{\bbR^{n}}|\delta_h(V(\delta_{x,y}^{s}u))|^{2}\frac{\psi(x)\psi(y)\,dx\,dy}{|x-y|^{n}}\\
    &\quad-c(1-s)^{-\frac1p}(1-\gamma)^{\frac1p}|h|^{\gamma}[u]^{p-1}_{W^{s,p}(\bbR^{n})}[\delta_h(u\psi)]_{W^{\gamma,p}(\bbR^{n})}\\
    &\quad-c(1-s)^{-\frac1p}\left(|h|[u]^{p-1}_{W^{s,p}(\bbR^{n})}\|\delta_{h}u\|_{L^{p}(\bbR^{n})}+|h|^{2}[u]^{p-1}_{W^{s,p}(\bbR^{n})}\|u\|_{L^{p}(\bbR^{n})}\right)
\end{align*}
for some constant $c=c(n,s_0,p)$.

We now rewrite $I_2$ as 
\begin{align*}
    I_2&=\int_{\bbR^{n}}f(x)\left[\psi_{-h}^2(x)\big(u(x)-u_{-h}(x)\big)-\psi^{2}(x)\big(u_h(x)-u(x)\big)\right]\,dx\\
    &=\int_{\bbR^{n}}f(x)\left[\delta_{-h}\psi^{2}(x)\delta_h u(x)-\psi^{2}_{-h}(x)\delta^{2}_h u(x-h)\right]\,dx\\
    &=\int_{\bbR^{n}}f(x)\delta_{-h}\psi^{2}(x)\delta_h u(x)\,dx-\int_{\bbR^{n}}f_h(x)\psi^{2}(x)\delta^{2}_h u(x)\,dx\eqqcolon I_{2,1}+I_{2,2}.
\end{align*}
By \eqref{ineq.test.grhe} and H\"older's inequality, we estimate $I_{2,1}$ as
\begin{align*}
    |I_{2,1}|
    &\leq c|h|\|f\|_{L^{p'}(B_1)}\|\delta_{h}u\|_{L^p(\bbR^{n})}
\end{align*}
for some constant $c=c(n,p)$. Again by \eqref{ineq.test.grhe}, H\"older's inequality and \eqref{lem.embdiff.a1} in Lemma \ref{lem.embdiff}, $I_{2,2}$ is estimated as follows: 
\begin{align*}
    |I_{2,2}|&=\left|\int_{\bbR^n}f_h(x)\psi(x)\left[\delta^{2}_h(u\psi)(x)-u_{2h}(x)\delta^2_{h}\psi(x)-2\delta_{h}u_h(x)\delta_h \psi(x)\right]\,dx\right|\\
    &\leq c\|f\|_{L^{p'}(B_1)}\|\delta^{2}_{h}(u\psi)\|_{L^p(\bbR^{n})}+c|h|^{2}\|f\|_{L^{p'}(B_1)}\|u\|_{L^{p}(\bbR^{n})}\\
    &\quad+ c|h|\|f\|_{L^{p'}(B_1)}\|\delta_h u\|_{L^{p}(\bbR^{n})}\\
    &\leq c\|f\|_{L^{p'}(B_1)}\left((1-\gamma)^{\frac{1}{p}}|h|^{\gamma}[\delta_{h}(u\psi)]_{W^{\gamma,p}(\bbR^{n})}+|h|^{2}\|u\|_{L^{p}(\bbR^{n})}\right)\\
    &\quad+c\|f\|_{L^{p'}(B_1)}|h|\|\delta_h u\|_{L^{p}(\bbR^{n})}
\end{align*}
for some constant $c=c(n,p)$.
Combining all the above estimates for $I_1$, $I_{2,1}$ and $I_{2,2}$ now finally yields \eqref{ineq.vp}.
\end{proof}

Using the above lemma, we obtain a more refined estimate. The proofs are different for in the cases $p\leq 2$ and $p\geq 2$.
\begin{lemma}\label{lem.vrefd}
    Let $p\geq2$ and assumed that $u\in W^{s,p}(\bbR^{n})$ is a weak solution to 
    $$
        (-\Delta_p)^su=f\quad\text{in }B_2
    $$
    with $f\in L^{p'}(B_{2})$. Fix a cut-off function $\psi\in C_c^{\infty}(B_{3/4})$ with $\psi\equiv1$ in $B_{1/2}$ and \eqref{test.sing}. Then for any $|h|<1/1000$, we have
\begin{align*}
        &(1-s)\int_{\bbR^{n}}\int_{\bbR^n}|\delta_h V(\delta_{x,y}^{s}u)|^{2}\psi(x)\psi(y)\frac{\,dx\,dy}{|x-y|^{n}}\\
        &\leq c|h|^{sp'}\left((1-s)[u]^{p}_{{W}^{s,p}(\bbR^{n})}+\|u\|^p_{L^p(\bbR^n)}+\|f\|^{p'}_{L^{p'}(B_1)}\right)
    \end{align*}
    for some constant $c=c(n,s_0,p)$, where the constant $s_0$ is determined in \eqref{s0assum}.
\end{lemma}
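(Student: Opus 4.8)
Write $\Phi(h)$ for the left-hand side of the asserted inequality and $M$ for the bracket on the right. The plan is to produce the power $|h|^{sp'}$ by iterating Lemma \ref{lem.combvest}: starting only from $u\in W^{s,p}(\bbR^n)$, each application will give a little extra differentiability of $u$, which is then fed back into the right-hand side of \eqref{ineq.vp}. The first ingredient is a transfer between the $V$-quantity and finite differences of $u$: since $p\geq2$, item (4) of Lemma \ref{el : lem.eleineq} with $X=\delta^s_{x,y}u_h$, $Y=\delta^s_{x,y}u$ gives $|\delta_hV(\delta^s_{x,y}u)|^2\ge\tfrac1c|x-y|^{-sp}|\delta_hu(x)-\delta_hu(y)|^p$, so that $\Phi(h)\ge\tfrac1c(1-s)[\delta_h u]^p_{W^{s,p}(B_{1/2})}$ because $\psi\equiv1$ on $B_{1/2}$. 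Hence any bound $\Phi(h)\le A|h|^\beta$ forces extra regularity of $u$: combining it with Lemma \ref{lem.embdiff} (part (1) for $\|\delta_hu\|_{L^p(\bbR^n)}^p\le cM|h|^{sp}$, then part (2) applied to $\delta_h u$ with the second increment equal to $h$) yields $\|\delta_h^2u\|_{L^p(B_{1/4})}^p\le cA|h|^{sp+\beta}$, and Lemma \ref{lem.secmeb} then gives $u\in W^{\alpha,p}_{\mathrm{loc}}$ near $B_{1/4}$ for every $\alpha<s+\beta/p$ with quantitative control of $(1-s)[u]^p_{W^{\alpha,p}}$ (read off $\nabla u$ via the second part of Lemma \ref{lem.secmeb} when $s+\beta/p>1$).

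\textbf{The bootstrap.} At the first step I use $\gamma=s$ in Lemma \ref{lem.combvest}: estimating $[\delta_h(u\psi)]_{W^{s,p}(\bbR^n)}\le 2[u\psi]_{W^{s,p}(\bbR^n)}\le c(1-s)^{-1/p}M^{1/p}$ by a Leibniz bound for the cut-off (cf.\ \eqref{norm.loc}) together with $\|\delta_hu\|_{L^p}\le c|h|^sM^{1/p}$, the right side of \eqref{ineq.vp} is $\le cM^{1/p'}(|h|^sM^{1/p}+|h|^2M^{1/p}+|h|^{1+s}M^{1/p})\le cM|h|^s$, so $\beta_0=s$. For the inductive step assume $\Phi(h)\le c_kM|h|^{\beta_k}$. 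By the previous paragraph $u$ lies (near $B_{3/4}$) in $W^{\gamma,p}_{\mathrm{loc}}$ for every $\gamma<s+\beta_k/p$; after localizing $u$ through Corollary \ref{el : lem.loc.deg}, so that one may work with a function globally of class $W^{\gamma,p}(\bbR^n)$ whose extra right-hand side $g$ has $L^{p'}$-norm $\le cM^{1/p'}$ in view of \eqref{ineq.loc}, Lemma \ref{lem.combvest} applies with this $\gamma$. Now differences of a $W^{\gamma',p}$-function with $\gamma'>\gamma$ are $O(|h|^{\gamma'-\gamma})$ in the weaker seminorm $[\,\cdot\,]_{W^{\gamma,p}}$; taking $\gamma'$ close to $s+\beta_k/p$ therefore gives $|h|^{\gamma}[\delta_h(u\psi)]_{W^{\gamma,p}}\le cM|h|^{s+\beta_k/p}$, while the remaining terms $|h|^2$ and $|h|\|\delta_hu\|_{L^p}\le|h|^{1+\gamma}M^{1/p}$ are dominated since $s+\beta_k/p\le sp'\le 1+s\le 2$ (using $s\le1\le p-1$). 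Hence $\Phi(h)\le c_{k+1}M|h|^{\beta_{k+1}}$ with $\beta_{k+1}=s+\beta_k/p$.

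\textbf{Closing the iteration, and the expected obstacle.} The recursion $\beta_{k+1}=s+\beta_k/p$, $\beta_0=s$, solves to $\beta_k=sp'(1-p^{-(k+1)})\uparrow sp'$, and the constants obey $c_{k+1}\le(Cc_k)^{1/p}$ for a fixed $C=C(n,s_0,p)$; since $p\ge2$ the map $x\mapsto(Cx)^{1/p}$ is a contraction, so $c_k\le c_\ast(n,s_0,p)$ uniformly. Letting $k\to\infty$ and using $|h|^{\beta_k}\to|h|^{sp'}$ for each fixed $|h|<1$ gives $\Phi(h)\le c_\ast M|h|^{sp'}$, which is the claim; all $(1-s)$-weights are carried along so that $c_\ast$ does not blow up as $s\to1^-$. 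I expect the hard part to be the clash of spatial scales inside the iteration: a single use of Lemma \ref{lem.combvest}, whose configuration $B_{1/2}\subset B_{3/4}\subset B_2$ is rigid, only yields regularity of $u$ on $B_{1/2}$, whereas $[\delta_h(u\psi)]_{W^{\gamma,p}}$ on its right-hand side sees $u$ on all of $B_{3/4}$; reconciling these — by running the whole bootstrap in a scale-invariant form over translates and rescalings of small balls, with the localization of Section \ref{sec:loc} — while keeping the constants bounded so that the exact exponent $sp'$ (not merely $sp'-\epsilon$) is attained, is the crux. One simplification helps here: the off-diagonal part of $\Phi(h)$, where $|x-y|$ is bounded below, is directly $\le cM|h|^{2s}\le cM|h|^{sp'}$ by item (2) of Lemma \ref{el : lem.eleineq} and H\"older's inequality (here $2s\ge sp'$ precisely because $p\ge2$), so only the near-diagonal part genuinely needs the bootstrap.
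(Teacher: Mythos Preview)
You are taking a much longer route than the paper, and the route has a genuine gap at precisely the place you flag as ``the crux''.

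The paper's proof is a \emph{one-step absorption}, not a bootstrap. The observation you are missing is that for $p\ge2$ the very quantity $[\delta_h(u\psi)]^p_{W^{s,p}(\bbR^n)}$ appearing on the right of \eqref{ineq.vp} is itself controlled by the left-hand side $\Phi(h)$: using \eqref{ineq.simple} with $\psi_m=\min\{\psi(x),\psi(y)\}$ and the elementary bound $|X-Y|^p\le c|V(X)-V(Y)|^2$ from Lemma~\ref{el : lem.eleineq} (items (1),(4)), together with $\psi_m^p\le\psi(x)\psi(y)$, the paper gets (this is \eqref{ineq1.vref})
\[
[\delta_h(u\psi)]^p_{W^{s,p}(\bbR^n)}\le c\,\tfrac{1}{1-s}\,\Phi(h)+\tfrac{c}{1-s}\|\delta_hu\|_{L^p}^p+c|h|^p\big([u]^p_{W^{s,p}}+\tfrac{1}{1-s}\|u\|_{L^p}^p\big).
\]
Plugging this into \eqref{ineq.vp} with $\gamma=s$ and applying Young's inequality to the product $T\cdot|h|^s(1-s)^{1/p}[\delta_h(u\psi)]_{W^{s,p}}$ (with $T\lesssim M^{1/p'}$) produces $\tfrac12\Phi(h)$ on the right, which is absorbed, plus $c|h|^{sp'}M$ from the $T^{p'}$ term. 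No iteration, no scale reconciliation, no limit in $k$.

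Your scheme, by contrast, throws away the smallness in $h$ at the first step via $[\delta_h(u\psi)]_{W^{s,p}}\le 2[u\psi]_{W^{s,p}}$ and then tries to recover it by bootstrapping regularity of $u$. Two problems: (i) each pass from $\Phi(h)\le c_kM|h|^{\beta_k}$ to $u\in W^{\gamma,p}$ with $\gamma$ close to $s+\beta_k/p$ goes through Lemma~\ref{lem.secmeb}, whose constants blow up like $(\sigma-\alpha)^{-p}$; your recursion $c_{k+1}\le(Cc_k)^{1/p}$ is not what actually comes out, and uniform control of $c_k$ while simultaneously pushing $\beta_k\to sp'$ is not established; (ii) the scale mismatch you identify is real: one application of Lemma~\ref{lem.combvest} gains regularity only on $B_{1/2}$, while the next application needs $W^{\gamma,p}$ on $B_{3/4}$, so each step forces a covering/rescaling layer that multiplies the constants further. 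In short, the paper closes with a single Young's-inequality absorption exploiting $p\ge2$; your bootstrap is unnecessary and, as written, would at best yield the exponent $sp'-\epsilon$ with constants depending on $\epsilon$.
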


\begin{proof}
First, observe that 
\begin{equation}\label{ineq.simple0}
\begin{aligned}
    |\delta_h(u\psi)(x)-\delta_h(u\psi)(y)|&\leq |(\delta_h u(x)-\delta_h u(y))\psi(x)|\\
    &\quad+|\delta_h u(y)(\psi(x)-\psi(y))|\\
    &\quad+ |(u_h(x)-u_h(y))\delta_h \psi(x)|\\
    &\quad+|u_h(y)(\delta_h \psi(x)-\delta_h \psi(y))|.
\end{aligned}
\end{equation}
We denote $\psi_m=\min\{\psi(x),\psi(y)\}$ to see that
\begin{equation}\label{ineq.simple}
\begin{aligned}
    |\delta_h(u\psi)(x)-\delta_h(u\psi)(y)|&\leq |(\delta_h u(x)-\delta_h u(y))\psi_m|\\   
    &\quad +|(\delta_h u(x)-\delta_h u(y))(\psi(x)-\psi(y))|\\
     &\quad+|\delta_h u(y)(\psi(x)-\psi(y))|\\
    &\quad+ |(u_h(x)-u_h(y))\delta_h \psi(x)|\\
    &\quad+|u_h(y)(\delta_h \psi(x)-\delta_h \psi(y))|.
\end{aligned}
\end{equation}
From this together with \eqref{test.sing} and \eqref{ineq.test.grhe}, it follows that
\begin{align*}
    [\delta_h(u\psi)]^{p}_{W^{s,p}(\bbR^{n})}&\leq c\int_{\bbR^{n}}\int_{\bbR^n}\frac{|\delta_h\delta_{x,y}^{s} u|^{p}\psi^{p}_m}{|x-y|^{n}}\,dx\,dy+\frac{c}{1-s}\|\delta_h u\|^{p}_{L^{p}(\bbR^{n})}\\
    &\quad+c|h|^{p}\left([u]^{p}_{{W}^{s,p}(\bbR^{n})}+(1-s)^{-1}{\|u\|^p_{L^p(\bbR^n)}}\right)
\end{align*}
for some constant $c=c(n,s_0,p)$.
We now further estimate the first term in the right-hand side of the above inequality.
Using Lemma \ref{el : lem.eleineq} along with the fact that $\psi_m^{p}\leq \psi_m^{2}\leq \psi(x)\psi(y)$, we obtain
    \begin{equation}\label{ineq1.vref}
    \begin{aligned}
        [\delta_h(u\psi)]^{p}_{W^{s,p}(\bbR^{n})}&\leq c\int_{\bbR^{n}}\int_{\bbR^{n}}|\delta_h V(\delta_{x,y}^{s}u)|^{2}\psi(x)\psi(y)\frac{\,dx\,dy}{|x-y|^{n}}+\frac{c}{1-s}\|\delta_h u\|^{p}_{L^{p}(\bbR^{n})}\\
        &\quad+c|h|^{p}\left([u]^{p}_{{W}^{s,p}(\bbR^{n})}+(1-s)^{-1}{\|u\|^p_{L^p(\bbR^n)}}\right).
    \end{aligned}
\end{equation}
Plugging \eqref{ineq1.vref} into \eqref{ineq.vp} with $\gamma$ replaced by $s$ and using Young's inequality along with the fact that $|h|^{sp}+|h|^{2}+|h|^{1+s}\leq c|h|^{sp'}$ for some constant $c=c(p)$, we deduce
\begin{align*}
        &(1-s)\int_{\bbR^{n}}\int_{\bbR^{n}}|\delta_h( V(\delta_{x,y}^{s}u))|^{2}\psi(x)\psi(y)\frac{\,dx\,dy}{|x-y|^{n}}\\
        &\leq \frac{1-s}{2}\int_{\bbR^{n}}\int_{\bbR^{n}}|\delta_h( V(\delta_{x,y}^{s}u))|^{2}\psi(x)\psi(y)\frac{\,dx\,dy}{|x-y|^{n}}\\
        &\quad+c|h|^{sp'}\left((1-s)[u]^{p}_{{W}^{s,p}(\bbR^{n})}+\|u\|^p_{L^p(\bbR^n)}+\|f\|^{p'}_{L^{p'}(B_1)}+\left\|\frac{\delta_h u}{|h|^{s}}\right\|_{L^{p}(\bbR^{n})}^p\right)
    \end{align*}
    for some constant $c=c(n,s_0,p)$.
Applying \eqref{lem.embdiff.a1} in Lemma \ref{lem.embdiff} to the last term in the right-hand side of the above inequality, we arrive at the desired estimate.
\end{proof}

We prove a similar version of Lemma \ref{lem.vrefd} when $p\leq 2$.
\begin{lemma}\label{lem.vrefs}
    Let $p\leq2$ and $u\in W^{\gamma,p}(\bbR^{n})$ be a weak solution to 
    \begin{equation*}
        (-\Delta_p)^su=f\quad\text{in }B_2
    \end{equation*}
    with $f\in L^{p'}(B_2)$ and $\gamma\in[s,1)$. Let us take a cut-off function $\psi\in C_c^{\infty}(B_{3/4})$ with $\psi\equiv1$ on $B_{1/2}$ and \eqref{test.sing}. Then for any $|h|<1/1000$, we have
\begin{align*}
        &(1-s)\int_{\bbR^{n}}\int_{\bbR^n}|\delta_h V(\delta_{x,y}^{s}u)|^{2}\psi(x)\psi(y)\frac{\,dx\,dy}{|x-y|^{n}}\\
        &\leq c|h|^{2\sigma}\left((1-\sigma)[u]^{p}_{W^{\gamma,p}(\bbR^{n})}+(1-s)[u]^{p}_{W^{s,p}(\bbR^{n})}+\|u\|_{L^p(\bbR^n)}^p+\|f\|^{p'}_{L^{p'}(B_1)}\right)
    \end{align*}
    for some constant $c=c(n,s_0,p)$, where we denote $\sigma=\gamma+(s-\gamma)p/2$ and the constant $s_0$ is determined in \eqref{s0assum}.
\end{lemma}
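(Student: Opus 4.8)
The plan is to apply Lemma~\ref{lem.combvest} with the parameter $\gamma$ there replaced by $\sigma=\gamma+(s-\gamma)p/2$ and then run an absorption argument. One first checks the elementary identity $\sigma-s=(\gamma-s)\tfrac{2-p}{2}$, which together with $p\le2$ and $\gamma\in[s,1)$ gives $s\le\sigma\le\gamma<1$; furthermore, splitting the double integral into $\{|x-y|\le1\}$ and $\{|x-y|>1\}$ shows that $u\in W^{\gamma,p}(\bbR^n)$ implies $u\in W^{\sigma,p}(\bbR^n)$ with $[u]_{W^{\sigma,p}(\bbR^n)}\le c([u]_{W^{\gamma,p}(\bbR^n)}+\|u\|_{L^p(\bbR^n)})$, $c=c(n,s_0,p)$. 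Thus Lemma~\ref{lem.combvest} applies with parameter $\sigma$ and, writing
\[
\mathcal{V}\coloneqq(1-s)\int_{\bbR^n}\int_{\bbR^n}\bigl|\delta_hV(\delta_{x,y}^{s}u)\bigr|^2\psi(x)\psi(y)\,\dxyn,\quad
\Phi\coloneqq\bigl((1-s)^{1/p}[u]_{W^{s,p}(\bbR^n)}\bigr)^{p-1}+\|f\|_{L^{p'}(B_1)},
\]
yields $\mathcal{V}\le c\,\Phi\bigl(|h|^{\sigma}(1-\sigma)^{1/p}[\delta_h(u\psi)]_{W^{\sigma,p}(\bbR^n)}+|h|^{2}\|u\|_{L^p(\bbR^n)}+|h|\,\|\delta_hu\|_{L^p(\bbR^n)}\bigr)$ for $|h|<1/1000$.

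The next step is to bound $[\delta_h(u\psi)]_{W^{\sigma,p}(\bbR^n)}^p$. Decomposing $\delta_h(u\psi)(x)-\delta_h(u\psi)(y)$ by the product rule exactly as in \eqref{ineq.simple0}--\eqref{ineq.simple} and using \eqref{test.sing} and \eqref{ineq.test.grhe}, the terms carrying a factor $\psi(x)-\psi(y)$, $\delta_h\psi$ or $\delta_h^2\psi$ are controlled, just as in the proof of Lemma~\ref{lem.vrefd}, by $\tfrac{c}{1-\sigma}\|\delta_hu\|^p_{L^p(\bbR^n)}+c|h|^p([u]^p_{W^{\sigma,p}(\bbR^n)}+\|u\|^p_{L^p(\bbR^n)})$, the factor $(1-\sigma)^{-1}$ coming from $\int_{|z|\le1}|z|^{(1-\sigma)p-n}\,dz$. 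The remaining (main) term is
\[
T_\sigma\coloneqq\int_{\bbR^n}\int_{\bbR^n}\frac{|\delta_hu(x)-\delta_hu(y)|^p\,\psi_m^p}{|x-y|^{n+\sigma p}}\,dx\,dy,\qquad \psi_m\coloneqq\min\{\psi(x),\psi(y)\};
\]
I rewrite it as $\int_{\bbR^n}\int_{\bbR^n}|\delta_h\delta_{x,y}^{s}u|^p\psi_m^p\,\frac{dx\,dy}{|x-y|^{n+(\sigma-s)p}}$, invoke Lemma~\ref{el : lem.eleineq}(2) in the subquadratic form $|\delta_h\delta_{x,y}^{s}u|^p\le c\,(|\delta_{x,y}^{s}u_h|+|\delta_{x,y}^{s}u|)^{(2-p)p/2}\,|\delta_hV(\delta_{x,y}^{s}u)|^p$, and apply Hölder's inequality with exponents $\tfrac2p$ and $\tfrac{2}{2-p}$ together with $\psi_m^2\le\psi(x)\psi(y)$. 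It is precisely the identity $(\sigma-s)\tfrac{2}{2-p}=\gamma-s$ that makes the second Hölder factor an honest $W^{\gamma,p}$-seminorm on $B_{3/4}$, which (since $B_{3/4}+h\subset B_1$) is bounded by $c[u]^p_{W^{\gamma,p}(\bbR^n)}$; hence
\[
T_\sigma\le c\,(1-s)^{-p/2}\,\mathcal{V}^{p/2}\,[u]^{p(2-p)/2}_{W^{\gamma,p}(\bbR^n)}.
\]

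Finally I would feed these estimates back into the inequality for $\mathcal{V}$ from the first step. The factor $\mathcal{V}^{p/2}$ in $T_\sigma$ produces, after taking the $p$-th root, a term of the form $c\,\Phi\,|h|^{\sigma}(1-\sigma)^{1/p}(1-s)^{-1/2}[u]^{(2-p)/2}_{W^{\gamma,p}(\bbR^n)}\,\mathcal{V}^{1/2}$, so Young's inequality absorbs $\tfrac12\mathcal{V}$ into the left-hand side and leaves $c\,\Phi^2|h|^{2\sigma}(1-\sigma)^{2/p}(1-s)^{-1}[u]^{2-p}_{W^{\gamma,p}(\bbR^n)}$; every other contribution is brought to the power $|h|^{2\sigma}$ or higher by replacing $\|\delta_hu\|_{L^p(\bbR^n)}$ with $|h|^{\sigma}\bigl((1-\sigma)[u]^p_{W^{\sigma,p}(\bbR^n)}\bigr)^{1/p}$ via Lemma~\ref{lem.embdiff} and using $|h|^{1+\sigma},\,|h|^2\le|h|^{2\sigma}$. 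The remaining step is routine but delicate bookkeeping: one expands $\Phi$ and $\Phi^2$ in terms of $(1-s)[u]^p_{W^{s,p}(\bbR^n)}$ and $\|f\|^{p'}_{L^{p'}(B_1)}$ and applies weighted Young inequalities, at each step exploiting the ordering $1-\gamma\le1-\sigma\le1-s$, to check that all negative powers of $1-s$ and $1-\sigma$ cancel and that everything collapses into
\[
c\,|h|^{2\sigma}\Bigl((1-\sigma)[u]^p_{W^{\gamma,p}(\bbR^n)}+(1-s)[u]^p_{W^{s,p}(\bbR^n)}+\|u\|^p_{L^p(\bbR^n)}+\|f\|^{p'}_{L^{p'}(B_1)}\Bigr),
\]
with $c=c(n,s_0,p)$ (the dependence being only on $s_0$ since $s,\sigma\ge s_0$ and all $(1-\cdot)^{-1}$ factors disappear). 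The main obstacle is exactly this robustness bookkeeping: unlike in the superquadratic case, the subquadratic structure forces the use of both directions of the $V$-equivalence in Lemma~\ref{el : lem.eleineq}(2) and genuinely creates negative powers of $1-s$ along the way, which cancel only thanks to the precise choice $\sigma=\gamma+(s-\gamma)p/2$ and the ordering of the three smallness parameters $1-\gamma,1-\sigma,1-s$.
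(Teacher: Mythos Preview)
Your proposal is correct and follows essentially the same route as the paper's proof: apply Lemma~\ref{lem.combvest} with parameter $\sigma$, decompose $[\delta_h(u\psi)]_{W^{\sigma,p}}^p$ via \eqref{ineq.simple}, estimate the main piece by H\"older (the paper records this as \eqref{ineq2.vref}, using exactly the identity $(\sigma-s)\tfrac{2}{2-p}=\gamma-s$ you note), feed back, and absorb via two Young inequalities with exponents $(2,2)$ and $(\tfrac{p}{2(p-1)},\tfrac{p}{2-p})$. The only cosmetic differences are that the paper keeps the $V$-integral separate rather than packaging it as your $\mathcal V$ (so the absorption reads $\tfrac{1-\sigma}{2}\int\psi_m^2\cdots\le\tfrac12\mathcal V$ via $\psi_m^2\le\psi(x)\psi(y)$ and $1-\sigma\le1-s$), and it applies Lemma~\ref{lem.embdiff} at level $\gamma$ rather than $\sigma$ when handling $\|\delta_hu\|_{L^p}$; neither changes the argument.
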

\begin{proof}
Using \eqref{ineq.simple} and the fact that $\delta_h\psi \equiv 0$ on $\bbR^n\setminus B_1$, note that
\begin{equation}\label{ineq1.lem.vrefs}
\begin{aligned}
    [\delta_h(u\psi)]^{p}_{W^{\sigma,p}(\bbR^{n})}&\leq c\int_{\bbR^{n}}\int_{\bbR^n}\frac{|\delta_h\delta_{x,y}^{\sigma} u|^{p}\psi^{p}_m}{|x-y|^{n}}\,dx\,dy+\frac{c\|\delta_h u\|^{p}_{L^{p}(\bbR^{n})}}{1-\sigma}\\
    &\quad+c|h|^{p}\int_{\bbR^n}\int_{B_1}\frac{|u_h(x)-u_h(y)|^{p}}{|x-y|^{n+\sigma p}}\,dx\,dy+\frac{c\|u\|^{p}_{L^{p}(\bbR^{n})}}{1-\sigma}
\end{aligned}
\end{equation}
for some constant $c=c(n,s_0,p)$. By the fact that $\sigma\leq \gamma$ and 
\begin{equation*}
    |x-y|\geq |y|/2\quad\text{for any }x\in B_1 \text{ and }y\in \bbR^n\setminus B_2,
\end{equation*}
we observe
\begin{equation*}
\begin{aligned}
    \int_{\bbR^n}\int_{B_1}\frac{|u_h(x)-u_h(y)|^{p}}{|x-y|^{n+\sigma p}}\,dx\,dy&=\int_{B_2}\int_{B_1}\frac{|u_h(x)-u_h(y)|^{p}}{|x-y|^{n+\sigma p}}\,dx\,dy\\
    &\quad+\int_{\bbR^n\setminus B_2}\int_{B_1}\frac{|u_h(x)-u_h(y)|^{p}}{|x-y|^{n+\sigma p}}\,dx\,dy\\
    &\leq c\int_{B_2}\int_{B_1}\frac{|u_h(x)-u_h(y)|^{p}}{|x-y|^{n+\gamma p}}\,dx\,dy\\
    &\quad+c\int_{\bbR^n\setminus B_2}\int_{B_1}\frac{|u_h(x)|^{p}+|u_h(y)|^{p}}{|y|^{n+\sigma p}}\,dx\,dy\\
    &\leq c\|u\|^{p}_{W^{\gamma,p}(\bbR^n)},
\end{aligned}
\end{equation*}
where $c=c(n,s_0,p)$. 

On the other hand, by H\"older's inequality and Lemma \ref{el : lem.eleineq} we have
\begin{equation}\label{ineq2.vref}
\begin{aligned}
    &\int_{\bbR^{n}}\int_{\bbR^n}\frac{|\delta_{x,y}^{\sigma}\delta_h u|^{p}\psi^{p}_m}{|x-y|^{n}}\,dx\,dy\\
    &=\int_{{\bbR^{n}}}\int_{\bbR^{n}}\frac{| \delta_{x,y}^{s}\delta_h u|^{p}\psi^{p}_m(|\delta_{x,y}^{s}u|+|\delta_{x,y}^{s}u_h|)^{\frac{p(p-2)}{2}}}{|x-y|^{n+\frac{(\gamma-s)p(2-p)}{2}}}(|\delta_{x,y}^{s}u|+|\delta_{x,y}^{s}u_h|)^{\frac{p(2-p)}{2}}\,dx\,dy\\
    &\leq \Bigg(\int_{\bbR^{n}}\int_{\bbR^{n}}(|\delta_{x,y}^{s}u|+|\delta_{x,y}^{s}u_h|)^{p-2}\frac{|\delta_{x,y}^{s}\delta_{h}u|^{2}\psi^{2}_m}{|x-y|^{n}}\,dx\,dy\Bigg)^{\frac{p}{2}}[u]_{W^{\gamma,p}(\bbR^{n})}^{\frac{p(2-p)
    }{2}}\\
    &\leq c\left(\int_{\bbR^{n}}\int_{\bbR^{n}}|\delta_h V(\delta_{x,y}^{s}u)|^{2}\frac{\psi_m^{2}\,dx\,dy}{|x-y|^{n}}\right)^{\frac{p}{2}}[u]^{\frac{p(2-p)}{p}}_{W^{\gamma,p}(\bbR^{n})}
\end{aligned}
\end{equation}
for some constant $c=c(p)$, where we denote $\psi_m=\min\{\psi(x),\psi(y)\}$.
Therefore, inserting the above two inequalities into the right-hand side of \eqref{ineq1.lem.vrefs} yields
\begin{equation*}
\begin{aligned}
    [\delta_h(u\psi)]^{p}_{W^{\sigma,p}(\bbR^{n})}&\leq c\left(\int_{\bbR^{n}}\int_{\bbR^{n}}|\delta_h(V(\delta_{x,y}^{s}u))|^{2}\frac{\psi_m^{2}\,dx\,dy}{|x-y|^{n}}\right)^{\frac{p}{2}}[u]^{\frac{p(2-p)}{2}}_{W^{\gamma,p}(\bbR^{n})}\\
    &\quad+\frac{c\|\delta_h u\|^{p}_{L^{p}(\bbR^{n})}}{1-\sigma}+c|h|^{p}\left([ u]^{p}_{W^{\gamma,p}(\bbR^{n})}+\frac{c\|u\|_{L^p(\bbR^n)}^p}{1-\sigma}\right)
\end{aligned}
\end{equation*}
for some constant $c=c(n,s_0,p)$. We now plug this into \eqref{ineq.vp} with $\gamma$ replaced by  $\sigma$ to see that 
\begin{align*}
    &(1-s)\int_{\bbR^{n}}\int_{\bbR^{n}}|\delta_h( V(\delta_{x,y}^{s}u))|^{2}\psi(x)\psi(y)\frac{\,dx\,dy}{|x-y|^{n}}\\
    &\leq c|h|^{\sigma}T(1-\sigma)^{\frac{1}{p}} [u]^{\frac{(2-p)}{2}}_{W^{\gamma,p}(\bbR^n)}\left(\int_{\bbR^{n}}\int_{\bbR^{n}}|\delta_h(V(\delta_{x,y}^{s}u))|^{2}\frac{\psi_m^{2}\,dx\,dy}{|x-y|^{n}}\right)^{\frac{1}{2}}\\
    &\,\,+c\abs{h}^{\sigma}T\\
    &\,\,\,\,\times\left[|h|(1-\sigma)^{\frac1p}[u]_{W^{\gamma,p}(\bbR^{n})}+(1+|h|^{1-\sigma})\|\delta_h u\|_{L^{p}(\bbR^{n})}+(|h|+|h|^{2-\sigma})\| u\|_{L^{p}(\bbR^{n})}\right]
\end{align*}
with $T=(1-s)^{\frac{1}{p'}}[u]^{p-1}_{W^{s,p}(\bbR^n)}+\|f\|_{L^{p'}(B_1)}$. We now use \eqref{lem.embdiff.a1} in Lemma \ref{lem.embdiff} with $\sigma$ replaced by $\gamma$ and Young's inequality twice with exponents $(2,2)$ and then $\left(\frac{p}{2(p-1)},\frac{p}{2-p}\right)$ to obtain 
\begin{align*}
     &(1-s)\int_{\bbR^{n}}\int_{\bbR^{n}}|\delta_h( V(\delta_{x,y}^{s}u))|^{2}\psi(x)\psi(y)\frac{\,dx\,dy}{|x-y|^{n}}\\
     &\leq \frac{1-\sigma}{2}\int_{\bbR^{n}}\int_{\bbR^{n}}|\delta_h( V(\delta_{x,y}^{s}u))|^{2}\psi^{2}_m\frac{\,dx\,dy}{|x-y|^{n}}+c(1-\sigma)|h|^{2\sigma}[u]_{W^{\gamma,p}(\bbR^n)}^{p}\\
     &\quad+c|h|^{2\sigma}\left((1-s)[u]_{W^{s,p}(\bbR^n)}^{p}+c\|u\|_{L^p(\bbR^n)}^p+c\|f\|_{L^{p'}(B_1)}^{p'}\right)
\end{align*}
for some constant $c=c(n,s_0,p)$, where we have used the fact that 
\begin{align*}
    |h|^2+|h|^{1+\gamma}+|h|^{\sigma+\gamma}+|h|^{1+\sigma}\leq c|h|^{2\sigma}
\end{align*}for some constant $c=c(n)$. Performing elementary calculations together with the fact that $s\leq  \sigma$, we arrive at the desired estimate.
\end{proof}

With Lemma \ref{lem.vrefd} and our localization arguments at hand, we now prove the following Lemma for $p\geq2$.

\begin{lemma}\label{loc.mainthmd}
    Let $u\in W^{s,p}(B_5)\cap L^{p-1}_{sp}(\bbR^{n})$ be a weak solution to 
    \begin{equation}\label{eq.scalef}
        (-\Delta_p)^su=f\quad\text{in }B_5,
    \end{equation}
    where $f\in L^{p'}(B_5)$ and $p\geq2$. Then we have 
    \begin{equation}\label{vlast1}
    \begin{aligned}
            \sup_{0<|h|<\frac{1}{1000}}\left\|\frac{\delta_{h}^{2}(u\eta)}{|h|^{sp'}}\right\|^{p}_{L^{p}(\bbR^{n})}&\leq c (1-s)[u]^{p}_{W^{s,p}(B_{5})}+c\|u\|_{L^p(B_5)}^p\\
            &\quad+c\mathrm{Tail}(u;B_{5})^{p}+c\|f\|_{L^{p'}(B_5)}^{p'}
    \end{aligned}
    \end{equation}
    for some constant $c=c(n,s_0,p)$ and for some cut-off function $\eta\in C_{c}^{\infty}(B_{3/4})$ satisfying $\eta\equiv 1$ in $B_{1/2}$ with $|\nabla \eta|\leq c$, where the constant $s_0$ is determined in \eqref{s0assum}. 
\end{lemma}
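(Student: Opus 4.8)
The plan is to deduce \eqref{vlast1} by combining three ingredients already established: the localization of Lemma~\ref{el : lem.loc}, the fractional differentiation estimate of Lemma~\ref{lem.vrefd} together with the inequality \eqref{ineq1.vref} from its proof, and the embedding \eqref{lem.embdiff.a1}. First I would fix a cut-off $\xi\in C_c^\infty(B_4)$ with $\xi\equiv1$ on $B_3$ and set $w\coloneqq u\xi$. By Lemma~\ref{el : lem.loc} (with $R=1$, $x_0=0$), $w\in W^{s,p}(\bbR^n)$ is a weak solution of $(-\Delta_p)^s w=f+g$ in $B_2$ with $g\in L^{p'}(B_{5/2})$ satisfying \eqref{ineq.loc}. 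Since $0<1-s<1$ and, splitting the tail integral over $B_5\setminus B_3$ and $\bbR^n\setminus B_5$ and applying H\"older on the first piece, $\mathrm{Tail}(u;B_3)^p\le c\|u\|_{L^p(B_5)}^p+c\,\mathrm{Tail}(u;B_5)^p$, one gets $\|g\|_{L^{p'}(B_{5/2})}^{p'}\le c\|u\|_{L^p(B_5)}^p+c\,\mathrm{Tail}(u;B_5)^p$. Likewise, \eqref{norm.loc} together with $s\ge s_0$ yields $(1-s)[w]_{W^{s,p}(\bbR^n)}^p\le c(1-s)[u]_{W^{s,p}(B_5)}^p+c\|u\|_{L^p(B_5)}^p$, while $\|w\|_{L^p(\bbR^n)}^p\le\|u\|_{L^p(B_5)}^p$. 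Writing $\mathcal D\coloneqq(1-s)[u]_{W^{s,p}(B_5)}^p+\|u\|_{L^p(B_5)}^p+\mathrm{Tail}(u;B_5)^p+\|f\|_{L^{p'}(B_5)}^{p'}$ for the right-hand side of \eqref{vlast1}, the quantities $(1-s)[w]_{W^{s,p}(\bbR^n)}^p$, $\|w\|_{L^p(\bbR^n)}^p$ and $\|f+g\|_{L^{p'}(B_1)}^{p'}\le c\|f\|_{L^{p'}(B_5)}^{p'}+c\|g\|_{L^{p'}(B_{5/2})}^{p'}$ are then each $\le c\mathcal D$, with $c=c(n,s_0,p)$.

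Next I would apply Lemma~\ref{lem.vrefd} to $w$ with right-hand side $f+g$ and cut-off $\psi\in C_c^\infty(B_{3/4})$, $\psi\equiv1$ on $B_{1/2}$, satisfying \eqref{test.sing}, obtaining for $0<|h|<1/1000$ that $(1-s)\int_{\bbR^n}\int_{\bbR^n}|\delta_h V(\delta_{x,y}^s w)|^2\psi(x)\psi(y)\,\frac{dx\,dy}{|x-y|^n}\le c|h|^{sp'}\mathcal D$. Inequality \eqref{ineq1.vref} — which relies only on the decomposition \eqref{ineq.simple0}--\eqref{ineq.simple}, the bounds \eqref{test.sing}, \eqref{ineq.test.grhe} and Lemma~\ref{el : lem.eleineq}, and does not use the equation — applied with $u$ replaced by $w$ bounds $[\delta_h(w\psi)]_{W^{s,p}(\bbR^n)}^p$ by $c$ times this integral plus $\frac{c}{1-s}\|\delta_h w\|_{L^p(\bbR^n)}^p+c|h|^p\bigl([w]_{W^{s,p}(\bbR^n)}^p+(1-s)^{-1}\|w\|_{L^p(\bbR^n)}^p\bigr)$. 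Bounding $\|\delta_h w\|_{L^p(\bbR^n)}^p\le c|h|^{sp}\bigl((1-s)[w]_{W^{s,p}(\bbR^n)}^p+\|w\|_{L^p(\bbR^n)}^p\bigr)$ via \eqref{lem.embdiff.a2} (with $h_0=1/1000$), and using that $p\ge2$ forces $sp'\le sp\le p$ and $sp'=\frac{sp}{p-1}\le p$, hence $|h|^{sp},|h|^p\le|h|^{sp'}$ for $|h|<1$, then multiplying through by $(1-s)$ and inserting the bounds above, I would arrive at $(1-s)[\delta_h(w\psi)]_{W^{s,p}(\bbR^n)}^p\le c|h|^{sp'}\mathcal D$. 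Since $\psi$ is supported in $B_{3/4}\subset B_3$, where $w=u$, we have $w\psi=u\psi$ as functions, so this reads $(1-s)[\delta_h(u\psi)]_{W^{s,p}(\bbR^n)}^p\le c|h|^{sp'}\mathcal D$; note that $\eta\coloneqq\psi$ is admissible in the statement since \eqref{test.sing} gives $|\nabla\psi|\le c(n)$.

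Finally, $u\psi=w\psi\in W^{s,p}(\bbR^n)$, so $\delta_h(u\psi)\in W^{s,p}(\bbR^n)$, and applying \eqref{lem.embdiff.a1} to $\delta_h(u\psi)$ with the difference-quotient parameter taken equal to $h$ gives $\bigl\|\delta_h^2(u\psi)/|h|^s\bigr\|_{L^p(\bbR^n)}^p\le c(1-s)[\delta_h(u\psi)]_{W^{s,p}(\bbR^n)}^p\le c|h|^{sp'}\mathcal D$, i.e. $\|\delta_h^2(u\psi)\|_{L^p(\bbR^n)}^p\le c|h|^{sp+sp'}\mathcal D$. Because $sp+sp'-sp'p=sp-sp'(p-1)=0$, this is precisely $\bigl\|\delta_h^2(u\psi)/|h|^{sp'}\bigr\|_{L^p(\bbR^n)}^p\le c\mathcal D$ uniformly in $0<|h|<1/1000$, which upon taking the supremum is \eqref{vlast1} with $\eta=\psi$. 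I expect the only delicate part to be the bookkeeping of the powers of $|h|$ and of the factors $(1-s)$ in the second paragraph — making every auxiliary term collapse onto $c|h|^{sp'}\mathcal D$ with a constant depending only on $n,s_0,p$ — together with the routine but necessary reduction of the localization error $g$ and of the truncated quantities $[w]_{W^{s,p}(\bbR^n)}$, $\|w\|_{L^p(\bbR^n)}$, $\mathrm{Tail}(u;B_3)$ to the $B_5$-quantities appearing in $\mathcal D$.
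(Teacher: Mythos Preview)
Your proposal is correct and follows essentially the same route as the paper's proof: localize via Lemma~\ref{el : lem.loc} to obtain $w=u\xi$ solving $(-\Delta_p)^s w=f+g$ in $B_2$, apply Lemma~\ref{lem.vrefd} to $w$, and combine the $V$-function bound with \eqref{ineq1.vref} and \eqref{lem.embdiff.a1} to conclude. The only cosmetic differences are the order in which you invoke \eqref{ineq1.vref} and \eqref{lem.embdiff.a1} (the paper first passes from $\delta_h^2$ to the $W^{s,p}$-seminorm and then to the $V$-integral, you do the reverse), and that you take $\eta=\psi$ while the paper writes $\eta=\xi\psi$; since $\xi\equiv1$ on the support of $\psi$, these coincide.
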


\begin{proof}
Let us choose a function $\xi\in C_{c}^{\infty}(B_{7/2})$ such that $\xi\equiv 1$ in $B_3$ and 
\begin{equation*}
    |\nabla \xi|\leq 4.
\end{equation*}
By Lemma \ref{el : lem.loc}, $w\coloneqq u\xi\in W^{s,p}(\bbR^{n})$ is a weak solution to 
\begin{equation*}
    (-\Delta_p)^sw=f+g\quad\text{in }B_{2},
\end{equation*}
where $g\in L^{p'}(B_{5/2})$ with 
\begin{equation}\label{fnormineqs}
\norm{g}^{p'}_{L^{p'}(B_{5/2})}\leq c(1-s)^{p'}\norm{u}^p_{L^p(B_3)}+c\mathrm{Tail}(u;B_3)^{p}
\end{equation}
for some constant $c=c(n,s_0,p)$. In addition, we observe from \eqref{norm.loc} that
\begin{align}\label{wspnorm}
    \|w\|_{W^{s,p}(\bbR^{n})}\leq c\|u\|_{W^{s,p}(B_{5})}.
\end{align}
Let us fix a cut-off function $\psi\in C_{c}^{\infty}(B_{3/4})$ such that $\psi\equiv 1$ in $B_{1/2}$ with \eqref{test.sing}. By Lemma \ref{lem.vrefd}, we have
\begin{equation}\label{ineq.v1}
\begin{aligned}
        &(1-s)\int_{\bbR^n}\int_{\bbR^n}|\delta_h V(\delta_{x,y}^{s}w)|^{2}\psi(x)\psi(y)\frac{\,dx\,dy}{|x-y|^{n}}\\
        &\leq c|h|^{sp'}\left((1-s)[w]^{p}_{W^{s,p}(\bbR^{n})}+\|w\|_{L^p(\bbR^n)}^p+\|f+g\|^{p'}_{L^{p'}(B_1)}\right)
    \end{aligned}
    \end{equation}
    for any $|h|<1/1000$, where $c=c(n,s_0,p)$. 
    
    Note from \eqref{lem.embdiff.a1} in Lemma \ref{lem.embdiff} and \eqref{ineq.simple} that
        \begin{align*}
            \left\|\frac{\delta_{h}^{2}(w\psi)}{|h|^{sp'}}\right\|^{p}_{L^{p}(\bbR^{n})}&\leq c(1-s)\left[\frac{\delta_{h}(w\psi)}{|h|^{s/(p-1)}}\right]^{p}_{W^{s,p}(\bbR^{n})}\nonumber\\
            &\leq c(1-s)\left[\frac{\psi_m\delta_{h}w}{|h|^{s/(p-1)}}\right]^{p}_{W^{s,p}({\bbR^n})}+c\left\|\frac{\delta_h w}{|h|^{s/(p-1)}}\right\|^{p}_{L^{p}(\bbR^{n})}\\
            &\quad+c(1-s)|h|^{p-sp'}[w]^{p}_{W^{s,p}(\bbR^n)}+c|h|^{p-sp'}\|w\|^p_{L^p(\bbR^n)}\nonumber\\
            &\leq c(1-s)\left(\left[\frac{\psi_m\delta_{h}w}{|h|^{s/(p-1)}}\right]^{p}_{W^{s,p}({\bbR^n})}+[w]^{p}_{W^{s,p}(\bbR^{n})}\right)+c\|w\|^p_{L^p(\bbR^n)}\nonumber
        \end{align*}
        for some constant $c=c(n,s_0,p)$, where $\psi_m=\min\{\psi(x),\psi(y)\}$ and \eqref{lem.embdiff.a1} are used along with $|h|^{-\frac{s}{p-1}}\leq |h|^{-s}$, if $|h|<\frac{1}{1000}$ for the last inequality.
        On the other hand, using Lemma \ref{el : lem.eleineq} together with the fact that $\psi^p_{m}\leq \psi^{2}_{m}\leq\psi(x)\psi(y)$ as well as \eqref{fnormineqs}--\eqref{ineq.v1},
        \begin{align*}
            \left\|\frac{\delta_{h}^{2}(w\psi)}{|h|^{sp'}}\right\|^{p}_{L^{p}(\bbR^{n})}&\leq c((1-s)[w]^{p}_{W^{s,p}(\bbR^{n})}+\|w\|_{L^p(\bbR^n)}^p+\|f+g\|^{p'}_{L^{p'}(B_1)})\\
            &\leq c \left((1-s)\|u\|^{p}_{W^{s,p}(B_{5})}+\|u\|_{L^p(B_5)}^p+\mathrm{Tail}(u;B_{5})^{p}+\|f\|_{L^{p'}(B_5)}^{p'}\right)
        \end{align*}
        holds for some constant $c=c(n,s_0,p)$.
        By taking $\eta=\xi\psi$, we obtain \eqref{vlast1}. 
\end{proof}
The following lemma yields an iterative scheme for the subquadratic case.
\begin{lemma}\label{loc.mainthmdbs}
    Let $u\in W^{\gamma,p}(B_5)\cap L^{p-1}_{sp}(\bbR^{n})$ be a weak solution to 
    \begin{equation}\label{eq.scalefs}
        (-\Delta_p)^su=f\quad\text{in }B_5,
    \end{equation}
    where $f\in L^{p'}(B_5)$, $p\leq 2$ and $\gamma\in[s,\min\left\{1,sp/(p-1)\right\})$. Then we have 
            \begin{equation}\label{vlast2}
            \begin{aligned}
            \sup_{0<|h|<\frac{1}{1000}}\left\|\frac{\delta_{h}^{2}(u\eta)}{|h|^{2\sigma}}\right\|^{p}_{L^{p}(\bbR^{n})}&\leq c (1-\sigma)[u]^{p}_{W^{\gamma,p}(B_{4})}+c\|u\|_{L^p(B_5)}^p\\
            &\quad+c\mathrm{Tail}(u;B_{5})^{p}+c\|f\|_{L^{p'}(B_5)}^{p'}
        \end{aligned}
        \end{equation}
    for some $c=c(n,s_0,p)$ and some $\eta\in C_{c}^{\infty}(B_{3/4})$ satisfying $\eta\equiv 1$ on $B_{1/2}$ with $|\nabla \eta|\leq c$, where $\sigma=\gamma+(s-\gamma)p/2$ and the constant $s_0$ is determined in \eqref{s0assum}. 
\end{lemma}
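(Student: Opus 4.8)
The plan is to follow the two-step scheme of the proof of Lemma~\ref{loc.mainthmd}, replacing the superquadratic $V$-estimate Lemma~\ref{lem.vrefd} by its subquadratic counterpart Lemma~\ref{lem.vrefs}, and to exploit that the exponent $\sigma=\gamma+(s-\gamma)p/2$ satisfies $s\le\sigma\le\gamma<1$ (the bound $\sigma<1$ being forced by $\gamma<\tfrac{sp}{p-1}$), together with the elementary weight comparison $\tfrac p2(1-s)\le 1-\sigma\le 1-s$, which again holds precisely because $\gamma<\tfrac{sp}{p-1}$.

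\emph{Localization.} First I would fix a cut-off $\xi\in C_c^\infty(B_{7/2})$ with $\xi\equiv1$ on $B_3$ and $\abs{\nabla\xi}\le4$, and apply Lemma~\ref{el : lem.loc} with $R=1$: then $w:=u\xi\in W^{\gamma,p}(\bbR^n)$ (since $u\in W^{\gamma,p}(B_5)$ and $\xi$ is smooth with compact support in $B_5$) is a weak solution of $(-\Delta_p)^sw=f+g$ in $B_2$, where $\norm{g}_{L^{p'}(B_{5/2})}^{p'}\le c(1-s)^{p'}\norm{u}_{L^p(B_3)}^p+c\,\mathrm{Tail}(u;B_3)^p$. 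By the standard cut-off estimates behind \eqref{norm.loc} one also has $[w]_{W^{\gamma,p}(\bbR^n)}^p\le c[u]_{W^{\gamma,p}(B_4)}^p+c\norm{u}_{L^p(B_5)}^p$ and $\norm{w}_{L^p(\bbR^n)}\le\norm{u}_{L^p(B_4)}$, and since $s_0\le s\le\gamma$ also $[w]_{W^{s,p}(\bbR^n)}^p\le[w]_{W^{\gamma,p}(\bbR^n)}^p+c\norm{w}_{L^p(\bbR^n)}^p$. Moreover, for the cut-off $\psi$ chosen below, $\eta:=\xi\psi=\psi$, so $w\psi=u\eta$.

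\emph{$V$-estimate and passage to the second difference.} Next I would pick $\psi\in C_c^\infty(B_{3/4})$ with $\psi\equiv1$ on $B_{1/2}$ and \eqref{test.sing}, set $\psi_m=\min\{\psi(x),\psi(y)\}$, and write $\mathcal E:=(1-\sigma)[w]_{W^{\gamma,p}(\bbR^n)}^p+(1-s)[w]_{W^{s,p}(\bbR^n)}^p+\norm{w}_{L^p(\bbR^n)}^p+\norm{f+g}_{L^{p'}(B_1)}^{p'}$. Applying Lemma~\ref{lem.vrefs} to $w$ (right-hand side $f+g$, the same $\gamma$) and using $\psi_m^2\le\psi(x)\psi(y)$ gives, for $0<\abs{h}<\tfrac1{1000}$,
\[
(1-s)\iint\abs{\delta_h V(\delta_{x,y}^{s}w)}^2\,\psi_m^2\,\frac{dx\,dy}{\abs{x-y}^n}\le c\abs{h}^{2\sigma}\,\mathcal E .
\]
On the other hand, applying \eqref{lem.embdiff.a1} to the outer difference with exponent $\sigma$ (admissible since $\sigma<1$), then the decomposition \eqref{ineq1.lem.vrefs}, the interpolation inequality \eqref{ineq2.vref}, and Lemma~\ref{el : lem.eleineq}, all with $u$ replaced by $w$, I would bound $\norm{\delta_h^2(w\psi)/\abs{h}^{2\sigma}}_{L^p(\bbR^n)}^p$ by
\[
\frac{c(1-\sigma)}{\abs{h}^{\sigma p}}\Big(\iint\abs{\delta_h V(\delta_{x,y}^{s}w)}^2\frac{\psi_m^2}{\abs{x-y}^n}\Big)^{\frac p2}[w]_{W^{\gamma,p}(\bbR^n)}^{\frac{p(2-p)}2}+\frac{c}{\abs{h}^{\sigma p}}\norm{\delta_h w}_{L^p(\bbR^n)}^p+c(1-\sigma)\abs{h}^{p(1-\sigma)}\norm{w}_{W^{\gamma,p}(\bbR^n)}^p .
\]
Here the middle term is handled by \eqref{lem.embdiff.a1} with exponent $\gamma$, giving $\abs{h}^{-\sigma p}\norm{\delta_h w}_{L^p(\bbR^n)}^p\le c(1-\gamma)[w]_{W^{\gamma,p}(\bbR^n)}^p\le c(1-\sigma)[w]_{W^{\gamma,p}(\bbR^n)}^p$, and the last term is harmless because $\abs{h}<1$ and $\sigma<1$.

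\emph{Bookkeeping of the weights — the main obstacle.} It remains to insert the first display into the first term of the second display and to absorb the emerging powers of $(1-s)$ and $(1-\sigma)$; this is the step I expect to be the most delicate, and it is exactly where the hypothesis $\gamma<\tfrac{sp}{p-1}$ is used. After Young's inequality with exponents $(2/p,\,2/(2-p))$ separating $\mathcal E^{p/2}$ from $[w]_{W^{\gamma,p}}^{p(2-p)/2}$, one chooses the Young parameter so that the prefactor of $\mathcal E$ becomes a constant; then the $(1-\sigma)[w]_{W^{\gamma,p}}^p$-piece of $\mathcal E$ reappears with a constant, the $(1-s)[w]_{W^{s,p}}^p$-piece is bounded via the localization estimates and $1-s\le\tfrac2p(1-\sigma)$ by $c(1-\sigma)[w]_{W^{\gamma,p}}^p+c\norm{w}_{L^p}^p$, and the leftover Young term, which carries a factor $(1-\sigma)^{2/(2-p)}(1-s)^{-p/(2-p)}$, is estimated by $c(1-s)[w]_{W^{\gamma,p}}^p\le c(1-\sigma)[w]_{W^{\gamma,p}}^p$ after using $1-\sigma\le1-s$ once and $1-s\le\tfrac2p(1-\sigma)$ once. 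This produces $\norm{\delta_h^2(w\psi)/\abs{h}^{2\sigma}}_{L^p(\bbR^n)}^p\le c(1-\sigma)[w]_{W^{\gamma,p}(\bbR^n)}^p+c\norm{w}_{L^p(\bbR^n)}^p+c\norm{f+g}_{L^{p'}(B_1)}^{p'}$. Finally, substituting the localization estimates together with $\norm{f+g}_{L^{p'}(B_1)}^{p'}\le c\norm{f}_{L^{p'}(B_5)}^{p'}+c(1-s)^{p'}\norm{u}_{L^p(B_3)}^p+c\,\mathrm{Tail}(u;B_3)^p$ (the $B_3$-quantities being absorbed into the $B_5$-ones), recalling $w\psi=u\eta$, and taking the supremum over $h$ yields \eqref{vlast2}.
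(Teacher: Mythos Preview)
Your proof is correct and follows essentially the same route as the paper: localize via Lemma~\ref{el : lem.loc}, apply Lemma~\ref{lem.vrefs} to $w$, pass to the second difference via \eqref{lem.embdiff.a1} and the decomposition behind \eqref{ineq1.lem.vrefs}--\eqref{ineq2.vref}, and close with Young's inequality; your weight bookkeeping in the Young step is carried out more explicitly than in the paper but leads to the same conclusion. One small correction: neither $\sigma<1$ nor the comparison $\tfrac{p}{2}(1-s)\le 1-\sigma\le 1-s$ requires the hypothesis $\gamma<\tfrac{sp}{p-1}$ --- since $s\le\gamma<1$ and $p\le2$ one has $s\le\sigma\le\gamma<1$ and the weight comparison automatically; the constraint $\gamma<\tfrac{sp}{p-1}$ is equivalent to $2\sigma>\gamma$ and is present only so that \eqref{vlast2} represents an actual gain of differentiability for the iteration, not because the estimate would otherwise fail.
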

\begin{proof}Let us fix $h\in \bbR^n$ with $0<|h|<1/1000$.
As in the proof of Lemma \ref{loc.mainthmd}, $w=u\xi$ is a weak solution to 
\begin{equation*}
    (-\Delta_p)^sw=f+g\quad\text{in }B_2,
\end{equation*}
where $f,g\in L^{p'}(B_{5/2})$ with \eqref{fnormineqs} and
\begin{equation}\label{wgpnorm}
    \|w\|_{W^{\gamma,p}(\bbR^n)}\leq c\|u\|_{W^{\gamma,p}(B_5)}
\end{equation}
for some $c=c(n,s_0,p)$, which follows from \eqref{norm.loc} with $s$ replaced by $\gamma$. We now note from \eqref{lem.embdiff.a1} in Lemma \ref{lem.embdiff} and \eqref{ineq.simple} that
        \begin{align*}
         \left\|\frac{\delta_{h}^{2}(w\psi)}{|h|^{2\sigma}}\right\|^{p}_{L^{p}(\bbR^{n})} &\leq c(1-\sigma)\left[\frac{\delta_{h}(w\psi)}{|h|^{\sigma}}\right]^{p}_{W^{\sigma,p}({\bbR^n})}\\
          &\leq c(1-\sigma)\left[\frac{\psi_m\delta_{h}w}{|h|^{\sigma}}\right]^{p}_{W^{\sigma,p}({\bbR^n})}+c\left\|\frac{\delta_h w}{|h|^{\sigma}}\right\|^{p}_{L^{p}(\bbR^{n})}\\
            &\quad+c(1-\sigma)|h|^{p-\sigma p}[w]^{p}_{W^{\sigma,p}(\bbR^n)}+c|h|^{p-\sigma p}\|w\|^{p}_{W^{\sigma,p}(\bbR^n)}\nonumber\\
            &\leq c(1-\sigma)\left(\left[\frac{\psi_m\delta_{h}w}{|h|^{\sigma}}\right]^{p}_{W^{\sigma,p}({\bbR^n})}+[w]^{p}_{W^{\sigma,p}(\bbR^{n})}\right)+c\|w\|^{p}_{W^{\sigma,p}(\bbR^n)}\nonumber
        \end{align*}
        for some $c=c(n,s_0,p)$ with denoting $\psi_m=\min\{\psi(x),\psi(y)\}$. 
        
        We next use \eqref{ineq2.vref} with $u$ replaced by $w$ to see that 
        \begin{align*}
            \left[\frac{\psi_m\delta_{h}w}{|h|^{\sigma}}\right]^{p}_{W^{\sigma,p}({\bbR^n})} &\leq c \left(\int_{\bbR^{n}}\int_{\bbR^{n}}|\delta_h V(\delta_{x,y}^{s}w)|^{2}\frac{\psi_m^{2}\,dx\,dy}{|h|^{2\sigma}|x-y|^{n}}\right)^{\frac{p}{2}}[w]^{\frac{p(2-p)}{2}}_{W^{\gamma,p}(\bbR^{n})}.
        \end{align*}
        Combining the above two inequalities together with Lemma \ref{lem.vrefs} with $u$ replaced by $w$ and Young's inequality yields
        \begin{align*}
            \left\|\frac{\delta_{h}^{2}(w\psi)}{|h|^{2\sigma}}\right\|^{p}_{L^{p}(\bbR^{n})}&\leq c(1-\sigma)\left([w]_{W^{\gamma,p}(\bbR^n)}^p+[w]_{W^{\sigma,p}(\bbR^n)}^p+[w]_{W^{s,p}(\bbR^n)}^p\right)\\
            &\quad+c\|w\|_{L^p(\bbR^n)}^p+c\|f+g\|^{p'}_{L^{p'}(B_1)}
        \end{align*}
        for some $c=c(n,s_0,p)$. Using the fact that $w\equiv 0$ on $\bbR^n\setminus B_{7/2}$, for any $t\in(0,\gamma]$,
        \begin{align*}
            [w]_{W^{t,p}(\bbR^n)}^{p}&\leq\int_{B_4}\int_{B_4}\frac{|w(x)-w(y)|^{p}}{|x-y|^{n+tp}}\,dx\,dy+\int_{\bbR^n\setminus B_4}\int_{B_4}\frac{|w(x)|^{p}}{|x-y|^{n+tp}}\,dx\,dy\\
            &\quad+\int_{B_4}\int_{\bbR^n\setminus B_4}\frac{|w(y)|^{p}}{|x-y|^{n+tp}}\,dx\,dy\\
            &\leq c[w]_{W^{\gamma,p}(B_4)}^p+c\int_{\bbR^n\setminus B_4}\int_{B_{7/2}}\frac{|w(x)|^{p}}{|y|^{n+tp}}\,dx\,dy\\
            &\leq \frac{c}{t}\|w\|_{W^{\gamma,p}(\bbR^n)}^p
        \end{align*}
        holds, where $c=c(n,p)$. 
        Using this along with the fact that $s,\sigma\in[s_0,\gamma]$, Young's inequality, \eqref{fnormineqs} and \eqref{wgpnorm}, we arrive at
        \begin{equation*}
            \left\|\frac{\delta_{h}^{2}(w\psi)}{|h|^{2\sigma}}\right\|^{p}_{L^{p}(\bbR^{n})}\leq c (1-\sigma)[u]^{p}_{W^{\gamma,p}(B_{5})}+\|u\|^{p}_{L^{p}(B_{5})}+c\mathrm{Tail}(u;B_{5})^{p}+c\|f\|_{L^{p'}(B_5)}^{p'},
        \end{equation*}
        where $c=c(n,s_0,p)$. 
        By taking $\eta=\xi\psi$, we obtain \eqref{vlast2}.
\end{proof}
Armed with the above two lemmas, we now prove our first main result.

\begin{proof}[\textbf{Proof of Theorem \ref{thm.1}.}] Let us first fix $s_0\in(0,s]$, $\alpha\in(s,\alpha_0)$, where $\alpha_0$ is determined in \eqref{defn.alpha0}. Choose $B_{10\rho}(y_0)\Subset \Omega$. By Lemma \ref{lem.scale}, $v(x)=u(\rho x+y_0)$ is a weak solution to 
\begin{equation}\label{eq.thm1}
    (-\Delta_p)^sv=f_\rho\quad\text{in }B_5,
\end{equation}
where $f_\rho(x)=\rho^{sp}f(\rho x+y_0)$. Then by Lemma \ref{loc.mainthmd} and Lemma \ref{loc.mainthmdbs} along with the fact that $s_0\leq s$, we have
\begin{align*}
         \sup_{0<|h|<\frac{1}{1000}}&\left\|\frac{\delta_h ^2(v\eta)}{|h|^{s_0\overline{p}}}\right\|^p_{L^{p}(\bbR^{n})}\leq\sup_{0<|h|<\frac{1}{1000}}\left\|\frac{\delta_h ^2(v\eta)}{|h|^{s\overline{p}}}\right\|^p_{L^{p}(\bbR^{n})}\\
         &\quad\quad\quad\leq c (1-s)[v]^{p}_{W^{s,p}(B_{5})}+c\|v\|_{L^p(B_5)}^p+c\mathrm{Tail}(v;B_{5})^{p}+c\|f_\rho\|_{L^{p'}(B_5)}^{p'}
    \end{align*}
    for some constant $c=c(n,s_0,p)$ and some function $\eta\in C_{c}^{\infty}(B_{3/4})$ satisfying $\eta\equiv 1$ on $B_{1/2}$ and $|\nabla \eta|\leq c$, where $\overline{p}=\min\{2,p'\}$. With aid of Lemma \ref{lem.secmeb} with standard covering arguments, we now prove the desired result when $p\geq2$.
    
\textbf{In case of $s_0\leq(p-1)/p$ and $p\geq2$: }Employing \eqref{ineq.seq1} in Lemma \ref{lem.secmeb},
\begin{align*}
[v\eta]_{W^{\alpha,p}(\bbR^n)}^p\leq c (1-s)[v]^{p}_{W^{s,p}(B_{5})}+c\|v\|^p_{L^{p}(B_5)}+c\mathrm{Tail}(v;B_{5})^{p}+c\|f_\rho\|_{L^{p'}(B_5)}^{p'}
\end{align*}
holds for some constant $c=c(n,s_0,p,\alpha)$. Changing of variables along with the fact that $\eta\equiv 1$ on $B_{1/2}$ and Lemma \ref{lem.upest}, we get
\begin{align*}
\rho^{p\alpha-n}[u]_{W^{\alpha,p}(B_{\rho/2}(y_0))}^p&\leq c\rho^{sp-n}(1-s)[u]_{W^{s,p}(B_{5\rho}(y_0))}^p+c\rho^{-n}\|u\|_{L^{p}(B_{5\rho}(y_0))}^p\\
&\quad+c\mathrm{Tail}(u;B_{5\rho}(y_0))^{p}+c\rho^{spp'-n}\|f\|^{p'}_{L^{p'}(B_{5\rho}(y_0))}\\
&\leq c\widetilde{E}(u;B_{10\rho}(y_0))^{p}+c\rho^{spp'-n}\|f\|^{p'}_{L^{p'}(B_{10\rho}(y_0))}
\end{align*}
for some constant $c=c(n,s_0,p,\alpha)$. Standard covering arguments now yield
\begin{align}\label{ineq.fspr}
r^{\alpha-\frac{n}{p}} [u]_{W^{\alpha,p}(B_{r}(x_0))}\leq c\widetilde{E}(u;B_{2r}(x_0))+cr^{sp'-\frac{n}{p}}\|f\|^{\frac{1}{p-1}}_{L^{p'}(B_{2r}(x_0))}
\end{align}
for some constant $c=c(n,s_0,p,\alpha)$ whenever $B_{2r}(x_0)\Subset \Omega$.
   
\textbf{In case of $s_0>(p-1)/p$ and $p\geq2$: }Using \eqref{ineq.seq2} in Lemma \ref{lem.secmeb}, we have 
\begin{align*}
[\nabla(v\eta)]_{W^{\alpha-1,p}(\bbR^n)}^p\leq c\left[ (1-s)[v]^{p}_{W^{s,p}(B_{5})}+\|v\|^p_{L^{p}(B_5)}+\mathrm{Tail}(v;B_{5})^{p}+\|f_\rho\|_{L^{p'}(B_5)}^{p'}\right]
\end{align*}
for some constant $c=c(n,s_0,p,\alpha)$. Similarly, as in the proof of \eqref{ineq.fspr}, changing variables, Lemma \ref{lem.upest} and standard covering arguments yield
\begin{align}\label{ineq.fspr2}
r^{\alpha-\frac{n}{p}}[\nabla u]_{W^{\alpha-1,p}(B_{r}(x_0))}\leq c\widetilde{E}(u;B_{2r}(x_0))+cr^{sp'-\frac{n}{p}}\|f\|^{\frac{1}{p-1}}_{L^{p'}(B_{2r}(x_0))}
\end{align} 
for some constant $c=c(n,s_0,p,\alpha)$ whenever $B_{2r}(x_0)\Subset \Omega$.

We now focus on the case $p\leq2$. Its proof is based on a bootstrap argument along with Lemma \ref{lem.scale} and Lemma \ref{loc.mainthmdbs}. 
    
\textbf{In case of $s_0\leq (p-1)/p$ and $p\leq2$: }We now take $\delta=(\alpha_0-\alpha)(p-1)/2$,
\begin{equation*}
\gamma_i=s_0(2-p)^i+(s_0p-\delta)\frac{1-(2-p)^i}{p-1}\quad\text{and}\quad \sigma_i=\gamma_i+(s_0-\gamma_i)p/2
\end{equation*}
to see that $\gamma_1<2s_0$ and $\gamma_{i+1}<2\sigma_i$ for each $i\geq1$.
In addition, since 
\begin{equation*}
\lim_{i\to\infty}\gamma_i=\frac{s_0p-\delta}{p-1},
\end{equation*}
there is a natural number $l=l(n,s_0,p,\alpha)$ such that \begin{equation}\label{cond.l.thm1}
\gamma_l\leq \alpha_0< \gamma_{l+1}.
\end{equation} 
As in the case $p\geq 2$, we first observe from \eqref{ineq.seq1} in Lemma \ref{lem.secmeb} that 
\begin{align*}
[v\eta]_{W^{\gamma_1,p}(\bbR^n)}^p\leq c (1-s)[v]^{p}_{W^{s,p}(B_{5})}+c\|v\|^p_{L^{p}(B_5)}+c\mathrm{Tail}(v;B_{5})^{p}+c\|f_\rho\|_{L^{p'}(B_5)}^{p'}
\end{align*}
for some $c=c(n,s_0,p,\alpha)$. Since $\gamma_1<2s_0$, as in \eqref{ineq.fspr}, we obtain that if $B_{2r}(x_0)\Subset \Omega$, then
\begin{equation}\label{ineq.sin.ind}
r^{\gamma_1-\frac{n}{p}}[u]_{W^{\gamma_1,p}(B_{r}(x_0))}^p\leq c\widetilde{E}(u;B_{2r}(x_0))^{p}+cr^{sp'-\frac{n}{p}}\|f\|^{p'}_{L^{p'}(B_{2r}(x_0))}
\end{equation}
holds for some $c=c(n,s_0,p,\gamma_1)$, where a change of variables, Lemma \ref{lem.upest} and standard covering arguments were used. Since $u\in W^{\gamma_1,p}_{\mathrm{loc}}(\Omega)$, $v(x)=u(\rho x+y_0)\in W^{\gamma_1,p}(B_5)$ is a weak solution to \eqref{eq.thm1}, so that Lemma \ref{loc.mainthmdbs} yields 
\begin{align*}
\sup_{0<|h|<\frac{1}{1000}}\left\|\frac{\delta_h ^2(v\eta)}{|h|^{2\sigma_1}}\right\|^p_{L^{p}(\bbR^{n})}&\leq  c (1-\sigma_1)[v]^{p}_{W^{\gamma_1,p}(B_{5})}+c\|v\|_{L^p(B_5)}^p\\
&\quad+c\mathrm{Tail}(v;B_{5})^{p}+c\|f_\rho\|_{L^{p'}(B_5)}^{p'}
\end{align*}
for some $c=c(n,s_0,p)$. Since $\gamma_2<2\sigma_1$, we now use \eqref{ineq.seq1} in Lemma \ref{lem.secmeb} to see that 
\begin{align*}
[v]^p_{W^{\gamma_2,p}(B_{1/2})}&\leq c[v]^{p}_{W^{\gamma_1,p}(B_{5})}+c\|v\|_{L^{p}(B_5)}^{p}+c\mathrm{Tail}(v;B_{5})^{p}+c\|f_\rho\|_{L^{p'}(B_5)}^{p'},
\end{align*}
where $c=c(n,s_0,p,\alpha)$, as $\sigma_1$ depends only on $n,s_0$ and $p$.
Changing variables and using \eqref{ineq.sin.ind},
\begin{align*}
\rho^{\gamma_2-\frac{n}{p}}[u]_{W^{\gamma_2,p}(B_{\rho/2}(y_0))}\leq c\widetilde{E}(u;B_{10\rho}(y_0))+c\rho^{sp'-\frac{n}{p}}\|f\|_{L^{p'}(B_{10\rho}(y_0))}^{\frac{1}{p-1}}
\end{align*}
holds for $B_{10\rho}(y_0)\subset B_{2r}(x_0)$. Via standard covering arguments, we have 
\begin{equation*}
r^{\gamma_2-\frac{n}{p}}[u]_{W^{\gamma_2,p}(B_{r}(x_0))}\leq c\widetilde{E}(u;B_{2r}(x_0))+cr^{sp'-\frac{n}{p}}\|f\|^{\frac{1}{p-1}}_{L^{p'}(B_{2r}(x_0))}
\end{equation*}
for some constant $c=c(n,s_0,p,\alpha)$ whenever $B_{2r}(x_0)\Subset\Omega$. We now iterate $l-2$ more times to show that 
\begin{equation}\label{ineq.ith}
r^{\gamma_l-\frac{n}{p}}[u]_{W^{\gamma_l,p}(B_{r}(x_0))}\leq c\widetilde{E}(u;B_{2r}(x_0))+cr^{sp'-\frac{n}{p}}\|f\|^{\frac{1}{p-1}}_{L^{p'}(B_{2r}(x_0))}.
\end{equation}
Since $u(\rho x+y_0)\in W^{\gamma_l,p}(B_5)$ is a weak solution to \eqref{eq.thm1}, Lemma \ref{loc.mainthmd}, \eqref{eq:embed}, and covering arguments imply
\begin{equation}\label{ineq.fspr3}
\begin{aligned}
r^{\alpha-\frac{n}{p}}[u]_{W^{\alpha,p}(B_{r}(x_0))}\leq c\widetilde{E}(u;B_{2r}(x_0))+cr^{sp'-\frac{n}{p}}\|f\|^{\frac{1}{p-1}}_{L^{p'}(B_{2r}(x_0))}
\end{aligned}
\end{equation}
for some constant $c=c(n,s_0,p,\alpha)$, where we have used \eqref{cond.l.thm1}. 

\textbf{In case of $s_0>(p-1)/p$ and $p\leq2$: }Since $s_0>(p-1)/p$, there is a positive integer $l$ such that 
\begin{equation*}
\gamma_l\leq 1< \gamma_{l+1}.
\end{equation*}
As in the proof of \eqref{ineq.fspr3}, we get 
\begin{align}\label{ineq.last.sin}
r^{{\gamma}-\frac{n}{p}}[u]_{W^{{\gamma},p}(B_{r}(x_0))}\leq c\widetilde{E}(u;B_{2r}(x_0))+cr^{sp'-\frac{n}{p}}\|f\|^{\frac{1}{p-1}}_{L^{p'}(B_{2r}(x_0))}
\end{align} 
for any $\gamma\in(0,1)$, where $c=c(n,s_0,p,\gamma)$ and $B_{2r}(x_0)\Subset\Omega$. We next select $\widetilde{\gamma}=\frac{1}{2-p}\left(\frac{\alpha_0+\alpha}{2}-s_0p\right)$ to see that 
\begin{equation*}
\widetilde{\gamma}<1\quad\text{and}\quad\frac{\alpha+\alpha_0}{2}=2\widetilde{\gamma}+(s_0-\widetilde{\gamma})p.
\end{equation*}  
Employing Lemma \ref{loc.mainthmdbs} with $\gamma$ replaced by $\widetilde{\gamma}$, we obtain that 
\begin{align*}
\sup_{0<|h|<\frac{1}{1000}}\left\|\frac{\delta_h ^2(v\eta)}{|h|^{\frac{\alpha+\alpha_0}{2}}}\right\|^p_{L^{p}(\bbR^{n})}&\leq  c \|v\|^{p}_{W^{\widetilde{\gamma},p}(B_{5})}+c\mathrm{Tail}(v;B_{5})^{p}+c\|f_\rho\|_{L^{p'}(B_5)}^{p'},
\end{align*}
where $c=c(n,s_0,p,\alpha)$.
Therefore, by \eqref{ineq.seq2} in Lemma \ref{lem.secmeb}, changing variables, \eqref{ineq.last.sin} and Lemma \ref{lem.upest}, we get
\begin{align*}
\rho^{\alpha}[\nabla u]_{W^{\alpha-1,p}(B_{\rho/2}(y_0))}\leq c\widetilde{E}(u;B_{10\rho}(y_0))+c\rho^{sp'-\frac{n}{p}}\|f\|^{\frac{1}{p-1}}_{L^{p'}(B_{10\rho}(y_0))}
\end{align*}
for $B_{10\rho}(y_0)\subset B_{2r}(x_0)$, where $c=c(n,s_0,p,\alpha)$.
Using standard covering argument, we deduce
\begin{align}\label{ineq.fspr4}
r^{\alpha-\frac{n}{p}}[\nabla u]_{W^{\alpha-1,p}(B_{r}(x_0))}\leq c\widetilde{E}(u;B_{2r}(x_0))+cr^{sp'-\frac{n}{p}}\|f\|^{\frac{1}{p-1}}_{L^{p'}(B_{2r}(x_0))}
\end{align}
for some constant $c=c(n,s_0,p,\alpha)$, whenever $B_{2r}(x_0)\Subset\Omega$.

Finally, since $u-(u)_{B_{2r}(x_0)}$ is a also weak solution to \eqref{pt : eq.main}, the desired estimates follow from \eqref{ineq.fspr}, \eqref{ineq.fspr2}, \eqref{ineq.fspr3} and \eqref{ineq.fspr4}. This completes the proof.
\end{proof}

We end this section with the following $V$-function estimates which might be useful in future applications.
    \begin{corollary}
    Let $u\in W^{s,p}(B_{2R}(x_0))\cap L^{p-1}_{sp}(\bbR^n)$ be a weak solution to 
    \begin{align}\label{rmk.eq}
        (-\Delta_p)^su=f\quad\text{in }B_{2R}(x_0).
    \end{align}
    Then for any $|h|<R/1000$, we have 
    \begin{equation}\label{vftnest1}
    \begin{aligned}
        &(1-s)R^{sp}\dashint_{B_{{R}/{2}}(x_0)}\int_{B_{{R}/{2}}(x_0)}|\delta_h V(\delta_{x,y}^su)|^2\frac{\,dx\,dy}{|x-y|^n}\\
        &\leq c|h|^{s\overline{p}}E(u;B_R(x_0))^p+c|h|^{s\overline{p}}R^{spp'-n}\|f\|_{L^{p'}(B_{R}(x_0))}^{p'}
    \end{aligned}
    \end{equation}
    for some constant $c=c(n,s_0,p)$, where $\overline{p}=\min\{2,p'\}$ and the constant $s_0$ is determined in \eqref{s0assum}. In particular, when $p\leq 2$, we have a more refined estimate
     \begin{equation}\label{vftnest2}
    \begin{aligned}
        &(1-s)R^{sp}\dashint_{B_{{R}/{2}}(x_0)}\int_{B_{{R}/{2}}(x_0)}|\delta_h V(\delta_{x,y}^su)|^2\frac{\,dx\,dy}{|x-y|^n}\\
        &\leq c|h|^{2\gamma+(s-\gamma)p}E(u;B_R(x_0))^p+c|h|^{2\sigma}R^{spp'-n}\|f\|_{L^{p'}(B_{R}(x_0))}^{p'}
    \end{aligned}
    \end{equation}
    for any $\gamma\in \left(s,\min\{sp',1\}\right)$, where $c=c(n,s_0,p,\gamma)$.
    \end{corollary}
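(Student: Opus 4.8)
The plan is to obtain \eqref{vftnest1}--\eqref{vftnest2} by running the first half of the arguments behind Lemma \ref{loc.mainthmd} and Lemma \ref{loc.mainthmdbs}, stopping at the level of the $V$-function once one has the estimate coming from Lemma \ref{lem.vrefd} (for $p\geq2$) or Lemma \ref{lem.vrefs} (for $p\leq2$), that is, before invoking the embedding Lemma \ref{lem.secmeb}. I would begin with the standard reductions. Since $(-\Delta_p)^s$ and the quantity $V(\delta^s_{x,y}u)$ are both unaffected when a constant is added to $u$, I may replace $u$ by $u-(u)_{B_R(x_0)}$, so that $(u)_{B_R(x_0)}=0$ and $\mathrm{Tail}(u;B_\rho)$ and $\|u\|_{L^p(B_\rho)}$ are, for $\rho$ comparable to $R$, already controlled by $E(u;B_R(x_0))$. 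By Lemma \ref{lem.scale} I may further assume $x_0=0$ and $R=1$; this turns $f$ into $\hat f$ with $\|\hat f\|_{L^{p'}(B_1)}^{p'}$ proportional to $R^{spp'-n}\|f\|_{L^{p'}(B_R(x_0))}^{p'}$ and leaves $E$ and the left-hand side of \eqref{vftnest1}--\eqref{vftnest2} scale covariant.

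Next I would localize via Lemma \ref{el : lem.loc}: with a cut-off $\xi$ that equals $1$ on a ball slightly larger than $\overline B_{1/2}$ and is compactly supported inside $B_2$, one gets $w\coloneqq u\xi\in W^{s,p}(\bbR^n)$ solving $(-\Delta_p)^sw=f+g$ on a ball still containing $\overline B_{1/2}$ together with a margin absorbing the shifts $|h|<1/1000$, where by \eqref{ineq.loc} one has $\|g\|_{L^{p'}}^{p'}\leq c(1-s)^{p'}\|u\|_{L^p}^p+c\,\mathrm{Tail}(u;\cdot)^p$ on the relevant balls, and by \eqref{norm.loc} one has $[w]_{W^{s,p}(\bbR^n)}^p+\|w\|_{L^p(\bbR^n)}^p\leq c\|u\|_{W^{s,p}}^p$. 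For the refined inequality \eqref{vftnest2} I would additionally invoke Theorem \ref{thm.1}: for any $\gamma\in(s,\min\{sp',1\})$ it yields $u\in W^{\gamma,p}_{\mathrm{loc}}$ together with the corresponding quantitative bound of $[u]_{W^{\gamma,p}}$ on a sub-ball by $E(u;\cdot)$ and $\|f\|_{L^{p'}(\cdot)}^{1/(p-1)}$, whence $w\in W^{\gamma,p}(\bbR^n)$ with the version of \eqref{norm.loc} in which $s$ is replaced by $\gamma$.

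Then I would apply Lemma \ref{lem.vrefd} to $w$ when $p\geq2$ (gain $|h|^{sp'}=|h|^{s\overline p}$), and Lemma \ref{lem.vrefs} to $w$ when $p\leq2$ --- with $\gamma=s$ for \eqref{vftnest1}, so that $\sigma=s$ and the gain is $|h|^{2s}=|h|^{s\overline p}$, and with the chosen $\gamma$ for \eqref{vftnest2}, so that the gain is $|h|^{2\sigma}=|h|^{2\gamma+(s-\gamma)p}$ --- always with the inner cut-off $\psi\equiv1$ on $B_{1/2}$. Since $w\equiv u$ on $\mathrm{supp}\,\psi$ and $|h|<1/1000$, the left-hand side of the resulting inequality dominates $(1-s)\int_{B_{1/2}}\int_{B_{1/2}}|\delta_hV(\delta^s_{x,y}u)|^2\frac{dx\,dy}{|x-y|^n}$, which after multiplication by the normalizing factor and undoing the scaling is the left-hand side of \eqref{vftnest1}--\eqref{vftnest2}. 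On the right-hand side the terms $\|f+g\|_{L^{p'}}^{p'}$, $(1-s)[w]_{W^{s,p}(\bbR^n)}^p$, $\|w\|_{L^p(\bbR^n)}^p$ and, for \eqref{vftnest2}, $(1-\gamma)[w]_{W^{\gamma,p}(\bbR^n)}^p$ are first estimated by the localization bounds above and then reduced to $E(u;B_1)^p+\|\hat f\|_{L^{p'}(B_1)}^{p'}$ by means of Lemma \ref{lem.upest} (which trades $(1-s)[u]_{W^{s,p}}^p$ for the excess), the fractional Poincar\'e inequality of Lemma \ref{lem.poist} (to absorb $\|u\|_{L^p}^p$, using that the mean vanishes) and, for the $W^{\gamma,p}$ contribution, Theorem \ref{thm.1}. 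No covering argument is needed because only a single pair of concentric balls is involved, and rescaling back restores the factors $R^{sp}$ and $R^{spp'-n}$, giving \eqref{vftnest1}--\eqref{vftnest2}.

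The step I expect to be the main obstacle is the simultaneous control of scales and of the right-hand side. Lemma \ref{el : lem.loc} and the interior estimates each shrink the working ball, so the localization radius has to be chosen carefully --- roughly between $R/2$ and $R$ --- so that $\xi$ still lives inside the solution domain $B_{2R}(x_0)$, so that $w\equiv u$ on the support of the inner cut-off, and above all so that every quantity appearing after Lemma \ref{el : lem.loc} and Lemma \ref{lem.upest} is dominated by the excess $E(u;B_R(x_0))$ (and by $\|f\|_{L^{p'}(B_R(x_0))}$) rather than by data on a larger ball. What makes this possible is precisely that the excess functional already subtracts the mean, so that $\mathrm{Tail}(u-(u)_{B_R(x_0)};B_\rho)\leq cE(u;B_R(x_0))$ whenever $\rho\eqsim R$, and that Lemma \ref{lem.upest} converts the local Gagliardo seminorm into the excess on a ball of at most twice the radius.
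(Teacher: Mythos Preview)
Your strategy coincides with the paper's: localize via Lemma~\ref{el : lem.loc}, apply the $V$-function Lemmas~\ref{lem.vrefd}/\ref{lem.vrefs} to the localized solution $w=u\xi$, and reduce the right-hand side using \eqref{norm.loc}, \eqref{ineq.loc}, Lemma~\ref{lem.upest} and the Poincar\'e inequality; for \eqref{vftnest2} both arguments first upgrade $u$ to $W^{\gamma,p}$ via Theorem~\ref{thm.1} and then feed this into Lemma~\ref{lem.vrefs}.

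The one genuine gap is your claim that no covering argument is needed. With the lemmas \emph{as stated} you cannot match the radii. After scaling to $R=1$, Lemma~\ref{el : lem.loc} requires $B_{5\tilde R}\Subset B_2$, so $\tilde R<2/5$, and $w$ solves the localized equation only on $B_{2\tilde R}$; Lemmas~\ref{lem.vrefd}/\ref{lem.vrefs}, however, are formulated for a solution on $B_2$ with $\psi\equiv1$ on $B_{1/2}$, so after the mandatory rescaling they deliver the $V$-estimate only on $B_{\tilde R/2}$. On top of that, bounding $[w]_{W^{s,p}(\bbR^n)}$ and $\|g\|_{L^{p'}}$ by data on $B_1$ through \eqref{norm.loc} and \eqref{ineq.loc} forces $5\tilde R\leq1$, hence $\tilde R\leq1/5$ and the left-hand side lives only on $B_{1/10}$, not on $B_{1/2}$. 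Your final paragraph correctly identifies this tension, but the resolution you propose---a careful choice of a single localization radius between $R/2$ and $R$---cannot work when every lemma in the chain shrinks the ball by a fixed ratio. The paper closes exactly this gap by the covering you dismiss: it runs the whole argument on small balls $B_{\rho/2}(y_0)$ with $B_{10\rho}(y_0)\Subset B_2$, uses Lemma~\ref{lem.upest} together with the excess/tail comparison you allude to in order to dominate $E(u;B_{10\rho}(y_0))$ by $E(u;B_1)$, and then sums over a cover of $B_{1/2}$.
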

    \begin{proof}
        In view of a straightforward scaling argument, we may assume that $x_0=0$ and $R=1$. Let us fix $B_{10\rho}(y_0)\Subset B_2$. By Lemma \ref{lem.scale}, 
    $
        u_\rho(x)=u(\rho x+y_0)
    $
    is a weak solution to 
\begin{equation*}
    (-\Delta_p)^su_\rho=f_\rho\quad\text{in }B_{10},
\end{equation*}
where $f_\rho(x)=\rho^{sp}f(\rho x+y_0)$. By Lemma
    \ref{el : lem.loc}, $v=u_\rho\xi$ is a weak solution to 
    \begin{equation*}
    (-\Delta_p)^sv=f_\rho+g\quad\text{in }B_{2},
\end{equation*}
where the function $\xi$ is determined in Lemma \ref{el : lem.loc}, and $g$ satisfies
\begin{equation}\label{rmk.ineqg}
    \|g\|_{L^{p'}(B_{5/2})}^{p'}\leq c(1-s)\|u_\rho\|_{L^{p}(B_3)}^p+c\mathrm{Tail}(u;B_3)^p
\end{equation}
for some constant $c=c(n,s_0,p)$. In light of Lemma \ref{lem.vrefd} and Lemma \ref{lem.vrefs}, we deduce
\begin{align*}
        &(1-s)\int_{\bbR^{n}}\int_{\bbR^n}|\delta_h V(\delta_{x,y}^{s}v)|^{2}\psi(x)\psi(y)\frac{\,dx\,dy}{|x-y|^{n}}\\
        &\leq c|h|^{s\overline{p}}\left((1-s)[v]^{p}_{{W}^{s,p}(\bbR^{n})}+\|v\|^p_{L^p(\bbR^n)}+\|f_{\rho}+g\|^{p'}_{L^{p'}(B_1)}\right)
    \end{align*}
    for some constant $c=c(n,s_0,p)$ and the function $\psi$ is determined in Lemma \ref{lem.vrefd}.
    Using \eqref{norm.loc} and \eqref{rmk.ineqg}, we obtain
    \begin{equation}\label{ineq.vftnsd}
    \begin{aligned}
        &(1-s)\rho^{sp}\dashint_{B_{\rho/2}(y_0)}\int_{B_{\rho/2}(y_0)}|\delta_h V(\delta_{x,y}^{s}u)|^{2}\frac{\,dx\,dy}{|x-y|^{n}}\\
        &\leq c|h|^{s\overline{p}}\left((1-s)\rho^{sp-n}[u]^{p}_{{W}^{s,p}(B_{5\rho}(y_0))}+\rho^{-n}\|u\|^p_{L^p(B_{5\rho}(y_0))}\right)\\
        &\quad+c|h|^{s\overline{p}}\left(\mathrm{Tail}(u;B_{5\rho}(y_0))^p+\rho^{spp'-n}\|f\|^{p'}_{L^{p'}(B_{5\rho}(y_0))}\right)
    \end{aligned}
    \end{equation}
    for some constant $c=c(n,s_0,p)$. Since $u-(u)_{B_{5\rho}(y_0)}$ is a also weak solution to \eqref{rmk.eq} with $x_0=0$ and $R=1$, together with Lemma \ref{lem.poist}, we observe that
    \begin{equation}\label{ineq.vftnsd2}
    \begin{aligned}
        &(1-s)\rho^{sp}\dashint_{B_{\rho/2}(y_0)}\int_{B_{\rho/2}(y_0)}|\delta_h V(\delta_{x,y}^{s}u)|^{2}\frac{\,dx\,dy}{|x-y|^{n}}\\
        &\leq c|h|^{s\overline{p}}\left((1-s)\rho^{sp-n}[u]^{p}_{{W}^{s,p}(B_{5\rho}(y_0))}+\mathrm{Tail}(u-(u)_{B_{5\rho}(y_0)};B_{5\rho}(y_0))^p\right)\\
        &\quad+c|h|^{s\overline{p}}\rho^{spp'-n}\|f\|^{p'}_{L^{p'}(B_{5\rho}(y_0))}
    \end{aligned}
    \end{equation}
    holds, where $c=c(n,s_0,p)$. Then \eqref{vftnest1} follows from Lemma \ref{lem.upest} and a standard covering argument. 

    We are now in the position to prove \eqref{vftnest2}. Let us fix 
    \begin{equation*}
      \gamma\in \left(s,\min\{sp',2+(s-1)p\}\right)
    \end{equation*}
    to see that there is a constant $t_0\in(0,s]$ such that $\alpha_0=\gamma$, where $\alpha_0$ is determined in \eqref{defn.alpha0} with $s_0$ replaced by $t_0$. By Theorem \ref{thm.1}, we observe that $u\in W^{\gamma,p}_{\mathrm{loc}}(\Omega)$ and
    \begin{align}\label{vest.}
        r^{\gamma p-n}[u]^p_{W^{\gamma,p}(B_{r}(z_0))}\leq cE(u;B_{2r}(z_0))^p+cr^{spp'-n}\|f\|^{p'}_{L^{p'}(B_{2r}(z_0))}
    \end{align}
    for some constant $c=c(n,s_0,p,\gamma)$ whenever $B_{2r}(z_0)\Subset\Omega$, as $t_0$ depends only on $n,p$ and $\gamma$. By Lemma \ref{lem.vrefs}, 
   \begin{align*}
        &(1-s)\int_{\bbR^{n}}\int_{\bbR^n}|\delta_h V(\delta_{x,y}^{s}v)|^{2}\psi(x)\psi(y)\frac{\,dx\,dy}{|x-y|^{n}}\\
        &\leq c|h|^{2\gamma+(s-\gamma)p}\left([v]_{W^{\gamma,p}(\bbR^n)}^p+(1-s)[v]^{p}_{{W}^{s,p}(\bbR^{n})}+\|v\|^p_{L^p(\bbR^n)}+\|f\|^{p'}_{L^{p'}(B_1)}\right)
    \end{align*}
    holds for some constant $c=c(n,s_0,p,\gamma)$, where the constant $\sigma$ is determined in Lemma \ref{lem.vrefs}.
    As in \eqref{ineq.vftnsd} and \eqref{ineq.vftnsd2} along with Lemma \ref{lem.upest} and \eqref{vest.}, we deduce 
    \begin{equation*}
    \begin{aligned}
        &(1-s)\rho^{sp}\dashint_{B_{\rho/2}(y_0)}\int_{B_{\rho/2}(y_0)}|\delta_h V(\delta_{x,y}^{s}u)|^{2}\frac{\,dx\,dy}{|x-y|^{n}}\\
        &\leq c|h|^{2\gamma+(s-\gamma)p}\left(E(u;B_{10\rho}(y_0))^p+\rho^{spp'-n}\|f\|^{p'}_{L^{p'}(B_{5\rho}(y_0))}\right)
    \end{aligned}
    \end{equation*}
    for some $c=c(n,s_0,p,\gamma)$. By standard covering arguments, we obtain \eqref{vftnest2}.
\end{proof}



\section{Differentiable data} \label{sec:ddata}
In this section, we consider the case of a differentiable right-hand side $f\in W^{t,p'}$ for $t\in(0,1)$. We use a different kind of an iteration scheme to utilize the additional regularity imposed on $f$.
\begin{lemma}\label{lem.diffquo}
    Let $u\in W^{s,p}(\bbR^{n})$ be a weak solution to 
    \begin{equation}\label{eq.itesubd}
        (-\Delta_p)^su=f\quad\text{in }B_{2}
    \end{equation}
    with $f\in L^{p'}(B_{5/2})$. For any $|h|<1/1000$, we have 
    \begin{align}\label{eq:cacc}
    \begin{split}
    & (1-s)\int_{\bbR^n}\int_{\bbR^n}\left|\delta_h V\left(\delta_{x,y}^su\right)\right|^{2}\max\{\psi^{\overline{p}}(x),\psi^{\overline{p}}(y)\}\frac{\,dx\,dy}{|x-y|^{n}}\\
    &\leq c(1-s)\int_{\bbR^n}\int_{\bbR^n}\phi_{|\delta_{x,y}^su|}\left({\left(|\delta_{h}u(x)|+|\delta_hu(y)|\right)} \abs{\delta^s_{x,y}\psi}\right)\frac{\,dx\,dy}{|x-y|^{n}}\\
    &\quad+c\int_{B_1}|\delta_ h u||\delta_h f|\,dx
    \end{split}
\end{align}
    for some constant $c=c(n,p)$, where the function $\psi$ is determined in Lemma \ref{lem.combvest} and ${\overline{p}}=\max\{2,p\}$.
\end{lemma}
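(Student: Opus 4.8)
The plan is a standard difference-quotient Caccioppoli argument: I would test the equation against a first finite difference of $u$ localized by $\psi^{\overline p}$, isolate the good $V$-function term, and absorb the remaining error by a shifted Young inequality that keeps careful track of the powers of $\psi$. Concretely, since $u\in W^{s,p}(\bbR^n)$, the function $\psi^{\overline p}\delta_h u$ lies in $W^{s,p}(\bbR^n)$ and is supported in $B_{3/4}$, so $\varphi:=\delta_{-h}(\psi^{\overline p}\delta_h u)$ is supported in $B_{3/4}\cup(B_{3/4}+h)\Subset B_2$ and is admissible in \eqref{eq.itesubd}. Testing with $\varphi$, then applying the change of variables $(x,y)\mapsto(x+h,y+h)$ to the part of the bilinear form coming from $(\psi^{\overline p}\delta_h u)(x-h)-(\psi^{\overline p}\delta_h u)(y-h)$ and the shift $x\mapsto x+h$ to the corresponding right-hand side term, I obtain the discrete integration-by-parts identity
\begin{equation*}
(1-s)\int_{\bbR^n}\int_{\bbR^n}\delta_h A(\delta_{x,y}^s u)\,\delta_{x,y}^s(\psi^{\overline p}\delta_h u)\,\frac{dx\,dy}{|x-y|^n}=\int_{B_1}\delta_h f\,\psi^{\overline p}\,\delta_h u\,dx ,
\end{equation*}
whose right-hand side is bounded by $\int_{B_1}|\delta_h u||\delta_h f|\,dx$ because $0\le\psi\le1$; this accounts for the last term in \eqref{eq:cacc}.

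Next I would decompose $\delta_{x,y}^s(\psi^{\overline p}\delta_h u)=\psi^{\overline p}(x)\,\delta_h(\delta_{x,y}^s u)+\delta_h u(y)\,\delta_{x,y}^s\psi^{\overline p}$, using $\delta_{x,y}^s\delta_h u=\delta_h(\delta_{x,y}^s u)$. In the first resulting integral, relabeling $x\leftrightarrow y$ and using that $A$ is odd shows that $\psi^{\overline p}(x)$ may be symmetrized to $\tfrac12(\psi^{\overline p}(x)+\psi^{\overline p}(y))\ge\tfrac12\max\{\psi^{\overline p}(x),\psi^{\overline p}(y)\}$; combined with Lemma~\ref{el : lem.eleineq}(1), which gives $\delta_h A(\delta_{x,y}^s u)\cdot\delta_h(\delta_{x,y}^s u)\ge\tfrac1c|\delta_h V(\delta_{x,y}^s u)|^2$ pointwise, this integral controls a constant multiple of the left-hand side of \eqref{eq:cacc}. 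After moving the remaining contributions to the right, it remains to estimate the error term $(1-s)\int\int|\delta_h A(\delta_{x,y}^s u)|\,|\delta_h u(y)|\,|\delta_{x,y}^s\psi^{\overline p}|\,\tfrac{dx\,dy}{|x-y|^n}$.

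The crux is the pointwise estimate of this integrand. Set $a:=|\delta_{x,y}^s u|$, $t:=|\delta_h(\delta_{x,y}^s u)|$ and $M:=\max\{\psi(x),\psi(y)\}$. By \eqref{equi.shif} one has $|\delta_h A(\delta_{x,y}^s u)|\eqsim\phi_a'(t)$, while $0\le\psi\le1$ yields $|\delta_{x,y}^s\psi^{\overline p}|\le\overline p\,M^{\overline p-1}|\delta_{x,y}^s\psi|$ and $\max\{\psi^{\overline p}(x),\psi^{\overline p}(y)\}=M^{\overline p}$. The key claim is the pointwise inequality
\begin{equation*}
\phi_a'(t)\,c_1\,M^{\overline p-1}\,d\le\varepsilon\,\phi_a(t)\,M^{\overline p}+c_\varepsilon\,\phi_a(c_1 d),\qquad M\in[0,1],\ c_1,d\ge0 ,
\end{equation*}
which I would prove, for $M>0$, by factoring the left side as $M^{\overline p}\,\phi_a'(t)\,(c_1 d/M)$, applying the shifted Young inequality $\phi_a'(t)\,r\le\varepsilon\phi_a(t)+c_\varepsilon\phi_a(r)$, and then invoking $M^{\overline p}\phi_a(c_1 d/M)\le\phi_a(c_1 d)$; this last bound is the homogeneity estimate $\phi_a(\lambda r)\ge\lambda^{\overline p}\phi_a(r)$ for $\lambda\in(0,1]$, which is exactly where the choice $\overline p=\max\{2,p\}$ enters — writing $\phi_a(\tau)\eqsim|a\vee\tau|^{p-2}\tau^2$, it follows for $p\ge2$ from $a\vee\lambda r\ge\lambda(a\vee r)$ and for $p\le2$ from $a\vee\lambda r\le a\vee r$. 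Applying the claim with $c_1=|\delta_h u(y)|$ and $d=|\delta_{x,y}^s\psi|$, bounding $\phi_a(c_1 d)\le\phi_a\big((|\delta_h u(x)|+|\delta_h u(y)|)|\delta_{x,y}^s\psi|\big)$ by monotonicity of $\phi_a$, and using $|\delta_h V(\delta_{x,y}^s u)|^2\eqsim\phi_a(t)$ (from Lemma~\ref{el : lem.eleineq} and $\phi_a'(t)t\eqsim\phi_a(t)$), I would choose $\varepsilon$ small, absorb the $\varepsilon M^{\overline p}\phi_a(t)$ contribution into the left-hand side of \eqref{eq:cacc} (which is finite since $u\in W^{s,p}(\bbR^n)$), and conclude.

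The main obstacle is precisely this Young step: a careless application would leave an \emph{unweighted} term $\varepsilon\phi_a(t)$ that cannot be absorbed into the $\psi^{\overline p}$-weighted left-hand side, so one is forced to extract the full factor $M^{\overline p}$, which is the structural reason behind the exponent $\overline p=\max\{2,p\}$ and the homogeneity-type scaling property of the shifted $N$-function $\phi_a$. The other ingredients — admissibility of the test function, the translation identity, and the symmetrization — are routine modulo careful bookkeeping on supports.
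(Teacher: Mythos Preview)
Your proposal is correct and follows essentially the same approach as the paper: both test \eqref{eq.itesubd} with $\delta_{-h}(\psi^{\overline p}\delta_h u)$, extract the $V$-function term via Lemma~\ref{el : lem.eleineq}, and control the cross term by a shifted Young inequality in which the factor $M^{\overline p-1}$ coming from $|\delta^s_{x,y}\psi^{\overline p}|\le \overline p\,M^{\overline p-1}|\delta^s_{x,y}\psi|$ is what allows absorption into the $M^{\overline p}$-weighted good term. The only organisational difference is that the paper obtains the weight $\max\{\psi^{\overline p}(x),\psi^{\overline p}(y)\}$ by a pointwise case split on which of $\psi(x),\psi(y)$ is larger and then invokes \cite[Lemmas 31, 32, 34]{DieEtt08} for the inequality $(\phi_a)^*\big(M^{\overline p-1}\phi_a'(t)\big)\le c\,M^{\overline p}\phi_a(t)$, whereas you symmetrise the good integral under $x\leftrightarrow y$ and prove the equivalent homogeneity bound $M^{\overline p}\phi_a(r/M)\le c\,\phi_a(r)$ directly; your route is slightly more self-contained but otherwise identical in substance.
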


\begin{proof}
    Fix a cut-off function $\psi=\psi(x)\in C_{c}^{\infty}(B_{3/4})$ with \eqref{test.sing} and fix $0<|h|<\frac{1}{1000}$.
    By testing \eqref{eq.itesubd} with $\delta_{-h}(\psi^{\overline{p}}\delta_{h}u)$, we get
    \begin{align*}
    (1-s)\int_{\bbR^n}\int_{\bbR^n}\delta_h A\left(\delta^s_{x,y}u\right)\delta^s_{x,y}(\psi^{\overline{p}}\delta_{h}u)\dfrac{\,dx\,dy}{\abs{x-y}^n}=\int_{B_1}\delta_h f\,\psi^{\overline{p}}\delta_h u\,dx.
    \end{align*}
    Let us estimate now estimate the quantity
    \begin{equation*}
        J_1\coloneqq\delta_h A(\delta^s_{x,y}u)\delta^s_{x,y}(\psi^{\overline{p}}\delta_{h}u).
    \end{equation*}
We may assume $\delta_{h}u(x)\geq\delta_{h}u(y)$ by interchanging the roles of $\delta_{h}u(x)$ and $\delta_{h}u(y)$.

\textbf{In case of $\psi(x)\geq \psi(y)$:} In this case, by \eqref{equi.shif} we have 
	\begin{equation}\label{ineq.estij1}
	\begin{aligned}
	J_1&\geq \delta_h A(\delta^s_{x,y}u)\psi^{\overline{p}}(x)\delta^s_{x,y}(\delta_{h}u)\\
	&=\delta_h A(\delta^s_{x,y}u)\psi^{\overline{p}}(x)\delta_{h}(\delta^s_{x,y}u)\geq \frac{1}{c}\phi_{|\delta_{x,y}^su|}\left({|\delta_h \delta_{x,y}^su|}\right)\psi^{\overline{p}}(x).
	\end{aligned}
	\end{equation}

\textbf{In case of $\psi(x)\leq \psi(y)$:} We rewrite $J_1$ as 
	\begin{align*}
	J_1=\delta_h A(\delta^s_{x,y}u)\psi^{\overline{p}}(y)\delta^s_{x,y}(\delta_{h}u)+\delta_h A(\delta^s_{x,y}u)\delta_{h}u(x)\delta^s_{x,y}(\psi^{\overline{p}})\eqqcolon J_{1,1}+J_{1,2}.
	\end{align*}
	As in the estimate of \eqref{ineq.estij1}, we have $J_{1,1}\geq \phi_{|\delta_{x,y}^su|}\left({|\delta_h \delta_{x,y}^su|}\right)\psi^{\overline{p}}(y)$. On the other hand, using \eqref{equi.shif} and the fact that $|\psi(x)|\leq|\psi(y)|$, one can see that
	\begin{align*}
	J_{1,2}&\geq -c\phi'_{|\delta_{x,y}^su|}\left(|\delta_h \delta_{x,y}^su|\right)|\delta_{h}u(x)||\psi(y)|^{\overline{p}-1}\abs{\delta^s_{x,y}\psi}
	\end{align*}
	for some constant $c=c(n,p)$.
	Using Young's inequality given in \cite[Lemma 31, Lemma 32, and Lemma 34]{DieEtt08}, we obtain
	\begin{align*}
	J_{1,2}&\geq -\epsilon(\phi_{|\delta_{x,y}^su|})^{*}\left(\psi^{\overline{p}-1}(y){\phi'_{|\delta_{x,y}^su|}\left(|\delta_h \delta_{x,y}^su|\right)}\right)-c_{\epsilon}\phi_{|\delta_{x,y}^su|}\left({|\delta_{h}u(x)|}\abs{\delta^s_{x,y}\psi}\right)\\
	&\geq -\epsilon\psi^{\overline{p}}(y)\phi_{|\delta_{x,y}^su|}\left({|\delta_h \delta_{x,y}^su|}\right)-c_{\epsilon}\phi_{|\delta_{x,y}^su|}\left({|\delta_{h}u(x)|}\abs{\delta^s_{x,y}\psi}\right)
	\end{align*}
	for some constant $c_\epsilon=c(n,p,\epsilon)$. Combining all the estimates along with the choice of $\epsilon=\frac{1}{4c}$, we arrive at the estimate
	\begin{align*}
	J_1&\geq \frac{1}{c}\max\{\psi^{\overline{p}}(x),\psi^{\overline{p}}(y)\}\phi_{|\delta_{x,y}^su|}\left({|\delta_h \delta^s_{x,y}u|}\right)\\
	&\quad-c\phi_{|\delta_{x,y}^su|}\left({\left(|\delta_{h}u(x)|+|\delta_hu(y)|\right)}\delta^s_{x,y}\psi\right)
	\end{align*}
	for some constant $c=c(n,p)$.
Thus, by the above estimate for $J_1$ along with \eqref{equi.shif} and Lemma \ref{el : lem.eleineq}, we conclude that \eqref{eq:cacc} holds with respect to some constant $c=c(n,p)$.
\end{proof}

\subsection{The degenerate case}
To use a bootstrap argument, we further estimate the right-hand side of the inequality given in Lemma \ref{lem.diffquo}. In order to accomplish this, we use a different approach in each of the cases $p\in(1,2]$ and $p \in [2,\infty)$.
\begin{lemma}\label{lem.degind}
    Let $p\geq 2$. Let $u\in W^{s,p}(\bbR^{n})$ be a weak solution to 
    \begin{equation*}
        (-\Delta_p)^su=f\quad\text{in }B_{2}
    \end{equation*}
    with $f\in L^{p'}(B_2)$. For any $|h|<1/1000$, we have
    \begin{align*}
        (1-s)[\delta_h(u\psi)]_{W^{s,p}(\bbR^n)}^p&\leq c\left((1-s)^{\frac1p}[u]_{W^{s,p}(\bbR^{n})}\right)^{p-2}\left\|{\delta_h u}\right\|_{L^{p}(\bbR^{n})}^{2}+c\left\|{\delta_h u}\right\|_{L^p(\bbR^n)}^p\nonumber\\
        &\quad+c|h|^p\left((1-s)[u]^p_{{W}^{s,p}(\bbR^n)}+\|u\|_{L^p(\bbR^n)}^p\right)\\
        &\quad+c\left\|{\delta_h u}\right\|_{L^{p}(B_1)}\left\|{\delta_h f}\right\|_{L^{p'}(B_{1})}
    \end{align*}
    for some constant $c=c(n,s_0,p)$, where the function $\psi$ is determined in Lemma \ref{lem.combvest} and the constant $s_0$ is determined in \eqref{s0assum}.
\end{lemma}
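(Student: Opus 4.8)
The plan is to combine the Caccioppoli-type inequality \eqref{eq:cacc} of Lemma \ref{lem.diffquo} (with $\overline{p}=p$, since $p\geq2$) with the reduction already carried out in the proof of Lemma \ref{lem.vrefd}. Writing $\psi_m=\min\{\psi(x),\psi(y)\}$, the first step is to recall — exactly as in the proof of Lemma \ref{lem.vrefd}, using the decomposition \eqref{ineq.simple}, \eqref{test.sing}, \eqref{ineq.test.grhe} and the standard bound $[\psi]_{W^{s,p}(\bbR^n)}^p\leq c(1-s)^{-1}$ — that
\begin{equation*}
[\delta_h(u\psi)]^p_{W^{s,p}(\bbR^n)}\leq c\int_{\bbR^n}\int_{\bbR^n}|\delta_h\delta^s_{x,y}u|^p\psi_m^p\dxyn+\frac{c}{1-s}\|\delta_h u\|^p_{L^p(\bbR^n)}+c|h|^p\Big([u]^p_{W^{s,p}(\bbR^n)}+\tfrac{1}{1-s}\|u\|^p_{L^p(\bbR^n)}\Big).
\end{equation*}
Since $p\geq2$, parts (1) and (4) of Lemma \ref{el : lem.eleineq} give the pointwise bound $|\delta_h\delta^s_{x,y}u|^p\leq c|\delta_h V(\delta^s_{x,y}u)|^2$, which together with $\psi_m^p\leq\max\{\psi^p(x),\psi^p(y)\}$ shows that $1-s$ times the first integral above is controlled by the left-hand side of \eqref{eq:cacc}. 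Multiplying the displayed inequality by $1-s$, the two lower-order terms become exactly $c\|\delta_h u\|^p_{L^p(\bbR^n)}+c|h|^p\big((1-s)[u]^p_{W^{s,p}(\bbR^n)}+\|u\|^p_{L^p(\bbR^n)}\big)$, so it only remains to bound the two terms on the right-hand side of \eqref{eq:cacc}.

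The second of these is immediate from H\"older's inequality: $\int_{B_1}|\delta_h u||\delta_h f|\,dx\leq\|\delta_h u\|_{L^p(B_1)}\|\delta_h f\|_{L^{p'}(B_1)}$. For the first term I would expand the shifted $N$-function by \eqref{ineq1.shif}, which for $p\geq2$ gives
\begin{equation*}
\phi_{|\delta^s_{x,y}u|}\big((|\delta_h u(x)|+|\delta_h u(y)|)\,|\delta^s_{x,y}\psi|\big)\leq c|\delta^s_{x,y}u|^{p-2}(|\delta_h u(x)|+|\delta_h u(y)|)^2|\delta^s_{x,y}\psi|^2+c(|\delta_h u(x)|+|\delta_h u(y)|)^p|\delta^s_{x,y}\psi|^p.
\end{equation*}
The contribution of the last summand, after integration against $(1-s)\dxyn$ and use of $|\psi(x)-\psi(y)|\leq c\min\{1,|x-y|\}$ from \eqref{test.sing}, is bounded by $c\|\delta_h u\|^p_{L^p(\bbR^n)}$ (the inner integral in the variable not carrying $\delta_h u$ is $\leq c(1-s)^{-1}$, which the prefactor $1-s$ absorbs). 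For the first summand I apply H\"older's inequality with exponents $\big(\tfrac{p}{p-2},\tfrac p2\big)$, splitting both the factor $|\delta^s_{x,y}\psi|^2$ and the kernel $|x-y|^{-n}$ so that each of the two resulting factors carries the weight $|\delta^s_{x,y}\psi|^2|x-y|^{-n}$; this leads to
\begin{equation*}
(1-s)\left(\int_{\bbR^n}\int_{\bbR^n}|\delta^s_{x,y}u|^p|\delta^s_{x,y}\psi|^2\dxyn\right)^{\frac{p-2}{p}}\left(\int_{\bbR^n}\int_{\bbR^n}(|\delta_h u(x)|+|\delta_h u(y)|)^p|\delta^s_{x,y}\psi|^2\dxyn\right)^{\frac{2}{p}}.
\end{equation*}
Using $|\psi(x)-\psi(y)|^2\leq c\min\{1,|x-y|^2\}$, the first factor is $\leq c[u]^p_{W^{s,p}(\bbR^n)}$ — here the global regularity $u\in W^{s,p}(\bbR^n)$ is used — and the second factor is $\leq c(1-s)^{-1}\|\delta_h u\|^p_{L^p(\bbR^n)}$, so the whole quantity is $\leq c(1-s)[u]^{p-2}_{W^{s,p}(\bbR^n)}\big((1-s)^{-1}\|\delta_h u\|^p_{L^p(\bbR^n)}\big)^{2/p}=c\big((1-s)^{1/p}[u]_{W^{s,p}(\bbR^n)}\big)^{p-2}\|\delta_h u\|^2_{L^p(\bbR^n)}$. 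Collecting all the contributions yields the asserted estimate.

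The step I expect to need the most care is the bookkeeping of the $(1-s)$-weights. It is the particular H\"older split — distributing the weight $|\delta^s_{x,y}\psi|^2|x-y|^{-n}$ into \emph{both} factors rather than leaving the kernel in only one of them — that makes the powers of $1-s$ balance, so that the robust quantity $(1-s)^{1/p}[u]_{W^{s,p}(\bbR^n)}$, which does not degenerate as $s\to1$, appears in the end. A minor additional technicality is the long-range part $|x-y|>1$ of the various double integrals: there the cut-off $\psi$ confines one of the two variables to $B_{3/4}$ and the kernels $|x-y|^{-(n+sp)}$ and $|x-y|^{-(n+2s)}$ are dominated by $|x-y|^{-n}$, so these contributions are harmless and only account for the fact that the full $\bbR^n$-quantities $[u]_{W^{s,p}(\bbR^n)}$ and $\|\delta_h u\|_{L^p(\bbR^n)}$, rather than local ones, appear in the statement.
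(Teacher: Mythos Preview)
Your proposal is correct and follows essentially the same route as the paper's proof: both combine the product-rule decomposition of $[\delta_h(u\psi)]_{W^{s,p}}^p$ with Lemma~\ref{lem.diffquo}, and then estimate the shifted $N$-function term via \eqref{ineq1.shif} and the H\"older split with exponents $(p/(p-2),p/2)$, which is exactly what produces the paper's inequality \eqref{ineq.vpn}. The only cosmetic difference is that you pass through \eqref{ineq.simple} and $\psi_m$ (as in the proof of Lemma~\ref{lem.vrefd}) while the paper uses \eqref{ineq.simple0} with $\psi(x)\leq\max\{\psi(x),\psi(y)\}$ directly; both give the same bound.
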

\begin{proof}
Fix a cut-off function $\psi\in C_{c}^\infty(B_{3/4})$ with $\psi\equiv 1$ in $B_{1/2}$ and \eqref{test.sing}. Using \eqref{ineq.simple0}, \eqref{test.sing}, \eqref{ineq.test.grhe} and Lemma \ref{el : lem.eleineq}, we observe that
\begin{align*}
   &\int_{\bbR^n}\int_{\bbR^n}\abs{\delta^s_{x,y}\delta_h(u\psi)}^p\dfrac{\,dx\,dy}{\abs{x-y}^n}\\
   &\leq\int_{\bbR^n}\int_{\bbR^n}\left|\delta_h V\left(\delta^s_{x,y}u\right)\right|^{2}\max\{\psi^{p}(x),\psi^p(y)\}\frac{\,dx\,dy}{|x-y|^{n}}\\
   &\quad+ \frac{c}{1-s}\|\delta_hu\|_{L^p(\bbR^n)}^p+c|h|^p\left([u]_{W^{s,p}(\bbR^n)}^p+\frac{c}{1-s}\|u\|_{L^p(\bbR^n)}^p\right)
\end{align*}
for some $c=c(n,s_0,p)$.
We next note that from \eqref{ineq1.shif} and H\"older's inequality,
\begin{equation}\label{ineq.vpn}
\begin{aligned}
    &\int_{\bbR^n}\int_{\bbR^n}\phi_{|\delta_{x,y}^su|}\left(\left(|\delta_{h}u(x)|+|\delta_hu(y)|\right) \abs{\delta^s_{x,y}\psi}\right)\frac{\,dx\,dy}{|x-y|^{n}}\\
    &\leq c\int_{\bbR^n}\int_{\bbR^n}{|\delta_{x,y}^su|^{p-2}}{(|\delta_{h}u(x)|+|\delta_{h}u(x)|)^2}\abs{\delta^s_{x,y}\psi}^2\frac{\,dx\,dy}{|x-y|^{n}}\\
    &\quad+c\int_{\bbR^n}\int_{\bbR^n}{\left(|\delta_{h}u(x)|^p+|\delta_{h}u(y)|^p\right)}\abs{\delta^s_{x,y}\psi}^p\frac{\,dx\,dy}{|x-y|^{n}}\\
     &\leq \frac{c}{(1-s)^{\frac{2}{p}}}[u]_{W^{s,p}(\bbR^n)}^{p-2}\|\delta_h u\|_{L^p(\bbR^n)}^2+\frac{c}{1-s}\|\delta_h u\|_{L^p(\bbR^n)}^p
\end{aligned}
\end{equation}
holds for some constant $c=c(n,s_0,p)$.
Thus, combining the above two inequalities along with Lemma \ref{lem.diffquo} yields the desired estimate.
\end{proof}

\begin{lemma}\label{lem.deg.reg.ind} Let $u\in W^{\gamma,p}(B_{5R}(x_0))\cap L^{p-1}_{sp}(\bbR^n)$ be a weak solution to 
\begin{equation*}
    (-\Delta_p)^su=f\quad\text{in }B_{5R}(x_0),
\end{equation*}
    where $f\in W^{t,p'}(B_{5R}(x_0))$ and $\gamma\in[s,\min\{1,\alpha_1\})$ with the constant $\alpha_1$ determined in \eqref{defn.alpha1}. If $s_0+\frac{\gamma+\min\{\gamma,t\}}{p}\leq 1$, then we have 
    \begin{align*}
        R^{\sigma-\frac{n}{p}}[u]_{W^{\sigma,p}(B_{R/2}(x_0))}&\leq c(1-s)^{\frac{1}{p}}R^{\gamma-\frac{n}{p}}[u]_{W^{\gamma,p}(B_{5R}(x_0))}\\
        &\quad+c\mathrm{Tail}(u-(u)_{B_{5R}(x_0)};B_{5R}(x_0))\\
        &\quad+cR^{\frac{sp+t}{p-1}-\frac{n}{p}}[f]_{W^{t,p'}(B_{5R}(x_0))}^{\frac{1}{p-1}}
    \end{align*}
    for some constant $c=c(n,s_0,p,\gamma,\sigma,t)$, where $\sigma<s_0+\frac{\gamma+\min\{\gamma,t\}}{p}$. On the other hand, if $s_0+\frac{\gamma+\min\{\gamma,t\}}{p}>1 $, then we have 
    \begin{align*}
        R^{\sigma-\frac{n}{p}}[\nabla u]_{W^{\sigma-1,p}(B_{R/2}(x_0))}^p&\leq c(1-s)^{\frac{1}{p}}R^{\gamma-\frac{n}{p}}[u]_{W^{\gamma,p}(B_{5R}(x_0))}\\
        &\quad+c\mathrm{Tail}(u-(u)_{B_{5R}(x_0)};B_{5R}(x_0))\\
        &\quad+cR^{\frac{sp+t}{p-1}-\frac{n}{p}}[f]_{W^{t,p'}(B_{5R}(x_0))}^{\frac{1}{p-1}}
    \end{align*}
    for some constant $c=c(n,s_0,p,\gamma,\sigma,t)$, where $1<\sigma<s_0+\frac{\gamma+\min\{\gamma,t\}}{p}$.
\end{lemma}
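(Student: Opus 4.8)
The plan is to combine the localization of Corollary~\ref{el : lem.loc.deg} with the difference quotient estimate of Lemma~\ref{lem.degind}, and to convert the resulting decay of second differences into fractional Sobolev regularity via Lemma~\ref{lem.secmeb}. By the scaling of Lemma~\ref{lem.scale} together with a translation it suffices to treat $x_0=0$ and $R=1$, the general case following by undoing the change of variables; moreover, since $u-(u)_{B_5}$ solves the same equation and the left-hand sides of the claimed estimates are unchanged by adding a constant to $u$, we may assume $u$ has zero average over $B_5$, which is precisely what produces the modified tail on the right-hand side after rescaling. Put $\tau\coloneqq t$ if $t<\gamma$, and otherwise let $\tau<\gamma$ be chosen arbitrarily close to $\gamma$. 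Applying Corollary~\ref{el : lem.loc.deg} with its parameter $\sigma$ equal to $\gamma$ produces a cut-off $\xi$ such that $w\coloneqq u\xi\in W^{\gamma,p}(\bbR^n)$ is a weak solution of $(-\Delta_p)^sw=f+g$ in $B_2$, with $f+g\in W^{\tau,p'}(B_{5/2})$ and both $[g]_{W^{\tau,p'}(B_{5/2})}$ and $\|g\|_{L^{p'}(B_{5/2})}$ bounded by $c(1-s)^{1/p'}\big([u]_{W^{\gamma,p}(B_3)}+\|u\|_{L^p(B_3)}\big)+c\,\mathrm{Tail}(u;B_3)$; by \eqref{norm.loc} and \eqref{eq:embed} we also have $[w]_{W^{s,p}(\bbR^n)}\le c\|u\|_{W^{\gamma,p}(B_5)}$.

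Next I apply Lemma~\ref{lem.degind} to $w$ with right-hand side $f+g$ and estimate the terms on its right-hand side using the regularity in hand: $\|\delta_hw\|_{L^p(\bbR^n)}\le c(1-\gamma)^{1/p}|h|^{\gamma}[w]_{W^{\gamma,p}(\bbR^n)}$ by \eqref{lem.embdiff.a1}, and $\|\delta_h(f+g)\|_{L^{p'}(B_1)}\le c|h|^{\tau}$ times the $W^{\tau,p'}(B_{5/2})$-norm of $f+g$ (hence, via \eqref{eq:embed} and the bounds for $g$ above, by $c|h|^\tau$ times the $W^{t,p'}(B_{5/2})$-norm of $f$ plus the $g$-contribution) by the second part of Lemma~\ref{lem.embdiff} applied with exponent $p'$ (the threshold constant is harmless since $B_1$ is compactly contained in $B_{5/2}$). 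Since $p\ge2$ and $\tau\le\gamma<1$, we have $\gamma+\tau\le2\gamma\le p\gamma$ and $\gamma+\tau<2\le p$; hence, after using Young's inequality to dominate the product $\big((1-s)^{1/p}[w]_{W^{s,p}}\big)^{p-2}\|\delta_hw\|_{L^p}^2$ and the data product (recalling $(1-\gamma)^{1/p}\le(1-s)^{1/p}$ because $\gamma\ge s$) and the elementary bound $\|\delta_h^2(w\psi)\|_{L^p(\bbR^n)}^p\le c(1-s)|h|^{sp}[\delta_h(w\psi)]_{W^{s,p}(\bbR^n)}^p$ coming from \eqref{lem.embdiff.a1}, I obtain
\begin{equation*}
	\sup_{0<|h|<1/1000}\Big\|\,\frac{\delta_h^2(w\psi)}{|h|^{\,s+(\gamma+\tau)/p}}\,\Big\|_{L^p(\bbR^n)}^p\le c\,\mathcal{R},
\end{equation*}
where $\psi$ is the cut-off of Lemma~\ref{lem.combvest} and $\mathcal{R}$ collects the quantities $(1-s)[u]_{W^{\gamma,p}(B_5)}^p$, $\|u\|_{L^p(B_5)}^p$, $\mathrm{Tail}(u;B_5)^p$ and the $p'$-th power of the $W^{t,p'}(B_5)$-norm of $f$, with constants depending on $n,s_0,p,\gamma,t$. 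The exponent $s+(\gamma+\tau)/p$ is the worst one surviving the collection, because the smallest power of $|h|$ among all contributions is $\gamma+\tau$; this is where the hypotheses $p\ge2$, $\gamma<1$ and $\gamma\ge s$ are used.

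Because $s\ge s_0$ and $|h|<1$, the displayed bound holds a fortiori with $s$ replaced by $s_0$ in the exponent, so Lemma~\ref{lem.secmeb} applies to $v\coloneqq w\psi\in L^p(\bbR^n)$ with its parameter equal to $s_0+(\gamma+\tau)/p$: if $s_0+(\gamma+\tau)/p\le1$ it gives $v\in W^{\sigma,p}(\bbR^n)$ with $[v]_{W^{\sigma,p}(\bbR^n)}\le c\big(\mathcal{R}^{1/p}+\|v\|_{L^p(\bbR^n)}\big)$ for every $\sigma<s_0+(\gamma+\tau)/p$, while if $s_0+(\gamma+\tau)/p>1$ it yields the weak gradient $\nabla v\in W^{\sigma-1,p}(\bbR^n)$ with $[\nabla v]_{W^{\sigma-1,p}(\bbR^n)}\le c\big(\mathcal{R}^{1/p}+\|v\|_{L^p(\bbR^n)}\big)$ for every $\sigma\in(1,s_0+(\gamma+\tau)/p)$. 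Since $\xi\equiv1$ on $B_3$ and $\psi\equiv1$ on $B_{1/2}$ we have $v=u$ on $B_{1/2}$, so these left-hand sides control $[u]_{W^{\sigma,p}(B_{1/2})}$, resp.\ $[\nabla u]_{W^{\sigma-1,p}(B_{1/2})}$. Undoing the rescaling, absorbing the $\|u\|_{L^p}$-terms via the robust Poincar\'e inequality Lemma~\ref{lem.poist} together with \eqref{eq:embed} (legitimate because $u$ has zero average over $B_5$), and letting $\tau$ approach $\min\{\gamma,t\}$, we arrive at the asserted estimates for every $\sigma$ strictly below $s_0+\frac{\gamma+\min\{\gamma,t\}}{p}$, the data contribution $cR^{\frac{sp+t}{p-1}-\frac np}[f]_{W^{t,p'}(B_{5R}(x_0))}^{1/(p-1)}$ arising from the $p'$-th power of the $W^{t,p'}$-norm of $f$ after taking $p$-th roots and rescaling.

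The main obstacle is the bookkeeping: one must carefully track the powers of $(1-s)$, $(1-\gamma)$, $(1-\tau)$, $|h|$ and the radii through Corollary~\ref{el : lem.loc.deg}, Lemma~\ref{lem.degind}, \eqref{lem.embdiff.a1}, the second part of Lemma~\ref{lem.embdiff} and Lemma~\ref{lem.secmeb}, and in particular ensure that the term carrying the factor $\big((1-s)^{1/p}[w]_{W^{s,p}}\big)^{p-2}$ contributes only linearly once Young's inequality and a $p$-th root have been applied, and that no power of $|h|$ strictly below $\gamma+\tau$ ever appears (which would spoil the claimed gain). The embedding step and the rescaling/Poincar\'e step are then routine.
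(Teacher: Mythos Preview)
Your proposal is correct and follows essentially the same route as the paper: reduce by scaling, localize via Corollary~\ref{el : lem.loc.deg} to obtain $w=u\xi$ solving $(-\Delta_p)^s w=f+g$ with $g$ in a fractional Sobolev space, feed this into the difference-quotient estimate of Lemma~\ref{lem.degind}, extract a second-difference decay of order $s+(\gamma+\tau)/p$, and convert via Lemma~\ref{lem.secmeb}; the paper's auxiliary exponent $t_0$ is exactly your $\tau$, chosen for each fixed target $\sigma$. One small point you glossed over: to land on the seminorm $[f]_{W^{t,p'}}$ rather than the full norm in the final estimate, the paper subtracts the averages $(f)_{B_2}$ and $(g)_{B_2}$ before applying Lemma~\ref{lem.embdiff}(b) and then invokes Lemma~\ref{lem.poist} for $f$ and $g$ as well (not only for $u$); without this, your $\mathcal R$ carries an unwanted $\|f\|_{L^{p'}}$ term.
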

\begin{proof}
By Lemma \ref{lem.scale}, we may assume $x_0=0$ and $R=1$.
    Let us fix $\sigma<s_0+\frac{\gamma+\min\{\gamma,t\}}{p}$ and choose 
    \begin{equation*}
        \sigma_0=\frac{1}{2}\left(\sigma+s_0+\frac{\gamma+\min\{\gamma,t\}}{p}\right).
    \end{equation*}
    Then there is a constant $t_0<\min\{t,\gamma\}$ such that 
    \begin{equation*}
        \sigma_0=s_0+\frac{\gamma+t_0}{p}.
    \end{equation*}
    We choose a cutoff function $\xi\in C_c^{\infty}(B_{7/2})$ such that $|\nabla \xi|\leq4$. By Corollary \ref{el : lem.loc.deg}, $w=u\xi$ is a weak solution to
    \begin{equation*}
        (-\Delta_p)^sw=f+g\quad\text{in }B_2,
    \end{equation*}
    where $g\in W^{t_0,p'}(B_{5/2})$ with the estimate
    \begin{equation}\label{ineq1.g}
    \begin{aligned}
        [g]^{p'}_{W^{t_0,p'}(B_{5/2})}&\leq c(1-s)\|u\|_{W^{\gamma,p}(B_{3})}^p+c\mathrm{Tail}(u;B_{3})^p
    \end{aligned}
    \end{equation}
    for some constant $c=c(n,s_0,p,\gamma,t)$.

    Next note from Lemma \ref{lem.degind} that 
    \begin{align}\label{ineq0.itscd}
        (1-s)[\delta(w\psi)]^p_{W^{s,p}(\bbR^n)}&\leq c\left((1-s)^{\frac1p}[w]_{W^{s,p}(\bbR^n)}\right)^{p-2}\|\delta_h w\|_{L^p(\bbR^n)}^{2}+c\|\delta_h w\|^p_{L^p(\bbR^n)}\nonumber\\
        &\quad+c|h|^p\left((1-s)[w]_{W^{s,p}(\bbR^n)}+\|w\|^p_{L^p(\bbR^n)}\right) \nonumber\\
        &\quad+c\|\delta_h w\|_{L^p(B_1)}\|\delta_h F\|_{L^{p'}(B_1)},
    \end{align}
    where $F=f+g$. Then by (b) in Lemma \ref{lem.embdiff} and Lemma \ref{lem.poist}, we obtain
\begin{equation}\label{ineq1.F}
    \begin{aligned}
        \sup_{0<|h|<\frac{1}{1000}}\left\|\frac{\delta_h F}{|h|^{t_0}}\right\|_{L^{p'}(B_1)}&\leq  \sup_{0<|h|<\frac{1}{1000}}\left\|\frac{\delta_h (f-(f)_{B_2})}{|h|^{t}}\right\|_{L^{p'}(B_1)}\\
        &\quad+\sup_{0<|h|<\frac{1}{1000}}\left\|\frac{\delta_h (g-(g)_{B_2})}{|h|^{t_0}}\right\|_{L^{p'}(B_1)}\\
        &\leq c(1-t)^{\frac{1}{p'}}[f]_{W^{t,p'}(B_2)}+c(1-t_0)^{\frac{1}{p'}}[g]_{W^{t_0,p'}(B_2)}
    \end{aligned}
    \end{equation}
    for some constant $c=c(n,s_0,p,\gamma,t,\sigma)$, as $t_0$ depends only on $s_0,p,\gamma,t$ and $\sigma$. We now divide both sides of \eqref{ineq0.itscd} by $|h|^{\gamma+t_0}$ to see that 
    \begin{align*}
       (1-s)\left[\frac{\delta_h(w\psi)}{|h|^{\frac{\gamma+t_0}{p}}}\right]^p_{W^{s,p}(\bbR^n)}&\leq c\left((1-s)^{\frac1p}[w]_{W^{s,p}(\bbR^n)}\right)^{p-2}\left\|\frac{\delta_h w}{|h|^{\gamma}}\right\|_{L^p(\bbR^n)}^{2}\\
       &\quad+c\left\|\frac{\delta_h w}{|h|^{\gamma}}\right\|^p_{L^p(\bbR^n)}+c\left((1-s)[w]_{W^{s,p}(\bbR^n)}+\|w\|^p_{L^p(\bbR^n)}\right)\\
       &\quad+c\left\|\frac{\delta_h w}{|h|^{\gamma}}\right\|_{L^p(B_1)}\left\|\frac{\delta_h F}{|h|^{t_0}}\right\|_{L^{p'}(B_1)},
    \end{align*}
    where the fact that $|h|^{\gamma}\leq c|h|^{t_0}$ and $|h|\leq c|h|^{\frac{\gamma+t_0}{p}}$ for some constant $c$ is used. 
    
    By \eqref{lem.embdiff.a1} and \eqref{lem.embdiff.a2} in Lemma \ref{lem.embdiff}, Young's inequality, \eqref{ineq1.F} and the above inequality, we deduce 
    \begin{align*}
        &\sup_{0<|h|<\frac{1}{1000}}\left\|\frac{\delta_h^2 (w\psi)}{|h|^{s+\frac{\gamma+t_0}{p}}}\right\|_{L^p(\bbR^n)}^p\\
        &\leq c(1-s)\sup_{0<|h|<\frac{1}{1000}}\left[\frac{\delta_h(w\psi)}{|h|^{\frac{\gamma+t_0}{p}}}\right]^p_{W^{s,p}(\bbR^n)}+c\sup_{0<|h|<\frac{1}{1000}}\left\|\frac{\delta_h(w\psi)}{|h|^{\gamma}}\right\|_{L^p(\bbR^n)}^p\\
        &\leq c(1-s)[w]^{p}_{W^{s,p}(\bbR^n)}+(1-\gamma)[w]^{p}_{W^{\gamma,p}(\bbR^n)}+c\|w\|_{L^p(\bbR^n)}^p\\
        &\quad+ c(1-t)[f]_{W^{t,p'}(B_2)}^{p'}+c(1-t_0)[g]_{W^{t_0,p'}(B_2)}^{p'}
    \end{align*}
    for some constant $c=c(n,s_0,p)$. Using \eqref{norm.loc}, \eqref{ineq1.g} and \eqref{ineq0.itscd}, we obtain 
    \begin{align*}
        \sup_{0<|h|<\frac{1}{1000}}\left\|\frac{\delta_h^2 (w\psi)}{|h|^{s+\frac{\gamma+t_0}{p}}}\right\|_{L^p(\bbR^n)}^p&\leq (1-s)^{\frac1p}[u]^{p}_{W^{\gamma,p}(B_{5})}+\|u\|_{L^p(B_{5})}^p+\mathrm{Tail}(u;B_5)^{p}\\
        &\quad+c[f]_{W^{t,p'}(B_5)}^{p'},
    \end{align*}
    where $c=c(n,s_0,p,\gamma,\sigma,t)$. Suppose $s_0+\frac{\gamma+\min\{\gamma,t\}}{p}\leq 1$. Then \eqref{ineq.seq1} in Lemma \ref{lem.secmeb} yields 
    \begin{align*}
        [u]_{W^{\sigma,p}(B_{1/2})}^p&\leq [w\psi]^p_{W^{\sigma,p}(\bbR^n)}\\
        &\leq (1-s)^{\frac1p}[u]^p_{W^{\gamma,p}(B_{5})}+\|u\|_{L^p(B_{5})}^p+\mathrm{Tail}(u;B_5)^{p}+c[f]_{W^{t,p'}(B_5)}^{p'}
    \end{align*}
    for some constant $c=c(n,s_0,p,\gamma,\sigma,t)$. By the fact that $u-(u)_{B_5}$ is a also weak solution to our equation, together with Lemma \ref{lem.poist}, we get 
    \begin{align*}
        [u]_{W^{\sigma,p}(B_{1/2})}^p&\leq (1-s)^{\frac1p}[u]^p_{W^{\gamma,p}(B_{5})}+\mathrm{Tail}(u-(u)_{B_5};B_5)^{p}+c[f]_{W^{t,p'}(B_5)}^{p'},
    \end{align*}
    where $c=c(n,s_0,p,\gamma,\sigma,t)$. We now suppose $s_0+\frac{\gamma+\min\{\gamma,t\}}{p}> 1$. Following the same lines as in the above procedure together with \eqref{ineq.seq2} in Lemma \ref{lem.secmeb} yields the desired estimate.
\end{proof}

We now use a bootstrap argument to obtain our next main result. 
\begin{proof}[Proof of Theorem \ref{thm.regd}]
For $p\geq 2$, let us fix $s_0\in(0,s]$, and $t\in\left(0,\min\left\{\frac{s_0p}{p-2},1\right\}\right)$. Choose $\alpha\in(0,\alpha_1)$ for $\alpha_1\leq 1$ and $\alpha\in(1,\alpha_1)$ for $\alpha_1>1$, where the constant $\alpha_1$ is defined in \eqref{defn.alpha1}. The proof goes on two steps.

\textbf{Step 1: }We prove
\begin{equation}\label{ineq.fir.thm7}
u\in W^{t,p}_{\mathrm{loc}}(\Omega)
\end{equation}
with the estimate 
\begin{equation}\label{est.fir.thm7}
    R^{t-\frac{n}p}[u]_{W^{t,p}(B_r(x_0))}\leq cE(u;B_{2r}(x_0))+cR^{\frac{sp+t}{p-1}-\frac{n}p}\|f\|^{\frac{1}{p-1}}_{\widetilde{W}^{t,p'}(B_{2r}(x_0))}
\end{equation}
for some constant $c=c(n,s_0,p,t)$, where the notation $\|\cdot\|_{\widetilde{W}^{t,p'}}$ is given by \eqref{not.norm} whenever $B_{2r}(x_0)\Subset \Omega$.
To do this, define the number
\begin{equation}\label{cond.epis}
    \epsilon=\epsilon(s_0,p,t):=\min\left\{\frac{s_0}{2},\frac{s_0p-t(p-2)}{2p}\right\}>0
\end{equation}
as well as the sequence
\begin{equation}\label{seq.gamma.d}
   \gamma_{i}=(s_0-\epsilon)\sum_{k=0}^{i}\left(\frac2p\right)^k
\end{equation}
for any $i\geq0$. We summarize some facts for $\gamma_i$.
\begin{itemize}
\item By $\epsilon\leq \frac{s_0}{2}$ from \eqref{cond.epis}, we observe that $\gamma_i>0$.
\item Also, for any $i\geq0$,
\begin{equation}\label{seq.gamma.do}
\gamma_{i+1}=\tfrac{2\gamma_i}{p}+s_0-\epsilon.
\end{equation}
\item Since 
\begin{align*}
\eqref{cond.epis}\,\implies\,\epsilon\leq \tfrac{s_0p-t(p-2)}{2p}\,\implies\, t<\lim_{i\to\infty}\gamma_i=\begin{cases}
\frac{(s_0-\epsilon)p}{p-2}&\quad\text{if }p>2,\\
\infty&\quad\text{if }p=2,
\end{cases}
\end{align*}
there exists ${i}_0$ such that $i_0=\min\{i\in\setN \cup \{0\}:t<\gamma_{i}\}$.
\end{itemize}
Then the proof of \eqref{ineq.fir.thm7} goes as follows.
\begin{enumerate}
	\item[(1)] If $i_0=0$, then $t<\gamma_0<s_0$. Hence by \eqref{ineq2.upest} together with \eqref{eq:embed}, we have \eqref{ineq.fir.thm7} with \eqref{est.fir.thm7}.
	\item[(2)] If $i_0=1$, we have $\gamma_0\leq t<\gamma_1$. Then by Lemma \ref{lem.deg.reg.ind} with the choice of $\gamma=\gamma_0$ and $\sigma=t<\min\{\gamma_1,1\}=\min\{\frac{2\gamma_0}{p}+s_0-\epsilon,1\}$ from \eqref{seq.gamma.do} and by the standard covering argument, \eqref{ineq.fir.thm7}  with \eqref{est.fir.thm7} holds.
	\item[(3)] If $i_0\geq2$, then $\gamma_0<\cdots<\gamma_{i_0-1}\leq t<\gamma_{i_0}$ holds. Thus Lemma \ref{lem.deg.reg.ind} along with standard covering arguments gives $u\in W^{\gamma_1,p}_{\mathrm{loc}}(\Omega)$ with the estimate 
 \begin{equation}\label{ineq.deg.ga1}
    R^{\gamma_1-\frac{n}p}[u]_{W^{\gamma_1,p}(B_r(x_0))}\leq cE(u;B_{2r}(x_0))+cR^{\frac{sp+t}{p-1}-\frac{n}p}\|f\|^{\frac{1}{p-1}}_{\widetilde{W}^{t,p'}(B_{2r}(x_0))}
\end{equation}
whenever $B_{2r}(x_0)\Subset \Omega$, where $c=c(n,s_0,p,t)$.
  By iterating this procedure $i_0-2$ times along with \eqref{seq.gamma.do}, we obtain $u\in W^{\gamma_{i_0-1},p}_{\mathrm{loc}}(\Omega)$ and \eqref{ineq.deg.ga1} with $\gamma_1$ replaced by $\gamma_{i_0}$. Again Lemma \ref{lem.deg.reg.ind} for $t<\gamma_{i_0}$ yields
	\eqref{ineq.fir.thm7} with \eqref{est.fir.thm7}.
\end{enumerate}
Thus by \eqref{ineq.fir.thm7}, we can use Lemma \ref{lem.deg.reg.ind} with $\gamma$ replaced by $t$ for the next step. Moreover, by \eqref{eq:embed}, we have $u\in W^{\overline{t},p}_{\mathrm{loc}}(\Omega)$ for any $\overline{t}\leq t$.

\textbf{Step 2: }We define the number
\begin{equation}\label{choi.delta.thm7}
\delta=\delta(s_0,p,t,\alpha):=\min\left\{\frac{p-1}{2}\left(\frac{s_0p+t}{p-1}-\alpha\right),\frac{1}{2}\left(s_0-\frac{p-2}{p}t\right)\right\}>0
\end{equation}
and the sequence
\begin{equation*}
\widetilde{\gamma}_0=t\quad\text{and}\quad \widetilde{\gamma}_j=t\left(\frac1p\right)^{j}+\left(s_0-\delta+\frac{t}{p}\right)\sum_{k=0}^{j-1}\left(\frac1p\right)^{k}
\end{equation*}
for any $j\geq 1$. We record some facts for $\widetilde{\gamma}_j$.
\begin{itemize}
\item By $\delta\leq \frac{1}{2}(s_0-\frac{p-2}{p}t)$ from \eqref{choi.delta.thm7}, we observe $t\leq\widetilde{\gamma}_{j-1}<\widetilde{\gamma}_{j}$.
\item Also, for any $j\geq1$,
\begin{equation}\label{eq:widegamma}
\widetilde{\gamma}_j=\frac{\widetilde{\gamma}_{j-1}+t}{p}+s_0-\delta.
\end{equation}
\item Since 
\begin{align*}
\eqref{choi.delta.thm7}\,\,\implies\,\, \delta\leq\frac{p-1}{2}\left(\frac{s_0p+t}{p-1}-\alpha\right) \,\,\implies\,\, \alpha<\lim_{j\to\infty}\widetilde{\gamma}_j=\frac{s_0p+t-2\delta}{p-1},
\end{align*}
there exists ${j}_0$ such that $j_0=\min\{j\in\setN \cup \{0\}:\alpha<\widetilde{\gamma}_{j}\}$.
\end{itemize}

We now consider two cases.

\textbf{In case of $\frac{t+s_0p}{p-1}\leq1$: } Note that in this case $\alpha<1$, since $\alpha_0=\frac{t+s_0p}{p-1}\leq 1$ so that $\alpha\in(0,\alpha_0)$ holds. The proof is similar to (1)--(3).
\begin{itemize}
	\item If $j_0=0$, then $\alpha<\widetilde{\gamma}_0$. Thus by \textbf{Step 1}, we have $u\in W^{\alpha,p}_{\loc}(\Omega)$ with the estimate \eqref{est.sec.thm7}.

	\item If $j_0=1$, we have $\widetilde{\gamma}_0\leq\alpha<\widetilde{\gamma}_1$. Then by Lemma \ref{lem.deg.reg.ind} with the choice of $\gamma=t(=\widetilde{\gamma}_0)$ and $\sigma=\alpha<\min\{\widetilde{\gamma}_1,1\}=\min\{\frac{\widetilde{\gamma}_{0}+t}{p}+s_0-\delta,1\}$ from \eqref{eq:widegamma}, $u\in W^{\alpha,p}_{\loc}(\Omega)$ holds. In addition, covering arguments yields \eqref{est.sec.thm7}.
	\item If $j_0\geq 2$, then $\widetilde{\gamma}_0<\cdots<\widetilde{\gamma}_{i_0-1}\leq \alpha<\widetilde{\gamma}_{i_0}$ holds. Thus Lemma \ref{lem.deg.reg.ind} gives $u\in W^{\widetilde{\gamma}_1,p}_{\mathrm{loc}}(\Omega)$. By iterating this $j_0-2$ times along with \eqref{eq:widegamma}, we obtain $u\in W^{\widetilde{\gamma}_{j_0-1},p}_{\mathrm{loc}}(\Omega)$. Again Lemma \ref{lem.deg.reg.ind} for $\alpha<\widetilde{\gamma}_{j_0}$ yields $u\in W^{\alpha,p}_{\loc}(\Omega)$ with the estimate \eqref{est.sec.thm7}.
\end{itemize}

\textbf{In case of $\frac{t+s_0p}{p-1}>1$: } Note that in this case $\alpha>1$, since $\alpha_0=s_0+\frac{1+t}{p}> 1$ so that $\alpha\in(1,\alpha_0)$ holds. Since $\widetilde{\gamma}_0=t<1$, $\{\widetilde{\gamma}_j\}_{j\in\setN}$ is an increasing sequence, $\lim_{j\to\infty}\widetilde{\gamma}_j>\alpha>1$, there is a positive integer $j_0$ such that 
	\begin{align*}
	\widetilde{\gamma}_{j_0-1} \leq 1<\widetilde{\gamma}_{j_0}.
	\end{align*}
	By following the same lines as in the case when $\frac{t+s_0p}{p-1}\leq1$, we have $u\in W^{\gamma,p}_{\mathrm{loc}}(\Omega)$ for any $\gamma\in(0,1)$ as $1<\widetilde{\gamma}_{j_0}$. Moreover, we get
\begin{equation}\label{estg.sec.thm7}
    R^{\gamma-\frac{n}p}[u]_{W^{\gamma,p}(B_r(x_0))}\leq cE(u;B_{2r}(x_0))+cR^{\frac{sp+t}{p-1}-\frac{n}p}\|f\|^{\frac{1}{p-1}}_{\widetilde{W}^{t,p'}(B_{2r}(x_0))}
\end{equation}
for some constant $c=c(n,s_0,p,t,\gamma)$, whenever $B_{2r}(x_0)\Subset \Omega$.
	We now take 
	\begin{align*}
	\widetilde{\gamma}=\max\left\{{p\left(\frac{\alpha_1+\alpha}{2}-s_0\right)-t},t\right\}
	\end{align*}
	to see that 
	\begin{equation*}
	s_0+\frac{\widetilde{\gamma}+t}{p}\geq\frac{\alpha+\alpha_1}{2}.
	\end{equation*}
	Since $\widetilde{\gamma}\in [t,1)$ and $\alpha<\frac{\alpha+\alpha_1}{2}$, by Lemma \ref{lem.deg.reg.ind} with $\gamma=\tilde{\gamma}$ and with $\sigma=\alpha$ and by \eqref{estg.sec.thm7}, we have $\nabla u\in W^{\alpha-1,p}_{\mathrm{loc}}(\Omega)$ with \eqref{est.secg.thm7}.
The proof is completed.
\end{proof}




We end this subsection with the following estimates in terms of the $V$-function.
\begin{corollary}
For any given $p\geq2$ and $s_0\in(0,s]$, let $u\in W^{s,p}(B_{2R}(x_0))\cap L^{p-1}_{sp}(\bbR^n)$ be a weak solution to 
\begin{equation}\label{eq.cor64}
(-\Delta_p)^su=0\quad\text{in }B_{2R}(x_0).
\end{equation}
If $s_0>\frac{p-2}{p}$, then we have
\begin{align*}
    (1-s)R^{sp}\dashint_{B_{R/2}(x_0)}\int_{B_{R/2}(x_0)}|\delta_hV(\delta_{x,y}^su)|^2\frac{\,dx\,dy}{|x-y|^n}\leq c|h|^2E(u;B_{R}(x_0))^p
\end{align*}
for some constant $c=c(n,s_0,p)$.
\end{corollary}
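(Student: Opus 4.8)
The plan is to pass, via scaling and localization, to a solution of $(-\Delta_p)^sw=g$ with $g\in W^{1,p'}$, and then read off the $|h|^2$ decay from the difference-quotient Caccioppoli estimate of Lemma \ref{lem.diffquo}, using crucially that both $w$ and $g$ are differentiable.

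First I would invoke Lemma \ref{lem.scale}, together with the fact that adding a constant to $u$ changes neither the equation nor $\delta_hV(\delta^s_{x,y}u)$, to reduce to the case $x_0=0$, $R=1$, $(u)_{B_1}=0$, with $u$ solving $(-\Delta_p)^su=0$ in $B_2$. The decisive preliminary step is to turn $u$ into a $W^{1,p}$-solution with a quantitative gradient bound: since $f\equiv0\in W^{t,p'}_{\mathrm{loc}}$ for every $t$ and $s_0>\tfrac{p-2}{p}$, one may fix $t\in(0,1)$ so close to $1$ that $\tfrac{t+s_0p}{p-1}>1$ and $\alpha_1=s_0+\tfrac{1+t}{p}>1$; Theorem \ref{thm.regd} then gives $u\in W^{1,p}_{\mathrm{loc}}(B_2)$, together with a local bound for $\nabla u$ in terms of the excess, obtained by combining its estimate for $[\nabla u]_{W^{\alpha-1,p}}$ with the fractional Poincar\'e inequality of Lemma \ref{lem.poist}, a standard interpolation inequality, and the embedding $W^{\alpha,p}\hookrightarrow W^{1,p}$. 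This is precisely where the hypothesis $s_0>\tfrac{p-2}{p}$ enters.

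Next, for each ball $B_{10\rho}(y_0)\Subset B_2$ I would rescale to $u_\rho$ and apply the localization Corollary \ref{el : lem.loc.deg} in the borderline case $\sigma=1$: the function $w\coloneqq u_\rho\xi$ solves $(-\Delta_p)^sw=g$ in $B_2$ with $g\in W^{1,p'}(B_{5/2})$ and $\|\nabla g\|_{L^{p'}}$ controlled by a power $E(u;\cdot)^{p-1}$ of the excess, while $[w]_{W^{s,p}(\mathbb{R}^n)}$ is controlled by $\|u_\rho\|_{W^{s,p}(B_5)}$ through \eqref{norm.loc} and $\|\nabla w\|_{L^p(\mathbb{R}^n)}$ by $\|\nabla u_\rho\|_{L^p(B_5)}$. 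Applying Lemma \ref{lem.diffquo} to $w$ (so that $\overline p=p$ since $p\geq2$), its right-hand side splits into the ``$\phi$-term'' $(1-s)\iint\phi_{|\delta^s_{x,y}w|}\big((|\delta_hw(x)|+|\delta_hw(y)|)\,|\delta^s_{x,y}\psi|\big)\dxyn$ and the data term $\int_{B_1}|\delta_hw|\,|\delta_hg|\,dx$. I would estimate the $\phi$-term by \eqref{ineq.vpn} (valid for $p\geq2$), which controls it by $(1-s)^{-2/p}[w]_{W^{s,p}}^{p-2}\|\delta_hw\|_{L^p}^2+(1-s)^{-1}\|\delta_hw\|_{L^p}^p$, and the data term by H\"older's inequality by $\|\delta_hw\|_{L^p}\|\delta_hg\|_{L^{p'}}$. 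Since $w\in W^{1,p}$ and $g\in W^{1,p'}$ one has $\|\delta_hw\|_{L^p}\leq|h|\,\|\nabla w\|_{L^p}$ and $\|\delta_hg\|_{L^{p'}}\leq|h|\,\|\nabla g\|_{L^{p'}}$; using $|h|^p\leq|h|^2$ for $p\geq2$ and $|h|<1$, multiplying by $(1-s)$ makes every term equal to $|h|^2$ times a quantity controlled by $[w]_{W^{s,p}}$, $\|\nabla w\|_{L^p}$ and $\|\nabla g\|_{L^{p'}}$, and a direct count of the powers of $(1-s)$ (together with Lemma \ref{lem.upest} and the first step) shows this is bounded by $c|h|^2E(u;\cdot)^p$.

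Finally, on $B_{1/2}$ one has $\psi\equiv1$ and, for $|h|<\tfrac1{1000}$, $w=u_\rho$, so Lemma \ref{lem.diffquo} delivers $(1-s)\iint_{B_{1/2}\times B_{1/2}}|\delta_hV(\delta^s_{x,y}u_\rho)|^2\dxyn\leq c|h|^2E(u_\rho;B_{10})^p$; rescaling back, summing over a finite covering of $B_{1/2}$ by such balls $B_{\rho/2}(y_0)$ --- the off-diagonal contributions being again controlled by $\|\delta_hu\|_{L^p}\leq|h|\,\|\nabla u\|_{L^p}$ --- and undoing the first scaling yield the asserted estimate on $B_{R/2}(x_0)$. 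The step I expect to be the main obstacle is the first one: extracting from Theorem \ref{thm.regd} a clean $W^{1,p}$-gradient estimate in terms of the excess (the role of the hypothesis $s_0>\tfrac{p-2}{p}$), together with the bookkeeping of the $(1-s)$- and scaling-factors needed to land precisely on $(1-s)R^{sp}$ on the left and $|h|^2E(u;B_R)^p$ on the right.
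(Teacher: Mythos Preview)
Your proposal is correct and follows essentially the same route as the paper: obtain $u\in W^{1,p}_{\mathrm{loc}}$ with a gradient--excess estimate from Theorem~\ref{thm.regd} (this is exactly where $s_0>\tfrac{p-2}{p}$ is used), localize via Corollary~\ref{el : lem.loc.deg} with $\sigma=1$ to get $w=u_\rho\xi$ solving $(-\Delta_p)^sw=g$ with $g\in W^{1,p'}$, apply Lemma~\ref{lem.diffquo} together with \eqref{ineq.vpn}, and convert both $\|\delta_hw\|_{L^p}$ and $\|\delta_hg\|_{L^{p'}}$ into $|h|$ times gradient norms; a covering argument finishes. Two minor remarks: the $L^p$-gradient bound you need is already contained in the proof of Theorem~\ref{thm.regd} (through \eqref{ineq.seq2} in Lemma~\ref{lem.secmeb}, which bounds $\|\nabla(v\eta)\|_{L^p}$ before the seminorm), so you do not really need the Poincar\'e/interpolation detour you sketch; and the off-diagonal terms in the covering are handled, as you indicate, via Lemma~\ref{el : lem.eleineq}(2) and $\|\delta_hu\|_{L^p}\leq|h|\,\|\nabla u\|_{L^p}$.
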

\begin{proof}
    By Theorem \ref{thm.regd}, we have
    \begin{align}\label{gineq}
        r\left(\dashint_{B_r(y_0)}|\nabla u|^p\,dx\right)^{\frac{1}{p}}\leq cE(u;B_{2r}(y_0))
    \end{align}
    for any $B_{2r}(y_0)\Subset B_{2R}(x_0)$, where $c=c(n,s_0,p)$. Let us fix $B_{10\rho}(y_0)\Subset B_{2R}(x_0)$ and define
    \begin{equation*}
        u_\rho (x)=u(\rho x+y_0)
    \end{equation*}
    to see that $ u_\rho\in W^{1,p}(B_{10})\cap L^{p-1}_{sp}(\bbR^n)$ is a weak solution to
    \begin{align*}
        (-\Delta_p)^su_\rho =0\quad\text{in }B_{5}.
    \end{align*}
    Then by Corollary \ref{el : lem.loc.deg}, $w(x)=(u_\rho\xi)(x)\in W^{1,p}(\bbR^n)$ is a weak solution to 
    \begin{align*}
        (-\Delta_p)^sw=g\quad\text{in }B_{2},
    \end{align*}
    where $\xi$ is the function defined in Corollary \ref{el : lem.loc.deg} with $R=1$ and $x_0=0$, and $g\in W^{1,p'}(B_{5/2})$ with the estimate 
    \begin{align}\label{ineq1.v11}
        \|\nabla g\|^{p'}_{L^{p'}(B_{5/2})}\leq (1-s)\|u_\rho\|^p_{W^{1,p}(B_3)}+c\mathrm{Tail}(u_{\rho};B_3)^p
    \end{align}
    for some constant $c=c(n,s_0,p)$. In addition, recalling $w(x)=(u_\rho\xi)(x)$, we observe from \eqref{norm.loc} that
    \begin{align}\label{ineq1.v12}
        [w]_{W^{s,p}(\bbR^n)}^p\leq c\|u_\rho\|_{W^{s,p}(B_5)}^p.
    \end{align}
    Also, 
    \begin{align}\label{ineq1.v13}
        \|\nabla w\|_{L^p(\bbR^n)}\leq c\|u_\rho\|_{W^{1,p}(B_5)}^p
    \end{align}
    for some constant $c=c(n,s_0,p)$.

    By Lemma \ref{lem.diffquo} and \eqref{ineq.vpn}, we have 
    \begin{align*}
        &(1-s)\int_{B_{1/2}}\int_{B_{1/2}}|\delta_hV(\delta_{x,y}^sw)|^2\frac{\,dx\,dy}{|x-y|^n}\\
        &\leq c\left(\left((1-s)[w]_{W^{s,p}(\bbR^n)}\right)^{p-2}\|\delta_h w\|_{L^p(\bbR^n)}^2+\|\delta_h w\|_{L^p(\bbR^n)}^p\right)\\
        &\quad+\|\delta_h w\|_{L^p(B_2)}\|\delta_h g\|_{L^{p'}(B_2)}\eqqcolon I
    \end{align*}
    for some constant $c=c(n,s_0,p)$. Since 
    \begin{align*}
        \left\|\frac{\delta_h w}{|h|}\right\|_{L^p(\bbR^n)}^2\leq \|\nabla w\|_{L^p(\bbR^n)},
    \end{align*}
    we further estimate $I$ as
    \begin{align*}
       &(1-s)\int_{B_{1/2}}\int_{B_{1/2}}|\delta_hV(\delta_{x,y}^sw)|^2\frac{\,dx\,dy}{|x-y|^n}\\
       &\leq |h|^2\left[c(1-s)[w]^{p}_{W^{s,p}(\bbR^n)}+c\|\nabla w\|^p_{L^p(\bbR^n)}+c\|\nabla g\|^p_{L^{p'}(B_2)}\right]
    \end{align*}
    for some constant $c=c(n,s_0,p)$, where we have used Young's inequality. Using \eqref{ineq1.v11}--\eqref{ineq1.v13} and the change of variables yield
    \begin{align*}
        &(1-s)\rho^{sp}\dashint_{B_{\rho/2}(y_0)}\int_{B_{\rho/2}(y_0)}\frac{|\delta_hV(\delta_{x,y}^su)|^2}{|h|^2}\frac{\,dx\,dy}{|x-y|^n}\\
        &\leq c\rho^{sp-n}(1-s)[u]_{W^{s,p}(B_{5\rho}(y_0))}^p+c\rho^{p-n}\|\nabla u\|^p_{L^p(B_{5\rho}(y_0))}\\
        &\quad+c\rho^{-n}\| u\|^p_{L^p(B_{5\rho}(y_0))}+c\mathrm{Tail}(u;B_5)^p
    \end{align*}
    for some constant $c=c(n,s_0,p)$. By \eqref{gineq}, Lemma \ref{lem.upest} and the fact that $u-(u)_{B_{10\rho}(y_0)}$ is a weak solution to \eqref{eq.cor64}, we get
    \begin{align*}
        &(1-s)\rho^{sp}\dashint_{B_{\rho/2}(y_0)}\int_{B_{\rho/2}(y_0)}|\delta_hV(\delta_{x,y}^su)|^2\frac{\,dx\,dy}{|x-y|^n}\leq c|h|^2E(u;B_{10\rho}(y_0))^p,
    \end{align*}
    where $c=c(n,s_0,p)$. Standard covering arguments yield the desired estimate.
\end{proof}

\subsection{The singular case}
We now prove an analogous version of Lemma \ref{lem.degind} when $p\leq2$. 
\begin{lemma}\label{lem.sinind}
    Let $p\leq 2$ and $\gamma\in[s,1)$. Let $u\in W^{\gamma,p}(\bbR^{n})$ be a weak solution to 
    \begin{equation}
    \label{eq.itesub}
        (-\Delta_p)^su=f\quad\text{in }B_{2}
    \end{equation}
    with $f\in L^{p'}(B_2)$. For any $|h|<1/1000$, we have 
    \begin{equation}\label{ineq.lem.sinind}
    \begin{aligned}
        &(1-\varsigma)\left[{\delta_h (u\psi)}\right]^{p}_{W^{\varsigma,p}(\bbR^n)}\\
        &\leq  c[(1-\varsigma)^{\frac1p}u]_{W^{\gamma,p}(\bbR^n)}^{\frac{p(2-p)}{2}}\left(\left\|{\delta_h u}\right\|_{L^{p}(\bbR^{n})}^{\frac{p^{2}}{2}}+\left\|{\delta_h u}\right\|^{\frac{p}{2}}_{L^{p}(B_1)}\left\|{\delta_h f}\right\|^{\frac{p}{2}}_{L^{p'}(B_{1})}\right)\\
        &\quad+c\left\|{\delta_h u}\right\|_{L^{p}(\bbR^{n})}^{p}+c|h|^p\|u\|_{L^{p}(\bbR^n)}^p+c|h|^p(1-\varsigma)[u]_{W^{\varsigma,p}(\bbR^n)}^p
    \end{aligned}
    \end{equation}
    for some constant $c=c(n,s_0,p)$, where $\varsigma=\frac{sp}{2}+\frac{(2-p)\gamma}{2}$, the function $\psi$ is determined in Lemma \ref{lem.combvest} and the constant $s_0$ is determined in \eqref{s0assum}.
\end{lemma}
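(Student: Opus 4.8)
The plan is to mimic the proof of Lemma \ref{lem.vrefs}, but with the Caccioppoli-type estimate of Lemma \ref{lem.diffquo} playing the role that Lemma \ref{lem.combvest} played there. Throughout, write $\psi_m\coloneqq\min\{\psi(x),\psi(y)\}$, note that $\overline{p}=\max\{2,p\}=2$ for $p\le 2$, and recall that the exponent $\varsigma=\tfrac{sp}{2}+\tfrac{(2-p)\gamma}{2}$ here coincides with the quantity $\sigma$ appearing in Lemma \ref{lem.vrefs}. Since $\gamma\ge s$, splitting the Gagliardo seminorm at $|x-y|=1$ shows $u\in W^{s,p}(\bbR^n)$, so Lemma \ref{lem.diffquo} does apply to $u$.

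\textbf{Step 1 (reduction to a weighted finite difference).} First I would estimate $[\delta_h(u\psi)]^p_{W^{\varsigma,p}(\bbR^n)}$ from above by means of the pointwise decomposition \eqref{ineq.simple}, \eqref{test.sing} and \eqref{ineq.test.grhe}, exactly as in the derivation of \eqref{ineq1.lem.vrefs}, but retaining the factors of $|h|$ carried by all the terms of \eqref{ineq.simple} that contain $\delta_h\psi$, and estimating $\int_{\bbR^n}\int_{B_1}|u_h(x)-u_h(y)|^p|x-y|^{-n-\varsigma p}\,dx\,dy\le[u_h]^p_{W^{\varsigma,p}(\bbR^n)}=[u]^p_{W^{\varsigma,p}(\bbR^n)}$ by translation invariance. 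After multiplying by $(1-\varsigma)$, this already produces the last three summands on the right of \eqref{ineq.lem.sinind} and reduces the claim to estimating $(1-\varsigma)\int_{\bbR^n}\int_{\bbR^n}|\delta_h\delta^\varsigma_{x,y}u|^p\psi_m^p|x-y|^{-n}\,dx\,dy$.

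\textbf{Step 2 (interpolation and the Caccioppoli estimate).} For the remaining term I would carry out the Hölder interpolation of \eqref{ineq2.vref}: rewrite $\delta_h\delta^\varsigma_{x,y}u$ in terms of $\delta_h\delta^s_{x,y}u$, insert and remove the weight $(|\delta^s_{x,y}u|+|\delta^s_{x,y}u_h|)^{p(p-2)/2}$, apply Hölder's inequality with exponents $(2/p,2/(2-p))$ — which is precisely where the value of $\varsigma$ is used — and invoke Lemma \ref{el : lem.eleineq}(2) to replace $(|\delta^s_{x,y}u|+|\delta^s_{x,y}u_h|)^{p-2}|\delta^s_{x,y}\delta_hu|^2$ by $|\delta_hV(\delta^s_{x,y}u)|^2$. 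This yields
\[
\int_{\bbR^n}\int_{\bbR^n}\frac{|\delta_h\delta^\varsigma_{x,y}u|^p\psi_m^p}{|x-y|^n}\,dx\,dy\le c\Bigl(\int_{\bbR^n}\int_{\bbR^n}|\delta_hV(\delta^s_{x,y}u)|^2\frac{\psi_m^2\,dx\,dy}{|x-y|^n}\Bigr)^{p/2}[u]_{W^{\gamma,p}(\bbR^n)}^{p(2-p)/2}.
\]
Multiplying by $1-\varsigma$ and splitting $1-\varsigma=(1-\varsigma)^{p/2}(1-\varsigma)^{(2-p)/2}$, the right-hand side becomes $c\,[(1-\varsigma)\mathcal I]^{p/2}\,[(1-\varsigma)^{1/p}u]_{W^{\gamma,p}(\bbR^n)}^{p(2-p)/2}$, where $\mathcal I$ denotes the $\psi_m^2$-weighted $V$-function integral above. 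Since $\varsigma\ge s$ and $\psi_m^2\le\max\{\psi^{\overline p}(x),\psi^{\overline p}(y)\}$, Lemma \ref{lem.diffquo} bounds $(1-\varsigma)\mathcal I\le(1-s)\mathcal I$ by $c(1-s)\int_{\bbR^n}\int_{\bbR^n}\phi_{|\delta^s_{x,y}u|}\bigl((|\delta_hu(x)|+|\delta_hu(y)|)|\delta^s_{x,y}\psi|\bigr)\dxyn+c\int_{B_1}|\delta_hu|\,|\delta_hf|\,dx$. Applying \eqref{ineq2.shif} (so that $\phi_a(t)\le c|t|^p$ since $p\le 2$) together with the elementary bound $\int_{\bbR^n}|\psi(x)-\psi(y)|^p|x-y|^{-n-sp}\,dy\le c(n,s_0,p)(1-s)^{-1}$ (split at $|x-y|=1$ and use \eqref{test.sing}) to the first integral, and Hölder's inequality to the second, gives $(1-\varsigma)\mathcal I\le c\|\delta_hu\|_{L^p(\bbR^n)}^p+c\|\delta_hu\|_{L^p(B_1)}\|\delta_hf\|_{L^{p'}(B_1)}$. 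Raising to the power $p/2\le 1$ (subadditivity of $t\mapsto t^{p/2}$) and reinserting the factor $[(1-\varsigma)^{1/p}u]_{W^{\gamma,p}(\bbR^n)}^{p(2-p)/2}$ reproduces the leading summand of \eqref{ineq.lem.sinind}; combining with Step 1 finishes the proof.

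I expect the main obstacle to be the bookkeeping of the weight $1-\varsigma$ in Step 2: it must be split so that one power is absorbed into the $V$-function integral (where it is traded for $1-s\ge1-\varsigma$ and then handled by Lemma \ref{lem.diffquo}) while the complementary power dresses $u$ inside the $W^{\gamma,p}$-seminorm, which is the origin of the somewhat unusual factor $[(1-\varsigma)^{1/p}u]_{W^{\gamma,p}(\bbR^n)}$ in the statement; in addition one has to check carefully that the Hölder exponents used in the interpolation are exactly those forced by the prescribed value $\varsigma=\tfrac{sp}{2}+\tfrac{(2-p)\gamma}{2}$. The remaining ingredients — the difference-quotient algebra of Step 1 and the cut-off integrals used in Step 2 — are routine.
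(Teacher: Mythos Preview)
Your proposal is correct and follows essentially the same route as the paper: reduce $[\delta_h(u\psi)]^p_{W^{\varsigma,p}}$ to a cut-off--weighted difference integral, interpolate via H\"older to a $V$-function integral, and close with Lemma~\ref{lem.diffquo} together with \eqref{ineq2.shif}. The only cosmetic difference is that the paper carries the weight $\psi_M=\max\{\psi(x),\psi(y)\}$ from the outset (using \eqref{ineq.simple0}) so that \eqref{ineq.sub111} already matches the left side of Lemma~\ref{lem.diffquo}, while you work with $\psi_m$ via \eqref{ineq.simple} and \eqref{ineq2.vref} and only use $\psi_m^2\le\max\{\psi^{\overline p}(x),\psi^{\overline p}(y)\}$ at the very end; both variants are equally valid.
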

\begin{proof}
We first note from \eqref{ineq.simple0} that 
\begin{equation}\label{ineq.sub110}
\begin{aligned}
    \int_{\bbR^n}\int_{\bbR^n}\frac{|\delta^{\varsigma}_{x,y}\delta_h (u\psi)|^{p}}{|x-y|^{n}}\,dx\,dy&\leq \int_{\bbR^n}\int_{\bbR^n}\frac{|\delta^{\varsigma}_{x,y}\delta_h u|^{p}\psi_M^p}{|x-y|^{n}}\,dx\,dy+\frac{c}{1-\varsigma}\|\delta_hu\|_{L^p(\bbR^n)}^p\\
    &\quad +c|h|^p[u]^p_{{W}^{\varsigma,p}(\bbR^n)}+\frac{c}{1-\varsigma}|h|^p\|u\|^p_{L^{p}(\bbR^n)}
\end{aligned}
\end{equation}
for some constant $c=c(n,s_0,p)$, where we denote 
\begin{equation*}
   \psi_M= \max\{\psi(x),\psi(y)\}.
\end{equation*}
Then H\"older's inequality and Lemma \ref{el : lem.eleineq} yields
\begin{equation}\label{ineq.sub111}
\begin{aligned}
    &\int_{\bbR^n}\int_{\bbR^n}\frac{|\delta^{\varsigma}_{x,y}\delta_h u|^{p} \psi_M^p}{|x-y|^{n}}\,dx\,dy\\
    &=\int_{\bbR^n}\int_{\bbR^n}\frac{|\delta^s_{x,y}\delta_h u|^{p}(|\delta^s_{x,y}u|+|\delta^s_{x,y}u_h|)^{\frac{p(p-2)}{2}}(|\delta^\gamma_{x,y}u|+|\delta^\gamma_{x,y}u_h|)^{\frac{p(2-p)}{2}}\psi_M^p}{|x-y|^{n}}\,dx\,dy\\
    &\leq \Bigg(\int_{\bbR^n}\int_{\bbR^n}(|\delta^s_{x,y}u|+|\delta^s_{x,y}u_h|)^{p-2}{|\delta^s_{x,y}\delta_h u|^{2}\psi_M^2}\frac{\,dx\,dy}{|x-y|^{n}}\Bigg)^{\frac{p}{2}}[u]_{W^{\gamma,p}(B_{1})}^{\frac{p(2-p)
    }{2}}\\
    &\leq  \Bigg(\int_{\bbR^n}\int_{\bbR^n}\left|\delta_h V\left(\delta^s_{x,y}u\right)\right|^{2}\psi_M^2\frac{\,dx\,dy}{|x-y|^{n}}\Bigg)^{\frac{p}{2}}[u]_{W^{\gamma,p}(B_{1})}^{\frac{p(2-p)
    }{2}}.
\end{aligned}
\end{equation}
We next note from \eqref{ineq2.shif} and H\"older's inequality that
\begin{equation}\label{ineq.sub112}
\begin{aligned}
    &\int_{\bbR^n}\int_{\bbR^n}\phi_{|\delta_{x,y}^su|}\left({\left(|\delta_{h}u(x)|+|\delta_hu(y)|\right)} \frac{|\psi(x)-\psi(y)|}{|x-y|^{s}}\right)\frac{\,dx\,dy}{|x-y|^{n}}\\
    &\leq c\int_{\bbR^n}\int_{\bbR^n}\left({|\delta_{h}u(x)|^p+|\delta_{h}u(y)|^p}\right) \frac{|\psi(x)-\psi(y)|^p}{|x-y|^{sp}}\frac{\,dx\,dy}{|x-y|^{n}}\\
     &\leq \frac{c}{1-s}\|\delta_h u\|_{L^p(\bbR^n)}^p
\end{aligned}
\end{equation}
for some constant $c=c(n,s_0,p)$.
Therefore, employing the inequality
\begin{align*}
\int_{B_1}\abs{\delta_h u}\abs{\delta_h f}\,dx\leq\norm{\delta_h u}_{L^p(B_1)}\norm{\delta_h f}_{L^{p'}(B_1)},
\end{align*}
and combining all the estimates \eqref{ineq.sub110}--\eqref{ineq.sub112} along with Lemma \ref{lem.diffquo} and the fact that $\varsigma\geq s$, \eqref{ineq.lem.sinind} is obtained.
\end{proof}
Using Lemma \ref{lem.sinind}, we obtain an analogous version of Lemma \ref{lem.deg.reg.ind} when $p\leq2$.

\begin{lemma}\label{lem.sin.reg.ind}
Let $p\leq2$ and $\gamma\in[s,1)$. Suppose that $u\in W^{\gamma,p}(B_{5R}(x_0))\cap L^{p-1}_{sp}(\bbR^n)$ be a weak solution to 
\begin{equation*}
    (-\Delta_p)^su=f\quad\text{in }B_{5R}(x_0),
\end{equation*}
where $f\in W^{t,p'}(B_{5R}(x_0))$ for some constant $t\in(0,p-1)$. Let us denote 
\begin{equation*}
    \varsigma_0=\frac{(2-p)\gamma}{2}+\frac{s_0p}{2}.
\end{equation*}
Then there exists $c=c(n,s_0,p,t,\sigma,\gamma)\geq 1$ such that the following holds.
\begin{itemize}
	\item If $\frac{\gamma+\min\{(p-1)\gamma,t\}}{2}+\varsigma_0\leq 1$, then for any $\sigma<\frac{\gamma+\min\{(p-1)\gamma,t\}}{2}+\varsigma_0$, we have 
	\begin{align*}
	R^{\sigma-\frac{n}{p}}&[u]_{W^{\sigma,p}(B_{R/2}(x_0))}\\
	&\leq c(1-s)^{\frac1p}R^{\gamma-\frac{n}{p}}[u]_{W^{\gamma,p}(B_{5R}(x_0))}+c\mathrm{Tail}(u-(u)_{B_{5R}(x_0)};B_{5R}(x_0))\\
	&\quad+cR^{\frac{sp+t}{p-1}-\frac{n}{p}}[f]_{W^{t,p'}(B_{5R}(x_0))}^{\frac{1}{p-1}}=:\mathcal{R}.
	\end{align*}
	\item If $\frac{\gamma+\min\{(p-1)\gamma,t\}}{2}+\varsigma_0>1$, then for any $\sigma\in\left(1,\frac{\gamma+\min\{(p-1)\gamma,t\}}{2}+\varsigma_0\right)$, we have 
	\begin{align*}
	R^{\sigma-\frac{n}{p}}[\nabla u]_{W^{\sigma-1,p}(B_{R/2}(x_0))}^p\leq\mathcal{R}.
	\end{align*}
\end{itemize}
\end{lemma}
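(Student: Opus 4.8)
The argument runs parallel to the proof of Lemma~\ref{lem.deg.reg.ind}, with the superquadratic tools (Corollary~\ref{el : lem.loc.deg}, Lemma~\ref{lem.degind}) replaced by their subquadratic counterparts (Corollary~\ref{el : lem.loc.sin}, Lemma~\ref{lem.sinind}). By Lemma~\ref{lem.scale} we may take $x_0=0$ and $R=1$. Given $\sigma$ in the admissible range, set $\sigma_1=\tfrac12\bigl(\sigma+\varsigma_0+\tfrac{\gamma+\min\{(p-1)\gamma,t\}}{2}\bigr)$ and pick $t_0<\min\{(p-1)\gamma,t\}$ with $\varsigma_0+\tfrac{\gamma+t_0}{2}=\sigma_1$, so that $t_0=t_0(n,s_0,p,\gamma,t,\sigma)$. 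With a cut-off $\xi\in C_c^\infty(B_{7/2})$, $\xi\equiv1$ on $B_3$, Corollary~\ref{el : lem.loc.sin} gives that $w=u\xi\in W^{\gamma,p}(\bbR^n)$ solves $(-\Delta_p)^sw=F$ in $B_2$ with $F=f+g$, $g\in W^{\gamma(p-1),p'}(B_{5/2})$, the Gagliardo seminorm of $g$ being controlled by $(1-s)\|u\|_{W^{\gamma,p}(B_3)}^p$ and $\mathrm{Tail}(u;B_3)^p$; since $t_0<\min\{(p-1)\gamma,t\}$, the second part of Lemma~\ref{lem.embdiff} together with Lemma~\ref{lem.poist} yields $F\in W^{t_0,p'}(B_{5/2})$ with a controlled seminorm.

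Next I would apply Lemma~\ref{lem.sinind} to $w$ (keeping $s$ and the present $\gamma$, so $\varsigma=\tfrac{sp}{2}+\tfrac{(2-p)\gamma}{2}\ge\varsigma_0$) with the interior cut-off $\psi$ of Lemma~\ref{lem.combvest}, and convert the right-hand side of the resulting inequality for $[\delta_h(w\psi)]_{W^{\varsigma,p}(\bbR^n)}^p$ into powers of $|h|$: \eqref{lem.embdiff.a1}--\eqref{lem.embdiff.a2} give $\|\delta_h w\|_{L^p(\bbR^n)}\le c|h|^{\gamma}\bigl((1-\gamma)[w]_{W^{\gamma,p}(\bbR^n)}^p+\|w\|_{L^p(\bbR^n)}^p\bigr)^{1/p}$, and the second part of Lemma~\ref{lem.embdiff} with Lemma~\ref{lem.poist} give $\|\delta_h F\|_{L^{p'}(B_1)}\le c|h|^{t_0}\bigl((1-t)^{1/p'}[f]_{W^{t,p'}(B_2)}+(1-\gamma(p-1))^{1/p'}[g]_{W^{\gamma(p-1),p'}(B_2)}\bigr)$. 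The dominant term that results is of order $|h|^{(\gamma+t_0)p/2}$; here I would use $p\le2$ together with $t_0<(p-1)\gamma$ to verify $\tfrac{\gamma+t_0}{2}\le\min\{\gamma,\tfrac{\gamma p}{2},1\}$, so that the remaining terms — of orders $|h|^{\gamma p^2/2}$, $|h|^{\gamma p}$ and $|h|^p$ — carry at least that power of $|h|$. Dividing by $|h|^{(\gamma+t_0)p/2}$, taking $\sup_{0<|h|<1/1000}$ and applying Young's inequality with conjugate exponents chosen so that the fractional powers of $[w]_{W^{\gamma,p}}$, $[f]_{W^{t,p'}}$ and $[g]_{W^{\gamma(p-1),p'}}$ recombine into first powers, I obtain a uniform bound for $(1-\varsigma)\bigl[\delta_h(w\psi)/|h|^{(\gamma+t_0)/2}\bigr]_{W^{\varsigma,p}(\bbR^n)}^p$.

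To close the argument I would use \eqref{lem.embdiff.a1} once more, now with the function $\delta_h(w\psi)/|h|^{(\gamma+t_0)/2}$ and order $\varsigma$, to obtain a uniform bound for $\bigl\|\delta_h^2(w\psi)/|h|^{\varsigma+(\gamma+t_0)/2}\bigr\|_{L^p(\bbR^n)}$, then invoke \eqref{norm.loc} with $s$ replaced by $\gamma$, the embedding $[w]_{W^{\varsigma,p}(\bbR^n)}\le c\|w\|_{W^{\gamma,p}(\bbR^n)}$ (valid since $\varsigma\le\gamma$; cf.\ the end of the proof of Lemma~\ref{loc.mainthmdbs}), the $g$-estimate of Corollary~\ref{el : lem.loc.sin} and Lemma~\ref{lem.poist} to reduce the right-hand side to $(1-s)[u]_{W^{\gamma,p}(B_5)}^p$, $\mathrm{Tail}(u-(u)_{B_5};B_5)^p$ and $[f]_{W^{t,p'}(B_5)}^{p'}$. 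Since $\varsigma+\tfrac{\gamma+t_0}{2}\ge\sigma_1>\sigma$ and lies in $(0,2)$, Lemma~\ref{lem.secmeb} applies with order parameter $\varsigma+\tfrac{\gamma+t_0}{2}$: \eqref{ineq.seq1} yields $[w\psi]_{W^{\sigma,p}(\bbR^n)}$ when $\varsigma_0+\tfrac{\gamma+\min\{(p-1)\gamma,t\}}{2}\le1$ (so $\sigma<1$), while \eqref{ineq.seq2} yields $[\nabla(w\psi)]_{W^{\sigma-1,p}(\bbR^n)}$ when that threshold exceeds $1$ (so $\sigma>1$). Using $\psi\equiv1$ on $B_{1/2}$, the fact that $u-(u)_{B_5}$ is again a weak solution, and undoing the scaling, gives the claimed estimates.

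The main obstacle will be the exponent bookkeeping of the second paragraph. Unlike the superquadratic bound of Lemma~\ref{lem.degind}, the right-hand side of Lemma~\ref{lem.sinind} is genuinely inhomogeneous in $\delta_h$: it mixes the terms $\|\delta_h u\|^{p^2/2}$ and $\|\delta_h u\|^{p/2}\|\delta_h F\|^{p/2}$ with the $h$-independent prefactor $[(1-\varsigma)^{1/p}u]_{W^{\gamma,p}}^{p(2-p)/2}$. One must check that with the choice $t_0<(p-1)\gamma$ the term governing the differentiability gain is precisely the one of order $|h|^{(\gamma+t_0)p/2}$ — this is exactly where the subquadratic restriction $\min\{(p-1)\gamma,t\}$ appears, in place of $\min\{\gamma,t\}$ in the case $p\ge2$ — and that the remaining fractional powers then reassemble, via Young's inequality with the correct conjugate exponents, into the clean right-hand side above. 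A secondary point is keeping the two cut-offs $\xi$ (localization) and $\psi$ (difference quotients) and the nested balls $B_2\subset B_{5/2}\subset B_3\subset B_{7/2}\subset B_5$ consistent, and using $\varsigma\ge\varsigma_0$ (valid since $\gamma\ge s$, $p\le2$) exactly where the final threshold is read off.
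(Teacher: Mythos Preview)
Your proposal is correct and follows essentially the same route as the paper's own proof: scale, localize via Corollary~\ref{el : lem.loc.sin}, apply Lemma~\ref{lem.sinind} to $w=u\xi$, do the exponent bookkeeping to isolate the governing $|h|^{(\gamma+t_0)p/2}$ term, pass to second differences via Lemma~\ref{lem.embdiff}, and conclude with Lemma~\ref{lem.secmeb} together with the subtraction of $(u)_{B_5}$. One tiny slip: the inequality $\varsigma\ge\varsigma_0$ holds because $s\ge s_0$ (cf.\ \eqref{s0assum}), not because $\gamma\ge s$ and $p\le 2$.
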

\begin{proof}
    By Lemma \ref{lem.scale}, we may assume $x_0=0$ and $R=1$. Let us fix $\sigma<\varsigma_0+\frac{\gamma+\min\{(p-1)\gamma,t\}}{2}$ and choose
    \begin{equation*}
        \sigma_0=\frac12\left(\sigma+\varsigma_0+\frac{\gamma+\min\{(p-1)\gamma,t\}}{2}\right).
    \end{equation*}
    Then there is a constant $t_0<\min\{(p-1)\gamma,t\}$ such that 
    \begin{equation*}
        \varsigma_0+\frac{\gamma+t_0}{2}=\sigma_0.
    \end{equation*}
    We next choose a cutoff function $\xi\in C_c^{\infty}(B_{7/2})$ such that $|\nabla \xi|\leq4$. By Lemma \ref{el : lem.loc.sin}, $w=u\xi$ is a weak solution to
    \begin{equation*}
        (-\Delta_p)^sw=f+g\quad\text{in }B_2,
    \end{equation*}
    where $g\in W^{\gamma(p-1),p'}(B_{5/2})$ with the estimate
    \begin{align}\label{ineq.gts}
        [g]^{p'}_{W^{\gamma(p-1),p'}(B_{5/2})}&\leq c(1-s)^{p'}\|u\|_{W^{\gamma,p}(B_{3})}^p+c\mathrm{Tail}(u;B_{3})^p
    \end{align}
    for some constant $c=c(n,s_0,p,\gamma)$.

    Using Lemma \ref{lem.sinind} and the fact that for any $|h|<1/1000$, $|h|^{p\gamma}\leq c|h|^{\frac{p\gamma}{2}}\leq c|h|^{\frac{\gamma+t_0}{2}}$ and $|h|\leq c|h|^{\frac{\gamma+t_0}{2}}$ for some constant $c$, we deduce 
    \begin{equation}\label{ineq1.itscs}
    \begin{aligned}
    &(1-\varsigma_0)\left[\frac{\delta_h (w\psi)}{|h|^{\frac{\gamma+t_0}{2}}}\right]_{W^{\varsigma_0,p}(\bbR^n)}^{p}\\
    &\leq  c[(1-\varsigma_0)^{\frac1p}w]_{W^{\gamma,p}(\bbR^n)}^{\frac{p(2-p)}{2}}\left(\left\|\frac{\delta_h w}{|h|^{\gamma}}\right\|_{L^{p}(\bbR^{n})}^{\frac{p^{2}}{2}}+\left\|\frac{\delta_h w}{|h|^{\gamma}}\right\|^{\frac{p}{2}}_{L^{p}(B_1)}\left\|\frac{\delta_h F}{|h|^{t_0}}\right\|^{\frac{p}{2}}_{L^{p'}(B_{1})}\right)\\
    &\quad+c\left\|\frac{\delta_h w}{|h|^{\gamma}}\right\|_{L^{p}(\bbR^{n})}^{p}+c\|w\|_{L^{p}(\bbR^n)}^p+c(1-\varsigma_0)[w]_{W^{\varsigma_0,p}(\bbR^n)}^p,
\end{aligned}
\end{equation}
where the function $\psi$ is determined in Lemma \ref{lem.sinind} and $F=f+g$. Let us note from \eqref{lem.embdiff.a1} in Lemma \ref{lem.embdiff} and Lemma \ref{lem.poist},
    \begin{align}\label{ineq0.itscs}
        \sup_{0<|h|<\frac{1}{1000}}\left\|\frac{\delta_h F}{|h|^{t_0}}\right\|_{L^{p'}(B_1)}^{p'}&\leq  c\sup_{0<|h|<\frac{1}{1000}}\left\|\frac{\delta_h (f-(f)_{B_2})}{|h|^{t}}\right\|_{L^{p'}(B_1)}\nonumber\\
        &\quad+c\sup_{0<|h|<\frac{1}{1000}}\left\|\frac{\delta_h (g-(g)_{B_2})}{|h|^{(p-1)\gamma}}\right\|_{L^{p'}(B_2)}^{p'}\nonumber\\
        &\leq c(1-t)[f]^{p'}_{W^{t,p'}(B_2)}+(1-(p-1)\gamma)[g]_{W^{(p-1)\gamma,p'}(B_2)}^{p'}
    \end{align}
    for some constant $c=c(n,s_0,p,\gamma,t)$.
    We now use \eqref{lem.embdiff.a2} and \eqref{lem.embdiff.a1} in Lemma \ref{lem.embdiff} to see that
    \begin{align*}
        \sup_{0<|h|<\frac{1}{1000}}\left\|\frac{\delta^2_h (w\psi)}{|h|^{\varsigma_0+\frac{\gamma+t_0}{2}}}\right\|_{L^p(\bbR^n)}^p&\leq c(1-\varsigma_0)\left[\frac{\delta_h (w\psi)}{|h|^{\frac{\gamma+t_0}{2}}}\right]_{W^{\varsigma_0,p}(\bbR^n)}^{p}+c\left\|\frac{\delta_h (w\psi)}{|h|^{\gamma}}\right\|_{L^{p}(\bbR^n)}^{p}\\
        &\leq c(1-\varsigma_0)\left[\frac{\delta_h (w\psi)}{|h|^{\frac{\gamma+t_0}{2}}}\right]_{W^{\varsigma_0,p}(\bbR^n)}^{p}+c(1-\gamma)[w\psi]_{W^{\gamma,p}(\bbR^n)}^p
    \end{align*}
    for some constant $c=c(n,s_0,p,\gamma,\sigma)$.
    Thus, combining this, \eqref{ineq1.itscs} and \eqref{ineq0.itscs} yields
    \begin{align*}
        \sup_{0<|h|<\frac{1}{1000}}\left\|\frac{\delta^2_h (w\psi)}{|h|^{\varsigma_0+\frac{\gamma+t_0}{2}}}\right\|_{L^p(\bbR^n)}^p&\leq c(1-\varsigma_0)([w]_{W^{\gamma,p}(\bbR^n)}^p+[w]_{W^{\sigma,p}(\bbR^n)}^p)\\
        &\quad +c\|w\|_{L^p(\bbR^n)}^p+c[f]_{W^{t,p'}(B_2)}^{p'}+c[g]_{W^{(p-1)\gamma,p'}(B_2)}^{p'}.
    \end{align*}
    We now use the fact that $w=u\xi$, \eqref{ineq.gts} and \eqref{norm.loc} to further estimate the right-hand side of the above display as
    \begin{align*}
        \sup_{0<|h|<\frac{1}{1000}}\left\|\frac{\delta^2_h (w\psi)}{|h|^{\varsigma_0+\frac{\gamma+t_0}{2}}}\right\|_{L^p(\bbR^n)}^p&\leq c(1-s)[u]_{W^{\gamma,p}(B_5)}^p+c\|u\|_{L^p(B_5)}^p\\
        &\quad+c\mathrm{Tail}(u;B_5)^p+c[f]_{W^{t,p'}(B_2)}^{p'}
    \end{align*}
    for some constant $c=c(n,s_0,p,\gamma,\sigma)$.
    In light of \eqref{ineq.seq1} in Lemma \ref{lem.secmeb} along with the fact that $w=u\xi$, we get
    \begin{align*}
        [u]_{W^{\sigma,p}(B_{1/2})}^p&\leq c(1-s)[u]_{W^{\gamma,p}(B_5)}^p+c\|u\|_{L^p(B_5)}^p+c\mathrm{Tail}(u;B_5)^p+c[f]_{W^{t,p'}(B_2)}^{p'},
    \end{align*}
    whenever $\varsigma_0+\frac{\gamma+t_0}{2}\leq1$. Similarly, if $\varsigma_0+\frac{\gamma+t_0}{2}>1$, then we have 
    \begin{align*}
        [\nabla u]_{W^{\sigma-1,p}(B_{1/2})}^p&\leq c(1-s)[u]_{W^{\gamma,p}(B_5)}^p+c\|u\|_{L^p(B_5)}^p+c\mathrm{Tail}(u;B_5)^p+c[f]_{W^{t,p'}(B_2)}^{p'}
    \end{align*}
    for some constant $c=c(n,s_0,p,\gamma,\sigma)$. Using the fact that $u-(u)_{B_5}$ is a also weak solution to 
    \begin{equation*}
        (-\Delta_p)^su =f\quad\text{in }B_5
    \end{equation*} and using Lemma \ref{lem.poist}, we get the desired estimate.
\end{proof}
We now prove Theorem \ref{thm.regs}.
\begin{proof}[Proof of Theorem \ref{thm.regs}.]
For $p\leq 2$, let us fix $s_0\in(0,s]$ and $t\in\left(0,p-1\right)$.

We take a sequence
    \begin{align*}
        \gamma_i=\frac{s_0p}{4}i\quad\text{for each }i\geq0.
    \end{align*}
    Then there is a nonnegative integer $i_0$ such that 
    \begin{align*}
    \begin{cases}
    t<(p-1)\gamma_0&\quad\text{if }i_0=0,\\
    (p-1)\gamma_{i_0}\leq t<(p-1)\gamma_{i_0+1}&\quad\text{if }i_0>0.
  	\end{cases}   
    \end{align*}
    Since $\gamma_0<s$, we observe $u\in W^{\gamma_0,p}_{\mathrm{loc}}(\Omega)$. Moreover, using \eqref{eq:embed} and Lemma \ref{lem.upest}, we observe that the estimate 
\begin{equation}\label{ineq.fir.thm9}
    R^{\gamma_0-\frac{n}p}[u]_{W^{\gamma_0,p}(B_r(x_0))}\leq cE(u;B_{2r}(x_0))+cR^{\frac{sp+t}{p-1}-\frac{n}p}\|f\|^{\frac{1}{p-1}}_{\widetilde{W}^{t,p'}(B_{2r}(x_0))}
\end{equation}
holds for some constant $c=c(n,s_0,p,t)$, whenever $B_{2r}(x_0)\Subset \Omega$. Using Lemma \ref{lem.sin.reg.ind} and standard covering arguments, we inductively prove $u\in W^{\gamma_{i_0},p}_{\mathrm{loc}}(\Omega)$ and obtain the estimate \eqref{ineq.fir.thm9} with $\gamma_0$ replaced by $\gamma_{i_0}$. Since 
    \begin{equation*}
        \frac{t}{p-1}<\gamma_{i_{0}+1}\leq\frac{\gamma_{i_0}+\min\{(p-1)\gamma_{i_0},t\}}{2}+\frac{(2-p)\gamma_{i_0}}{2}+\frac{s_0 p}{2},
    \end{equation*}
    applying Lemma \ref{lem.sin.reg.ind} yields $u\in W^{\frac{t}{p-1},p}_{\mathrm{loc}}(\Omega)$.
    Let us define 
    \begin{equation}\label{choi.sin.ep}
        \epsilon=\min\left\{\frac{(p-1)}{4}\left(\frac{\alpha+\alpha_2}{2}-\alpha\right),\frac{s_0p}{4}\right\}.
    \end{equation}
    We next consider the sequence 
    \begin{align*}
        \widetilde{\gamma}_0=\frac{t}{p-1}\quad\text{and}\quad\widetilde{\gamma}_i=\widetilde{\gamma}_0\left(\frac{3-p}{2}\right)^i+\left(\frac{s_0p+t}{2}-\epsilon\right)\sum_{k=0}^{i-1}\left(\frac{3-p}{2}\right)^k
    \end{align*}
    and observe that $\widetilde{\gamma}_{i}\leq \widetilde{\gamma}_{i+1}$ and $\widetilde{\gamma}_i\geq \frac{t}{p-1}$ for any $i\geq0$, as well as
    \begin{align}\label{lim.sin.widegam}
        \lim_{i\to\infty}\widetilde{\gamma}_i=\frac{s_0p+t}{p-1}-\frac{2\epsilon}{p-1}>\frac{\alpha_2+\alpha}{2}.
    \end{align}
    \begin{enumerate}
        \item $\frac{s_0p+t}{p-1}\leq1$. By \eqref{lim.sin.widegam} along with \eqref{choi.sin.ep}, there is a positive integer $i_0$ such that 
        \begin{equation*}
            \widetilde{\gamma}_{i_0}\leq \alpha<\widetilde{\gamma}_{i_0+1}.
        \end{equation*}
        By iterating $i_0$ times Lemma \ref{lem.sin.reg.ind} together with covering arguments, we obtain $u\in W^{\widetilde{\gamma}_{i_0},p}_{\mathrm{loc}}(\Omega)$ with the estimate 
\begin{equation}\label{ineq2.fir.thm9}
    R^{\widetilde{\gamma}_{i_0}-\frac{n}p}[u]_{W^{\widetilde{\gamma}_{i_0},p}(B_r(x_0))}\leq cE(u;B_{2r}(x_0))+cR^{\frac{sp+t}{p-1}-\frac{n}p}\|f\|^{\frac{1}{p-1}}_{\widetilde{W}^{t,p'}(B_{2r}(x_0))}
\end{equation}
for some constant $c=c(n,s_0,p,t,\alpha)$, whenever $B_{2r}(x_0)\Subset \Omega$. Thus by applying Lemma \ref{lem.sin.reg.ind} and standard covering arguments along with \eqref{ineq2.fir.thm9}, $u\in W^{\alpha,p}_{\mathrm{loc}}(\Omega)$ holds with \eqref{est.sec.thm9}.
        \item $\frac{s_0p+t}{p-1}>1$. Since $\frac{\alpha_2+\alpha}{2}>1$, there is a positive integer $j_0$ such that 
        \begin{align*}
            \widetilde{\gamma}_{j_0}\leq 1<\widetilde{\gamma}_{j_0+1}.
        \end{align*}
        As in the case when $\frac{s_0p+t}{p-1}\leq1$, we get $u\in W^{\widetilde{\gamma},p}_{\mathrm{loc}}(\Omega)$ with 
        \begin{equation*}
            \widetilde{\gamma}=\max\left\{\frac{1}{3-p}\left(\frac{s_0p+t}{2}-(1+\alpha)\right),\frac{t}{p-1}\right\}<1.
        \end{equation*} In addition, we have \eqref{ineq2.fir.thm9} with $\widetilde{\gamma}_{i_0}$ replaced by $\widetilde{\gamma}$.
        Since 
        \begin{equation*}
            \frac{s_0p}{2}+\frac{(3-p)\widetilde{\gamma}}{2}+\frac{t}{2}>\alpha,
        \end{equation*}
        by Lemma \ref{lem.sin.reg.ind} we arrive at $\nabla u\in W^{\alpha-1,p}_{\mathrm{loc}}(\Omega)$ with \eqref{est.secg.thm9}.
    \end{enumerate}
Then the proof is completed.
\end{proof}
Finally, we prove Corollary \ref{cor.regs}, that is, our main result in the homogeneous subquadratic case.
\begin{proof}[Proof of Corollary \ref{cor.regs}.]
    Let us fix $s_0\in(0,s]$ and 
    \begin{equation*}
    \alpha\in \left(1,\max\{1+\frac{s_0 p}{2},2+(s_0-1)p\}\right).
    \end{equation*}
     Suppose {$1+\frac{s_0p}{2}\leq 2+(s_0-1)p$ so that}
    $\max\{1+\frac{s_0 p}{2},2+(s_0-1)p\}=2+(s_0-1)p$. In this case, the estimate \eqref{est.sin.grad}
    follows from Theorem \ref{thm.1} with $f=0$. On the other hand, if {$2+(s_0-1)p\leq 1+\frac{s_0p}{2}$ so that} $\max\{1+\frac{s_0 p}{2},2+(s_0-1)p\}=1+\frac{s_0 p}{2}$, Theorem \ref{thm.regs} with $f=0$ yields \eqref{est.sin.grad}.
    This completes the proof.
\end{proof}

\appendix
\section{Power functions}\label{app.power}
In this appendix, we prove some properties of power functions that are useful to prove the sharpness of our results. Let us assume $\sigma \in(0,1)$ and $p\in[1,\infty)$.
By \cite[page 44, Example 1]{RunSic96}, we observe
\begin{align}\label{goal1.app}
    u(x)=|x|^{\sigma-\frac{n}{p}}\in W^{s,p}_{\mathrm{loc}}(\bbR^n)\quad\text{if }s<\sigma
\end{align}
and
\begin{align}\label{goal2.app}
    u(x)=|x|^{\sigma-\frac{n}{p}}\notin W^{s,p}(B_1)\quad\text{if }s\geq\sigma.
\end{align}

We next prove that the function 
\begin{equation}\label{defn.u}
    u(x)=|x|^{sp'+\epsilon-\frac{n}{p}}\in W^{sp'+\epsilon/2,p}_{\mathrm{loc}}(\bbR^n)\cap L^{p-1}_{sp}(\bbR^n)
\end{equation}is a weak solution to 
\begin{align}\label{eq11}
    (-\Delta_p)^su=f\quad\text{in }B_1,
\end{align}
where $f(x)=c|x|^{(p-1)\left(\epsilon-\frac{n}{p}\right)}$ for some constant $c$. 
By \cite[Lemma 2.3]{IanMosSqu16}, observe that 
\begin{align*}
    \int_{\bbR^n}f\phi\,dx=\int_{\bbR^n}(-\Delta_p)^su(x)\phi(x)\,dx<\infty
\end{align*}
for any $\phi\in C_{c}^{\infty}(B_1)$, which implies that $(-\Delta_p)^su(x)$ is well defined a.e. In addition, there is a point $x_0\in B_1\setminus\{0\}$ such that $(-\Delta_p)^su(x_0)<\infty$.
We next observe that
\begin{align*}
    (-\Delta_p)^su(rx)=r^{(p-1)\left(\epsilon-\frac{n}{p}\right)}(-\Delta_p)^su(x)
\end{align*}
for any $x\in \bbR^n$, $r>0$ and $ (-\Delta_p)^su(x)$ is a radially symmetric function.
 From this, we deduce
\begin{align*}
    (-\Delta_p)^su(x)=c|x|^{(p-1)\left(\epsilon-\frac{n}{p}\right)}\in L^{p'}_{\mathrm{loc}}(\bbR)
\end{align*}
for some constant $c$. Therefore, $u$ is a weak solution to \eqref{eq11}.



\printbibliography

\end{document}